\documentclass[reqno,english,11pt]{amsart}

\usepackage{amsmath,amsfonts,amssymb,graphicx,amsthm,url}
\usepackage[noadjust]{cite}
\usepackage{stmaryrd}
\usepackage{mathrsfs,booktabs,tabularx}
\usepackage{xifthen,xcolor,tikz,setspace}
\usetikzlibrary{decorations.pathmorphing,patterns,shapes,calc,decorations}
\usetikzlibrary{decorations.pathreplacing}
\usepackage{mathtools,upgreek,paralist}
\usepackage[final]{showkeys} %add in 'final' into parameter to remove showkeys

% showkeys font
%\colorlet{refkey}{orange!20}
%\colorlet{labelkey}{blue!30}
\definecolor{refkey}{gray}{.75}
\definecolor{labelkey}{gray}{.5}
\usepackage[algo2e,boxed,vlined,algoruled]{algorithm2e}
\usepackage{comment}
\usepackage[shortlabels]{enumitem}

\setlength{\oddsidemargin}{6mm}
\setlength{\evensidemargin}{6mm} 
\setlength{\textwidth}{145mm}

\usepackage[letterpaper]{geometry}
\geometry{verbose,tmargin=1in,bmargin=1in,lmargin=1in,rmargin=1in}

\usepackage[colorinlistoftodos]{todonotes}
\presetkeys{todonotes}{inline, color=green}{}

\usepackage{accents}
\usepackage[algo2e,boxed,vlined,algoruled]{algorithm2e}

%\usepackage{subfig}
%   \Atopmargin=0in
%   \oddsidemargin=0in
%   \evensidemargin=0in
%   \textwidth=6in
%   \textheight=8.5in

%\usepackage\llb dvips\rrb {graphicx}
%\usepackage{geometry} %makes the margins thinner, and the paper greener.
\usepackage[small]{caption}
\usepackage[colorlinks=true]{hyperref}
\colorlet{DarkGreen}{green!50!black}
\colorlet{DarkGray}{gray!60!black}
%\allowdisplaybreaks

\numberwithin{equation}{section}

%%%%%%%%%%%%%%%%%%%%%%%%%%%%%%%%%%%%%%%%%%%%%%%%%%%%%%%%%%%%%%%%%%%%%%%%%%%%%%

 % sum with 2 lines
\renewcommand{\restriction}{\mathord{\upharpoonright}}
\renewcommand{\epsilon}{\varepsilon}
\newcommand{\given}{\;\big|\;}
\newcommand{\one}{\mathbf{1}}

\usepackage{color}
 \definecolor{refkey}{gray}{.5}
 \definecolor{labelkey}{gray}{.5}
%\definecolor{refkey}{gray}{.75}
%\definecolor{labelkey}{gray}{.5}
\definecolor{light}{gray}{.9}

\usepackage{soul}

%%%%%%%%%%%%%%%%%%%%%%%%%%%%%%%%%%%%%%%%%%%%%%%%%%%%%%%%%%%%%%%%%%%%%%%%%%%%%%
%%%%%%%%%% Theorems, lemmas, and so on...
%%%%%%%%%%%%%%%%%%%%%%%%%%%%%%%%%%%%%%%%%%%%%%%%%%%%%%%%%%%%%%%%%%%%%%%%%%%%%%

\usepackage[capitalise]{cleveref}

\newtheorem{theorem}{Theorem}[section]
\newtheorem*{theorem*}{Theorem}
\newtheorem{lemma}[theorem]{Lemma}

\newtheorem{claim}[theorem]{Claim}
\crefname{claim}{Claim}{Claims}
\newtheorem{proposition}[theorem]{Proposition}

\newtheorem{corollary}[theorem]{Corollary}

\theoremstyle{definition}{

\newtheorem{definition}[theorem]{Definition}

\newtheorem*{definition*}{Definition}

\newtheorem{remark}[theorem]{Remark}
\newtheorem*{remark*}{Remark}

}

\newcommand{\E}{\mathbb E}

\newcommand{\R}{\mathbb R}
\newcommand{\Z}{\mathbb Z}

\newcommand{\cA}{\ensuremath{\mathcal A}}
\newcommand{\cB}{\ensuremath{\mathcal B}}
\newcommand{\cC}{\ensuremath{\mathcal C}}

\newcommand{\cE}{\ensuremath{\mathcal E}}
\newcommand{\cF}{\ensuremath{\mathcal F}}

\newcommand{\cI}{\ensuremath{\mathcal I}}
\newcommand{\cJ}{\ensuremath{\mathcal J}}

\newcommand{\cL}{\ensuremath{\mathcal L}}

\newcommand{\cP}{\ensuremath{\mathcal P}}

\newcommand{\cU}{\ensuremath{\mathcal U}}
\newcommand{\cV}{\ensuremath{\mathcal V}}

\newcommand{\sfh}{\ensuremath{\mathsf{h}}}

\newcommand{\fm}{\mathfrak{m}}

\newcommand{\fW}{\mathfrak{W}}

\newcommand{\fD}{\mathfrak{D}}

\newcommand{\g}{{\ensuremath{\mathbf g}}}
\newcommand{\f}{{\ensuremath{\mathbf f}}}

\newcommand{\bP}{{\ensuremath{\mathbf P}}}

 \renewcommand{\epsilon}{\varepsilon}

\DeclareMathOperator{\diam}{diam}

\newcommand{\Iso}{{\mathsf{Iso}}}

\newcommand{\Itop}{\cI_{\mathsf{top}}}
\newcommand{\Ibot}{\cI_{\mathsf{bot}}}
\newcommand{\Vtop}{\cV_\mathsf{top}}
\newcommand{\Vbot}{\cV_\mathsf{bot}}

\newcommand{\Atop}{\widehat{\cV}_{\mathsf{top}}}
\newcommand{\Abot}{\widehat{\cV}_{\mathsf{bot}}}

\newcommand{\Top}{\mathsf{top}}
\newcommand{\Bot}{\mathsf{bot}}
\newcommand{\Vred}{\cV_\mathsf{red}}
\newcommand{\Vblue}{\cV_\mathsf{blue}}
\newcommand{\Ared}{\widehat{\cV}_{\mathsf{red}}}

\newcommand{\Ablue}{\widehat{\cV}_{\mathsf{blue}}}

\newcommand{\Red}{\ensuremath{\mathsf{red}}}
\newcommand{\Blue}{\ensuremath{\mathsf{blue}}}
\newcommand{\noR}{\ensuremath{\mathsf{nonR}}}
\newcommand{\noB}{\ensuremath{\mathsf{nonB}}}
\newcommand{\Full}
{\ensuremath{\mathsf{full}}}
\newcommand{\Fuzzy}{\ensuremath{\mathsf{fuzzy}}}

\newcommand{\sep}{{\fD}}

\newcommand{\ez}{{\mathfrak{e}_3}}

\renewcommand{\d}{\mathrm{d}}
\newcommand{\dob}{\ensuremath{\mathsf{dob}}}
\newcommand{\fl}{\ensuremath{\mathsf{fl}}}

\newcommand{\musoft}{{\widehat\upmu}}

\DeclareMathOperator{\hgt}{ht}
\newcommand{\tv}{{\textsc{tv}}}

\newcommand{\hull}[1]{{\mathbullet{#1}}}

\makeatletter
\crefname{step}{Step}{Steps}
\crefname{case}{Case}{Cases}
\newcommand{\superimpose}[2]{%
  {\ooalign{$#1\@firstoftwo#2$\cr\hfil$#1\@secondoftwo#2$\hfil\cr}}}
  
\newcommand{\sbullet}{%
  \hbox{\fontfamily{lmr}\fontsize{.4\dimexpr(\f@size pt)}{0}\selectfont\textbullet}}
\DeclareRobustCommand{\mathbullet}{\accentset{\sbullet}}

\makeatother

\title[3D Ising and Potts interfaces above a hard floor]{Logarithmic delocalization of low temperature \\ 3D Ising and Potts interfaces above a hard floor}

\author{Joseph Chen}
\address{J.\ Chen\hfill\break
Courant Institute\\ New York University\\
251 Mercer Street\\ New York, NY 10012, USA.}
\email{jlc871@courant.nyu.edu}

\author{Reza Gheissari}
\address{R.\ Gheissari\hfill\break
Department of Mathematics \\ Northwestern University }
\email{gheissari@northwestern.edu}

\author{Eyal Lubetzky}
\address{E.\ Lubetzky\hfill\break
Courant Institute\\ New York University\\
251 Mercer Street\\ New York, NY 10012, USA.}
\email{eyal@courant.nyu.edu}

\begin{document}

\begin{abstract}
    We study the entropic repulsion of the low temperature 3D Ising and Potts interface in an $n\times n \times n$ box with blue boundary conditions on its bottom face (the hard floor), and red boundary conditions on its other five faces. For Ising, 
    Fr\"ohlich and Pfister proved in 1987 that the typical interface height above the origin diverges (non-quantitatively), via correlation inequalities special to the Ising model; no such result was known for Potts.
    We show for both the Ising and Potts models that the entropic repulsion fully overcomes the potentially attractive interaction with the floor, and obtain a logarithmically diverging lower bound on the typical interface height. 
    This is complemented by a conjecturally sharp upper bound of $\lfloor \xi^{-1}\log n\rfloor$ where $\xi$ is the rate function for a point-to-plane non-red connection under the infinite volume red measure.    
    The proof goes through a coupled random-cluster interface to overcome the potentially attractive interaction with the boundary, and a coupled fuzzy Potts model to reduce the upper bound to a simpler setting where the repulsion is attained by conditioning a no-floor interface to lie in the upper half-space. 
\end{abstract}

\maketitle

\vspace{-.8cm}

\section{Introduction}\label{sec:intro}
The Potts model on a finite graph $G=(V,E)$
is the following probability distribution over $\sigma \in \{1,\ldots,q\}^V$ (coloring each site in $V$ by one of $q$ colors),
\begin{align}\label{eq:Potts-measure}
    \mu_{G,\beta,q}(\sigma) \propto \exp \Big( - \beta \sum_{(u,v)\in E} \one_{\{\sigma_v\ne \sigma_w\}}\Big)\,,
\end{align}
with $\beta>0$ the inverse-temperature (in this paper, a large constant). The case $q=2$ is the Ising model. 
A boundary condition for the model is an assignment $\eta$ of specific values to a subset of the sites $S\subset V$, whence one considers \cref{eq:Potts-measure} restricted to configurations agreeing with $\eta$ on $S$ (equivalently, one looks at $\mu_{G,\beta,q}$ conditional on $\sigma$ agreeing with $\eta$ on $S$).
We will be interested in the $q$-state Potts model on the $n\times n \times n$ cube $\Lambda_{n}$ with boundary conditions, denoted by $\fl$, that are $\Blue$ boundary conditions on its bottom face (the \emph{hard floor}), and $\Red$ boundary conditions on the other five faces. 
We denote this distribution by $\mu_{\Lambda_n}^\fl$.

These boundary conditions induce a $\Blue$-to-non-$\Blue$ interface, denoted here by $\cI_\Blue$, consisting of the set of plaquettes separating vertices in the $\Blue$ phase (either in the $\Blue$ component of the boundary, or in non-$\Blue$ bubbles encapsulated by that component), from the rest (see \cref{def:FK-Potts-interfaces} for the formal definition).  
(The plaquettes of the interface are unit squares bounded by vertices of~$\Z^3$, and two plaquettes are considered adjacent if they share an edge.)
Regard the height of each plaquette $f\in\cI_\Blue$ as the difference of the $\ez$-coordinate of its midpoint and the $\ez$-coordinate of the floor, which we view as having height zero, and denote it by $\hgt(f)$. 
In this paper, we study the typical height of plaquettes in the bulk of the interface $\cI_\Blue$, as it balances competing forces of rigidity (which would hold at large $\beta$ if instead the boundary conditions changed color in the middle of the box), entropic repulsion away from the hard floor, and a complicated interaction between the interface and the floor through the potential finite bubbles of non-$\Blue$ below the interface that the hard floor precludes. 

The analysis of low-temperature 3D Ising interfaces has a long history, dating fifty years to the classical work of Dobrushin~\cite{Dobrushin73} who established rigidity of the interface when there is no nearby floor; namely, one imposes on the $n\times n\times n$ cube $\Lambda_n$
\emph{Dobrushin boundary conditions}, denoted here by $\dob$, that are $\Blue$ on its lower half and $\Red$ on its top half.
This spurned several other works on rigidity (localization) of integer-valued surfaces at low-temperatures, including the order-order interface of the super-critical random-cluster model and interfaces of low-temperature Potts model~\cite{GielisGrimmett02} under $\mu_{\Lambda_n}^\dob$. The behavior of localized interfaces in the presence of a nearby floor (sometimes called a wall) introduces a competition between the forces preferring rigidity and \emph{entropic repulsion}, a desire to delocalize from the floor to make room for thermal fluctuations that would otherwise be precluded. The work of Fr\"ohlich and Pfister~\cite{FP87a,FP87b} studied this and related surface transitions like wetting and layering rigorously for the Ising model. Using correlation inequalities special to the Ising model, they established non-quantitative delocalization of the Ising interface in $\mu_{\Lambda_n}^\fl$; i.e., the statement that the typical height of the interface above the origin diverges with $n$ (at an unknown rate). To our knowledge, this remains the only rigorous bound on the Ising interface height above a hard floor (see \cref{sec:related-work} for logarithmic bounds in simpler settings), and not even a non-quantitative delocalization was known for the Potts interface.

Rigorous quantitative results on entropic repulsion have generally been restricted to models only on interfaces (as opposed to entire spin configurations) and that are height functions (no overhangs). The model closest to the Ising/Potts interfaces is the---well-studied in its own right---Solid-On-Solid (SOS) model. 
There, the picture of entropic repulsion at low-temperatures is now precisely understood; the SOS interface in $\Lambda_n$ with $0$ boundary conditions and nonnegative heights is such that all but $\epsilon_\beta n^2$ many plaquettes of the interface are at height $\lfloor \frac{1}{4\beta} \log n\rfloor+O(1)$~\cite{CLMST14} with more precise quantities like the shape and fluctuations of the level curve for this highest height themselves being the subject of much rich investigation~\cite{CLMST16,Caddeo-SOS-level-lines}. Entropic repulsion, and the ensuing shape of the interface above a hard floor, have also been studied extensively for related height function models like~\cite{LMS-Harmonic-pinnacles}; see the survey~\cite{IV18} for more rich interface phenomena in SOS and related height function models. One element of all such analyses is a good understanding of the exponential rates for typical and maximal height deviations of the rigid surface before placing it near a floor.

In recent years, the precise law of height deviations, and maximum height, in the 3D Ising interface itself were derived in~\cite{GL_max,GL_tightness}, and for the random-cluster and Potts interfaces in~\cite{ChLu24}. In the Ising case, using this, a phenomenon of delocalization due to entropic repulsion was sharply studied when the repulsion is generated by what we will refer to as a \emph{soft floor}, where the model has no boundary condition floor, but the interface is conditioned to stay above height zero~\cite{GL-entropic-repulsion}. This may feel qualitatively the same, and in the case of the height function models like SOS it is indeed the same. However, the boundary conditions at height zero are an $\exp(c n^2)$ tilt of the measure without them, and the interactions of the interface to the boundary conditions on the floor are non-explicit and quite complicated. Moreover, since the interface is uniformly within height $O(\log n)$ of the floor, these interactions are on the same order as the entropy-induced repulsion effects and compete. In particular, these interactions could pin the surface to the floor and prevent it rising to logarithmic heights. 
We describe this and how we overcome it in more detail in Section~\ref{sec:related-work}. 

Our main results concern the low-temperature Ising and Potts interfaces in $\mu_{\Lambda_n}^\fl$, i.e., with a ``hard floor" formed by boundary conditions at height zero. We show a logarithmically growing lower bound on the typical height of $\cI$, together with a conjecturally sharp upper bound. Moreover, as $\beta \to\infty$, the ratio of the lower and upper bounds goes to $1$. Our analyses require moving back and forth between Ising and Potts, a coupled random-cluster model, and the fuzzy Potts model~\cite{Maes-fuzzy-Potts}, the latter two giving certain advantages in dealing with the possible (attractive and repulsive, respectively) interactions of the interface with the hard floor at height zero. 

\subsection{Main result} 
Under the infinite-volume measure $\mu_{\mathbb Z^3}^\Red$, at large enough $\beta$, there is almost surely an infinite connected component of $\Red$ vertices, call it $\Vred$, and all connected components of $\Vred^c= \mathbb Z^3 \setminus \Vred$ are finite. See, e.g.,~\cite[Chapter 7]{Grimmett_RC} for more on the $\beta$ large regime of the Potts model in $d\ge 3$.  
Define the finite point-to-plane connectivity rate by  
\begin{align}\label{eq:point-to-plane-connectivity}
    \xi_{h} = \xi_{h}(q,\beta) := - \log \mu_{\mathbb Z^3}^{\Red}\big((0,0,0) \longleftrightarrow \R^2\times\{h\} \text{ in $\Vred^c$} \big)\,.
\end{align}
As we will see in Section~\ref{sec:identification-of-rates}, a sub-additivity argument using Fekete's Lemma 
can show that \begin{align}\label{eq:xi}
\lim_{h\to\infty} \frac{\xi_{h}}{h} = :\xi = \xi(q,\beta)
\end{align}
exists (in fact, we show  in Lemma~\ref{lem:xi-tilde-additive} that $\xi_{h}$ is approximately additive, so that $\xi_{h} = \xi h +O(1)$ for all $h$). A simple Peierls argument (and the fact that such a non-red connection implies a finite random-cluster component of size $h$)---see, e.g.,~\cite[Thm~7.3.2]{Grimmett_RC}---implies that $\xi \in [4\beta -C,4\beta +C]$ for all $h\ge 1$, for an absolute constant $C>0$.  With the rate quantity $\xi$ in hand, we define 
\begin{align}\label{eq:h_n^*}
    h_{n}^* = 
   h_n^*(q,\beta): = \lfloor \xi^{-1} \log n  \rfloor\,. 
\end{align}
Our main result is the following.

\begin{theorem}\label{thm:main}
    Fix $q\ge 2$ and $\beta$ large enough. Consider $\mu_{\Lambda_n}^\fl$, the $q$-state Potts model on the $n\times n \times n$ box $\Lambda_n$ with boundary conditions $\Blue$ on its bottom face and $\Red$ elsewhere, and let $\cI_\Blue$ be the interface separating the $\Blue$ and non-$\Blue$ phases as defined above (formally defined in Definition~\ref{def:FK-Potts-interfaces}).  There exists a sequence $\epsilon_\beta\downarrow 0$ as $\beta \uparrow\infty$ such that with probability $1-o(1)$, 
    \begin{align*}
        |\{f\in\cI_\Blue: \hgt(f) \notin  [(1-\epsilon_\beta) h_{n}^* , h_{n}^*]\} | \le \epsilon_\beta n^2\,. 
    \end{align*}
In addition, the same bound holds for the interface $\cI_\Red$ separating the $\Red$ and non-$\Red$ phases.
\end{theorem}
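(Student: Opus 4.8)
The plan is to prove matching upper and lower bounds on the typical height of the interface $\cI_\Blue$ (and then $\cI_\Red$), each holding up to $\epsilon_\beta n^2$ exceptional plaquettes, by exploiting the three coupled representations advertised in the abstract: the Potts model itself, its Edwards--Sokal random-cluster coupling, and the fuzzy Potts coupling that collapses the $q-1$ non-$\Blue$ colors into a single ``non-blue'' state. The heart of the matter is that the hard floor exerts a subtle, potentially attractive pull on $\cI_\Blue$ through the finite non-$\Blue$ bubbles it forbids beneath the interface, so one cannot simply transfer the soft-floor analysis of~\cite{GL-entropic-repulsion}. I would organize the proof around the following steps.

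\textbf{Step 1: Lower bound via the coupled random-cluster interface.} First I would pass to the Edwards--Sokal coupling and work with the random-cluster interface $\cI^{\mathrm{rc}}$ separating the boundary-wired component from the rest; under the floor boundary conditions this is the wired-vs.-free interface with a hard wall. The key point is a monotonicity/FKG-type comparison: conditioning on the interface avoiding the floor (as $\cI^{\mathrm{rc}}$ must, being pinned to the wired boundary on five faces) should, by positive association in the random-cluster model, only help the interface rise. Concretely, I would compare $\mu_{\Lambda_n}^\fl$ to the soft-floor law $\mu_{\Lambda_n}^\dob(\,\cdot \mid \cI \subset \text{upper half})$, show that the hard-floor interface stochastically dominates (from above, in the appropriate sense) a soft-floor interface conditioned to stay nonnegative, and then import the quantitative lower bound $(1-\epsilon_\beta)h_n^*$ from the entropic-repulsion estimates for the conditioned no-floor interface. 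This requires re-deriving the soft-floor lower bound in the random-cluster (hence $q$-general) setting, using the height-deviation large deviation rate $\xi_h = \xi h + O(1)$ from \cref{lem:xi-tilde-additive} in place of the Ising-specific $\cite{GL_max,GL_tightness}$ inputs, now available via~\cite{ChLu24}.

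\textbf{Step 2: Upper bound via the coupled fuzzy Potts model.} For the upper bound I would project onto the fuzzy Potts model obtained by merging colors $2,\dots,q$ into a single non-blue label; the floor boundary conditions become a genuine hard-wall condition for a two-state (blue / non-blue) system whose interface is exactly $\cI_\Blue$. The fuzzy Potts measure is still monotone and amenable to cluster expansion / Peierls arguments at large $\beta$, and crucially the forbidden-bubble interaction with the floor now pushes the wrong way only weakly: I would bound the probability that a macroscopic fraction of plaquettes rise above $h_n^*$ by a union bound over ``pillar'' excess-height events, each costing $e^{-\xi_h}=e^{-\xi h + O(1)}$ by the point-to-plane non-red connectivity estimate \cref{eq:point-to-plane-connectivity}, and $h_n^* = \lfloor \xi^{-1}\log n\rfloor$ makes $n^2 e^{-\xi h_n^*} = O(1)$, so at most $\epsilon_\beta n^2$ plaquettes can exceed $h_n^*$ except with probability $o(1)$. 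The reduction to a ``no-floor interface conditioned to the upper half-space'' alluded to in the abstract is what makes the per-pillar cost exactly governed by $\xi$ rather than a larger rate.

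\textbf{Step 3: From $\cI_\Blue$ to $\cI_\Red$.} Finally, the statement for $\cI_\Red$ follows by a near-identical argument with the roles of the colors permuted: under $\mu_{\Lambda_n}^\fl$ the red phase is the one wired to five faces, and $\cI_\Red$ is the red-vs.-non-red interface, which sits just above the floor for essentially the same reasons; the same random-cluster lower bound and fuzzy-Potts (now merging $\{1\}\cup\{3,\dots,q\}$ against red... more precisely merging all non-red colors) upper bound apply, since the rate $\xi$ in \cref{eq:xi} is defined via non-red connections to begin with. One must check that $\cI_\Blue$ and $\cI_\Red$ are close to each other — indeed, in the bulk the region between the floor and $\cI_\Red$ is predominantly blue and the two interfaces nearly coincide up to $O(1)$ height and $\epsilon_\beta n^2$ plaquettes — so the two conclusions are really two facets of one estimate.

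\textbf{Main obstacle.} I expect the genuinely hard step to be Step 1: controlling the attractive interaction between $\cI_\Blue$ and the hard floor through the precluded non-$\Blue$ bubbles. A naive FKG comparison does not obviously work because the event ``interface stays nonnegative'' is not a monotone event in the spin configuration in an immediately usable way, and the $e^{cn^2}$ reweighting by the floor boundary conditions is on the same order as the entropic-repulsion gain, so one cannot treat it perturbatively. The resolution I would pursue is to work entirely within the random-cluster representation, where the floor condition becomes a wiring of the bottom face and the relevant event becomes genuinely increasing, and to quantify the (beneficial rather than harmful) effect of this wiring via a careful comparison of partition functions for the interface with and without the floor-bubbles, showing the bubble contributions can only lower the height by $O(1)$ relative to the soft-floor model — small compared to $\epsilon_\beta h_n^*$. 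Making this comparison rigorous and quantitative, uniformly over the shape of the interface, is where the bulk of the work lies.
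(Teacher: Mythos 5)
Your high-level architecture matches the paper's (random-cluster coupling for the lower bound, fuzzy Potts FKG for the upper bound, the rate $\xi_h$ throughout), but both main steps have genuine gaps. For the lower bound, the assertion that ``by positive association in the random-cluster model'' the hard-floor interface stochastically dominates a conditioned soft-floor interface, and that in the RC representation ``the relevant event becomes genuinely increasing,'' does not hold up: the floor measure $\bar\pi_n^\fl$ is the wired-wired FK measure \emph{conditioned on the disconnection event} $\sep_{n,n}$, which is decreasing, so FKG is not directly available, and no stochastic domination between the two interface laws is established (nor needed) in the paper. What the paper actually proves (\cref{lem:compare-fl-dob,lem:compare-fl-dob-integral}) is the weaker but sufficient statement that the \emph{cost of a specific lifting map} $\Theta_j$ applied to $\Itop$ is smaller under $\bar\pi_n^\fl$ than under $\bar\pi_n^\dob$; this works only for $\Itop$ (not $\Ibot$, $\cI_\Full$, or $\cI_\Blue$, for which the inequality reverses) because $\{\Itop=I\}$ is measurable above $I$ and induces \emph{free} boundary below it, so the hard-vs.-soft difference reduces to a ratio of partition functions of unconditioned FK measures below $I$, handled by a free-energy interpolation (\cref{clm:free-energy}) plus FKG. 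You would also need, as input to the lift (because of overhangs), an a priori $O(\log n)$ upper bound on $\Itop$ under the hard floor, which itself requires the fuzzy-Potts monotonicity (\cref{prop:Potts-monotonicity,thm:weak-upper-fl-top}); this dependency is absent from your plan. Finally, the paper's lower bound is obtained by injecting straight-column spikes after lifting, giving $(1-C/\beta)h_n^*$; it does not (and with $\Itop$'s rate could not exactly) import the sharp soft-floor repulsion of~\cite{GL-entropic-repulsion}.

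For the upper bound, the per-pillar union bound is quantitatively insufficient as stated. First, $n^2e^{-\xi h_n^*}$ is not $O(1)$ but can be of order $n$ (since $\xi h_n^*\leq \log n$); more importantly, the pillar cost $e^{-\alpha_h}\approx e^{-\xi h}$ is an estimate under the \emph{unconditioned} no-floor measure $\mu_n^{h_n^*}$, whereas you need it under $\musoft_n^{h_n^*}$ (equivalently, after the fuzzy-Potts comparison, under $\mu_n^\fl$). Transferring a first-moment bound across the conditioning costs a factor $1/\mu_n^{h_n^*}(\cI_\Blue\subset\cL_{\geq0})=e^{(1+\epsilon_\beta)n}$, which destroys any polynomial estimate. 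The paper's resolution is a two-sided large-deviation argument: a lower bound $e^{-(1+\epsilon_\beta)n}$ on the positivity probability (\cref{lem:prob-of-positivity}, via fuzzy-Potts FKG over columns and the identification $\alpha_{h_n^*+1}\geq\log n-\epsilon_\beta$), against an upper bound $e^{-\sqrt{\beta}n/C}$ on the probability that $\geq \tfrac{C}{\sqrt\beta}n^2$ columns exceed height $h_n^*+1$ under the unconditioned rigid measure (\cref{lem:large-deviation-area-in-walls}, via a wall/excess-area analysis, not a union bound over pillars). Without an unconditioned tail bound beating $e^{-(1+\epsilon_\beta)n}$, your Step 2 does not close. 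Your Step 3 is essentially the paper's (sandwiching the four interfaces and \cref{lem:Ifull-wall-faces}), and is fine.
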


As mentioned, in terms of lower bounds on the interface height with a hard floor, to the best of our knowledge in the $q=2$ case there was only the non-quantitative bound of~\cite{FP87a}, and in Potts for $q\ge 3$, there was no proof of delocalization due to the entropic repulsion, even non-quantitatively. 

The upper bound of Theorem~\ref{thm:main} is conjecturally sharp.  
Indeed, for general $q\ge 2$, the height $h_n^*$ is exactly half of the maximum height oscillation $M_n = \max\{|\hgt(f)|: f\in \cI_{\Blue}\}$ under the no-floor Dobrushin boundary condition measure $\mu_{\Lambda_n}^\dob$ as identified in~\cite{GL_tightness,ChLu24}. This matches the scaling relation between the repelled typical height, and no-floor maximum height in the SOS approximation~\cite{CLMST14}. To be precise,~\cite[Theorem~1.3]{ChLu24} identified $M_n$ less explicitly in terms of a different rate, which they called the pillar rate. Part of our work in this paper (Theorem~\ref{thm:identifying-rate-with-infinite-limit}) identifies that pillar rate with~$\xi_h$. This implies that under $\mu_{\Lambda_n}^{\dob}$, one has $M_n = 2 h_n^* + O(1)$ with probability $1-o(1)$.

\begin{remark}\label{rem:other-interfaces}
Our proofs go through an analysis of the random-cluster model, in order to handle the interaction with the hard floor---this is even needed in the special case $q=2$ (the Ising model) where the model has FKG. Specifically, the interaction between the random-cluster interface $\Itop$ (the boundary of the connected random-cluster component of the $\Red$ boundary sites; see \cref{def:FK-Potts-interfaces}) and the hard floor is amenable to rigorous analysis in a way Ising and Potts interactions are not.

As a consequence of \cref{lem:Ifull-wall-faces}, with high probability the typical height of $\Itop$ (and the analogously defined random-cluster interface $\Ibot$), and the typical height of the Potts interface $\cI_{\Blue}$ are the same. 
As a byproduct, \cref{thm:main} also holds (with the same $h_n^*$) for the interfaces $\Itop,\Ibot$.
At the same time, the intuition behind the entropic repulsion phenomenon would suggest that the interface $\Ibot$ would rise to height $\xi_\Bot^{-1} \log n$, where $\xi_\Bot$ is the relevant random-cluster connection rate.  
Perhaps surprisingly, the upper bound of $h_n^*$ proves that the typical height of $\Ibot$ is $\Omega(\log n)$ smaller than this prediction (as $\xi_\Bot^{-1}$ is known to be strictly smaller than $\xi$ by~\cite{ChLu24}).  
\end{remark}

\begin{remark}
The rate $\xi$ governing \cref{eq:h_n^*} was the large deviation of a point-to-plane connection probability; for the Ising model, it can be shown (albeit for a slightly different notion of adjacency) that it is equivalent to a point-to-point rate, e.g., using the CLT in \cite[Cor.~3]{GL_max}. We expect the same to hold for the notion of adjacency used here, as well as for the Potts model for any $q\geq 3$.
\end{remark}

\begin{remark}
    Our lower bound of $(1-\epsilon_\beta)h_n^*$ in \cref{thm:main} is of the form $(1-C/\beta)h_n^*$.
    See \cref{rem:lower-bound-rate} for a discussion on a sketch of how this could be improved to the point-to-plane random-cluster connection rate in the complement of the (a.s.\ unique) infinite component. This would asymptotically be $(1-(C/\beta)e^{-\beta})h_n^*$, but that would be the limit of our approach.
\end{remark}
\subsection{Further prior work and proof ideas}\label{sec:related-work}    
Let us begin with a history of quantitative lower bounds for entropic repulsion of Ising-style interfaces that simplify some aspect to not have the delicate potentially attractive interaction to the boundary of the hard-floor case.
In~1991, 
Melicher{\v{c}}{\'\i}k~\cite{melichervcik1991entropic} studied a distribution over Ising-type interfaces (allowing overhangs), but with the distribution only being over the interface, not having any finite bubbles above and below the interface, and established that the interface rises to height order $\log n$ when conditioned to be non-negative. The presence of the full spin configurations (including the possibility of finite bubbles below the interface) causes a well-known difficulty for studying Ising interfaces in the presence of nearby boundary conditions due to complicated, possibly attractive, interactions between the interface and the boundary floor. (In $d=2$ where the phenomenology is very different, the works~\cite{IST15,IOVW} required significant ideas to deal with the potential for these interactions to attract the interface to the boundary floor. The latter of these works showed first that the interface is typically at least distance $n^{\epsilon}$ away from the floor, whence the interactions become negligible. However, in the $d =3$ case where the repulsion is only of order $\log n$, there is no hope for such an approach to controlling the interactions.) 
 Holick\'y and Zahradn\'{i}k~\cite{HZ93} sketched in 1993 how some of the steps in~\cite{melichervcik1991entropic} can be extended to incorporate the interactions in the true 3D Ising measure, postponing the details to a future version (but to our knowledge there was still no rigorous proof of this even in this Ising case).
 The recent work~\cite{GL-entropic-repulsion} proved an entropic repulsion transition for the interface in the true 3D Ising measure (including the bubbles) at height $h_n^*$ when the floor is soft, i.e., where the interface is conditioned to be above height zero. Effectively, this incorporated self-interactions of the interface through the bubbles, but still did not have to handle interactions with a nearby hard floor.

Dealing with the potentially attractive interactions between the interface and the hard floor is a significant part of establishing our lower bound of $(1-\epsilon_\beta) h_n^*$ and is achieved by instead working with a certain ``top" subset $\Itop$ of a coupled random-cluster model's order-order interface (see \cref{def:FK-Potts-interfaces}). For this specific marginal of the random-cluster interface, the interaction with the boundary is approximately in the right direction, in that it is less costly to apply maps that lift the interface in the presence of the hard floor, than in its soft-floor analogue. The reason this lower bound does not match $h_n^*$ is because the rates for downwards oscillations of $\Itop$ are the source of our lower bound, but are distinct from those of $\cI_{\Blue}$ that we expect to govern the repulsion in $\mu_{\Lambda_n}^\fl$. This lower bound (and a logarithmic order upper bound) are proven in Section~\ref{sec:logarithmic-lower-upper}. Notably, these proofs of logarithmic order delocalization are essentially self-contained and do  not use any of the more combinatorial walls, ceilings, and pillars machinery core to recent work in the subject.

The proof of the conjecturally sharp upper bound of $h_n^*$ goes via a different comparison to the soft-floor model, directly in the spin space. This comparison shows that the complicated interactions through finite bubbles between the interface $\cI_{\Blue}$ and the hard floor do not induce any additional repulsion beyond that visible in the soft-floor model. In the Ising case, this follows from a sequence of delicate 
monotonicity relations between the soft-floor and hard-floor interfaces. In the Potts case, we first build on~\cite{ChLu24} to get sharp large deviation bounds on maximal oscillations of a rigid Potts interface, and get the $h_n^*$ upper bound in the soft-floor model. Notably, we then replace the Ising model's monotonicity step with a monotonicity step in the coupled \emph{fuzzy Potts model} where all non-$\Blue$ vertices are relabeled as $\Fuzzy$. Since the interface $\cI_{\Blue}$ is measurable with respect to the coupled fuzzy Potts model and monotone in the set of $\Blue$ vertices, this allows for an application of the FKG inequalities developed for the fuzzy Potts model in~\cite{Chayes-fuzzy-Potts}. The comparison between the soft and hard floor measures is in \cref{subsec:log-upper-bound} and the proof of the sharp upper bound is in \cref{sec:sharp-upper}. 

The above arguments would establish Theorem~\ref{thm:main} with $h_n^*$ defined less explicitly in terms of a rate $\alpha_h$~\eqref{eq:alpha-h} for a certain \emph{pillar} of the Dobrushin interface to exceed some height, in place of $\xi_h$. 
\cref{sec:identification-of-rates} establishes equivalence between $\alpha_h$ and the finite point-to-plane connectivity rate of $\xi_h$, and furthermore shows that the latter is additive so that $h_n^*$ has the nice characterization of~\eqref{eq:h_n^*} with the limiting $\xi$. This goes via the random-cluster representation of the interface and certain swapping operations to show the equivalences of rates on the appropriate high probability events for the connections. As mentioned, this equivalence has the added benefit of refining the previous tightness results of the no-floor maximum oscillation $M_n$ of $\cI_{\Blue}$ from~\cite{ChLu24} to become tightness around the explicit value $2\lfloor \xi^{-1} \log n\rfloor = 2h_n^*$.

\section{Logarithmic upper and lower bounds}\label{sec:logarithmic-lower-upper}

The main goal of this section is to establish the lower bound of $(1-\epsilon_\beta)h_n^*$ for the typical height of the plaquettes of $\cI_\Blue$. As mentioned in Section~\ref{sec:related-work}, we approach this via a coupled random-cluster model where the bottom boundary face of $\Lambda_n$ is conditioned not to be connected to the five other boundary faces, inducing an interface $\cI_{\Full}$ of plaquettes dual to closed edges separating the two components. (By a simple energy-entropy tradeoff, when $\beta$ is large, the number of plaquettes in interfaces will not exceed $(1+\epsilon_\beta) n^2$, and so most of the plaquettes of $\cI_{\Blue}$ coincide with the plaquettes of $\cI_{\Top}$ (which bounds the open connected component of the five $\Red$ boundary faces). Thus, lower and upper bounds on the typical height of one of these interfaces also bound the other.) 

Since this section contains many of the new ideas in the paper to control the interactions of interfaces with the hard floor in the two possible directions (attractive and repulsive), we sketch the logic of the argument below.
\begin{enumerate}
    \item \emph{Control of attractive interaction with hard floor via $\cI_{\Top}$.} When seeking a lower bound on the interface height, we are concerned about potentially attractive interactions between the interface and the hard floor. In the random-cluster world, these are expressed in terms of differences in probabilities, under a soft-floor interface, of the bottom face boundary edges all being open (to force a hard floor): see \cref{lem:compare-fl-dob-integral}. If we \emph{only} reveal the $\cI_{\Top}$ portion of the random-cluster interface, it induces free boundary conditions on the vertices immediately below it. Pushing these free boundary conditions up, as would happen under a map that lifts $\cI_{\Top}$, heuristically moves the free part of the boundary away from height zero, increasing the marginals at height zero, and hence interactions with the boundary only favor the higher realization of $\cI_{\Top}$, at which point the new entropy we can inject via downwards $\cI_{\Top}$ spikes beats the energy cost paid for lifting the interface up to $(1-\epsilon) h_n^*$.   
    \item \emph{Necessity for a crude upper bound.} The above argument, however, has problems due to the potential overhangs in $\cI_{\Top}$ resulting in its vertical shift not being entirely above it. Our solution is to lift $\cI_{\Top}$ by an exaggerated amount, more than its maximum height oscillation, and as long as this is $n^{o(1)}$, the extra cost this induces only changes the $\epsilon_\beta$ in the $(1-\epsilon_\beta) h_n^*$ we can ensure the interface reaches up to. 
    \item \emph{Control on repulsive interactions with hard floor via fuzzy Potts.} Step (2) necessitates some non-trivial upper bound on the maximum height of the interface in the hard-floor setting. In \cref{prop:Potts-monotonicity}, we use a delicate revealing and FKG argument with the fuzzy Potts representation of the model to show a monotonicity in the opposite direction of step (1) for the $\cI_{\Blue}$ interface: the interactions of $\cI_{\Blue}$ with the hard floor are only more attractive than in the soft-floor case---actually with boundary conditions flipping color at height $\sfh=C \log n$ instead of $0$. In this latter setting, an $O(\log n)$ upper bound is easy to deliver. This same monotonicity is used in Section~\ref{sec:sharp-upper} to give the sharp $h_n^*$ upper bound on the height of $\cI_{\Blue}$.   
\end{enumerate}
These results are presented in a different order in the section than the above logic, starting with preliminaries introducing the coupled random-cluster, Potts, and fuzzy Potts interfaces in Section~\ref{subsec:random-cluster-fuzzy-Potts}, then the monotonicity relation of item (3) is established in Section~\ref{subsec:log-upper-bound}, and finally items (1)--(2) to give the logarithmic lower bound using maps on $\cI_{\Top}$ are developed in Sections~\ref{subsec:Itop-monotonicity}--\ref{subsec:log-lower-bound}.

\subsection{Random-cluster, FK--Potts and fuzzy Potts models}\label{subsec:random-cluster-fuzzy-Potts}
Let $G=(V,E)$ be a finite graph. The \emph{random-cluster} model on $G$, also known as the \emph{Fortuin--Kasteleyn} (FK) model, is a probability distribution over subsets $\omega$ of the edge set $E$, with parameters $0<p<1$ and $q>0$, given by
\begin{equation}\label{eq:fk-measure}\pi_{G}(\omega) \propto \Big(\frac{p}{1-p}\Big)^{|\omega|} q^{\kappa(\omega)}\,,\end{equation}
where $\kappa(\omega)$ is the number of connected components of the graph $(V, \omega)$. One views $\omega$ as a Boolean function on $E$, assigning $e\in E$ the value $\omega_e=1$ if it is present (``\emph{open}'') and $\omega_e=0$ if it is missing (``\emph{closed}''); we refer to the $\kappa(\omega)$ connected components of the graph of open edges as \emph{open clusters}. Every edge is dual to the plaquette it is normal to and bisected by, and we say that plaquette is dual-to-open or dual-to-closed if said edge is open or closed, respectively. 

Consider the $q$-state Potts model on a finite graph $G$ at inverse-temperature $\beta$, and the FK model on the same graph with $p = 1 - e^{-\beta}$ and the same (integer) $q$. These two models can be coupled via the FK--Potts coupling, given by the following joint measure on edge subsets $\omega\subset  E$ and vertex colorings $\sigma:V\to\{1,\ldots,q\}$:
\begin{equation*}
    \bP_G(\omega,\sigma) \propto \Big(\frac{p}{1-p}\Big)^{|\omega|}
    \prod_{(u,v)\in \omega}\one_{\{\sigma_u = \sigma_v\}}\,.
\end{equation*}
We will refer to the joint distribution $\bP_G$ as the FK--Potts law.
In order to sample $(\omega,\sigma)\sim\bP_G$ conditional on a given edge configuration $\omega\sim\pi_G$, one  independently and uniformly at random draws a random color $\sigma_{\cC}$ for each open cluster $\cC$ of $\omega$, and assigns every $v\in \cC$ the color $\sigma_{\cC}$. For the converse, to sample $(\omega,\sigma)\sim\bP_G$ conditional on a given coloring $\sigma\sim\mu_G$, one closes all edges, then independently samples every monochromatic edge as a Bernoulli($p$) random variable. We will often use $\Blue$ and $\Red$ for the colors $1$ and $2$, respectively, as well as $\noB$ (resp., $\noR$) to denote all colors other than $\Blue$ (resp., $\Red$).

The $q$-state \emph{fuzzy Potts} model\footnote{The definition here is the one from \cite{Chayes-fuzzy-Potts}, and the special case of the $(1,q-1)$-state fuzzy Potts model from \cite{Haggstrom-Fuzzy-Potts}.}  is the pushforward measure 
\[ \phi_G = \mu_G \circ f_{\textsf{bf}}^{-1}\]
for $f_{\textsf{bf}}:\{1,\ldots,q\}^V\to\{\Blue,\Fuzzy\}^V$ that replaces every color other than $\Blue$ by the same $\Fuzzy$ color.
Equivalently, one can sample $\sigma\sim\phi_G$ by first sampling $\omega\sim\pi_G$,  then coloring the vertices of each open cluster of $\omega$ independently, in $\Blue$ with probability $1/q$ or in the color $\Fuzzy$ otherwise.
It was shown in~\cite{Chayes-fuzzy-Potts} (see also~\cite{Haggstrom-Fuzzy-Potts}) that this model satisfies the FKG lattice condition. In particular, for any arbitrary fixed boundary condition $\eta$ (taking values in $\Blue,\Fuzzy$), the fuzzy Potts model~$\phi_G^\eta$ (that is, $\phi_G$ conditional on the boundary vertices of $G$ agreeing with the assignment $\eta$) has for any pair of events $A,B$ that are increasing in the set of $\Blue$ vertices, that 
\begin{align}\label{eq:fuzzy-Potts-FKG}
    \phi_G^\eta(A \cap B) \ge \phi_G^\eta(A) \phi_G^\eta(B)\,.
\end{align}

In the context of \cref{thm:main}, it will be useful to view the $n\times n \times m$ box $\Lambda_{n,m}$ as a subgraph of $(\Z+\frac12)^3$, so that the Potts model assigns colors to the midpoints of unit cubes in $\Z^3$, whence the interface between, say, $\Blue$ and $\noB$ vertices, is a set of plaquettes whose corners lie in $\Z^3$:
\begin{align}\label{eq:floor-domain}
    \Lambda_{n,m} = \big( [-\tfrac{n}{2},\tfrac{n}{2}]^2 \times [0,m]\big) \cap (\Z+\tfrac12)^3\,. 
\end{align}

To impose a boundary condition $\eta$ on $\Lambda_{n,m}$, one adds to the graph its outer vertex boundary in $(\Z+\frac12)^3$ to get the outer vertex boundary set $\partial \Lambda_{n,m} = \{v\in (\Z + \frac12)^3 \setminus \Lambda_{n,m}: v\sim \Lambda_{n,m}\}$, and forces those vertices to take certain colors $\eta$, e.g., $\Red$ on boundary vertices in the upper half-space and $\Blue$ on the lower half-space to obtain~$\mu_{\Lambda_{n,m}}^\fl$. We write $\mu_n^\fl=\mu_{\Lambda_{n,n}}^\fl$ for brevity, denoting the analogous fuzzy Potts model by $\phi_n^\fl$. When a boundary condition $\eta$ is understood from context, e.g., when considering $\mu_n^\fl$, we use $\partial_\Red \Lambda_{n,m}$ (respectively, $\partial_{\Blue} \Lambda_{n,m}$) to denote the subsets of $\partial \Lambda_{n,m}$ getting color $\Red$ (resp., $\Blue$) under $\eta$. 

The FK model corresponding to $\mu_{\Lambda_{n,m}}^\fl$, under the FK--Potts coupling, would have us sample $\pi_{\Lambda_{n,m}}^{\mathsf{w},\mathsf{w}}$ with boundary conditions that are \emph{wired} on $\partial_\Red \Lambda_{n,m}$ (all vertices of $\partial_\Red \Lambda_{n,m}$ are viewed as being in the same connected component when counting $\kappa(\omega)$ in~\eqref{eq:fk-measure}) and wired on $\partial_{\Blue}\Lambda_{n,m}$, conditional on these not being in the same connected component of $\omega$, i.e.,  \begin{align}\label{eq:disconnection-event}
\sep_{n,m}:= \{\omega: \partial_{\Red} \Lambda_{n,m} \nleftrightarrow \partial_\Blue \Lambda_{n,m} \text{ in $\omega$}\}
\end{align} 
One then recovers $\sigma\sim\mu_{\Lambda_{n,m}}^\fl$ from $\omega\sim \pi^{\mathsf{w},\mathsf{w}}_{\Lambda_{n,m}}(\cdot\mid\sep_{n,m})$ via giving the cluster of $\partial_\Blue \Lambda_{n,m}$ the color $\Blue$, giving the cluster of $\partial_\Red \Lambda_{n,m}$ the color $\Red$, and then coloring every remaining open cluster of $\omega$ uniformly and independently (cf., e.g., \cite[\S2.2]{GheissariLubetzky18},\cite[Fact~3.4 and Cor.~3.5]{LubetzkySly12} where this coupling with disconnection events was previously used).
We will use the abbreviated notation $\bar \pi_n^\fl := \pi_{\Lambda_{n,n}}^{\mathsf{w},\mathsf{w}}(\cdot\mid\sep_{n,n})$, and $\bP_n^\fl$ for the joint FK--Potts distribution whose marginals are $\mu_n^\fl$ and $\bar\pi_n^\fl$.

We will also need to consider the Potts model on the enlarged $n\times n \times 2m$ box 
\begin{align}\label{eq:dob-domain}
    \Lambda'_{n,m} = \big( [-\tfrac{n}{2},\tfrac{n}{2}]^2 \times [-m,m]\big) \cap (\Z+\tfrac12)^3\,,
\end{align}
with boundary conditions that, for an integer $\sfh$, are $\Red$ at height at least $\sfh$ and $\Blue$ at height at most $\sfh$ (recall our sites are at half-integer heights). We will denote this distribution by $\mu_{\Lambda'_{n,m}}^\sfh$, and write $\mu_n^\sfh = \mu_{\Lambda'_{n,n}}^\sfh$ for brevity. Similarly, we denote by $\bar\pi_{\Lambda'_{n,m}}^\sfh$ the FK model on $\Lambda_{n,m}'$ with two wired boundary components, above and below height $\sfh$, conditioned on no path connecting them in~$\omega$, and put $\bar\pi_n^\sfh = \bar\pi_{\Lambda'_{n,n}}^\sfh$; as before, the FK--Potts measure with marginals $(\mu_n^\sfh,\bar\pi_n^\sfh)$ is denoted by $\bP_n^\sfh$. 
In the special case $\sfh=0$---the setup in Dobrushin's pioneering works---we may also write $\mu_n^\dob:=\mu_n^0$. In this context, $\partial_\Red \Lambda_{n,m}'$ and $\partial_\Blue \Lambda_{n,m}'$ refer to the portions of the outer vertex boundary of $\Lambda_{n,m}'$ above and below height $\sfh$ respectively. 

\begin{figure}
\vspace{-0.1in}
    \begin{tikzpicture}
   \node (fig1) at (0.5,0) {
    	\includegraphics[width=0.65\textwidth]{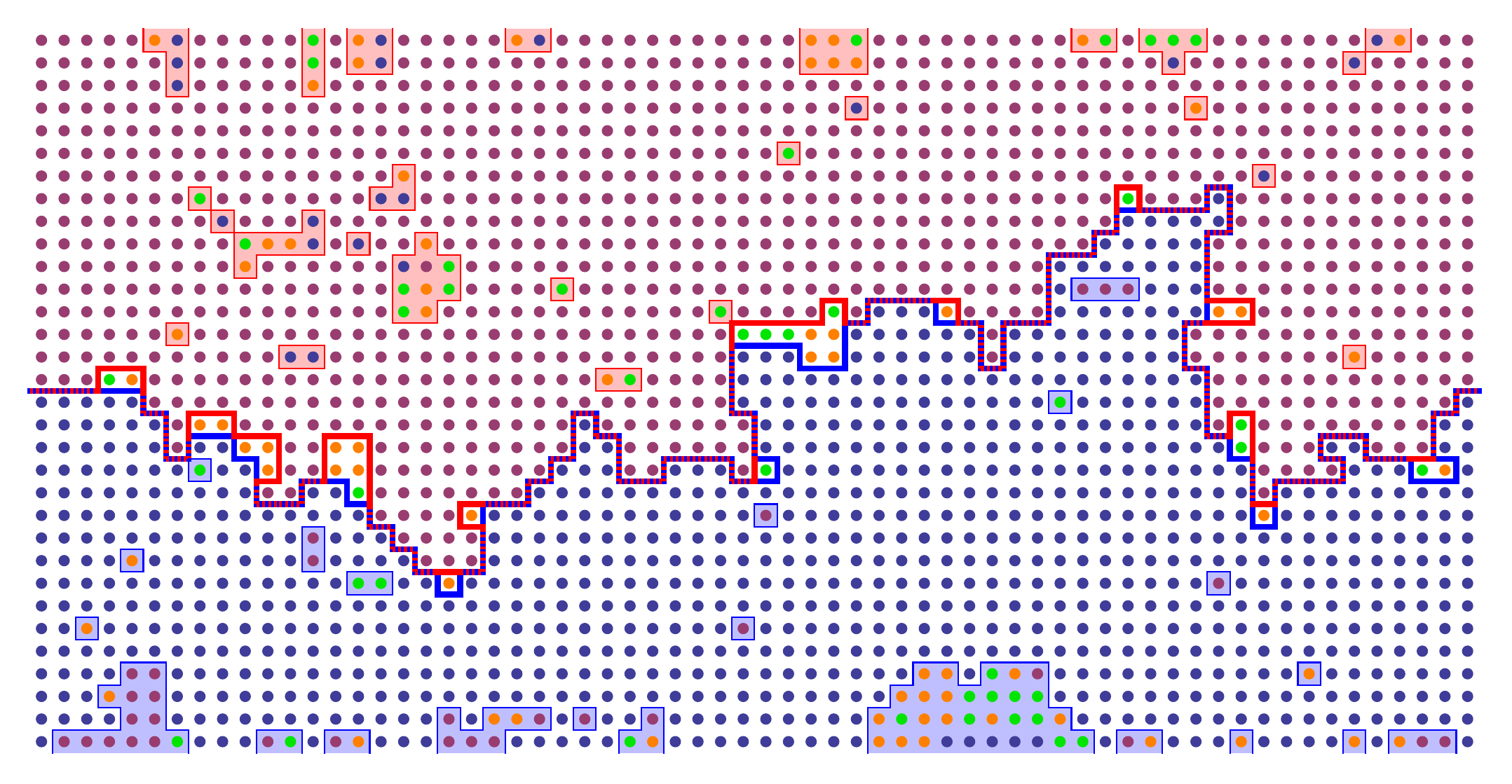}};
   \node (fig2) at (8.6,-1.35) {
   	\includegraphics[width=0.3\textwidth]{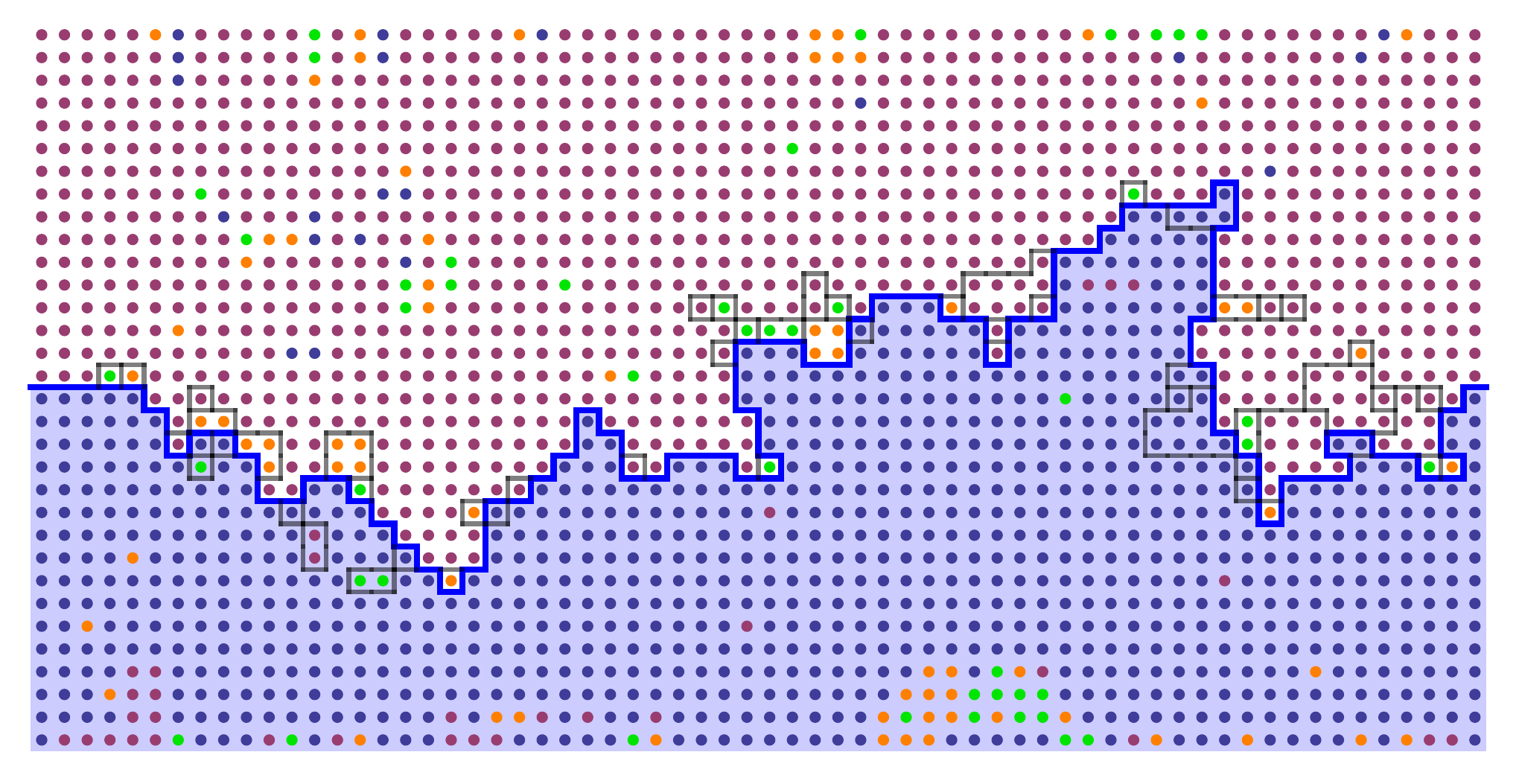}};
    \node (fig3) at (8.6,1.35) {
   	\includegraphics[width=0.3\textwidth]{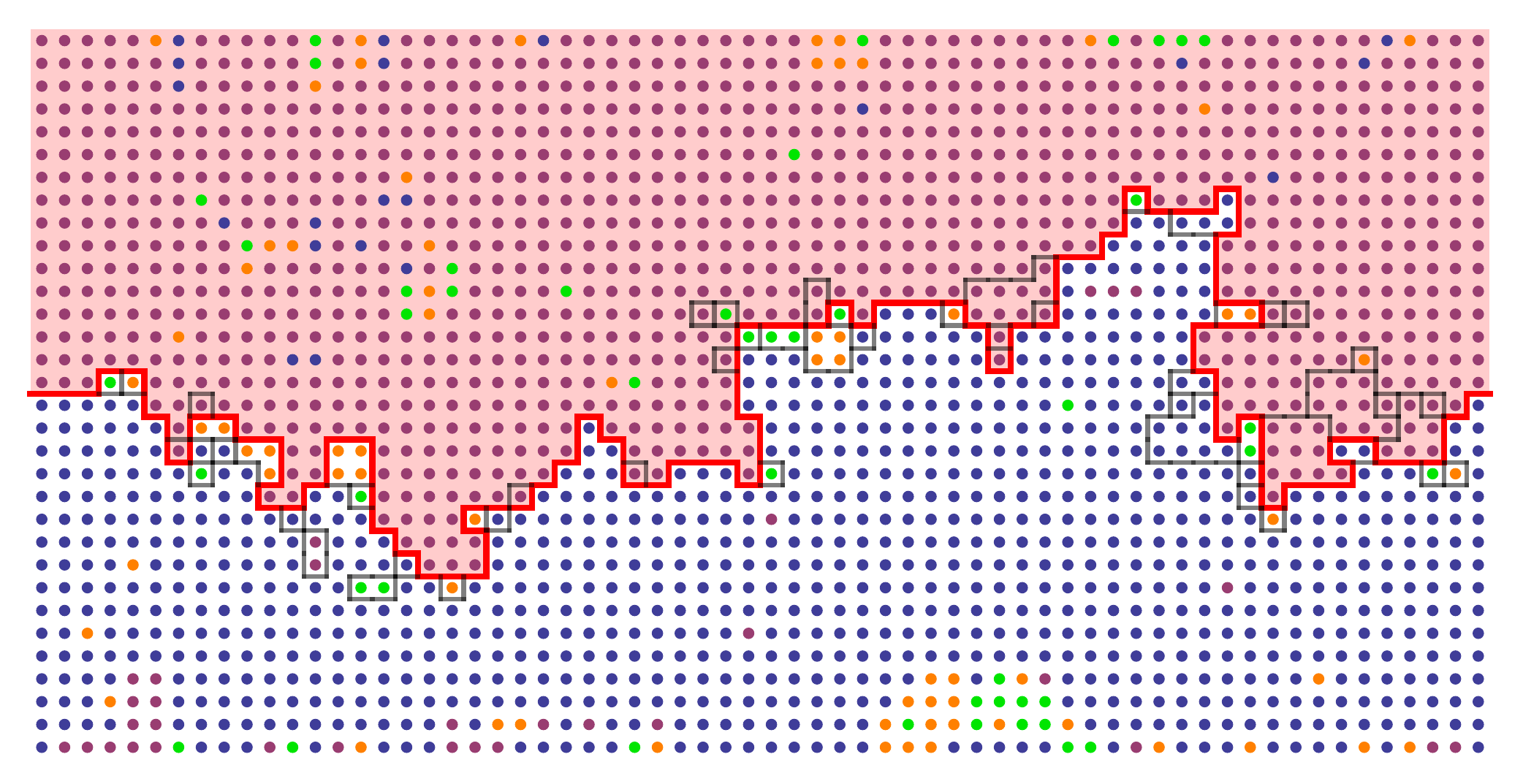}};
    \end{tikzpicture}
    \caption{A Potts configuration (left) and its two Potts interfaces $\cI_\Blue$ and $\cI_\Red$ (right).}
    \label{fig:inter-redblue}
\end{figure}

\begin{figure}
\vspace{-0.1in}
    \begin{tikzpicture}
   \node (fig1) at (0.5,0) {
    	\includegraphics[width=0.65\textwidth]{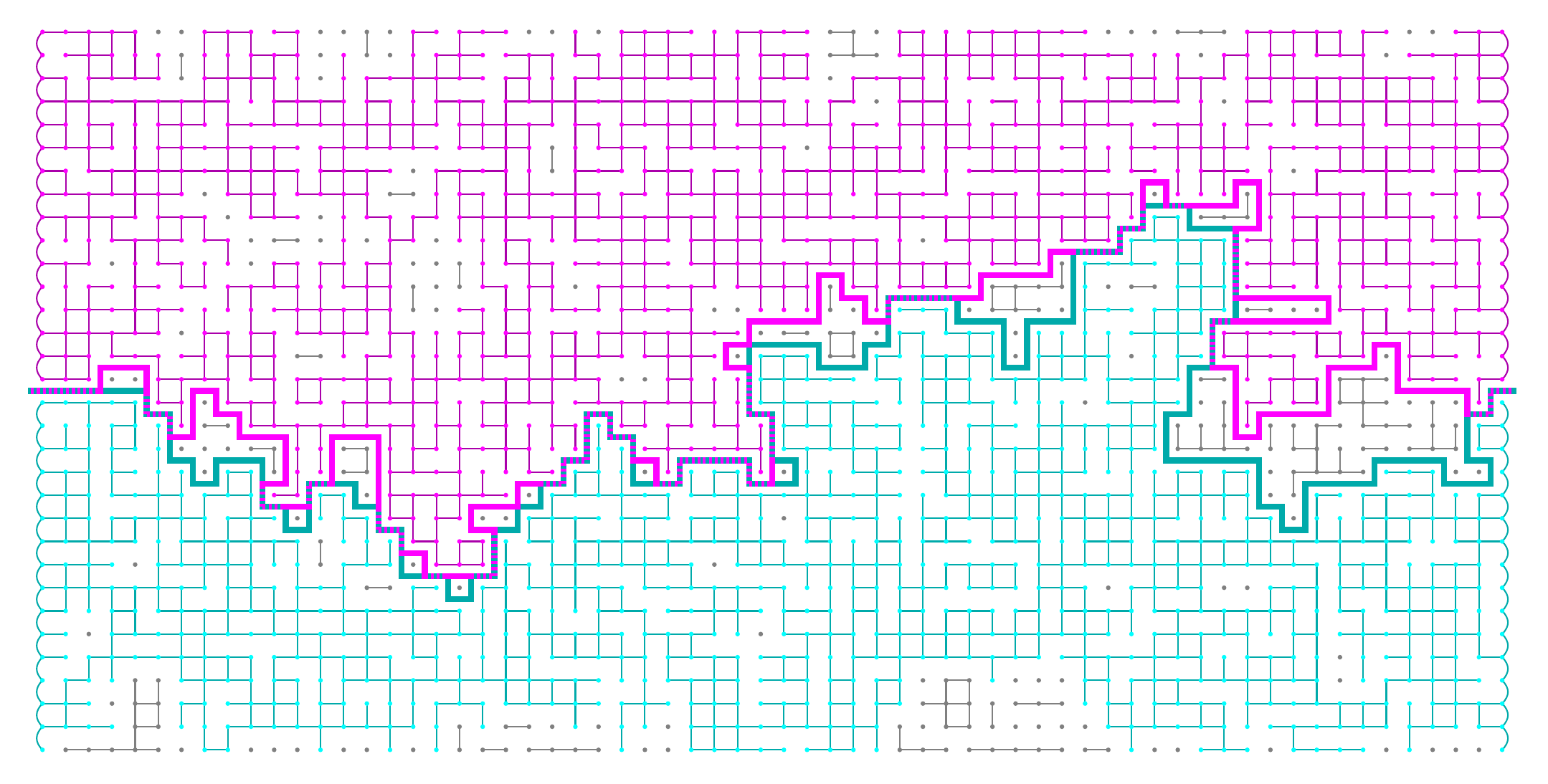}};
   \node (fig2) at (8.6,-1.35) {
   	\includegraphics[width=0.3\textwidth]{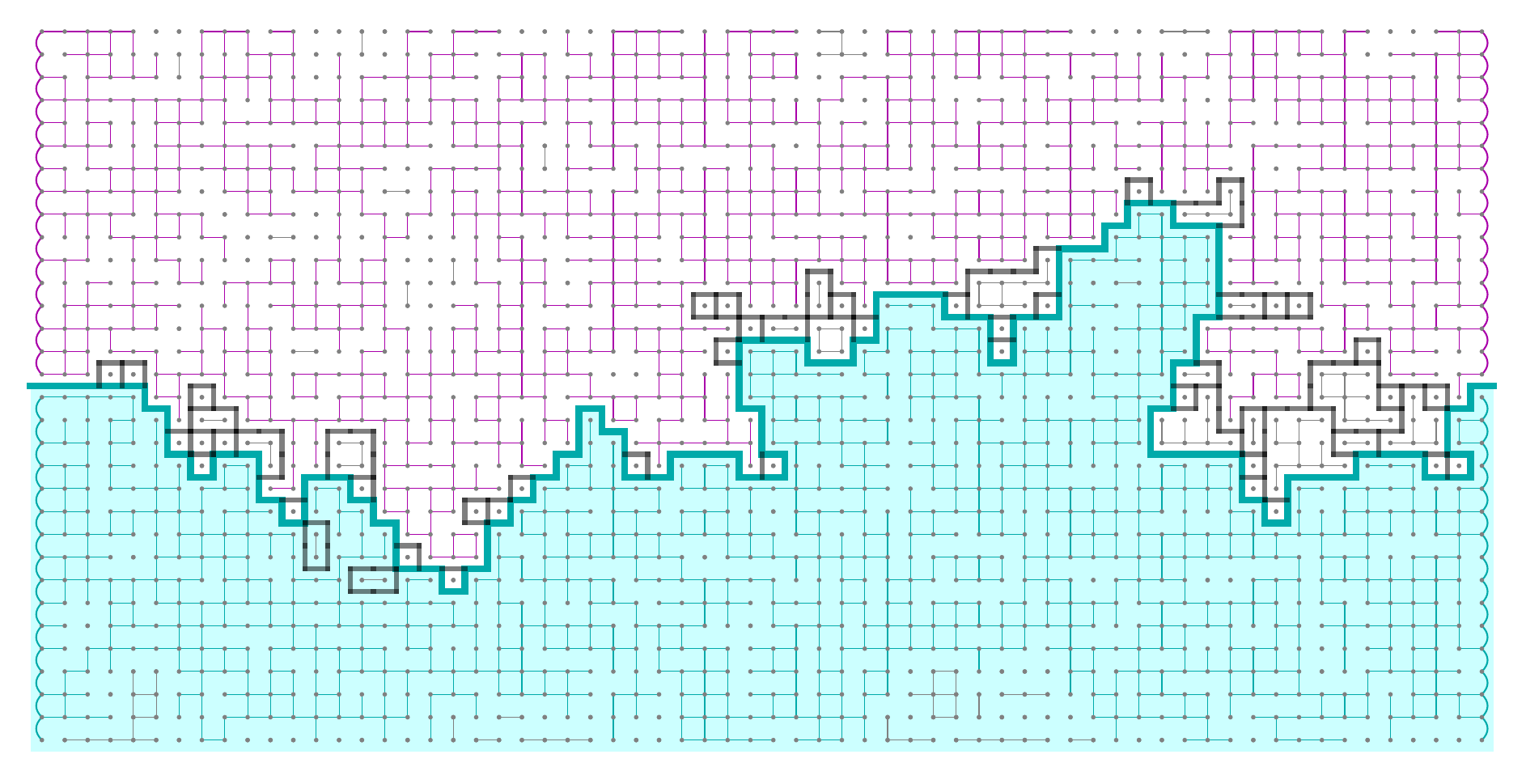}};
    \node (fig3) at (8.6,1.35) {
   	\includegraphics[width=0.3\textwidth]{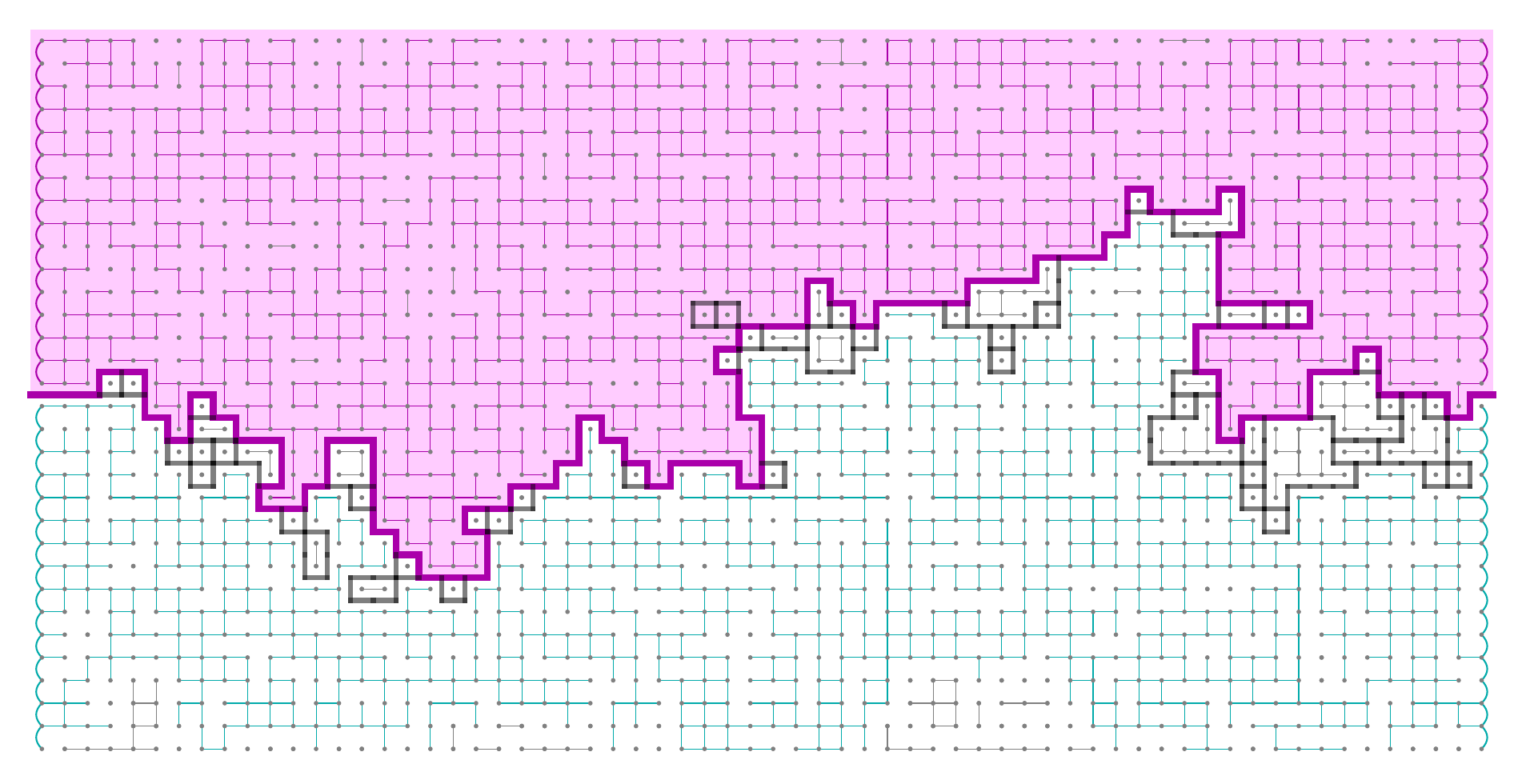}};
    \end{tikzpicture}
    \caption{A random-cluster configuration (left), and its interfaces $\cI_\Bot$ and $\cI_\Top$ (right).}
    \label{fig:inter-bottop}
\end{figure}

\begin{figure}
\vspace{-0.1in}
\includegraphics[width=0.75\textwidth]{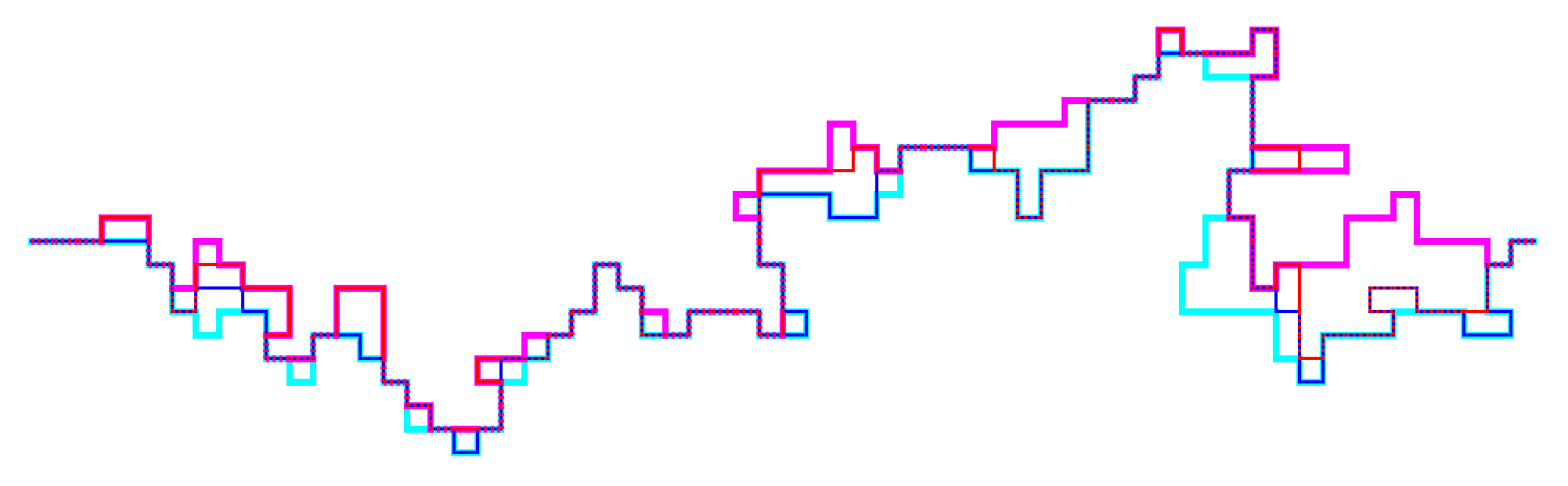}
    \caption{The four coupled interfaces, $\cI_{\Top}$ (pink highlighted), $\cI_{\Red}$ (red line), $\cI_{\Blue}$ (blue line) and $\cI_{\Bot}$ (cyan highlighted), showing the ordering $\cI_{\Top} \succeq \cI_{\Red}\succeq \cI_{\Blue} \succeq \cI_{\Bot}$.}
    \label{fig:four-inter}
\end{figure}
Under each of the boundary conditions specified above, we can define two Potts interfaces $\cI_\Blue$ and $\cI_\Red$, and two coupled random-cluster interfaces, $\Ibot$ and $\Itop$, defined as follows:
\begin{definition}[FK and Potts interfaces]\label{def:FK-Potts-interfaces}
Let $(\sigma,\omega)\sim\bP_n^\fl$ or $(\sigma,\omega)\sim\bP_n^\sfh$. The below definition applies to both, with the sets ``$\Blue$ boundary vertices", and ``$\Red$ boundary vertices" interpreted according to the setting. \begin{enumerate}
    \item\label{it:V-blue-red} Let $\Vblue$ be the connected component (via adjacency in $(\Z+\frac12)^3$) of $\Blue$ vertices in~$\sigma$ incident to the $\Blue$ boundary vertices; augment it to $\Ablue$ by adding all connected components of $(\Z+\frac12)^3 \setminus \Vblue$ whose outer vertex boundary is a subset of $\Vblue$. (This guarantees that all bubbles in the $\Blue$ phase are in $\Ablue$.)
    Define $\Vred$, $\Ared$ analogously. (See \cref{fig:inter-redblue}.)
    \item \label{it:V-top-bot} Let $\Vtop$ denote the vertices of the open cluster of $\omega$ incident to the $\Red$ boundary vertices; augment it to $\Atop$ by adding all connected components of $(\Z+\frac12)^3\setminus \Vtop$ whose outer vertex boundary is a subset of $\Vtop$. 
    Define $\Vbot$, $\Abot$ analogously. (See \cref{fig:inter-bottop}.)
\end{enumerate}
The interface $\cI_\Blue$ is the set of plaquettes dual to (closed) edges $(x,y)$ of $(\Z+\frac12)^3$ with $x\in\Ablue$ and $y\in\Ablue^c$. 
Analogously, define $\cI_\Red$, $\Ibot$ and $\Itop$ w.r.t.\ $\Ared$, $\Abot$ and $\Atop$, respectively.

By the FK--Potts coupling, under either set of boundary conditions $(\sigma,\omega)\sim\bP_n^\fl$ or $(\sigma,\omega)\sim\bP_n^\sfh$, one has $\Atop\subset  \Ared \subset  \Ablue^c$ and $\Abot \subset  \Ablue(\subset  \Ared^c)$. Then the four interfaces are ordered (see \cref{fig:four-inter}) via the  partial order on subsets of the vertices\footnote{Associate $\cI_\Blue$ and $\Ibot$ with $\Ablue$ and $\Abot$, resp., and associate $\cI_\Red$ and $\Itop$ with $\Ared^c$ and  $\Atop^c$, resp.\ (i.e., in all cases, use the part incident to the \emph{bottom} face of the boundary conditions) to arrive at this partial order.
}:
\begin{equation}\label{eq:interface-order} \Itop \succeq \cI_\Red \succeq \cI_\Blue \succeq \Ibot\,.\end{equation}
\end{definition}

The four interfaces $\Itop,\cI_\Red,\cI_\Blue,\Ibot$ are all subsets of a single \emph{full} interface, denoted $\cI_\Full$, which is the object that can be studied via cluster expansion, as done by Gielis and Grimmett~\cite{GielisGrimmett02}.
\begin{definition}[Full interface]
Let $\omega\sim \bar\pi_n^\fl$ or $\omega\sim\bar\pi_n^\sfh$. The below definition applies to both, with the sets ``$\Blue$ boundary vertices", and ``$\Red$ boundary vertices" interpreted according to the setting. Consider the set $F$  of plaquettes dual to closed edges of $\omega$. 
The full interface $\cI_\Full$ is the maximal $1$-connected component of $F$ (plaquettes $f,f'$ are $1$-adjacent if they share an edge) incident to the boundary plaquettes (plaquettes dual to edges between the $\Blue$ and  $\Red$ boundary vertices).
\label{def:full-interface}
\end{definition}

N.B., $\cI_{\Full}$ is amenable to a low-temperature cluster expansion analysis, because it induces fully wired boundary conditions on both the set of sites above it, and the set of sites below it. 

\begin{proposition}[{\cite{GielisGrimmett02,Grimmett_RC}}]
\label{prop:grimmett-cluster-exp}
Let $q\geq 1$ and $n,m\geq 1$. There exist $p_0<1$ so the following holds for all $p \geq p_0$.
Denoting by $|I|$ the number of plaquettes in $I$, and by $\kappa(I)$ the number of open clusters in the configuration where the only closed edges are those dual to plaquettes in $I$, we have
\begin{equation}\label{eq:CE}
    \bar{\pi}^\dob_{\Lambda'_{n,m}}(\cI_\Full=I) = \frac{1}{Z_{n,m,p,q}}p^{|\partial I|}(1-p)^{| I|}q^{\kappa(I)}\exp\Big(\sum_{f\in \cI} \g(f, I)\Big)\,,
\end{equation}
where $\partial I$ is the set of plaquettes that are $1$-connected to $I$ but not in $I$, and $\g$ is some function satisfying the following: there exist absolute constants $c, K > 0$ such that
\begin{align}\label{eq:g-bound-1-face}
    |\g(f, I)| &\leq K\,, \\
\label{eq:g-bound-2-faces}
    |\g(f, I) - \g(f', I')| &\leq Ke^{-cr(f, I; f', I')}\,,
\end{align}
for all $f, I, f', I'$, where 
\[ r(f, I; f', I') =  \sup \{r: I \cap B_r(f) \cap \Lambda'_{n,m}\equiv I' \cap B_r(f')\cap \Lambda'_{n,m}\}\,,\] i.e., $r(f, I; f', \cI')$ is the largest radius such that the interfaces $I, I'$ agree on the balls of this radius around $f, f'$ (the intersections with a ball of this radius are translates of one another). 
\end{proposition}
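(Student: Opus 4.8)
The plan is to carry out the classical large-$p$ (low-temperature) polymer expansion for the order--order interface of the random-cluster model, rearranged so that whatever $I$-dependence is not carried by the explicit factor $p^{|\partial I|}(1-p)^{|I|}q^{\kappa(I)}$ becomes localized along $I$. It is convenient to write the weight as $p^{|\omega|}(1-p)^{|E\setminus\omega|}q^{\kappa(\omega)}$, a rescaling of~\eqref{eq:fk-measure}, so that the ground state of any connected region is ``all edges open''. Let $\pi^{\mathsf w,\mathsf w}$ denote the random-cluster measure on $\Lambda'_{n,m}$ with the two Dobrushin boundary pieces wired. For every admissible interface $I$ (one that separates the two wired pieces --- the only $I$ with positive probability under $\bar\pi^\dob_{\Lambda'_{n,m}}$) one has $\{\cI_\Full=I\}\subseteq\sep_{n,m}$, hence $\bar\pi^\dob_{\Lambda'_{n,m}}(\cI_\Full=I)=\pi^{\mathsf w,\mathsf w}(\cI_\Full=I)/\pi^{\mathsf w,\mathsf w}(\sep_{n,m})$ with an $I$-independent denominator. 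By Definition~\ref{def:full-interface}, $\{\cI_\Full=I\}$ forces every edge dual to a plaquette of $I$ closed, every edge dual to a plaquette of $\partial I$ open, and leaves the remaining edges free; the free edges split into the region above $I$, the region below $I$, and the finitely many cavities enclosed by $I$, pairwise disconnected in $\omega$ because the plaquettes of $I$ between them are closed. Therefore $q^{\kappa(\omega)}$ factorizes over these pieces and $\pi^{\mathsf w,\mathsf w}(\cI_\Full=I)\propto (1-p)^{|I|}\,p^{|\partial I|}\prod_R\widehat Z_R(I)$, where $R$ runs over the pieces and $\widehat Z_R(I)$ is the random-cluster partition function on $R$ with the boundary condition induced by the wirings and by the (open) plaquettes of $\partial I$.

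Next I would run the cluster expansion in each piece. Normalizing $\widehat Z_R(I)$ by its all-open ground state (whose weight is $p^{|E_R|}q$) turns it into a polymer partition function $\Xi_R(I)$ over families of pairwise non-adjacent polymers $\gamma$ (connected sets of closed edges), with weight $w(\gamma)$ carrying $(\tfrac{1-p}{p})^{|\gamma|}$ and the $q$-power for the extra components $\gamma$ creates; the sub-multiplicativity of $q^{\kappa}$ under splitting a closed-edge set into non-adjacent clusters yields $|w(\gamma)|\le (Cq(1-p))^{|\gamma|}$, uniformly small for $p\ge p_0$. The Koteck\'y--Preiss criterion then gives $\log\Xi_R(I)=\sum_{\mathbf C\subset R}\phi^T(\mathbf C)$, absolutely convergent over clusters $\mathbf C$ of polymers with Ursell weights $\phi^T$; this convergence is precisely what is established in~\cite{GielisGrimmett02,Grimmett_RC}. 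The product of ground-state normalizations contributes exactly $q^{\#\{\text{pieces}\}}=q^{\kappa(I)}$ together with $p^{\sum_R|E_R|}=p^{\,|E(\Lambda'_{n,m})|-|I|-|\partial I|}$.

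Finally I would localize the cluster sums onto $I$. Redistributing $\sum_{\mathbf C}\phi^T(\mathbf C)$ over edges (dividing $\phi^T(\mathbf C)$ by $|\mathbf C|$ at each edge it meets), the contribution of an edge far from the boundary of its piece is a fixed bulk constant, while the contribution of an edge near that boundary --- and the boundary of every piece lies on $\partial\Lambda'_{n,m}$ or on $I\cup\partial I$ --- differs from it by an amount decaying exponentially in the distance to the boundary. Collecting all $I$-independent contributions (the $|E(\Lambda'_{n,m})|$-proportional bulk term, the $\partial\Lambda'_{n,m}$-surface term, the factor $\pi^{\mathsf w,\mathsf w}(\sep_{n,m})$, and a per-edge bookkeeping constant that turns the leftover $p^{-|I|-|\partial I|}$ into the advertised $p^{|\partial I|}(1-p)^{|I|}$ --- legitimate since $|\partial I|\le C_0|I|$) into $Z_{n,m,p,q}$, what remains is $\exp(\sum_{f\in I}\g(f,I))$, where $\g(f,I)$ is the sum, over all clusters and boundary edges assigned to $f$ by a fixed bounded-range tie-break rule, of their surface corrections, plus the bookkeeping constant at $f$. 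Absolute convergence gives~\eqref{eq:g-bound-1-face}. For~\eqref{eq:g-bound-2-faces}: if $I$ and $I'$ coincide on the radius-$r$ balls about $f$ and $f'$ (identified by a translation), then any cluster or bookkeeping term contributing to $\g(f,I)$ but not to $\g(f',I')$ must reach outside those balls and hence have diameter at least $r$, so summing $(Cq(1-p))^{\mathrm{diam}}$ over clusters through $f$ bounds the difference by $Ke^{-cr}$ (enlarging $K$ to absorb small $r$).

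The step I expect to be the main obstacle is the last one: making the piecewise decomposition and the $q^{\kappa}$ factorization fully rigorous in the presence of arbitrarily shaped cavities and re-entrant pockets of $I$, and tracking precisely which constants are $I$-independent, so that the residual $I$-dependence is \emph{exactly} a sum of terms that are simultaneously bounded and exponentially stabilizing with respect to the local geometry of $I$. The convergence of the underlying polymer expansion (the first two steps) is classical for $p$ close to $1$ and is the content of the cited works; the point here is only to repackage it in the stated interface form with a local~$\g$.
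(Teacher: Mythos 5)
Your outline is essentially correct, but it takes a genuinely different route from the one in the cited works (and from the proof of the generalization, \cref{prop:cluster-expansion}, given in \cref{sec:appendix}). You run the classical Kotecк\'y--Preiss polymer expansion: normalize each piece of $\Lambda'_{n,m}\setminus I$ by its all-open ground state, treat $1$-connected dual-to-closed components as polymers with small weights, and localize the convergent cluster sums onto $I$. The paper (following \cite{GielisGrimmett02,Grimmett_RC}, see \cref{lem:free-energy-expansion,lem:f-term-decay}) instead starts from the same ratio-of-partition-functions identity $\pi(\cI_\Full=I)\propto (1-p)^{|I|}p^{|\partial I|}q^{\kappa(I)}\,Z^{\mathsf w}(\cU\setminus\overline I)/Z^{\mathsf w}(\cU)$ but then expands each $\log Z$ \emph{exactly} as $\log q+\sum_e \f(e)$ with $\f(e)=\int_p^1\frac{s-\pi_s(\omega_e=1)}{s(1-s)}\,\d s$, defines $\g(f,I)$ by grouping the differences $\f^{\mathsf w}_{\cU\setminus\overline I}(e)-\f^{\mathsf w}_{\cU}(e)$ according to the nearest plaquette of $I$, and obtains \cref{eq:g-bound-2-faces} from a disagreement-percolation estimate on how edge marginals depend on distant modifications of the domain. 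The trade-off is real: the free-energy route needs no convergence criterion and no factorization of weights --- it converts the combinatorics into a single probabilistic mixing estimate --- whereas your route is more self-contained at the level of ``low-temperature contour models'' but forces you to confront exactly the feature that makes the random-cluster model harder than Ising, namely the nonlocal factor $q^{\kappa(\omega)}$.

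That is also where your sketch is thinnest. For the polymer representation to be a genuine polymer partition function you must check (i) that $\kappa-1$ is exactly additive over mutually non-$1$-connected closed components within a piece (true, via the fact that the plaquette boundary of any finite cluster is $1$-connected, but it needs to be stated and proved), and (ii) that the polymer weight is \emph{piece-dependent}: a small polymer $\gamma$ can cut off a region jointly with the walls of its piece (a pocket of $I$ capped by a single plaquette of $\gamma$), so $w_R(\gamma)$ depends on $I$ near $\gamma$ and the uniform bound must be argued as $q^{\kappa_R(\omega_\gamma)-1}\le q^{O(|\gamma|)}$ rather than read off from $\gamma$ alone. Neither point is fatal --- both are standard once identified, and the resulting $\g$ remains local in $I$ --- but they are precisely the steps the cited proof is engineered to avoid, and your own closing caveat correctly locates them. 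The bookkeeping of the prefactors of $p$ (absorbing $p^{-|I|-|\partial I|}$ into $\g$ using $|\partial I|\le C|I|$ and locality of $|\partial I\cap B_2(f)|$) is legitimate as you describe.
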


\begin{remark}\label{rem:diff-domains}
The above result is stated in 
\cite[Lem.~9]{GielisGrimmett02},\cite[Lem~7.118]{Grimmett_RC}
for the limit as $m\to\infty$, but its analog was
first established for $\bar \pi^{\sfh}_{\Lambda'_{n,m}}(\cI_\Full=I)$ in \cite[Lem.~7]{GielisGrimmett02},\cite[Lem~7.104]{Grimmett_RC} for all $n,m\geq 1$.
The proof further applies to any box $\Lambda = ([-\frac n2,\frac n2]^2\times [m_1,m_2])\cap(\Z+\frac12)^3$ for $m_1 \leq 0$ and $m_2\geq 1$, with the caveat that the bound on $\g$ in terms of $r(f,I;f',I')$ (\cref{eq:g-bound-2-faces}) might render \cref{eq:CE} useless if $m_1$ or $m_2$ were taken too close to $0$ (as $r$ will be capped at the distance to the boundary).
\end{remark}

We note in passing that, in \cref{sec:identification-of-rates}, devoted to studying the limiting large deviation rates of local maxima in the interfaces, we will need to appeal to cluster expansion expressions for more general $1$-connected sets of dual-to-closed plaquettes. See \cref{prop:cluster-expansion} and its proof.

Recall from the introduction that if $f$ is a plaquette in $(\Z+\frac12)^3$---so its midpoint $x=(x_1,x_2,x_3)$ has two half-integer coordinates and one integer coordinate---we write $\hgt(f) := x_3$. In what follows, we address the maximal (minimal) height achieved by an interface $I$ above a given $(x_1,x_2)$,
\[ \overline\hgt_{(x_1,x_2)}(I) := \max\{x_3\,:\;(x_1,x_2,x_3) \in I\}\quad\mbox{and}\quad
\underline\hgt_{(x_1,x_2)}(I) := \min\{x_3\,:\;(x_1,x_2,x_3) \in I\}\] 
(where one views $I$ as a subset of $\R^3$, the union of its plaquettes). 

\begin{theorem}[{\cite[Thm.~3]{GielisGrimmett02},\cite[Thm.~7.144]{Grimmett_RC}}]\label{thm:gg-interface-height}
Let $q\geq 1$, $n \geq 1$ and $m\geq n/2$.
There exist $p_0<1$ such that, for every $p>p_0$ there is some $a=a_p>0$ so the following holds,
\[ \bar\pi^0_{\Lambda'_{n,m}}(\overline\hgt_x(\cI_\Full)\geq k) \leq \exp(-a_p k)\quad\mbox{for every $x=(x_1,x_2)\in(\Z+\frac12)^2$ and $k=k(n)\geq 1$}\,,\]
and, by symmetry, the same holds for the event $\{\underline\hgt_x(\cI_\Full) \leq -k\}$.
\end{theorem}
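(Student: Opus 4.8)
The plan is to deduce this from the cluster-expansion representation in \cref{prop:grimmett-cluster-exp} by a Peierls-type (energy--entropy) argument organized around Dobrushin's wall decomposition of $\cI_\Full$ --- this is in essence the content of \cite[Thm.~3]{GielisGrimmett02} and \cite[Thm.~7.144]{Grimmett_RC}, which one could also simply cite. Fix $p$ close enough to $1$. Let $I_0$ be the flat interface at height $0$, the unique area-minimiser under $\bar\pi^0_{\Lambda'_{n,m}}$, and for an interface $I$ put $\Delta(I)=|I\triangle I_0|$. Comparing the weights \cref{eq:CE} of $I$ and $I_0$ and using \cref{eq:g-bound-1-face}--\cref{eq:g-bound-2-faces}, that $q^{\kappa(I)-\kappa(I_0)}\le e^{O(\Delta(I))}$ (since $q\ge1$ and $\kappa(I)-\kappa(I_0)=O(\Delta(I))$), and the isoperimetric fact that $|I|-|I_0|$ and $|\partial I|-|\partial I_0|$ are each of order $\Delta(I)$, one obtains
\[
\frac{\bar\pi^0_{\Lambda'_{n,m}}(\cI_\Full=I)}{\bar\pi^0_{\Lambda'_{n,m}}(\cI_\Full=I_0)}\le\exp\big(-(\lambda_p-C)\,\Delta(I)\big),
\]
with $\lambda_p=\log\tfrac1{1-p}\to\infty$ as $p\to1$ and $C$ absolute. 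Dobrushin's wall representation writes $I$ as $I_0$ with a family of pairwise-compatible ``walls'' $\{W_i\}$ inserted (maximal connected groups of plaquettes of $I$ not lying flat at height $0$), with $\sum_i|W_i|\asymp\Delta(I)$; the displayed bound (essentially) factorises over walls, so that summing over interfaces containing a prescribed admissible wall $W$ gives $\bar\pi^0_{\Lambda'_{n,m}}(W\text{ is a wall of }\cI_\Full)\le\exp(-(\lambda_p-C)|W|)$.

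The second ingredient I would use is a deterministic geometric lemma: if $\overline\hgt_x(\cI_\Full)\ge k$ then $\cI_\Full$ has a wall $W$ whose base (its horizontal projection together with the columns it encloses) contains the fixed column $x$, and $|W|\ge c_0 k$ for an absolute $c_0>0$. The point is that the plaquettes of $\cI_\Full$ above the base of the wall through $x$ form a connected set that must climb from height $0$ to height at least $k$, which costs at least of order $k$ plaquettes irrespective of overhangs, and $1$-connectedness of $\cI_\Full$ keeps this excess confined to a single wall.

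Combining the two, the argument finishes by a union bound over walls: there are at most $C_1^{\ell}$ admissible walls of size $\ell$ whose base contains a given column, so
\[
\bar\pi^0_{\Lambda'_{n,m}}\big(\overline\hgt_x(\cI_\Full)\ge k\big)\le\sum_{\ell\ge c_0 k}C_1^{\ell}\exp\big(-(\lambda_p-C)\ell\big)\le\exp(-a_p k)
\]
once $p$ is large enough that $\lambda_p-C>\log C_1$, with $a_p=c_0(\lambda_p-C-\log C_1)>0$. The bound for $\{\underline\hgt_x(\cI_\Full)\le-k\}$ follows from the reflection symmetry of $\bar\pi^0_{\Lambda'_{n,m}}$ about height $0$; the hypothesis $m\ge n/2$ (and, more generally, the box being thick enough relative to $n$ and $k$) enters only to ensure \cref{prop:grimmett-cluster-exp} applies with uniform constants over the heights involved, cf.\ \cref{rem:diff-domains}.

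The part I expect to be the real work is making the wall decomposition and the ``tall column $\Rightarrow$ large wall'' step fully precise while allowing overhangs (so $\cI_\Full$ need not be a graph over $\Z^2$), and checking that the combinatorial factor $q^{\kappa(I)}$ and the long-range corrections $\g$ that couple distinct walls across their boundaries are genuinely absorbed into $e^{O(|W|)}$ errors that do not overcome the $(1-p)^{|W|}$ gain. This is exactly the analysis performed in \cite{GielisGrimmett02} following Dobrushin, so in the paper I would invoke it rather than reprove it.
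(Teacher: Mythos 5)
Your outline correctly reproduces the classical Dobrushin/Gielis--Grimmett energy--entropy argument, and for $m=\infty$ it would indeed suffice to cite \cite{GielisGrimmett02}. But the statement here is for finite $m\geq n/2$, and the one step you dismiss in a single clause --- ``the hypothesis $m\ge n/2$ enters only to ensure \cref{prop:grimmett-cluster-exp} applies with uniform constants'' --- is precisely the content of the paper's proof and is not correct as stated. The cluster expansion of \cref{prop:grimmett-cluster-exp} is valid for any finite box, but per \cref{rem:diff-domains} the decay in \cref{eq:g-bound-2-faces} is capped at the distance to the boundary: when one deletes a wall $W$ and thereby vertically shifts the ceiling faces interior to it, each shifted face $f$ at height close to $\pm m$ contributes an $O(1)$ (not exponentially small) discrepancy $|\g(f,I)-\g(f',I')|$, and the number of such faces is a priori of order $n^2$, which is not controlled by $|W|$ or by the excess area $\fm(W)$. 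So your key estimate $\bar\pi^0_{\Lambda'_{n,m}}(W\text{ is a wall})\le\exp(-(\lambda_p-C)|W|)$ is exactly the claim that fails to follow from the cited machinery for walls whose interior ceilings approach height $\pm m$, and your union bound runs over such walls whenever $k$ is comparable to $m$.

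The paper's proof handles this by splitting into two regimes. If $\max_y\overline\hgt_y(\cI_\Full)\le m/4$, the deletion map keeps everything at distance $\ge m/2$ from the boundary and the boundary corrections total $O(n^2e^{-cm/2})$, which is negligible; the known argument then applies verbatim. The event $\max_y\overline\hgt_y(\cI_\Full)\ge m/4$ is bounded separately by an iterated map deleting nested sequences of groups of walls of total excess area $\ge m/4$, together with an isoperimetric inequality showing that the number $|A|$ of ceiling faces at height $\ge m/2$ satisfies $\fm(\mathfrak W)\ge |A|\,m/(2n)\ge |A|/4$ --- this is where $m\ge n/2$ is genuinely used --- so that the $O(1)$-per-face boundary corrections are absorbed into the energy gain $e^{-(\beta-C)\fm(\mathfrak W)}$. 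Without some version of this second regime your argument has a genuine gap for $k$ (equivalently, wall sizes) of order $m$.
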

\begin{remark}
    \label{rem:interface-height-const}
The above result was stated for the limit $m\to\infty$, and as explained in \cref{rem:diff-domains}, is further applicable to $\Lambda'_{n,m}$ with some modifications (the potential pinning effects of the hard floor and ceiling are straightforwardly shown to be negligible when they are positioned at height $\pm c n$; we give a detailed proof in \cref{sec:pf-of-gg-interface-height} for completeness).
In our context of $p=1-e^{-\beta}$, and for $\beta$ large enough the result above holds true for $a = \beta - C$ where $C>0$ is an absolute constant.
% provided we have $1\leq k(n) = o(m)$ (see \cite[Props.~2.13]{ChLu24}). As per \cref{rem:diff-domains}, it also holds in $\Lambda = ([-\frac n2,\frac n2]^2\times[m_1,m_2])\cap(\Z+\frac12)^3$ for $m_1\leq 0 < m_2$ such that $k=o(|m_1|\wedge m_2)$ (the effect of which is negligible compared to $e^{-a k}$).
\end{remark}

An important ingredient in the proofs is comparison between the hard floor setting of $\mu_n^\fl$ and the ``soft floor'' setting of the conditional distribution of $\mu_n^\sfh$ given that $\cI_\Blue$ lies above height $0$, namely
\begin{equation}
    \label{eq:mu-hat-measure}
    \musoft_n^{\sfh} := \mu_n^\sfh (\cdot \mid \cI_\Blue \subset  \cL_{\geq 0})\,, 
\end{equation}
where $\cL_{\geq 0}$ denotes the positive half-space $\R^2\times[0,\infty)$. (Throughout the paper, we use the notation $\cL_h = \R^2\times\{h\}$ for the slab at height $h$, and write $\cL_{\geq h} = \bigcup_{z\geq h} \cL_z$, and similarly for $\cL_{> h},\cL_{\leq h},\cL_{<h}$.)

\subsection*{Notational comment} Let us conclude this preliminary subsection with a few notational remarks. Throughout the paper $\beta$ will be taken to be a sufficiently large constant independent of $n$, $p$ will equal $1-e^{-\beta}$ per the FK--Potts coupling, and $n$ will always be sufficiently large. We use $\epsilon_\beta$ in different lines to represent (possibly different) small constants that go to zero as $\beta \to\infty$, and $C$ in different lines to represent (possibly different) $\beta$-independent constants.

\subsection{A logarithmic upper bound on the top interface}\label{subsec:log-upper-bound}
As explained by step (2) in the sketch at the beginning of this section, the lower bound argument requires as an input, a logarithmic upper bound on the maximal height of $\Itop$ in the measure $\bar\pi_n^\fl$. Recall that $\cL_{\ge h}$ is the slab $\mathbb R^2\times [h,\infty)$. 

\begin{theorem}
    \label{thm:weak-upper-fl-top}
We have 
    \begin{equation}
        \bar\pi_n^\fl(\Itop \cap \cL_{\geq (14/\beta)\log n} \neq \emptyset) \leq n^{-1+\epsilon_\beta}\,.
    \end{equation}
Consequently, by \cref{eq:interface-order}, the same holds for $\Ibot$ under $\bar\pi_n^\fl$, as well for $\cI_\Blue,\cI_\Red$ under $\mu_n^\fl$. 
\end{theorem}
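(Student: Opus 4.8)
The only genuine obstacle here is the hard floor at height $0$: it is too close to $\Itop$ for the Gielis--Grimmett cluster expansion of \cref{prop:grimmett-cluster-exp,thm:gg-interface-height} to be applied directly (as noted in \cref{rem:diff-domains}, the bound \cref{eq:g-bound-2-faces} on $\g$ becomes useless when the lower face of the box is at height $0$). The plan is to push the effective floor up to a height $\sfh = \lfloor C_1\beta^{-1}\log n\rfloor$ --- for a suitable absolute constant $C_1>2$, small enough that a few further multiples of $\beta^{-1}\log n$ still sum to less than $(14/\beta)\log n$ --- at which scale the cluster expansion is legitimate. First one uses the fuzzy Potts monotonicity of \cref{prop:Potts-monotonicity} to dominate the $\Blue$/non-$\Blue$ interface under $\mu_n^\fl$ by the one under the shifted soft-floor measure $\musoft_n^\sfh$ of \cref{eq:mu-hat-measure}, deducing that $\cI_\Blue$ is low; then one reveals $\Ablue$ and observes that above $\cI_\Blue$ the random-cluster model carries no hard floor, so the remaining part of $\cI_\Full\supseteq\Itop$ (using \cref{def:full-interface,eq:interface-order}) sits only $O(\beta^{-1}\log n)$ higher. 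A union bound over the $n^2$ columns then closes the argument.

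\emph{Step 1: $\cI_\Blue$ is low under $\mu_n^\fl$.} Since enlarging the $\Blue$ cluster only raises $\cI_\Blue$, the event $\{\cI_\Blue\cap\cL_{\ge k}\ne\emptyset\}$ is increasing in the set of $\Blue$ vertices, and \cref{prop:Potts-monotonicity} gives $\mu_n^\fl(\cI_\Blue\cap\cL_{\ge k}\ne\emptyset)\le\musoft_n^\sfh(\cI_\Blue\cap\cL_{\ge k}\ne\emptyset)$. In $\musoft_n^\sfh=\mu_n^\sfh(\cdot\mid\cI_\Blue\subset\cL_{\ge0})$ the conditioning is harmless: $\cI_\Blue$ is rigid at height $\sfh$ under $\mu_n^\sfh$, a downward excursion reaching $0$ costs $e^{-(\beta-C)\sfh}$ per column, and $C_1>2$ makes $n^2e^{-(\beta-C)\sfh}=o(1)$, so up to a $1+o(1)$ factor one may work with $\mu_n^\sfh$ itself. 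There $\cI_\Blue$ agrees with the full interface of $\bar\pi_n^\sfh$ off the thin random-cluster sandwich; since $\sfh\ll n$ keeps the flip height far from both the floor and the ceiling of $\Lambda'_{n,n}$, the shifted form of \cref{thm:gg-interface-height} (legitimate by \cref{rem:diff-domains,rem:interface-height-const}) gives $\bar\pi_n^\sfh(\overline\hgt_x(\cI_\Full)\ge\sfh+\ell)\le e^{-(\beta-C)\ell}$ for each column $x$. Taking $\ell=\lfloor C_2\beta^{-1}\log n\rfloor$ with $C_2>3$ and union-bounding over the $n^2$ columns yields, with $h_0:=\sfh+\ell$,
\[ \mu_n^\fl\big(\cI_\Blue\not\subset\cL_{<h_0}\big)\le n^{-1+\epsilon_\beta}\,. \]

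\emph{Step 2: from $\cI_\Blue$ to $\Itop$.} Work under $\bP_n^\fl$ on the event $\{\cI_\Blue\subset\cL_{<h_0}\}$ and reveal $\Ablue$. All edges dual to $\cI_\Blue$ are then closed, the $\Blue$ boundary face lies inside $\Ablue$ and is automatically disconnected from the $\Red$ faces, so the random-cluster configuration on $\Ablue^c$ is a bona fide FK model with \emph{free} boundary along $\cI_\Blue$ and wired boundary on the $\Red$ faces --- with no hard floor and no residual disconnection conditioning. The part of $\cI_\Full$ interior to $\Ablue$, together with $\Ibot$, lies below $\cI_\Blue\subset\cL_{<h_0}$; the only plaquettes of $\cI_\Full$ that can exceed height $h_0$ belong to $\cI_\Full\cap\mathrm{int}(\Ablue^c)$, i.e.\ to the full interface of the revealed FK model. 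In that model the top cluster $\Atop$ reaches down to $\cI_\Blue$ essentially everywhere (low-temperature clusters are large), and $\overline\hgt_x(\Itop)-\overline\hgt_x(\cI_\Blue)$ exceeds $\ell'$ only if a finite random-cluster component of size $\ge\ell'$ separates $\Atop$ from $\cI_\Blue$ above $x$, which by a Peierls estimate --- valid precisely because this region carries no pathological floor --- has probability at most $e^{-(\beta-C)\ell'}$. Union-bounding over the $n^2$ columns with $\ell'=\lfloor C_2\beta^{-1}\log n\rfloor$, off an event of probability $n^{-1+\epsilon_\beta}$ the whole of $\cI_\Full$, hence $\Itop\subseteq\cI_\Full$, lies in $\cL_{<h_0+\ell'}$, and $h_0+\ell'=\sfh+\ell+\ell'=(C_1+2C_2)\beta^{-1}\log n<(14/\beta)\log n$ for the constants above. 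This is the asserted bound for $\Itop$; the versions for $\Ibot,\cI_\Blue,\cI_\Red$ follow at once from $\Itop\succeq\cI_\Red\succeq\cI_\Blue\succeq\Ibot$.

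The main obstacle is the input to Step 1, namely \cref{prop:Potts-monotonicity}: this is exactly where the \emph{potentially repulsive} interaction of $\cI_\Blue$ with the hard floor must be controlled, and its proof --- a careful revealing scheme combined with the fuzzy Potts FKG inequality \cref{eq:fuzzy-Potts-FKG}, which is available because $\cI_\Blue$ is monotone in the set of $\Blue$ vertices --- is the delicate part. Step 2 is comparatively routine, provided one checks that revealing $\Ablue$ genuinely replaces the hard floor by a harmless free wall sitting $\Theta(\beta^{-1}\log n)$ above it, so that no new pinning or repulsion is introduced.
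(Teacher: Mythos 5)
Your Step 1 is essentially the paper's own argument: \cref{lem:soft-upper-lower-bound} bounds the height of $\cI_\Blue$ under $\musoft_n^{h_0}$ via \cref{thm:gg-interface-height} plus the observation that the positivity conditioning only costs a $1+o(1)$ factor, and \cref{cor:weak-upper-fl-blue} then transfers this to $\mu_n^\fl$ through \cref{prop:Potts-monotonicity}, exactly as you propose.

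Step 2, however, has a genuine gap. You reveal $\Ablue$ and assert that the random-cluster configuration on $\Ablue^c$ becomes ``a bona fide FK model with free boundary along $\cI_\Blue$ \ldots with no residual disconnection conditioning.'' This is false. The set $\Ablue$ is a \emph{Potts} object, not a random-cluster object, and the event $\{\Ablue=A\}$ forces every vertex of $A^c$ adjacent to $A$ to be non-$\Blue$ (otherwise it would join $\Vblue$). In the coupled FK--Potts measure this is a nontrivial constraint on the unexplored region: for $q=2$ it literally wires the inner vertex boundary of $A^c$ to the $\Red$ phase -- the opposite of a free boundary -- and for $q\ge 3$ it imposes a fuzzy-Potts-type coloring condition that has no clean FK translation. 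The object whose revealing genuinely induces free boundary conditions on the region below it is $\Atop$ (this is precisely what \cref{lem:compare-fl-dob} exploits), not $\Ablue$. Consequently your ``Peierls estimate'' for $\overline\hgt_x(\Itop)-\overline\hgt_x(\cI_\Blue)$ is asserted in a measure you have not identified, and the map (opening closed edges, some of which are bichromatic in $\sigma$ and hence have zero joint weight when opened without recoloring) is not controlled.

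The paper closes this gap differently and more simply: it never conditions on $\Ablue$, but runs Peierls maps directly under the pure FK measure $\bar\pi_n^\fl$. In \cref{lem:Gamma-outside-Atop-Abot} one looks at the component $\cC_v$ of $\Lambda_{n,n}\setminus(\Atop\cup\Abot)$ and opens the edges dual to $\Gamma_v\cap\Itop$ (or $\Gamma_v\cap\Ibot$, whichever carries at least half the boundary); this preserves the disconnection event $\sep_{n,n}$ and gains $e^{(\beta-\log q)|\Gamma_v|/2}$, with \cref{lem:bound-on-hairs} handling $\cI_\Full\setminus\Itop$ similarly. \cref{cor:dist-Ifull-Ibot} then sandwiches all four interfaces within an additive $O(\beta^{-1}\log n)$ of each other, which combined with \cref{cor:weak-upper-fl-blue} yields the theorem. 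Note also that your framing is slightly misleading: these Peierls maps work perfectly well right next to the hard floor -- it is the cluster expansion and wall-deletion machinery, not the edge-opening maps, that degenerate near height $0$ -- so no pushing of the floor is needed for this step. If you replace your Step 2 by this direct argument in $\bar\pi_n^\fl$, the proof is complete.
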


While the logarithmic upper bound on $\Itop$ implies one for $\cI_\Blue$, our argument for establishing this bound for $\Itop$ goes through first proving such a result directly on $\cI_\Blue$ (\cref{cor:weak-upper-fl-blue} below) and then using the rigidity of the $4$ interfaces (\cref{cor:dist-Ifull-Ibot} below) to conclude the theorem above.

\subsubsection{Upper bound on the blue interface with a soft floor}
We begin with an upper bound on the maximal height of $\cI_\Blue$ in the soft-floor setting of $\musoft_n^\sfh$, with the boundary conditions $\sfh$ set high enough---larger than an absolute constant multiple of $h_n^*$.

\begin{lemma}\label{lem:soft-upper-lower-bound}
    Let $h_0 = \lfloor (3/\beta)\log n\rfloor$. We have
    \begin{equation}
        \musoft_n^{h_0}(\cI_\Blue \cap \cL_{\geq 2h_0} \neq \emptyset) \leq n^{-1+\epsilon_\beta}\,.
    \end{equation}
\end{lemma}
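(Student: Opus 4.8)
The plan is to exploit that $h_0=\lfloor(3/\beta)\log n\rfloor$ is \emph{larger}, by a factor $\approx 3/2$, than the $\Theta(\beta^{-1}\log n)$ scale of the maximal height fluctuations of a rigid interface. This has two consequences that together give the lemma. First, under the \emph{unconditioned} measure $\mu_n^{h_0}$ the event $\{\cI_\Blue\subset\cL_{\geq 0}\}$ is typical, so $\musoft_n^{h_0}$ is a bounded perturbation of $\mu_n^{h_0}$ and the soft-floor conditioning may simply be dropped. Second, there is enough room between height $h_0$ (where $\cI_\Blue$ is rigid) and height $2h_0$ for a large-deviation estimate to rule out $\cI_\Blue$ reaching $2h_0$, even after a union bound over the $\approx n^2$ columns. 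Both consequences follow from \cref{thm:gg-interface-height}, applied through the FK--Potts coupling and the inclusion $\cI_\Blue\subseteq\cI_\Full$, after transferring that theorem from the symmetric Dobrushin box to the box $\Lambda'_{n,n}$ with boundary colours switching at height $h_0$ --- a routine use of the translation and general-domain remarks \cref{rem:diff-domains,rem:interface-height-const}, legitimate because the hard floor and ceiling of $\Lambda'_{n,n}$ lie at distance $\Theta(n)\gg h_0$ from the flip line and hence play no role.

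Concretely, I would first estimate $\mu_n^{h_0}(\cI_\Blue\not\subset\cL_{\geq 0})$. On this event $\cI_\Full$ contains a plaquette of height $<0$, hence $\underline\hgt_x(\cI_\Full)\leq O(1)$ for some column $x$, which relative to the flip line at $h_0$ is a downward deviation of size $h_0-O(1)$; by the transferred \cref{thm:gg-interface-height} this has probability at most $e^{-(\beta-C)(h_0-O(1))}$. Summing over the $O(n^2)$ columns and using $h_0\geq(3/\beta)\log n-1$ gives $\mu_n^{h_0}(\cI_\Blue\not\subset\cL_{\geq 0})\leq n^{-1+\epsilon_\beta}$ for $n$ large. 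Therefore, for any event $A$,
\[
\musoft_n^{h_0}(A)=\frac{\mu_n^{h_0}\big(A\cap\{\cI_\Blue\subset\cL_{\geq 0}\}\big)}{\mu_n^{h_0}\big(\cI_\Blue\subset\cL_{\geq 0}\big)}\leq 2\,\mu_n^{h_0}(A)\,,
\]
and it remains to bound $\mu_n^{h_0}(\cI_\Blue\cap\cL_{\geq 2h_0}\neq\emptyset)$. On that event $\cI_\Full$ contains a plaquette of height $\geq 2h_0$, so $\overline\hgt_x(\cI_\Full)\geq 2h_0-O(1)$ for some column $x$ --- an upward deviation of size $h_0-O(1)$ from the flip line --- and the identical computation yields $\mu_n^{h_0}(\cI_\Blue\cap\cL_{\geq 2h_0}\neq\emptyset)\leq n^{-1+\epsilon_\beta}$. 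Combining with the displayed bound completes the proof (enlarging $\epsilon_\beta$ to absorb the factor $2$).

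The main difficulty, though a mild one, is the bookkeeping in these two union bounds, and it is where the specific constant $3$ in the definition of $h_0$ is used: one needs the per-column exponent $(\beta-C)h_0$ to beat $3\log n$ up to $\epsilon_\beta\log n$ slack --- the $2\log n$ coming from the $\approx n^2$ columns and the remaining $(1-\epsilon_\beta)\log n$ being the target decay rate --- i.e.\ $h_0\gtrsim(3-\epsilon_\beta)\beta^{-1}\log n$, which is exactly what $h_0=\lfloor(3/\beta)\log n\rfloor$ provides, the loss from the floor function and the $n$-independent prefactor $e^{\beta-C}$ being absorbable into $n^{\epsilon_\beta}$ for $n$ large relative to $\beta$. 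Everything else (the translation of \cref{thm:gg-interface-height}, the reduction to columns up to $O(1)$ height slack, and $\cI_\Blue\subseteq\cI_\Full$ under the coupling) is standard.
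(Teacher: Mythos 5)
Your proposal is correct and follows essentially the same route as the paper: apply \cref{thm:gg-interface-height} (via \cref{rem:diff-domains,rem:interface-height-const}) to $\bar\pi_n^{h_0}$ with a union bound over columns to control both $\cI_\Full\cap\cL_{\leq 0}$ and $\cI_\Full\cap\cL_{\geq 2h_0}$, pass to $\cI_\Blue$ via the inclusion $\cI_\Blue\subset\cI_\Full$, and divide by $\mu_n^{h_0}(\cI_\Blue\subset\cL_{\geq 0})\geq 1-n^{-1+\epsilon_\beta}$ to remove the soft-floor conditioning. The bookkeeping with the constant $3$ matches the paper's $2n^2e^{-(\beta-C)h_0}\leq n^{-1+\epsilon_\beta}$.
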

\begin{proof}
By \cref{thm:gg-interface-height,rem:interface-height-const}, a union bound over $x=(x_1,x_2)\in[-\frac n2,\frac n2]^2\cap(\Z+\frac12)^2$, as well as on the two events $\{\overline\hgt_x(\cI_\Full)\geq h_0\}$ and $\{\underline\hgt_x(\cI_\Full) \leq -h_0\}$ in \cref{thm:gg-interface-height} (which, in our context here of $\bar\pi_n^{h_0}$, address the intersection of $\cI_\Full$ with $\cL_{2h_0}$ and $\cL_{0}$, respectively), gives
\[\bar\pi_n^{h_0}\left(\cI_\Full \cap (\cL_{\leq 0}\cup\cL_{\geq 2h_0})\neq \emptyset\right) \leq 2n^2 e^{-(\beta-C)h_0} \leq n^{-1+\epsilon_\beta}\,,\]
where we absorbed the constant pre-factor into the $\epsilon_\beta$ term. In particular, as $\cI_\Blue \subset  \cI_\Full$, we have
\[\mu_n^{h_0}\left(\cI_\Blue \cap (\cL_{\leq 0}\cup\cL_{\geq 2h_0})\neq \emptyset\right) \leq n^{-1+\epsilon_\beta}\,,\]
and thus
\[ \musoft_n^{h_0}(\cI_\Blue \cap \cL_{\geq 2h_0}\neq\emptyset) \leq \frac{
\mu_n^{h_0}(\cI_\Blue \cap \cL_{\geq 2h_0}\neq\emptyset)}
{\mu_n^{h_0}(\cI_\Blue \subset  \cL_{\geq 0})} \leq \frac{n^{-1+\epsilon_\beta}}{1-n^{-1+\epsilon_\beta}} = (1+o(1))n^{-1+\epsilon_\beta} \]
(using the preceding display in both the numerator and denominator above), as required.
\end{proof}

\subsubsection{Comparing the soft and hard floor measures}
We next wish to move from $\musoft_n^{\sfh}$ to~$\mu_n^\fl$. The next proposition, which may be of independent interest, establishes a comparison between these measures via a subtle revealing and monotonicity argument that utilizes the FKG property of the fuzzy Potts model. It is also key to the sharper $h_n^*$ upper bound proved in Section~\ref{sec:sharp-upper}. 

\begin{proposition}\label{prop:Potts-monotonicity}
    Let $\sfh \geq 0$, and in the context of both $\musoft_n^\sfh$ and $\mu_n^\fl$, let $A$ be an event on configurations $\sigma$ on $\Lambda'_{n,n}$ that is increasing in the set of $\Blue$ vertices (that is, the set of configurations in $A$ is closed under the operation of coloring any vertex in $\Blue$) and measurable w.r.t.\ the set $\Ablue^c$.
    Then
    \[
        \mu_n^\fl( A) \le \musoft_n^{\sfh }(A)
    \,,\]
    where the set $\Ablue^c$ is understood w.r.t.\ the corresponding boundary conditions on either side.  
\end{proposition}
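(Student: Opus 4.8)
The plan is to transfer the inequality to the fuzzy Potts model, where the events in play become increasing in the set of $\Blue$ vertices so that the FKG inequality~\eqref{eq:fuzzy-Potts-FKG} is available, and then to compare $\mu_n^\fl$ with $\musoft_n^\sfh$ through a monotone exploration of the interface $\cI_\Blue$. The starting point is that $\Ablue$ --- hence $\cI_\Blue$ --- is a deterministic function of the fuzzy configuration $\tau = f_{\textsf{bf}}(\sigma)$, since its definition refers only to the $\Blue$/non-$\Blue$ adjacency pattern and to the connected components of its complement. Consequently both $A$ (measurable w.r.t.\ the set $\Ablue^c$, and increasing in the set of $\Blue$ vertices by hypothesis) and the conditioning event $B:=\{\cI_\Blue\subset\cL_{\geq 0}\}$ are $\tau$-measurable; moreover, using the $\Blue$ boundary on the side faces below height $0$, one has $B=\{\Ablue\supseteq\Lambda'_{n,n}\cap\cL_{<0}\}$, so $B$ is also increasing in the set of $\Blue$ vertices, because $\Ablue$ is (cf.\ \cref{rem:other-interfaces}). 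Writing $\phi_n^\fl,\phi_n^\sfh$ for the fuzzy push-forwards of $\mu_n^\fl,\mu_n^\sfh$, pushing the claim forward under $f_{\textsf{bf}}$ reduces it to the fuzzy-Potts inequality
\begin{equation}\label{eq:plan-fuzzy-goal}
    \phi_n^\fl(A)\;\le\;\phi_n^\sfh(A\mid B)\,.
\end{equation}
By~\eqref{eq:fuzzy-Potts-FKG} one has $\phi_n^\sfh(A\mid B)\ge\phi_n^\sfh(A)$, so the substance of~\eqref{eq:plan-fuzzy-goal} is that the conditioning on $B$ more than offsets the potentially attractive interaction of $\cI_\Blue$ with the hard floor.

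To prove~\eqref{eq:plan-fuzzy-goal}, view $\phi_n^\fl$ as living on $\Lambda'_{n,n}$ with every site of height below $0$ declared $\Blue$, so that its restriction to heights $\geq\tfrac12$ is the hard-floor measure. A short check with the definition of the augmentation shows that, on the event $B$, every site $u$ of height $\tfrac12$ not in $\Ablue$ has its downward neighbour in $\Vblue$; thus, \emph{as seen from the region of height $\geq\tfrac12$}, both measures rest on a $\Blue$ floor at height $0$, and their only genuine differences are (a) the side boundary conditions at heights in $(0,\sfh)$ ($\Red$ for $\phi_n^\fl$, $\Blue$ for $\phi_n^\sfh$) and (b) the additional randomness $B$ tolerates strictly below height $0$, which by the previous observation does not change the effective floor.

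Now explore the set $\Ablue^c$ (equivalently $\cI_\Blue$) by a common site-by-site exploration begun at the $\Red$ boundary, revealing fuzzy colours and advancing through non-$\Blue$ neighbours, run simultaneously under $\phi_n^\fl$ and $\phi_n^\sfh(\cdot\mid B)$ in a fixed order. After a common revealed portion $R$, the conditional law of the remaining fuzzy configuration is, in each case, a fuzzy Potts measure on the unexplored region with $\Fuzzy$ boundary along $R$ and the respective outer conditions elsewhere --- for $\phi_n^\sfh$ additionally conditioned on the increasing, $\tau$-measurable event $B$. Combining (i)~the FKG property of fuzzy Potts, (ii)~the domination of the $\phi_n^\fl$ side condition by that of $\phi_n^\sfh$ in the $\Blue$ order, (iii)~the fact that conditioning on $B$ only tilts the $\phi_n^\sfh$-law further toward $\Blue$, and (iv)~the common-floor observation, one obtains that at each step the probability that the next explored site lies in $\Ablue^c$ (a decreasing-in-$\Blue$ event) is no larger under $\phi_n^\sfh(\cdot\mid B)$ than under $\phi_n^\fl$. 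Coupling the exploration decisions greedily then yields, almost surely, a monotone coupling with the full fuzzy configurations ordered in the $\Blue$ sense (in particular $(\Ablue^c)^\fl\supseteq(\Ablue^c)^{\sfh,B}$), and since $A$ is increasing in the set of $\Blue$ vertices this gives~\eqref{eq:plan-fuzzy-goal}.

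I expect the crux to be making steps (iii)--(iv) precise at \emph{every} stage of the exploration. Strictly below height $0$, the hard floor carries the maximal $\Blue$ value while on $B$ the soft-floor configuration merely lies in $\Ablue$ (possibly with non-$\Blue$ bubbles), so on that stratum the hard floor is in fact \emph{more} $\Blue$; one must argue that this excess is invisible to the region above --- only the effective floor and the side conditions feed into it --- so that it is outweighed by the conditioning on $B$ together with the more-$\Blue$ side condition of $\phi_n^\sfh$, and this balance must persist through each revealing step, which dictates a careful choice of exploration order and of how the event $B$ is absorbed into the per-step comparison. A convenient byproduct of the side-condition comparison is that the full range $\sfh\geq 0$ comes at no extra cost: enlarging $\sfh$ only enlarges the $\Blue$ part of the side boundary of $\phi_n^\sfh$, which can only reinforce~\eqref{eq:plan-fuzzy-goal}.
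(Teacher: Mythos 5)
Your proposal assembles the right ingredients (the $f_{\textsf{bf}}$-measurability of $\Ablue$, the fact that $\{\cI_\Blue\subset\cL_{\geq0}\}$ is increasing in the set of $\Blue$ vertices, and the fuzzy-Potts FKG inequality), but the central step is not carried out, and the route you sketch for it does not work as stated. Two concrete problems. First, your exploration argument presumes that, after revealing the fuzzy colours on a region $R$, the conditional law on the complement is again a fuzzy Potts measure with $\Fuzzy$/$\Blue$ boundary along $R$. The fuzzy Potts model has no Domain Markov property (the paper has to work around exactly this), so the per-step comparison you invoke between the two conditional laws is not available; conditioning a fuzzy Potts measure on a partial revelation does not produce a fuzzy Potts measure on the unexplored region. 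Second, and more fundamentally, the point you flag as "the crux" --- that the extra $\Blue$ness of the hard floor strictly below height $0$ (where the soft-floor measure on $B$ may still carry non-$\Blue$ bubbles encapsulated by $\Vblue$) is "invisible" to the region above --- is precisely the potentially attractive floor interaction the proposition is designed to control, and no argument is given for it. There is no monotonicity that makes the presence or absence of those bubbles irrelevant to the conditional law above; asserting it at every exploration step is assuming the conclusion.

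The paper's proof goes in one direction only and never needs a coupled exploration. Starting from $\musoft_n^\sfh$, it reveals $\cL_{<0}$ together with every non-$\Blue$ component touching $\cL_{-1/2}$, observes that on the conditioning event this entire revealed set $S$ is contained in $\Ablue$, and then uses the \emph{Potts} Domain Markov property to recolour all of $S$ $\Blue$ without changing the conditional law of $\Ablue^c$ (this is \cref{clm:change-eta-to-eta'}, and it is what neutralises the bubble issue before fuzzy Potts ever enters). Only after this reduction --- when the measure is a genuine Potts model on $\Lambda_{n,n}$ with an all-$\Blue$ floor, conditioned on an increasing event and with extra $\Blue$ side boundary --- is the fuzzy-Potts FKG inequality applied, once, to drop those conditionings and decrease the probability of $A$ down to $\mu_n^\fl(A)$ (with a small $v_\Red$/strong-bond device to compensate for the missing Domain Markov property in fuzzy Potts). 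If you want to salvage your plan, the recolouring step of \cref{clm:change-eta-to-eta'} is the missing idea; without it the comparison of the two floors at heights below $0$ remains unproven.
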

\begin{proof}
Consider $\sigma\sim\mu_n^\sfh$, and let us examine the event $\{\cI_\Blue\subset  \cL_{\geq 0}\}$ delineating $\musoft_n^\sfh$.  
Recalling \cref{def:FK-Potts-interfaces}, \cref{it:V-blue-red}, we have $v\in\Ablue^c$ if and only if $v\notin \Vblue$ and there exists a path in   $(\Z+\frac12)^3$, connecting $v$ to a $\Red$ boundary vertex via internal vertices in  $\Ablue^c (\subset  \Vblue^c)$. Further note that
\begin{equation}\label{eq:Iblue-equiv}
\left\{\cI_\Blue \subset  \cL_{\geq 0}\right\} = \bigcap_{v\in\cL_{<0}} \{v\in \Ablue\} 
\,.
\end{equation}
Indeed, the interface $\cI_\Blue$ is nothing but the set of plaquettes separating nearest-neighbors $u,v\in(\Z+\frac12)^3$ with $u\in\Ablue$ and $v\in\Ablue^c$ (in fact with $u\in \Vblue$, as $ \Ablue\setminus\Vblue$ can never be adjacent to $\Ablue^c$). Thus, 
\[ \underline\hgt_{(x_1,x_2)}(\cI_\Blue) =\min\left\{x_3 : (x_1,x_2,x_3-\tfrac12)\in \Ablue\,,\,(x_1,x_2,x_3+\tfrac12)\in \Ablue^c\right\}\,,\]
giving the identity in \cref{eq:Iblue-equiv}.  

Determining whether $\cI_\Blue\subset  \cL_{\geq 0}$ must be done with care; a good example to keep in mind in this context is $\sigma$ that colors every $v\in\cL_{<0}$ in $\Blue$ and every $v\in\cL_{>0}$ in $\Red$ except for $Q=\{\pm\frac12\}^3$, where we swap the assignment, coloring $v\in Q\cap \cL_{<0}$ in $\Red$ and $v\in S\cap\cL_{>0}$ in $\Blue$. In this example, $[0,1]^2\times\{-1\} \subset \cI_\Blue$ (in particular $\cI_\Blue\nsubseteq \cL_{\geq 0}$) and similarly $[0,1]^2\times\{1\}\subset \cI_\Red$, since $Q\subset \Ablue^c \cap \Ared^c$. On the other hand, if we were to also set the color of $(\frac32,\frac12,\frac12)$ to $\Blue$, it would connect $Q\cap\cL_{>0}$ to $\Vblue$, thus leading to $Q\subset\Ablue$ and to $\cI_\Blue=\cI_\Red \subset  \cL_{\geq 0}$. 

\begin{figure}
  \begin{tikzpicture} 
  
  \pgfmathsetmacro{\circmult}{0.35}
  \pgfmathsetmacro{\circscale}{0.6}

   \begin{scope}[shift={(2.7,3)}]
  \foreach \a / \b in {1/5,2/5,3/6,3/7,4/8,5/7, 6/6,7/5,8/6,9/7,10/6,11/6,12/6,13/5,14/5,15/5, 16/5}
	\node [circle,fill=blue!50,inner sep=0.2ex,outer sep=0.1cm] (V\a\b) at (\a*\circmult,\b*\circmult) {};
 \newcommand{\interA}{($(V15.north) + (-0.5*\circmult,0)$) -- (V15.north)  to  (V25.north) to [bend right=45] (V36.west) to (V37.west) to [bend left=45] (V37.north) to [bend right=45] (V48.west) to[bend left=45]  (V48.north) to [bend left=45] (V48.east) to [bend right=45] (V57.north) to [bend left=45] (V57.east) to [bend right=45] (V66.north) to [bend left=45] (V66.east) to [bend right=45] (V75.north) to [bend right=45] (V86.west) to [bend left=45] (V86.north) to [bend right=45] (V97.west) to [bend left=45] (V97.north) to [bend left=45] (V97.east) to [bend right=45] (V106.north) -- (V126.north) to [bend left=45] (V126.east) to [bend right=45] (V135.north) -- (V165.north) to ++ (0.5*\circmult,0)};
	\newcommand{\closeA}{($(V165.north) + (0.5*\circmult,0)$) -- ++ (0,-4*\circmult) -- ++(-16*\circmult,0) -- ++(0,4*\circmult) }
	\path[fill=gray!20] \interA -- \closeA;
	\draw[ultra thick, black] \interA ;
	
  % Minus spins
  \foreach \a / \b in {1/6, 1/7, 1/8, 1/9, 4/6,4/7,5/5,5/6,6/5,
  9/5,9/6,10/5,11/5,12/5,
  16/6, 16/7, 16/8, 16/9}
	\node [circle,fill=red!50,
 inner sep=0.4ex,outer sep=0.1cm] (V\a\b) at (\a*\circmult,\b*\circmult) {};
% Plus spins
  \foreach \a / \b in {1/2,1/3,1/4, 1/5,2/5,3/5,3/6,3/7,4/5,4/8,5/7, 6/6,7/5,8/5,8/6,9/7,10/6,11/6,12/6,13/5,14/5,15/5,16/2,16/3,16/4,16/5}
	\node [circle,fill=blue!50,inner sep=0.4ex,outer sep=0.1cm] (V\a\b) at (\a*\circmult,\b*\circmult) {};

    \draw[dashed, gray, thin] ($0.5*(V15)+0.5*(V16)$)--($0.5*(V165)+0.5*(V166)$);    

    \node [font=\large,gray!75!black] at (3,1) {$V_0$};
    \node [font=\large,gray!75!black] at (3,3.1) {$V_1$};
    \node [font=\large] at (4.5,2.5) {$\Gamma$};
    
	 \end{scope}

  %%%%%%%%%%%%%
  % Interface dips below zero
  %%%%%%%%%%%%%
  \begin{scope}[scale=0.9,shift={(0,0)}]
  \foreach \a / \b in 
  {1/5,2/5,3/6,3/7,
 4/8,5/4,5/8, 6/7,7/5,7/6,8/5,8/6,9/4,12/4,13/5,14/5,15/5, 16/5} \node [circle,fill=blue!50,inner sep=0.2ex,outer sep=0.1cm] (V\a\b) at (\a*\circmult,\b*\circmult) {};

  \foreach \a / \b in 
  {1/6,2/6,2/7,3/8,3/9,4/9,5/9,6/8,7/6,7/7,8/7,8/8,9/8,10/8,11/7,12/7,13/6,14/6,15/6, 16/6} \node [circle,fill=blue!50,inner sep=0.2ex,outer sep=0.1cm] (V\a\b) at (\a*\circmult,\b*\circmult) {};
 
 \newcommand{\interB}{($(V15.north) + (-0.5*\circmult,0)$) -- (V15.north)  to  (V25.north) to [bend right=45] (V36.west) to (V37.west) to [bend left=45] (V37.north) to [bend right=45] (V48.west) to [bend left=45] (V48.north) -- (V58.north) to [bend left=45] (V58.east) to[bend right=45]  (V67.north) to [bend left=45] (V67.east) to [bend right=45] (V76.north) -- (V86.north) to [bend left=45] (V86.east) -- (V85.east) to [bend right=45] (V94.north) -- (V124.north) to [bend right=45] (V135.west) to [bend left=45] (V135.north) -- (V165.north) to ++ (0.5*\circmult,0)};
	\newcommand{\closeB}{($(V165.north) + (0.5*\circmult,0)$) -- ++ (0,-4*\circmult) -- ++(-16*\circmult,0) -- ++(0,4*\circmult) }
 \newcommand{\interR}{($(V16.south) + (-0.5*\circmult,0)$) -- (V16.south)  to  (V26.south) to [bend right=45] (V26.east) -- (V27.east) to [bend left=45] (V38.south) to [bend right=45] (V38.east) to [bend left=45] (V49.south) -- (V59.south) to [bend left=45]  (V58.east) to [bend right=45] (V68.south) to [bend left=45] (V77.west) to [bend right=45] (V77.south) -- (V87.south) to [bend right=45] (V87.east) to [bend left=45] (V98.south) -- (V108.south) to [bend left=45] (V117.west) to [bend right=45] (V117.south) -- (V127.south) to [bend left=45] (V136.west) to [bend right=45] (V136.south) -- (V166.south) to ++ (0.5*\circmult,0)};
	\newcommand{\closeR}{($(V165.north) + (0.5*\circmult,0)$) -- ++ (0,4*\circmult) -- ++(-16*\circmult,0) -- ++(0,-4*\circmult) }

 \path[fill=blue!15,
 transform canvas={shift={(0,-0.03)}}
 ] \interB -- \closeB;
 \path[fill=red!15,
 transform canvas={shift={(0,0.03)}}
 ] \interR -- \closeR;
	\draw[thick, red!50,transform canvas={shift={(0,0.03)}}] \interR ;
	\draw[thick, blue!50,transform canvas={shift={(0,-0.03)}}] \interB ;
	
% Plus spins
  \foreach \a / \b in { 
  3/6,3/7,4/8,5/7,5/8, 6/6,6/7,8/6,9/7,10/6,10/7,11/6,12/6,  
  11/8,12/8,13/8,
  1/5,2/5,3/5,4/5,7/5,7/6,8/5,13/5,14/5,15/5,16/5}
	\node [circle,fill=blue!50,inner sep=0.4ex,outer sep=0.1cm] (V\a\b) at (\a*\circmult,\b*\circmult) {};
 
    \foreach \a in {1,2,...,16}
    \foreach \b in {2,3,4}
	\node [circle,fill=blue!50,inner sep=0.4ex,outer sep=0.1cm] (V\a\b) at (\a*\circmult,\b*\circmult) {};

% Minus spins
  \foreach \a / \b in {1/6, 1/7, 1/8, 1/9,2/6,2/7,2/8,2/9,3/8,3/9,4/6,4/7,4/9,5/5,5/6,5/9,6/5,6/8,6/9,7/7,7/8,7/9,
    8/7,8/8,8/9,9/5,9/6,9/8,9/9,10/5,10/8,10/9,11/5,11/7,11/9,12/5,12/7,12/9,13/6,13/7,13/9,14/6,14/7,14/8,14/9,15/6,15/7,15/8,15/9,
  16/6, 16/7, 16/8, 16/9,
  8/3,9/3,10/3,11/3}
	\node [circle,fill=red!50,
 inner sep=0.4ex,outer sep=0.1cm] (V\a\b) at (\a*\circmult,\b*\circmult) {};
 
    \draw[dashed, gray, thin] ($0.5*(V15)+0.5*(V16)$)--($0.5*(V165)+0.5*(V166)$);

    \node [circle,draw=black,line width=1pt,
 inner sep=0.5ex,outer sep=0.1cm] at (V136) {};
 
	 \end{scope}
	%%%%%%%%%%%%%%%%
	%% Interface is above zero 
	%%%%%%%%%%%%%%%%
  \begin{scope}[scale=0.9,shift={(6.5,0)}]
  \foreach \a / \b in 
  {1/5,2/5,3/6,3/7,
 4/5,4/8,5/4,5/8, 6/7,7/5,7/6,8/5,8/6,9/7,10/7,11/6,12/4,13/5,13/6,14/5,15/5, 16/5} \node [circle,fill=blue!50,inner sep=0.2ex,outer sep=0.1cm] (V\a\b) at (\a*\circmult,\b*\circmult) {};

  \foreach \a / \b in 
{1/6,2/6,2/7,3/8,3/9,4/9,5/9,6/8,7/7,8/7,8/8,9/8,10/8,11/7,13/7,14/6,15/6, 16/6} \node [circle,fill=blue!50,inner sep=0.2ex,outer sep=0.1cm] (V\a\b) at (\a*\circmult,\b*\circmult) {};
 
 \newcommand{\interB}{($(V15.north) + (-0.5*\circmult,0)$) -- (V15.north)  to  (V25.north) to [bend right=45] (V36.west) to (V37.west) to [bend left=45] (V37.north) to [bend right=45] (V48.west) to [bend left=45] (V48.north) -- (V58.north) to [bend left=45] (V58.east) to[bend right=45]  (V67.north) to [bend left=45] (V67.east) to [bend right=45] (V76.north) -- (V86.north) to [bend right=45] (V97.west) to [bend left=45] (V97.north) -- (V107.north) to [bend left=45] (V107.east) to [bend right=45] (V116.north) -- (V136.north) to [bend left=45] (V136.east) to [bend right=45] (V145.north) -- (V165.north) to ++ (0.5*\circmult,0)};
	\newcommand{\closeB}{($(V165.north) + (0.5*\circmult,0)$) -- ++ (0,-4*\circmult) -- ++(-16*\circmult,0) -- ++(0,4*\circmult) }
 \newcommand{\interR}{($(V16.south) + (-0.5*\circmult,0)$) -- (V16.south)  to  (V26.south) to [bend right=45] (V26.east) -- (V27.east) to [bend left=45] (V38.south) to [bend right=45] (V38.east) to [bend left=45] (V49.south) -- (V59.south) to [bend left=45]  (V58.east) to [bend right=45] (V68.south) to [bend left=45] (V77.west) to [bend right=45] (V77.south) --
 (V87.south) to [bend right=45] (V87.east) to [bend left=45] (V98.south) -- (V108.south) to [bend left=45] (V117.west) to [bend right=45] (V117.south) -- (V137.south) to [bend left=45] (V146.west) to [bend right=45] (V146.south) -- (V166.south) to ++ (0.5*\circmult,0)};    
	\newcommand{\closeR}{($(V165.north) + (0.5*\circmult,0)$) -- ++ (0,4*\circmult) -- ++(-16*\circmult,0) -- ++(0,-4*\circmult) }

 \path[fill=blue!15,
 transform canvas={shift={(0,-0.03)}}
 ] \interB -- \closeB;
 \path[fill=red!15,
 transform canvas={shift={(0,0.03)}}
 ] \interR -- \closeR;
	\draw[thick, red!50,transform canvas={shift={(0,0.03)}}] \interR ;
	\draw[thick, blue!50,transform canvas={shift={(0,-0.03)}}] \interB ;
	
% Plus spins
  \foreach \a / \b in { 
  3/6,3/7,4/8,5/7, 5/8,6/6,6/7,8/6,9/7,10/6,10/7,11/6,12/6,  11/8,12/8,13/8,
  1/5,2/5,3/5,4/5,7/5,7/6,8/5,13/5,13/6,14/5,15/5,16/5}
	\node [circle,fill=blue!50,inner sep=0.4ex,outer sep=0.1cm] (V\a\b) at (\a*\circmult,\b*\circmult) {};
 
    \foreach \a in {1,2,...,16}
    \foreach \b in {2,3,4}
	\node [circle,fill=blue!50,inner sep=0.4ex,outer sep=0.1cm] (V\a\b) at (\a*\circmult,\b*\circmult) {};

    \draw[dashed, gray, thin] ($0.5*(V15)+0.5*(V16)$)--($0.5*(V165)+0.5*(V166)$);

% Minus spins
  \foreach \a / \b in {1/6, 1/7, 1/8, 1/9,2/6,2/7,2/8,2/9,3/8,3/9,4/6,4/7,4/9,5/5,5/6,5/9,6/5,6/8,6/9,7/7,7/8,7/9,
8/7,8/8,8/9,9/5,9/6,9/8,9/9,10/5,10/8,10/9,11/5,11/7,11/9,12/5,12/7,12/9,13/7,13/9,14/6,14/7,14/8,14/9,15/6,15/7,15/8,15/9,
  16/6, 16/7, 16/8, 16/9,
  8/3,9/3,10/3,11/3}
	\node [circle,fill=red!50,
 inner sep=0.4ex,outer sep=0.1cm] (V\a\b) at (\a*\circmult,\b*\circmult) {};
 
    \node [circle,draw=black,line width=1pt,
 inner sep=0.5ex,outer sep=0.1cm] at (V136) {};
  \end{scope}
\end{tikzpicture}
\caption{Top: the vertex set $\Gamma$ from the proof of \cref{prop:Potts-monotonicity} for $\sfh=0$, revealed with the rest of the vertices of $\cL_{<0}$ to form $V_0$ (marked in gray). Bottom: in this example, flipping the color of a single site $v\notin V_0$ (circled in black) is pivotal 
 for the event $\{\cI_\Blue\subset \cL_{\geq 0}\}$.}
\label{fig:V0-Iblue-Ired}
\vspace{-0.1in}
\end{figure}

With this in mind, consider $\sigma\sim\musoft_n^{\sfh}=\mu_n^\sfh(\cdot \mid \cI_\Blue\subset \cL_{\geq 0})$; 
let us reveal the color of every $v\in\cL_{<0}$, as well as the $\noB$ component $\cC_v$ of every $v\in\cL_{-1/2}$. Denote by $V_0$ the vertices whose color was revealed, and let $\Gamma$ denote its inner vertex boundary (vertices $x\in V_0$ adjacent to some $y\notin V_0$). By definition, every $x\in \Gamma$ is $\Blue$; indeed, either $x\in\cL_{-1/2}$, or it is in the external boundary of  $\cC_v$, and must be $\Blue$ since no $\cC_v$ reaches a $\Red$ boundary vertex (as this would imply that $v\in\Ablue^c$). Let $V_1$ denote the vertices whose color was not yet revealed.
(N.B.\ every $(\Z+\frac12)^3$-path from $x\in V_1$ to $y\in\cL_{<0}$ must pass through $\Gamma$.) See \cref{fig:V0-Iblue-Ired} for an illustration of the sets $V_0$ and $\Gamma$, as well as the fact that conditional event $\{\cI_\Blue\subset \cL_{\geq 0}\}$ gives information on the colors of $V_1$, as every path from a $\noB$ site $v\in\cL_{-1/2}$ to the boundary must go through some $u\in\Vblue$, or else we will have $v\in\Ablue^c$. (Were we to ignore this conditional event and only look at the colors of $V_0$, then even though the vertices in $\Gamma$ are all $\Blue$, some of these may or may not be in $\Vblue$, depending on the colors in~$V_1$. This plays a role in the conditioning, since some of $\noB$ vertices $v\in\cL_{-1/2}$ must be encapsulated in $\Ablue$ via a boundary of $\Vblue$ vertices using sites also from~$V_1$.)

For a realization $\sigma\restriction_{V_0}=\eta$ (that is compatible with the event $\cI_{\Blue} \subset  \cL_{\geq 0}$), let $\cB_\eta$ denote the $\sigma\restriction_{V_1}$-measurable event $\{\cI_{\Blue}\subset \cL_{\geq0}\}$ under $\mu_n^\sfh$ conditional on $V_0\,,\sigma\restriction_{V_0}=\eta$. Formally, $B_{\eta}$ is the event that every vertex of $\Gamma$ not shown to be in $\Ablue$ by $\eta$, is indeed part of $\Ablue$ in $\eta$ concatenated with $\sigma\restriction_{V_1}$. 
By this reasoning, 
\begin{align*}
    \musoft_n^\sfh( \cdot \mid V_0,\, \sigma\restriction_{V_0} = \eta) = \mu_n^\sfh(\cdot \mid V_0 ,\,\sigma\restriction_{V_0} = \eta,\, \cB_\eta)\,.
\end{align*}
Note that $\cB_\eta$ is increasing in $\Blue$ vertices (stemming from the monotonicity of the set $\Ablue$ in the set of $\Blue$ vertices). 

Having exposed $V_0$ and $\sigma\restriction_{V_0}$, we wish to show, uniformly over the realization $\eta$ of the latter, that
\begin{equation}\label{eq:mu(A)-given-eta} \mu_n^\sfh(A \mid V_0\,,\sigma\restriction_{V_0}=\eta\,,\cB_\eta) \geq \mu_n^\fl(A)\end{equation}
(at which point, taking an expectation over $V_0$ and $\eta$ will conclude).
To this end, we will establish the following, where here and in what follows, we say that a coloring $\eta$ of a vertex set $S\supseteq \cL_{<0}$ realizes $V_0=S$ if every $\sigma$ that agrees with $\eta$ on $S$ satisfies $V_0(\sigma)=\eta$ (it is important to recall that one can deterministically verify that $V_0(\sigma)=S$ from $\sigma\restriction_S$).

\begin{claim}\label{clm:change-eta-to-eta'}
For every fixed subset of vertices $S\supseteq \cL_{<0}$ and configuration $\eta:S\to \{1,\ldots,q\}$ realizing $V_0=S$, and every $\eta'$ obtained from $\eta$ by coloring an arbitrary subset of $S$ in $\Blue$,
\begin{align}\label{eq:change-eta-to-eta'}
    \mu_n^\sfh(\Ablue^c\in\cdot \mid  \sigma\restriction_{S} = \eta, \cB_\eta)  = \mu_n^\sfh( \Ablue^c\in\cdot \mid \sigma\restriction_{S} = \eta', \cB_\eta)\,.
\end{align}
\end{claim}
\begin{proof}
We will establish that \begin{equation}\label{eq:nts-in-monotonicity-claim}\text{$\forall \sigma : \sigma\restriction_S = \eta$ and $\sigma\restriction_{S^c} \in B_\eta$, changing $\eta$ to $\eta'$ does not change $\Ablue$} \end{equation}

To show this, we first argue that $S\subset  \Ablue$ on the events $\sigma\restriction_S =\eta$ and $\sigma\restriction_{S^c}\in\cB_\eta$. Indeed, suppose that $x\in S$ is not in $\Ablue$. If $x\in \cL_{<0}$, then since $\cI_{\Blue}\subset  \cL_{\geq0}$, then \cref{eq:Iblue-equiv} gives a contradiction. If $x\in \cL_{>0}$ and $x\notin \Ablue$, there must be a $\Ablue^c$ path connecting $x$ to the $\Red$ boundary $\partial \Lambda'_{n,n} \cap \cL_{>0}$; at the same time, $x$ must be incident to, or part of, a $\noB$-component $\cC_v$ for some $v\in \cL_{<0}$. Appending said path to the path connecting $x$ to $v$ in $\cC_v$ gives a $\Ablue^c$ path from the $\Red$ boundary to $\cL_{<0}$ (the vertices of $\cC_v$ are not in $\Vblue$, and every path from $\Ablue$ to $\Ablue^c$ goes through $\Vblue$),
contradicting that $\cI_{\Blue} \subset  \cL_{\ge 0}$. Now, $S\subset  \Ablue$ means that changing colors in $S$ to $\Blue$ does not change $\Ablue$, and evidently does not affect the configuration on $S^c$ which $\Ablue^c$ is a subset of.

Having established~\eqref{eq:nts-in-monotonicity-claim}, this implies that on $\{\sigma\restriction_{S} = \eta,\, \sigma\restriction_{S^c}\in\cB_\eta\}$, a realization of $\Ablue^c$ corresponds to the same set of configurations 
on $\sigma\restriction_{S^c}$ as it does on $\{\sigma_S=\eta',\sigma_{S^c}\in\cB_\eta\}$. The event $B_\eta$ is by definition measurable with respect to $\sigma\restriction_{S^c}$. Therefore, we can apply the domain Markov property to see that the law in the left-hand of \cref{eq:change-eta-to-eta'} only depends on $\sigma\restriction_S = \eta$ through the inner vertex boundary of $S$, which is $\Blue$ in $\eta$ by definition (recall the set $\Gamma$ defined as the inner vertex boundary of $V_0$), and hence also in $\eta'$. This concludes the proof of \cref{eq:change-eta-to-eta'}.
\end{proof}

To now show \eqref{eq:mu(A)-given-eta}, take $\eta'$ in \cref{clm:change-eta-to-eta'} to be all-$\Blue$, and use that $A$ is measurable with respect to  $\Ablue^c$, to see that 
\begin{align*}
    \mu_n^\sfh(A\mid \sigma\restriction_S = \eta, \cB_\eta) = \mu_n^\sfh (A \mid \sigma\restriction_S \equiv \Blue, \cB_\eta)\,.
\end{align*}
The right-hand, by Domain Markov, is nothing but a Potts model on $\Lambda_{n,n}$ (as opposed to $\Lambda'_{n,n}$), conditional on $\cB_\eta$ as well having the sites of $S$ be all-$\Blue$, and having boundary conditions that are $\Blue$ in $\cL_{<\sfh}$ and $\Red$ above it (the sites of $S$ separate the bottom face from a path to any vertex of $S^c$). Notice that the configuration on $\cL_{<0}$ being $\Blue$ ensures that interpreting $\Ablue$ w.r.t.\ these boundary conditions is the same as w.r.t.\ $\mu_n^\sfh$ on $\Lambda_{n,n}'$. 
At this point, we wish to use that these are all modifications of $\mu_n^\fl$ that are increasing in the set of $\Blue$ vertices, so the FKG inequality for the fuzzy Potts model should allow us to drop the conditionings and only decrease the probability. 
Still, unlike the Potts model, there is no Domain Markov Property in fuzzy Potts, and so this step must be done with care. 
First, let  
\[ \mu_n^\Gamma = \mu_{\Lambda_{n,n}}^\sfh\left(\sigma\restriction_{S^c}\in\cdot\given \sigma\restriction_{S}= \eta, \cB_\eta\right)\] 
denote the Potts distribution on $S^c$ given $\sigma\restriction_{S} = \eta$ as well as $\cB_\eta$, where the $\Red$ external boundary vertices (those in $\cL_{>\sfh}$) are replaced by a single (high degree) $\Red$ boundary vertex $v_\Red$, and the
external boundary vertices in $S \cup \cL_{<\sfh}$ are all $\Blue$. (Replacing the $\Red$ boundary vertices by a single $v_\Red$ does not change the Potts model, yet it will facilitate easy transitions to the fuzzy Potts model and back.)  As an additional adjustment, let us connect each of the $\Blue$ external boundary vertices in $\partial \Lambda_{n,n}\cap (\cL_{>0}\cap \cL_{<\sfh})$ to $v_\Red$ with, a bond of strength $J>0$ (to be taken arbitrarily large). This, again, does not affect $\mu_n^\Gamma$ by the Domain Markov property.

At this point, we move to the fuzzy Potts model $\phi_n^\Gamma$, 
noting that $\Ablue^c$ (thus also $A$, by assumption) is measurable under the fuzzy Potts projection $f_{\textsf{bf}}$. 
By FKG, and the fact that $A$ is increasing in $\Blue$ vertices, we can
\begin{enumerate}[(a)]\item remove the $\Blue$ boundary conditions on the sites of $S\cap \cL_{>0}$ and  $\partial \Lambda_{n,n}\cap (\cL_{>0}\cap \cL_{<\sfh})$, and \item remove the conditioning on the event $\cB_\eta$,
\end{enumerate} 
arriving at $\phi_n^\fl$, a fuzzy Potts measure on $\Lambda_n$ with a hard $\Blue$ floor at height $0$, a single vertex $v_\Red$ replacing all the $\Red$ boundary vertices in $\cL_{>\sfh}$, and that vertex connected to every formerly $\Blue$ boundary condition vertex in $\partial \Lambda_{n,n}\cap \cL_{>0}\cap \cL_{<\sfh}$ via edges with interaction strength $J$):
\[
\mu_n^\Gamma(A) = \phi_n^\Gamma(A)  \geq \phi_n^\fl(A) = \mu_n^\fl(A) + O(n\sfh e^{- \beta J})\,.\]
Here, the last identity uses that by a union bound, the $\Fuzzy$ boundary vertices will have the same color except with probability $O(n\sfh e^{- \beta J})$; a single $\Fuzzy$ boundary vertex in fuzzy Potts corresponds to a single $\noB$ boundary vertex in Potts, which by symmetry can be taken to be $\Red$. Taking $J$ arbitrarily large completes the proof.
\end{proof}

\begin{remark}
In \cref{prop:Potts-monotonicity} we required that $A$, the event that is increasing in the set of $\Blue$ vertices, should depend only on $\Ablue^c$. The assertion of the proposition would fail if we omitted that requirement; e.g., if $\mathsf{o}$ denotes the origin vertex $(\frac{1}{2},\frac{1}{2},\frac{1}{2})$ and $A=\{\sigma_{\mathsf{o}} = \Blue\}$ then one would have
$ \musoft_n^0(A) < \mu_n^\fl(A)$ (whereas the event $A=\{\sigma_{\mathsf{o}}\in\Ablue\}$ would be covered by the proposition and have $\musoft_n^0(A)\geq \mu_n^\fl(A)$).
\end{remark}

\begin{corollary}\label{cor:weak-upper-fl-blue}
For large enough $n$,
    \begin{equation}
        \mu^\fl_n(\cI_\Blue \cap \cL_{\geq  (6/\beta)\log n} \neq \emptyset) \leq n^{-1+\epsilon_\beta}\,.
    \end{equation}
\end{corollary}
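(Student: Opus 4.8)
The plan is to deduce this directly from \cref{lem:soft-upper-lower-bound} together with the comparison inequality \cref{prop:Potts-monotonicity}, both applied with $\sfh = h_0 := \lfloor (3/\beta)\log n\rfloor$. Since $2h_0 \le (6/\beta)\log n$, one has
\[ \{\cI_\Blue \cap \cL_{\geq (6/\beta)\log n}\neq\emptyset\} \;\subseteq\; \{\cI_\Blue \cap \cL_{\geq 2h_0}\neq\emptyset\} \;=:\; A\,, \]
so it suffices to bound $\mu_n^\fl(A)$.

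First I would verify that $A$ meets the structural hypotheses of \cref{prop:Potts-monotonicity}: that it is increasing in the set of $\Blue$ vertices and measurable with respect to $\Ablue^c$. The cleanest route to both is the identity
\[ A = \{\Ablue \cap \cL_{\geq 2h_0 - 1/2}\neq\emptyset\}\,. \]
Indeed, a plaquette of $\cI_\Blue$ at integer height $\ge 2h_0$ sits directly above an $\Ablue$-vertex at height $\ge 2h_0-\tfrac12$; conversely, if $m \ge 2h_0-\tfrac12$ is the highest $\Ablue$-vertex in some column, then the vertex at height $m+1$ lies in $\Ablue^c$ (it is either above $m$ in the same column or a $\Red$ boundary vertex, using that the top boundary is $\Red$ in both boxes $\Lambda_{n,n}$ and $\Lambda'_{n,n}$), which produces a plaquette of $\cI_\Blue$ at height $m+\tfrac12 \ge 2h_0$. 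The right-hand side is visibly measurable with respect to $\Ablue^c$ and increasing in $\Ablue$, hence increasing in the set of $\Blue$ vertices by the monotonicity of $\Ablue$ in the $\Blue$ vertex set used throughout \cref{def:FK-Potts-interfaces} and \cref{prop:Potts-monotonicity}.

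With the hypotheses in place, \cref{prop:Potts-monotonicity} (with $\sfh = h_0$) gives $\mu_n^\fl(A) \le \musoft_n^{h_0}(A)$, and \cref{lem:soft-upper-lower-bound} gives $\musoft_n^{h_0}(A) = \musoft_n^{h_0}(\cI_\Blue \cap \cL_{\geq 2h_0}\neq\emptyset) \le n^{-1+\epsilon_\beta}$; combining these with the inclusion above yields $\mu_n^\fl(\cI_\Blue \cap \cL_{\geq (6/\beta)\log n}\neq\emptyset) \le n^{-1+\epsilon_\beta}$. There is no substantial obstacle here — the content has been front-loaded into \cref{prop:Potts-monotonicity} — so the only step calling for genuine care is the bookkeeping in the previous paragraph, namely checking that $A$ depends only on $\Ablue^c$ and is increasing in the set of $\Blue$ vertices.
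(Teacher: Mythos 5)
Your proposal is correct and follows exactly the paper's route: set $h_0=\lfloor(3/\beta)\log n\rfloor$, bound $\musoft_n^{h_0}(\cI_\Blue\cap\cL_{\ge 2h_0}\neq\emptyset)$ via \cref{lem:soft-upper-lower-bound}, and transfer to $\mu_n^\fl$ via \cref{prop:Potts-monotonicity}. The only difference is that you spell out the verification (via the identity $A=\{\Ablue\cap\cL_{\ge 2h_0-1/2}\neq\emptyset\}$) that $A$ is increasing in the $\Blue$ vertices and measurable w.r.t.\ $\Ablue^c$, which the paper simply asserts; your verification is sound.
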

\begin{proof}
Let $h_0 = \lfloor(3/\beta)\log n\rfloor$, let $A$ be the event $\{\cI_\Blue \cap \cL_{\geq 2h_0} \neq \emptyset \}$, and recall the upper bound from \cref{lem:soft-upper-lower-bound} on $\musoft_n^{h_0}(A)$. Since $A$ is increasing in the set 
 of $\Blue$ vertices, and is measurable w.r.t.~$\Ablue^c$, we may apply \cref{prop:Potts-monotonicity} for $\sfh=h_0$ to extend this bound to $\mu_n^\fl(A)$, as required.
\end{proof}

\subsubsection{Rigidity of the four interfaces}
In this subsection we employ a standard Peierls argument to infer the rigidity of the interfaces  $\Ibot\preceq \cI_\Blue \preceq \cI_\Red \preceq \Itop$ from \cref{def:FK-Potts-interfaces}, which will then allow us to extend the logarithmic upper bound from $\cI_\Blue$ to the the other three interfaces. 

Recall that $\Abot \subset  \Atop^c$. The following lemma bounds the tail probability for the boundary size of a $(\Z+\frac12)^3$-connected component in $\Atop^c \setminus \Abot$. 
\begin{lemma}\label{lem:Gamma-outside-Atop-Abot}
Let $\omega\sim\bar\pi_n^\fl$, consider $v\in\Lambda_{n,n}$. Let $\cC_v$ be the (potentially empty) $(\Z+\frac12)^3$-connected component of the vertex $v$ in $\Lambda_{n,n}\setminus (\Atop \cup \Abot)$. Let $\Gamma_v$ be its plaquette boundary, that is, $f\in \Gamma_v$ if it is dual to the $(\Z+\frac12)^3$-bond $(u,w)$ with $u\in\cC_v$ and $w\notin\cC_v$. Then 
    \begin{align*}
        \bar\pi_n^\fl(|\Gamma_v | \ge r) \le e^{ - (\beta-C) r/2}\,.
    \end{align*}
\end{lemma}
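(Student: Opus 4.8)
The plan is to run a standard Peierls/contour argument in the random-cluster measure $\bar\pi_n^\fl$, using the cluster-expansion representation of $\cI_\Full$ from \cref{prop:grimmett-cluster-exp} together with the key structural observation that the plaquette boundary $\Gamma_v$ of any connected component $\cC_v$ of $\Lambda_{n,n}\setminus(\Atop\cup\Abot)$ is a closed ($1$-connected) surface of dual-to-closed plaquettes lying \emph{inside} $\cI_\Full$. The first step is to justify this last claim: since $\cC_v$ is disjoint from $\Atop$ and $\Abot$, every bond from $\cC_v$ to its exterior must be closed (an open bond from $v\in\cC_v$ to $\Vtop$ or $\Vbot$ would enlarge those clusters and hence their augmentations, contradicting $v\notin\Atop\cup\Abot$; an open bond to another vertex of $\Lambda_{n,n}\setminus(\Atop\cup\Abot)$ would keep that vertex inside $\cC_v$). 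Hence $\Gamma_v\subset F$, the set of plaquettes dual to closed edges, and since $\cC_v$ is connected, $\Gamma_v$ is $1$-connected and in fact $\Gamma_v\subset\cI_\Full$ whenever $\cC_v$ is separated from the boundary by $\cI_\Full$ — which it must be, as $\cC_v$ lies strictly between the top and bottom augmented clusters. (If $\cC_v$ is empty we have nothing to prove.)

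Given that $\Gamma_v$ is a $1$-connected set of $r$ dual-to-closed plaquettes forming a closed surface around $v$ with $\Gamma_v\subset\cI_\Full$, the second step is to perform the usual surgery: compare $\omega$ with the configuration $\omega'$ obtained by opening every edge dual to a plaquette of $\Gamma_v$ (equivalently, merging $\cC_v$ into whichever neighboring cluster it abuts). This is a valid map on the event $\{|\Gamma_v|\ge r\}$ into configurations where those $r$ plaquettes are no longer in the interface. The cluster-expansion formula \cref{eq:CE} controls the Radon--Nikodym cost: opening $r$ plaquettes changes the energetic factor by $(1-p)^{-r} = e^{\beta r}$ in the wrong direction... wait, opening edges means they leave $I$, so $(1-p)^{|I|}$ loses $r$ factors of $(1-p)=e^{-\beta}$, contributing $e^{\beta r}$ \emph{gain}, while $p^{|\partial I|}$ and $q^{\kappa(I)}$ change by at most a bounded multiplicative factor per plaquette (using $q\ge 1$, the number of clusters can only decrease, and $|\partial I|$ changes by $O(r)$), and the $\g$-sum changes by at most $Kr$ by \cref{eq:g-bound-1-face}. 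Net: removing $\Gamma_v$ from the interface costs at most $e^{-\beta r}\cdot e^{Cr}$ per realization, i.e. each fixed realization of $\Gamma_v$ of size $\ge r$ has probability at most $e^{-(\beta-C)r}$ times the probability of its image.

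The third step is the entropy count: the number of $1$-connected closed plaquette surfaces of size $s$ surrounding a fixed vertex $v$ is at most $s\cdot c_0^s$ for an absolute constant $c_0$ (a standard Peierls bound — there are at most $3^{2s}$ such $1$-connected sets through a given plaquette, and the surface must cross one of $O(s)$ vertical rays from $v$). Combining, $\bar\pi_n^\fl(|\Gamma_v|\ge r)\le \sum_{s\ge r} s\,c_0^s\, e^{-(\beta-C)s} \le e^{-(\beta-C')r}$ for $\beta$ large, and after renaming constants (and halving the rate to absorb the polynomial and the summation, which is the source of the $/2$ in the statement), $\bar\pi_n^\fl(|\Gamma_v|\ge r)\le e^{-(\beta-C)r/2}$.

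\textbf{Main obstacle.} The delicate point is not the Peierls counting but verifying cleanly that the surgery $\omega\mapsto\omega'$ is compatible with the \emph{conditioned} measure $\bar\pi_n^\fl=\pi_n^{\mathsf w,\mathsf w}(\cdot\mid\sep_{n,n})$: opening the plaquettes of $\Gamma_v$ merges $\cC_v$ into a neighboring cluster, and one must check this can never create a connection between $\partial_\Red\Lambda_{n,n}$ and $\partial_\Blue\Lambda_{n,n}$, i.e. $\omega'$ still lies in $\sep_{n,n}$. This holds because $\cC_v$ is, by construction, disjoint from both $\Atop$ and $\Abot$, so its neighbors in $\omega'$ after the merge are internal vertices not connected through $\omega$ to the top or bottom wired clusters in a way that would bridge them — but one should argue it carefully, possibly by merging $\cC_v$ only into a single designated neighboring cluster and noting $\cI_\Full$ still separates top from bottom. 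A secondary nuisance is that \cref{prop:grimmett-cluster-exp} is stated for $\cI_\Full$ under $\bar\pi^\dob$/$\bar\pi^\sfh$; for the hard-floor measure $\bar\pi_n^\fl$ one invokes the same expansion valid for $\cI_\Full$ in that setting (as used elsewhere in the paper, cf. \cref{rem:diff-domains} and the remark preceding \cref{prop:cluster-expansion}), with the floor at height $0$ — here one must note that $\Gamma_v$, being a \emph{bounded} closed surface strictly between $\Atop$ and $\Abot$, stays away from the boundary plaquettes, so the local cluster-expansion bounds apply without the caveat about $r$ being capped near the boundary.
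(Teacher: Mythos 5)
Your surgery step contains a genuine gap, and it is exactly at the point you flag as ``the main obstacle'' but do not resolve. Opening \emph{every} edge dual to a plaquette of $\Gamma_v$ produces a configuration that violates the conditioning $\sep_{n,n}$, so the image has $\bar\pi_n^\fl$-probability zero and the Peierls comparison is vacuous. Indeed, every external neighbor of $\cC_v$ lies in $\Atop\cup\Abot$ (by maximality of the component $\cC_v$ in $\Lambda_{n,n}\setminus(\Atop\cup\Abot)$), in fact in $\Vtop\cup\Vbot$, and a nonempty $\cC_v$ necessarily has neighbors in \emph{both} (if all its neighbors were in $\Vtop$, the augmentation in \cref{def:FK-Potts-interfaces} would have placed $\cC_v$ inside $\Atop$, and symmetrically for $\Abot$). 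So your $\omega'$ wires $\partial_\Red\Lambda_{n,n}$ to $\partial_\Blue\Lambda_{n,n}$ through $\cC_v$. Your proposed fix of ``merging into a single designated neighboring cluster'' is the right idea, but carrying it out is where the content of the lemma lives: one observes that $\Gamma_v\subset\Itop\cup\Ibot$ (each plaquette separates $\cC_v$ from $\Atop$ or from $\Abot$), so at least half of $\Gamma_v$ lies in one of the two, and one opens \emph{only} that half --- which keeps $\Atop$ and $\Abot$ disconnected since the edges opened join $\Atop$ to $\cC_v\subset\Abot^c$ (resp.\ $\Abot$ to $\cC_v\subset\Atop^c$). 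This is also the true source of the factor $1/2$ in the exponent, which you instead misattribute to absorbing polynomial prefactors and the tail sum; those only affect the constant $C$, not the rate.

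Two smaller points. The detour through the cluster expansion (\cref{prop:grimmett-cluster-exp}) is unnecessary and brings in the boundary-proximity caveat you then have to argue away: a direct weight comparison in \cref{eq:fk-measure} gives $\bar\pi_n^\fl(\omega')/\bar\pi_n^\fl(\omega)\ge\big(\tfrac{p}{(1-p)q}\big)^{\#\text{opened}}$, since opening an edge can decrease $\kappa$ by at most one, and this is all that is needed once the opened set has size at least $|\Gamma_v|/2$. Also, for the injectivity/entropy count one must enumerate not just the connected plaquette set $\Gamma_v$ but also which of its plaquettes were opened (the subset $\Gamma_v\cap\Itop$ is $\omega$-dependent); this costs an extra factor $2^{|\Gamma_v|}$, harmless but worth stating.
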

\begin{proof}
It is convenient for this proof to have the vertex set of $\Lambda_{n,n}$ include its external boundary vertices, letting those in $\cL_{>0}$ be part of $\Atop$ in $\omega\sim\bar\pi_n^\fl$, and those of $\cL_{<0}$ be part of $\Abot$. As such, every plaquette $f\in\Gamma_v$ must separate $x\in \cC_v$ from some $y\in\Abot \oplus \Atop$, whence $f\in \Ibot \oplus \Itop$. 

Let 
\[A_k := \{|\Gamma_v|=k\,,\,|\Gamma_v \cap \Itop|\geq k/2\}\,.\]
The map $\Phi$ that opens every edge dual to $\Gamma_v \cap \Itop$ maintains that $\Atop,\Abot$ are disconnected in $\omega$ (having  only opened edges between $\Atop$ and $\cC_v\subset  \Abot^c$), and
\[ \frac{\bar\pi_n^\fl(\Phi(\omega))}{\bar\pi_n^\fl(\omega)} \geq \Big(\frac{p}{(1-p)q}\Big)^{|\Gamma_v\cap\Itop|}\geq e^{(\beta-\log q)k/2}
\]
uniformly over $\omega\in A_k$ (each edge modification gains a factor of $\frac{p}{1-p}$, perhaps decreasing  $\kappa(\omega)$ by~$1$).
Enumerating over the connected set of plaquettes forming $\Gamma_v$ now shows that $\bar\pi_n^\fl(A_k) \leq \exp(-(\beta-C)k/2)$ for some absolute constant $C>0$.
The analogous argument treating 
\[B_k := \{|\Gamma_v|=k\,,\,|\Gamma_v\cap\Ibot|\geq k/2\}\] via opening the edges $\Gamma_v\cap\Ibot$ shows that $\bar\pi_n^\fl(B_k) \leq \exp(-(\beta-C)k/2)$ (with the same $C>0$), which, since $\{|\Gamma_v|\geq r\}\subset  \bigcup_{k\geq r} (A_{k}\cup B_{k})$, concludes the proof.
\end{proof}

Similarly, we can easily bound the differences between the $\Top$ and $\Full$ interfaces.
\begin{lemma}\label{lem:bound-on-hairs}
    Fix an edge $e \in \Lambda_n$ and consider the dual face $f = f_e$. Let $C_f$ be the (potentially empty) 1-connected component of $f$ in $\cI_\Full \setminus \cI_\Top$. Then, 
    \begin{align*}
        \bar{\pi}_n^\fl(|C_f| \ge r) \le e^{ - (\beta-C) r}\,.
    \end{align*}
    The same statement holds if we define $C_f$ to be the 1-connected component of $f$ in $\cI_\Full \setminus \cI_\Bot$.
\end{lemma}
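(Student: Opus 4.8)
The plan is to mimic the Peierls-type argument used in the proof of \cref{lem:Gamma-outside-Atop-Abot}, applying an edge-opening map to the plaquettes of $C_f$. Fix $e\in\Lambda_n$ with dual plaquette $f=f_e$, and let $C_f$ be the $1$-connected component of $f$ in $\cI_\Full\setminus\cI_\Top$. The key structural observation is that $C_f$ consists of plaquettes dual to closed edges that are \emph{not} on the boundary of the open cluster $\Vtop$ of the $\Red$ boundary vertices. I would first argue that if we open every edge dual to a plaquette of $C_f$, the resulting configuration $\Phi(\omega)$ still lies in the disconnection event $\sep_{n,n}$; this is because none of these edges joins $\Atop$ to $\Atop^c$ (such an edge would be dual to a plaquette of $\cI_\Top$, hence not in $C_f$), so the open cluster of $\partial_\Red\Lambda_{n,n}$ is unchanged and in particular still disconnected from $\partial_\Blue\Lambda_{n,n}$. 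Equivalently, one can invoke that each plaquette of $C_f\subset\cI_\Full\setminus\cI_\Top$ separates two vertices neither of which is in $\Vtop$ (or more precisely, does not separate $\Atop$ from its complement), so opening it cannot merge the two wired boundary clusters.

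Given this, I would run the standard energy-entropy computation. For a fixed $1$-connected set $I$ of $|I|=r$ plaquettes containing $f$, the map $\Phi_I$ that opens exactly the edges dual to $I$ satisfies
\[
\frac{\bar\pi_n^\fl(\Phi_I(\omega))}{\bar\pi_n^\fl(\omega)} \;\ge\; \Big(\frac{p}{(1-p)q}\Big)^{r} \;\ge\; e^{(\beta-\log q)r}\,,
\]
uniformly over $\omega$ with $C_f=I$, since each opened edge contributes a factor $\tfrac{p}{1-p}=e^\beta-1$ and can decrease $\kappa(\omega)$ by at most $1$ (costing a factor $q$); crucially, because we open \emph{all} of $C_f$ at once, the images $\Phi_I(\omega)$ for distinct $\omega$ with $C_f=I$ remain distinct (the map only changes edges dual to $I$, and on its image those edges are all open, so $\omega$ is recoverable given $I$). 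Summing over $\omega$ with $C_f=I$ gives $\bar\pi_n^\fl(C_f=I)\le e^{-(\beta-\log q)r}$, and then a union bound over the (at most $C^r$ many, by the standard bound on the number of $1$-connected plaquette sets of size $r$ through a fixed plaquette) choices of $I$ yields $\bar\pi_n^\fl(|C_f|\ge r)\le\sum_{k\ge r}(Ce^{-\beta})^k\le e^{-(\beta-C)r}$ for $\beta$ large, as claimed. The case where $C_f$ is the $1$-connected component of $f$ in $\cI_\Full\setminus\cI_\Bot$ is identical, opening edges dual to $C_f$ and using that such edges do not separate $\Abot$ from its complement, so $\Phi(\omega)$ still lies in $\sep_{n,n}$.

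The main obstacle I anticipate is the verification that opening the edges of $C_f$ genuinely preserves the disconnection event $\sep_{n,n}$ — i.e., pinning down exactly which vertices the plaquettes of $\cI_\Full\setminus\cI_\Top$ can and cannot separate. One has to rule out the possibility that some edge dual to a plaquette in $C_f$, though not itself a boundary plaquette of $\Vtop$ in $\omega$, nonetheless becomes a ``bridging'' edge after other plaquettes of $C_f$ are opened (so that the order of opening, or the joint effect, matters). The resolution is that $\Atop$ is a connected set with the property that every $(\Z+\tfrac12)^3$-path leaving it crosses $\cI_\Top$; opening edges whose duals lie in $\cI_\Full\setminus\cI_\Top$ only ever merges clusters \emph{within} $\Atop^c$ (or grows clusters disjoint from $\Atop$), never attaches anything to $\Atop$, so $\Atop$'s open cluster — and hence its disconnection from $\partial_\Blue\Lambda_{n,n}$ — is untouched. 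Making this last point fully rigorous, rather than the routine counting, is where the care is needed; it is a direct analogue of the corresponding step in \cref{lem:Gamma-outside-Atop-Abot} and of Peierls maps for the full interface in \cite{GielisGrimmett02}.
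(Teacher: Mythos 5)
Your proposal is correct and follows essentially the same route as the paper: open all edges dual to $C_f$, note that since $C_f\subset\cI_\Full\setminus\cI_\Top$ no opened edge is dual to an $\Itop$ plaquette so the cluster of $\partial_\Red\Lambda_{n,n}$ stays inside $\Atop$ and $\sep_{n,n}$ is preserved, then run the same energy--entropy computation as in \cref{lem:Gamma-outside-Atop-Abot} (here with the full rate $r$ rather than $r/2$, since every plaquette of $C_f$ is opened). The paper's proof is exactly this, stated in two sentences by reference to the earlier lemma.
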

\begin{proof}
   Take the map $\Phi$ that opens every edge dual to $C_f$. Since $C_f \subset \cI_{\Full} \setminus \cI_{\Top}$, this operation keeps the resulting configuration in $\sep_{n,n}$. The same short computation as in the proof of~\cref{lem:Gamma-outside-Atop-Abot} immediately implies the desired bound.
\end{proof}

\begin{corollary}\label{cor:dist-Ifull-Ibot}
Fix $\beta$ large enough. There exists a constant $C > 0$ such that for any $k \geq 0$,
\[ \bar\pi_n^\fl\left(\max_x \overline\hgt_x(\cI_\Full) > \max_x \overline\hgt_x(\Ibot) + k\right) \leq Cn^3e^{-(\beta - C)k/3}\,, \]
and
\[ \bar\pi_n^\fl\left(\min_x \underline\hgt_x(\cI_\Full) < \min_x \underline\hgt_x(\Itop) - k\right) \leq Cn^3e^{-(\beta - C)k/3}\,.  \]
By the ordering of the interfaces~\eqref{eq:interface-order}, the distance between the maximum of any of the interfaces $\Full, \Top, \Red, \Blue, \Bot$ satisfies the same bound, and likewise for the minimum.
\end{corollary}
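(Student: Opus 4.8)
The plan is to prove the two displayed tail bounds by a single argument — connectivity of $\cI_\Full$ together with \cref{lem:bound-on-hairs} — and then to upgrade to the statement about all five interfaces using the ordering \eqref{eq:interface-order}, the inclusions $\Itop,\cI_\Red,\cI_\Blue,\Ibot\subseteq\cI_\Full$, and a soft monotonicity of extremal heights in the underlying vertex set. I would treat the bound on $\max_x\overline\hgt_x(\cI_\Full)$ relative to $\Ibot$ first; the bound on $\min_x\underline\hgt_x(\cI_\Full)$ relative to $\Itop$ is symmetric, swapping ``up''/``down'', $\Abot$/$\Atop$, and invoking the $\cI_\Full\setminus\cI_\Top$ version of \cref{lem:bound-on-hairs} in place of the $\cI_\Full\setminus\cI_\Bot$ one.

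The first ingredient is deterministic: every plaquette of $\Ibot=\partial\Abot$ lies at height at most $\max_x\overline\hgt_x(\Ibot)$. Indeed such a plaquette separates some $u\in\Abot$ from a neighbour $w\notin\Abot$; travelling straight up the column of $u$ one must eventually leave $\Abot$ (it is contained in the finite box $\Lambda_{n,n}$, whose top boundary belongs to $\Atop$ and hence is disjoint from $\Abot$), producing a horizontal plaquette of $\Ibot$ at height at least $\hgt(u)+\tfrac12$, which dominates the height of the original plaquette. Now on the event $\{\max_x\overline\hgt_x(\cI_\Full)>\max_x\overline\hgt_x(\Ibot)+k\}$ there is a horizontal plaquette $f^*\in\cI_\Full$ with $\hgt(f^*)=H:=\max_x\overline\hgt_x(\cI_\Full)>\max_x\overline\hgt_x(\Ibot)+k$, and by the observation $H$ exceeds the height of every plaquette of $\Ibot$, so $f^*\notin\Ibot$.

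The second ingredient is connectivity. Since $\cI_\Full$ is $1$-connected and contains the nonempty set $\Ibot$, there is a $1$-path in $\cI_\Full$ from $f^*$ to $\Ibot$; truncating it at its first plaquette in $\Ibot$ yields a $1$-connected set $P\subseteq\cI_\Full\setminus\Ibot$ containing $f^*$ whose last plaquette is $1$-adjacent to a plaquette of $\Ibot$, hence (by the deterministic fact) at height at most $\max_x\overline\hgt_x(\Ibot)+1\le H-k$. As consecutive plaquettes of a $1$-path differ in height by at most $1$, $P$ contains at least $k$ plaquettes, and $P$ is contained in the component $C_{f^*}$ of $f^*$ in $\cI_\Full\setminus\cI_\Bot$ from \cref{lem:bound-on-hairs}. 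Hence the event is contained in $\bigcup_{f^*}\{|C_{f^*}|\ge k\}$, a union over the $O(n^3)$ horizontal plaquettes $f^*$ of the box; by \cref{lem:bound-on-hairs} and a union bound its probability is at most $Cn^3e^{-(\beta-C)k}$, which is stronger than the asserted $Cn^3e^{-(\beta-C)k/3}$. The min statement follows verbatim after the up/down swap.

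For the statement covering all five interfaces, the same travel-up-a-column (resp.\ travel-down-a-column) argument shows that $A\mapsto\max_x\overline\hgt_x(\partial A)$ and $A\mapsto\min_x\underline\hgt_x(\partial A)$ are nondecreasing in the vertex set $A$, as $A$ ranges over sets that contain the bottom boundary and avoid the top boundary: one first checks that the plaquette realizing the extremal height has the complement on the far side (otherwise going one more step away from the realizing vertex would exhibit an even more extremal plaquette), so that the extra vertices of the larger set only push its boundary further out. Applying this along $\Abot\subseteq\Ablue\subseteq\Ared^c\subseteq\Atop^c$ (the containments underlying \eqref{eq:interface-order}) gives $\max_x\overline\hgt_x(\Ibot)\le\max_x\overline\hgt_x(\cI_\Blue)\le\max_x\overline\hgt_x(\cI_\Red)\le\max_x\overline\hgt_x(\Itop)$, together with the reverse chain for the minima; and since each of the four interfaces is a subset of $\cI_\Full$, its maximal (resp.\ minimal) height is at most (resp.\ at least) that of $\cI_\Full$. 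Combining with the two displayed bounds, off an event of probability $Cn^3e^{-(\beta-C)k/3}$ all five maximal heights lie in an interval of length $k$ and all five minimal heights lie in an interval of length $k$, which is the asserted pairwise bound. I do not expect a serious obstacle here; the only points needing care are invoking the correct one of the two versions of \cref{lem:bound-on-hairs} for the max versus the min, and keeping track of the $O(1)$ shifts coming from overhangs and from plaquettes touching the top or bottom face of the box in the deterministic height facts — and these are precisely why the stated exponent $k/3$ is comfortably loose relative to what the argument actually delivers.
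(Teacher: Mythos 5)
Your argument is correct, and for the two displayed tail bounds it takes a genuinely different (and in fact sharper) route than the paper. The paper chains two comparisons: it first bounds $\max_x\overline\hgt_x(\cI_\Full)-\max_x\overline\hgt_x(\Itop)$ by $\max_f|C_f|$ using \cref{lem:bound-on-hairs}, and then bounds $\max_x\overline\hgt_x(\Itop)-\max_x\overline\hgt_x(\Ibot)$ by $\max_v|\Gamma_v|$ using \cref{lem:Gamma-outside-Atop-Abot}; splitting $k$ between the two tails (one of which decays only like $e^{-(\beta-C)r/2}$) is what produces the exponent $k/3$. You instead go from $\cI_\Full$ to $\Ibot$ in one step: the extremal plaquette $f^*$ of $\cI_\Full$ lies outside $\Ibot$, and since $\cI_\Full$ is $1$-connected and contains $\Ibot$, the $1$-connected component of $f^*$ in $\cI_\Full\setminus\cI_\Bot$ must bridge the height gap and hence have size at least $k$; a single application of the $\cI_\Full\setminus\cI_\Bot$ version of \cref{lem:bound-on-hairs} plus a union bound then gives $Cn^3e^{-(\beta-C)k}$, which is stronger than the stated bound and never invokes \cref{lem:Gamma-outside-Atop-Abot}. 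Your treatment of the ``all five interfaces'' claim — monotonicity of the extremal heights of $\partial A$ in $A$ along $\Abot\subseteq\Ablue\subseteq\Ared^c\subseteq\Atop^c$, combined with the inclusions into $\cI_\Full$ — is exactly what the paper's terse ``by the ordering of the interfaces'' is implicitly invoking, and you correctly flag the $O(1)$ slack in the travel-up/travel-down step. Two cosmetic points: the extremal height of $\cI_\Full$ need not be realized by a horizontal plaquette, so the final union bound should simply run over all $O(n^3)$ plaquettes rather than the horizontal ones; and the ``deterministic fact'' that every plaquette of $\Ibot$ has height at most $\max_x\overline\hgt_x(\Ibot)$ is immediate from the definition of $\overline\hgt$, so the column argument there is only genuinely needed for the monotonicity step at the end.
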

\begin{proof}
Every $f\in\Itop\setminus \Ibot$  belongs to the boundary $\Gamma_v$ of some component $\cC_v$ as per the definition in \cref{lem:Gamma-outside-Atop-Abot}. Note that said boundary must also contain some plaquette $f'\in\Ibot\setminus\Itop$ (some external boundary vertex of $\cC_v$ must be in $\Abot$, or else $\cC_v$ would be part of $\Atop$), whence $|\Gamma_v|$ is an upper bound on the length of a path of plaquettes connecting $f,f'$. Hence, the difference between $\max_x \overline\hgt_x(\Itop)$ and $ \max_x \overline\hgt_x(\Ibot)$ is bounded by $\max_v |\Gamma_v|$. Similarly, the difference between $\max_x\overline\hgt_x(\cI_\Full)$ and $\max_x\overline\hgt_x(\cI_\Top)$ is bounded by $\max_f |C_f|$.
We conclude by Lemmas~\ref{lem:Gamma-outside-Atop-Abot}--\ref{lem:bound-on-hairs} and a union bound over the $O(n^3)$ vertices in $\Lambda_n$.
\end{proof}

\begin{proof}[\textbf{\emph{Proof of \cref{thm:weak-upper-fl-top}}}]
Let $(\sigma,\omega)\sim\bP_n^\fl$. 
By taking $k = (12/\beta)\log n$ in \cref{cor:dist-Ifull-Ibot}, combined with the ordering of the interfaces in \cref{eq:interface-order}, 
with probability at least $1-n^{-1+\epsilon_\beta}$ we have that 
 for every $x$,
\[ \overline\hgt_x(\Itop) - \overline\hgt_x(\cI_\Blue) \leq 
\overline\hgt_x(\Itop) - \overline\hgt_x(\Ibot)\leq (12/\beta)\log n\,. 
\]
The bound on $\max_x \overline\hgt_x(\cI_\Blue)$ from  \cref{cor:weak-upper-fl-blue} thus concludes the proof. 
\end{proof}

\subsection{Monotonicity of the effect of lifting the top interface with a hard vs.\ soft floor}\label{subsec:Itop-monotonicity}

The basic mechanism for establishing the typical height of the interface above a floor (hard or soft) is to compare the effect of lifting the entire interface, say by a single $n\times n \times 1$ slab: one wishes to weigh the gain from entropic repulsion (the ability to drive deeper spikes in the bulk of the surface) against the loss in energy (due to the additional plaquettes along the perimeter of the newly added slab). However, for the Potts interfaces (e.g., $\cI_\Blue$), the interactions with the hard floor might outweigh the entropic repulsion effect (through the change in partition functions induced below the interface). In fact, curiously, their exponential rates appear to coincide, at least up to the first order term as a function of $\beta$. Fortunately, the interface $\cI_\Top$ (but not $\cI_{\Full}$ nor $\Ibot$) exhibits a nicer interaction with the hard floor; the following lemma shows, roughly, that  interactions with the hard floor make it ``only easier to lift the interface'' then.

\begin{lemma}\label{lem:compare-fl-dob}
For any $\Top$ interface $I\subset \cL_{\geq 0}$ and $j\geq 1$, let $\Theta_j I$ denote the interface obtained by shifting $I$ up by $j$ and adding the $4jn$ additional boundary plaquettes of heights in $[0,j]$,  and
set
\begin{equation}\label{eq:Xi-fl-dob-def} \Xi_j^{\fl}(I) := \frac{\bar\pi_n^{\fl}(\Itop = \Theta_j I)}{\bar\pi_n^{\fl}(\Itop =  I)}\qquad\mbox{and} \qquad \Xi^{\dob}_j(I) := \frac{\bar\pi_n^{\dob} (\Itop = \Theta_j I)}{\bar\pi_n^{\dob} (\Itop = I)}\,. \end{equation}
If $I$ and $j$ are such that $\max_{x}\overline\hgt_x(I)< j \leq n/2$, then
$ \Xi_j^\fl(I) \geq \Xi_j^\dob(I)$.
\end{lemma}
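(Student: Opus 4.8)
The plan is to express each of the ratios $\Xi_j^{\fl}(I)$ and $\Xi_j^{\dob}(I)$ as a ratio of random-cluster partition functions of the region \emph{below} the revealed $\Top$ interface, and then to compare these two region-below contributions. First I would reveal only $\Itop$ (and $\Atop$) in $\omega\sim\bar\pi_n^\fl$. Conditionally on $\{\Itop = I\}$, the law of the configuration strictly below $I$ (on the finite region $U_I$ consisting of the cubes below $I$, down to and including the floor) is a random-cluster measure with \emph{free} boundary conditions on the top side (the plaquettes of $I$ are dual-to-closed, so $\Atop$ is disconnected from $U_I$) and wired boundary conditions on the portion of $\partial\Lambda_n$ it touches; crucially, in the $\fl$ setting the bottom face $\partial_\Blue\Lambda_n$ is wired and the configuration must avoid the disconnection event $\sep_{n,n}$, but since $\Itop$ is already revealed as a full interface separating $\partial_\Blue\Lambda_n$ from $\Atop$, that disconnection is automatic — so the constraint disappears once $I$ is fixed. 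Writing $Z^{\fl}_{\text{below}}(I)$ for the partition function of this region-below measure, and similarly $Z_{\text{above}}(I)$ for the (translation-invariant, floor-independent) region-above partition function, the weight $\bar\pi_n^\fl(\Itop = I)$ factorizes, up to the explicit plaquette/cluster weight $p^{|\partial I|}(1-p)^{|I|}q^{\kappa(\cdots)}$, as $Z_{\text{above}}(I)\,Z^{\fl}_{\text{below}}(I)$ divided by the global normalization. The key point is that in the ratio $\Xi_j^\fl(I) = \bar\pi_n^\fl(\Itop=\Theta_j I)/\bar\pi_n^\fl(\Itop = I)$, the region-above factors and the global normalization cancel, the explicit plaquette weight contributes a clean factor $[p^{4n}(1-p)^n q^{?}]^{\,j}$ (from the $4jn$ added boundary plaquettes and the $jn$ new dual-to-closed plaquettes of $\Theta_j I$ versus $I$ — one must bookkeep the cluster count, but it is the same bookkeeping on both sides), so that
\[
\frac{\Xi_j^\fl(I)}{\Xi_j^\dob(I)} \;=\; \frac{Z^{\fl}_{\text{below}}(\Theta_j I)\big/Z^{\fl}_{\text{below}}(I)}{Z^{\dob}_{\text{below}}(\Theta_j I)\big/Z^{\dob}_{\text{below}}(I)}\,.
\]
It therefore suffices to show this ratio is $\geq 1$.

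Next I would identify the two region-below partition functions geometrically. Since $\max_x\overline\hgt_x(I) < j$, the region $U_{\Theta_j I}$ below the lifted interface is exactly $U_I$ (translated up by $j$) with an $n\times n\times j$ slab glued underneath it; in the $\fl$ case this slab sits on the hard floor at height $0$, in the $\dob$ case it sits on the wired lower boundary at height $-j$ extended appropriately — but the essential difference is that in the $\fl$ model the \emph{bottom} boundary of the region-below is the wired $\partial_\Blue$ face, whereas in the $\dob$ model the relevant lower wired boundary is also present; the real asymmetry is in how the wiring on the sides of the added slab interacts with the bottom wiring. Concretely, $Z^{\fl}_{\text{below}}(\Theta_j I)/Z^{\fl}_{\text{below}}(I)$ equals a random-cluster partition function of the $n\times n\times j$ slab with free boundary on top (where it meets a copy of $\partial U_I$ with the $I$-induced free bc), wired on all four sides \emph{and} wired on the bottom, \emph{but in the $\fl$ case the bottom-wired cluster and the side-wired clusters are the same cluster}, whereas in the $\dob$ case the region-below does not include a bottom floor at all in the same way. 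Rather than push the precise bookkeeping here, the cleanest route is the monotonicity comparison: both $Z^{\fl}_{\text{below}}(\Theta_j I)/Z^{\fl}_{\text{below}}(I)$ and $Z^{\dob}_{\text{below}}(\Theta_j I)/Z^{\dob}_{\text{below}}(I)$ are partition-function ratios of the \emph{same} slab-gluing operation, but with different boundary conditions on the bottom of the added slab — wired-and-identified-with-sides in the $\fl$ case, versus a free (or less-wired) boundary in the $\dob$ case — and a wired bottom boundary can only increase such a partition-function ratio relative to a free one, by the FKG/monotonicity properties of the random-cluster model (wiring boundary vertices increases the measure, and the ratio in question is an increasing function of the bottom boundary coupling via a standard transfer of the added slab's wiring). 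So the plan reduces to: (i) do the factorization carefully so all floor-independent pieces cancel; (ii) verify the added-slab plaquette/cluster weights match exactly on the two sides; (iii) observe the residual difference is precisely a boundary-condition comparison on the bottom of the added slab, with the $\fl$ side being the more-wired one; (iv) invoke random-cluster monotonicity to conclude $\Xi_j^\fl(I)/\Xi_j^\dob(I)\geq 1$.

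The main obstacle I anticipate is step (iii): making precise the claim that ``the hard floor's effect on the region below $\Itop$ is to add wiring, relative to the Dobrushin setting.'' Naively the $\dob$ region-below also has a wired bottom boundary (the $\partial_\Blue$ part of $\partial\Lambda_n'$ at height $-n/2$), so the comparison is not literally wired-versus-free; it is a comparison between two regions of different \emph{shape} (the $\fl$ region-below is a truncation of the $\dob$ one, cut off at the floor at height $0$). The correct formulation is that passing from $\dob$ to $\fl$ amounts to \emph{contracting} all vertices at heights $\le 0$ into the single wired boundary component — and contracting vertices (identifying them into one cluster) is exactly the operation that, for the random-cluster model, increases the measure and, one checks, increases the relevant interface-lifting ratio because lifting the interface \emph{removes} low-lying region that was being suppressed by the contraction constraint. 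I would handle this either by a direct Holley-type coupling on the region-below configurations conditioned on the two interface heights, or — following the heuristic sketched in item (1) of the section overview — by tracking the ``marginals at height zero'' monotonicity: lifting $\Itop$ pushes the free boundary it induces away from the floor, which only increases the probability of the floor-edge constraint, hence only helps the $\fl$ ratio. Turning that heuristic into the clean FKG statement of step (iv), with all partition-function cancellations justified, is where the real work lies; everything else is bookkeeping of explicit $p,q$ factors.
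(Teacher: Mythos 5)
Your architecture matches the paper's: factor the weight of $\{\Itop=I\}$ into an above-$I$ piece and a below-$I$ partition function, observe that the above-$I$ pieces and the explicit plaquette factors cancel in the double ratio $\Xi_j^\fl(I)/\Xi_j^\dob(I)$, and reduce to comparing below-interface partition-function ratios under the two bottom boundary conditions. This is exactly the content of \cref{lem:compare-fl-dob-integral}. However, your steps (iii)--(iv) — which you yourself flag as ``where the real work lies'' — are left as a heuristic, and the one concrete principle you invoke there (``a wired bottom boundary can only increase such a partition-function ratio relative to a free one, by FKG'') is not a valid standalone fact: ratios of partition functions are not in general monotone in boundary conditions, and the two below-regions moreover have different shapes, as you note.

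The paper closes this gap in two moves. First, the hard floor is realized as a one-parameter strengthening of the Dobrushin setup: one augments the below-$I$ graph by an edge set $\tilde E$ joining every vertex of $\cL_{-1/2}$ to an external wired vertex, with edge probability $\theta$; at $\theta=0$ this is $\dob$ and at $\theta=1$ it contracts $\cL_{<0}$ into the wired bottom component, i.e.\ $\fl$. \cref{clm:free-energy} (the standard derivative-of-free-energy identity) then expresses $\log\Xi_j^\fl(I)-\log\Xi_j^\dob(I)$ as $\sum_{e\in\tilde E}\int_0^1[\tilde\pi^{(\theta)}_{(\Theta_j I)_\downarrow}(\omega_e=1)-\tilde\pi^{(\theta)}_{I_\downarrow}(\omega_e=1)]/[\theta(1-\theta)]\,\d\theta$, converting the partition-function-ratio comparison into a comparison of single-edge marginals — precisely the ``marginals at height zero'' monotonicity you gesture at. Second, that marginal comparison is where FKG actually enters and where the hypothesis $\max_x\overline\hgt_x(I)<j$ is used: since $I$ lies strictly below $\Theta_j I$, every edge dual to a plaquette of $I$ belongs to $G_{(\Theta_j I)_\downarrow}$, so conditioning $\tilde\pi^{(\theta)}_{(\Theta_j I)_\downarrow}$ on closing all of them reproduces, by Domain Markov, the law $\tilde\pi^{(\theta)}_{I_\downarrow}$ on the common subregion; closing edges is a decreasing conditioning, so FKG gives $\tilde\pi^{(\theta)}_{(\Theta_j I)_\downarrow}(\omega_e=1)\geq\tilde\pi^{(\theta)}_{I_\downarrow}(\omega_e=1)$ and hence a nonnegative integrand. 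Without this interpolation-plus-embedding step your proof does not go through; a direct Holley comparison of the two conditional below-measures (your alternative suggestion) founders on exactly the shape mismatch you identified. A minor additional slip: the $4jn$ added boundary plaquettes of $\Theta_j I$ are dual-to-closed and contribute $(1-p)^{4jn}$, not a power of $p$; this is harmless only because the factor cancels in the double ratio.
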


To compare the effects of the map $\Theta_j$ in $\bar\pi_n^\fl$ vs.\ $\bar\pi_n^\dob$, we will need the following consequence of well-known expressions for derivatives of free energies in edge parameters. 
\begin{claim}\label{clm:free-energy}
For any finite graph $(V,E)$ and an edge set $\tilde E$, fix $p,q$ and let $\tilde Z_G^{(\theta)}$ be the partition function of the $(p,q)$-FK model on $G=(V, E \cup \tilde E)$ modified to have edge probability $\theta$ for $e\in\tilde E$. 
Denote by $\tilde\pi_G^{(\theta)}$ the probability of an edge configuration $\omega$ in this model. Then 
\[ \log \tilde Z_G^{(\theta_1)} - \log \tilde Z_G^{(\theta_0)} = \sum_{e\in \tilde E}\int_{\theta_0}^{\theta_1} \frac{\tilde\pi_G^{(\theta)}(\omega_e = 1)}{\theta(1-\theta)}\,\d \theta \,.\]
\end{claim}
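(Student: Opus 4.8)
The plan is to obtain the identity by differentiating $\log \tilde Z_G^{(\theta)}$ in the parameter $\theta$ and then integrating back, following the standard derivation that the $\theta$-derivative of a free energy is an edge marginal. First I would write the partition function in the normalization of \eqref{eq:fk-measure}, namely
\[
\tilde Z_G^{(\theta)} \;=\; \sum_{\omega\subseteq E\cup\tilde E} \Big(\tfrac{p}{1-p}\Big)^{|\omega\cap E|}\Big(\tfrac{\theta}{1-\theta}\Big)^{|\omega\cap\tilde E|} q^{\kappa(\omega)}\,,
\]
which is a finite sum of functions of $\theta$ that are rational with no poles on $(0,1)$, so term-by-term differentiation is legitimate there.

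Next I would record the elementary derivative: for a single $e\in\tilde E$ and $\omega_e\in\{0,1\}$ one has $\frac{d}{d\theta}\big(\tfrac{\theta}{1-\theta}\big)^{\omega_e} = \frac{\omega_e}{\theta(1-\theta)}\big(\tfrac{\theta}{1-\theta}\big)^{\omega_e}$ (the case $\omega_e=0$ is trivial; for $\omega_e=1$ use $\frac{d}{d\theta}\tfrac{\theta}{1-\theta}=\tfrac1{(1-\theta)^2}=\tfrac1{\theta(1-\theta)}\cdot\tfrac{\theta}{1-\theta}$). By the product rule over $e\in\tilde E$, and since the $E$-factors and $q^{\kappa(\omega)}$ do not depend on $\theta$,
\[
\frac{d}{d\theta}\,\tilde Z_G^{(\theta)} \;=\; \frac{1}{\theta(1-\theta)}\sum_{e\in\tilde E}\ \sum_{\omega} \omega_e\,\Big(\tfrac{p}{1-p}\Big)^{|\omega\cap E|}\Big(\tfrac{\theta}{1-\theta}\Big)^{|\omega\cap\tilde E|} q^{\kappa(\omega)}\,.
\]
Dividing by $\tilde Z_G^{(\theta)}$ turns the inner sum into an expectation under $\tilde\pi_G^{(\theta)}$, giving
\[
\frac{d}{d\theta}\log \tilde Z_G^{(\theta)} \;=\; \frac{1}{\theta(1-\theta)}\sum_{e\in\tilde E}\tilde\pi_G^{(\theta)}(\omega_e=1)\,.
\]
Finally I would integrate this identity over $\theta\in[\theta_0,\theta_1]$, using the fundamental theorem of calculus on the left and swapping the finite sum with the integral on the right, to arrive at the asserted formula.

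I do not anticipate a genuine obstacle; the only points demanding care are bookkeeping. The first is the choice of normalization: using the convention of \eqref{eq:fk-measure} (weight $(\tfrac{p}{1-p})^{|\omega|}$ rather than $p^{|\omega|}(1-p)^{|E|-|\omega|}$) is exactly what makes the numerator on the right equal to $\tilde\pi_G^{(\theta)}(\omega_e=1)$; the alternative normalization would contribute an extra $-\theta$ in the numerator coming from the $\theta$-dependent prefactor $(1-\theta)^{-|\tilde E|}$, which would then need to be cancelled, so it is cleanest to fix the convention upfront. The second is simply to note that $\theta(1-\theta)>0$ on $(0,1)$, so on $[\theta_0,\theta_1]\subset(0,1)$ there is no integrability issue and differentiation under the (finite) sum is valid. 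I would also remark in passing that nothing in the computation requires $\tilde E$ to be disjoint from $E$: shared edges may be treated as parallel copies carrying their own parameter $\theta$, since the derivative only ever acts on the $\tilde E$-factors, and this is the form in which the claim is invoked in \cref{lem:compare-fl-dob}.
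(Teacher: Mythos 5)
Your proposal is correct and follows essentially the same route as the paper: write $\tilde Z_G^{(\theta)}$ as a finite sum of weights, observe that the $\theta$-derivative of each weight is $\frac{|\omega\cap\tilde E|}{\theta(1-\theta)}$ times itself, identify $\frac{\d}{\d\theta}\log\tilde Z_G^{(\theta)}$ with the sum of edge marginals, and integrate. The extra remarks on normalization and on overlapping edge sets are harmless bookkeeping and do not change the argument.
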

\begin{proof}
As per \cref{eq:fk-measure}, we have 
\[ \tilde Z_G^{(\theta)} = \sum_{\omega}   \Big(\frac{p}{1-p}\Big)^{| \omega\cap E|}\Big(\frac{\theta}{1-\theta}\Big)^{|\omega\cap \tilde E|} q^{\kappa(\omega)} =: \sum_{\omega} W_{\omega}
\,.\]
Noting that $\frac{\d}{\d\theta} W_{\omega} = \big( |\omega\cap \tilde E|/[\theta (1-\theta)]\big)W_{\omega}$,  if we consider expectation w.r.t.\ $\tilde\pi_G^{(\theta)}$ , then 
\[
\frac{\d}{\d \theta} \log \tilde Z_G^{(\theta)} 
= \frac1{\tilde Z_G^{(\theta)}}\sum_{\omega} 
\frac{|\omega\cap \tilde E|}{\theta(1-\theta)}W_{\omega}
= \sum_{e\in \tilde E}  \frac{\tilde \pi_G^{(\theta)}(\omega_e =1)}{\theta(1-\theta)}
\,.
\]
Integrating over $\theta$ yields the claim. 
\end{proof}

\begin{lemma}\label{lem:compare-fl-dob-integral}
Let $I\subset \cL_{\geq 0}$ be a realization for $\Itop$
and $j\geq 1$ such that $\max_x\overline\hgt_x(I) \leq n-j$. Let $G_{I_{\downarrow}} = (V,E\cup \tilde E)$ denote the induced subgraph of $\Lambda'_{n,n}$ on $V=\Atop^c(I)$,
augmented via edges $\tilde E$ connecting every $v\in\cL_{-1/2}\cap\Lambda'_{n,n}$ 
to an (arbitrarily chosen) external boundary vertex $v_*\in\cL_{<0}\setminus\Lambda_{n,n}'$. Let $\tilde\pi_{I_{\downarrow}}^{(\theta)}$ be the FK model on~$G_{I_{\downarrow}}$ with  probabilities $p=1-e^{-\beta}$ in $E$ and $\theta$ in $\tilde E$, and boundary conditions  wired 
on $\cL_{<0}\cap \partial\Lambda'_{n,n}$ (and free elsewhere).
Then $\Xi_j^\fl(I)$ and $\Xi_j^\dob(I)$ from \eqref{eq:Xi-fl-dob-def} satisfy
    \begin{equation}
        \log \Xi_j^\fl(I) - \log \Xi_j^\dob(I) = \sum_{e \in \tilde E} \int_0^1 \frac{\tilde\pi_{(\Theta_j I)_{\downarrow}}^{(\theta)}(\omega_e = 1) - \tilde\pi^{(\theta)}_{I_\downarrow}(\omega_e = 1)}{\theta(1-\theta)} \,\d\theta\,.
\label{eq:log-Xi-fl-log-Xi-dob-diff}    \end{equation}
\end{lemma}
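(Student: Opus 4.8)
The plan is to reveal $\Itop$ and factorize each of the four probabilities entering \eqref{eq:Xi-fl-dob-def} into a contribution from the region above the interface and one from the region $\Atop^c$ below it. Conditioning the random-cluster measure on $\{\Itop=J\}$, for $J\in\{I,\Theta_j I\}$ (both of which lie in $\cL_{\geq 0}$ by hypothesis), amounts to declaring closed all edges dual to plaquettes of $J$, prescribing a configuration on $\Atop(J)$ (a deterministic region, as it is the blob cut off by $J$ on the $\Red$-boundary side) making the augmented wired $\Red$ component exactly $\Atop(J)$, and placing an unconstrained FK configuration on $\Atop^c(J)$ with the appropriate wiring below — the $\Blue$ floor at $\cL_{-1/2}$ for $\bar\pi_n^\fl$, the wired boundary of $\Lambda'_{n,n}$ in $\cL_{<0}$ for $\bar\pi_n^\dob$ — and with the disconnection event $\sep_{n,n}$ automatic once $J\subset\cL_{\geq 0}$. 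Since $\Lambda_{n,n}$ and $\Lambda'_{n,n}$ coincide on $\cL_{\geq 0}$ and the portion of the $\Red$ boundary relevant to $\Atop(J)$ (the side columns at heights $\geq\tfrac12$ and the top face) is the same in the $\fl$ and $\dob$ settings, the ``above'' partition function and the weight of the prescribed closed edges are the same function of $J$ under $\bar\pi_n^\fl$ and $\bar\pi_n^\dob$ (up to the overall normalizations, which cancel within each of $\Xi_j^\fl,\Xi_j^\dob$). Writing $Z_\downarrow^\fl(J)$, $Z_\downarrow^\dob(J)$ for the two ``below'' partition functions, these common factors cancel and leave
\[
\frac{\Xi_j^\fl(I)}{\Xi_j^\dob(I)} \;=\; \frac{Z_\downarrow^\fl(\Theta_j I)\,Z_\downarrow^\dob(I)}{Z_\downarrow^\fl(I)\,Z_\downarrow^\dob(\Theta_j I)}\,.
\]

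Next I identify these ``below'' partition functions with the auxiliary model $\tilde\pi^{(\theta)}_{J_\downarrow}$ on $G_{J_\downarrow}$. At $\theta=0$ every edge of $\tilde E$ is closed, so $\tilde Z^{(0)}_{J_\downarrow}=Z_\downarrow^\dob(J)$ exactly. For the hard-floor partition function, condition on the event $\mathcal O=\{\text{every edge of }\tilde E\text{ is open}\}$: then the whole layer $\cL_{-1/2}\cap\Lambda'_{n,n}$ is wired to $v_*$ and the $\cL_{<0}$ boundary, so the model on $G_{J_\downarrow}$ splits across that layer into a model on $\cL_{\geq 1/2}$, which is precisely the hard-floor ``below'' model with partition function $Z_\downarrow^\fl(J)$, and a model on $\cL_{\leq -1/2}$ whose partition function is a constant $W$ independent of $J$ (as $J\subset\cL_{\geq 0}$ puts all of $\cL_{<0}\cap\Lambda'_{n,n}$ into $\Atop^c(J)$ for both choices of $J$). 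Accounting for the cluster counts (the shared wired layer costs a factor $q^{-1}$) gives, for every $\theta\in(0,1)$,
\[
\tilde\pi^{(\theta)}_{J_\downarrow}(\mathcal O)\,\tilde Z^{(\theta)}_{J_\downarrow} \;=\; q^{-1}W\Big(\tfrac{\theta}{1-\theta}\Big)^{|\tilde E|}Z_\downarrow^\fl(J)\,,
\]
so that $Z_\downarrow^\fl(\Theta_j I)/Z_\downarrow^\fl(I)=\big[\tilde\pi^{(\theta)}_{(\Theta_j I)_\downarrow}(\mathcal O)\,\tilde Z^{(\theta)}_{(\Theta_j I)_\downarrow}\big]\big/\big[\tilde\pi^{(\theta)}_{I_\downarrow}(\mathcal O)\,\tilde Z^{(\theta)}_{I_\downarrow}\big]$ for any fixed $\theta\in(0,1)$.

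Combining the two previous displays and cancelling the common factor $q^{-1}W(\theta/(1-\theta))^{|\tilde E|}$,
\[
\log\Xi_j^\fl(I)-\log\Xi_j^\dob(I) \;=\; \log\frac{\tilde\pi^{(\theta)}_{(\Theta_j I)_\downarrow}(\mathcal O)}{\tilde\pi^{(\theta)}_{I_\downarrow}(\mathcal O)} + \Big(\log\tilde Z^{(\theta)}_{(\Theta_j I)_\downarrow}-\log\tilde Z^{(0)}_{(\Theta_j I)_\downarrow}\Big) - \Big(\log\tilde Z^{(\theta)}_{I_\downarrow}-\log\tilde Z^{(0)}_{I_\downarrow}\Big)\,.
\]
I then apply \cref{clm:free-energy} with $\theta_0=0$, $\theta_1=\theta$ and modified edge set $\tilde E$ to each parenthesized difference, turning the last two terms into $\sum_{e\in\tilde E}\int_0^\theta[\tilde\pi^{(\theta')}_{(\Theta_j I)_\downarrow}(\omega_e=1)-\tilde\pi^{(\theta')}_{I_\downarrow}(\omega_e=1)]/[\theta'(1-\theta')]\,\d\theta'$, and finally let $\theta\uparrow1$: the first term vanishes because $\tilde\pi^{(\theta)}_{J_\downarrow}(\mathcal O)\to1$ (forcing a fixed finite edge set open becomes certain; indeed $1-\tilde\pi^{(\theta)}_{J_\downarrow}(\mathcal O)\leq |\tilde E|\cdot\tfrac{q(1-\theta)}{\theta+q(1-\theta)}$), and the integral converges absolutely since near $\theta'=1$ one has $1-\tilde\pi^{(\theta')}_{J_\downarrow}(\omega_e=1)=O(1-\theta')$ uniformly (the graphs are finite), so the two marginals differ by $O(1-\theta')$ and the integrand stays bounded. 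This yields \eqref{eq:log-Xi-fl-log-Xi-dob-diff}.

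The step I expect to be the main obstacle is the bookkeeping in the first paragraph: one must verify that conditioning on $\{\Itop=J\}$ genuinely decouples into an ``above'' and a ``below'' piece, with the ``above'' piece and the weight of the prescribed closed edges literally coinciding between $\bar\pi_n^\fl$ and $\bar\pi_n^\dob$. This rests on $\Atop$ being an outer object — so that vertices of $\Atop$ adjacent to $J$ lie in the wired $\Red$ component, no component of $\Atop^c$ adjacent to $J$ gets re-augmented, and $\Atop(J)$ depends only on $J$ — together with the fact that the two boxes agree on $\cL_{\geq 0}$. A secondary technical point is the passage $\theta\uparrow1$: since the individual partition functions $\tilde Z^{(\theta)}_{J_\downarrow}$ diverge, \cref{clm:free-energy} cannot be applied directly at $\theta_1=1$, and one must keep $\theta<1$ throughout, relying on the cancellation of divergences (equivalently, the uniform $O(1-\theta)$ bound on $1-\tilde\pi^{(\theta)}_{J_\downarrow}(\omega_e=1)$) only at the end.
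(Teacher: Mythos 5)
Your proof is correct and follows essentially the same route as the paper's: a Domain Markov factorization of $\{\Itop=J\}$ into an above-$J$ piece (identical under $\fl$ and $\dob$, hence cancelling in the ratio $\Xi_j^\fl/\Xi_j^\dob$) and a below-$J$ piece, followed by \cref{clm:free-energy} applied to the two below partition functions. The only real difference is that the paper reaches the hard-floor endpoint by building a single interpolating measure on all of $\Lambda'_{n,n}$ and formally plugging $\theta_1=1$ into \cref{clm:free-energy}, whereas you keep $\theta<1$ and pass to the limit at the end; your treatment of that endpoint (where the individual integrals diverge like $\int \d\theta/(1-\theta)$ and only their difference converges) is in fact slightly more careful than the paper's.
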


\begin{proof}
We need to define two counterparts to $\tilde\pi_{I_\downarrow}^{(\theta)}$. First,
let $G_{I_\uparrow}$ be the induced subgraph of $\Lambda'_{n,n}$ on $\Atop(I)$, and define $\pi_{I_\uparrow}$ to be the FK model on $G_{I_\uparrow}$ (all edge probabilities are $p$, i.e., there is no special edge set $\tilde E$) with boundary conditions that are wired on $\cL_{>0}\cap \partial \Lambda'_{n,n}$ (and free elsewhere). Second, let $G$ be the graph $\Lambda'_{n,n}$ augmented by the set of edges $\tilde E$ (so that $G_{I_\downarrow}$ is its induced subgraph on $\Atop^c(I)$), and let  $\tilde \pi_G^{(\theta)}$ be its corresponding FK model, with edge probabilities $\theta$ on $\tilde E$, and boundary conditions as in $\bar\pi_{n}^\dob$, conditioned on no path connecting the $\Top$ boundary $\cL_{>0}\cap\partial\Lambda_{n,n}'$ to the $\Bot$ boundary $\cL_{<0}\cap\partial\Lambda_{n,n}'$.
Notice that $\tilde\pi_{G}^{(\theta)}$ 
interpolates between the boundary conditions $\dob$ and $\fl$:
\begin{equation}\label{eq:tilde-pi-interpolation} \tilde\pi^{(0)}_G = \bar\pi^\dob_{\Lambda'_{n,n}}\quad,\quad \tilde\pi^{(1)}_G (\omega\restriction_{\Lambda_{n,n}}\in\cdot) = \bar\pi^\fl_{\Lambda_{n,n}} \,.\end{equation}
Letting $\tilde Z^{(\theta)}_{G}$, $\tilde Z^{(\theta)}_{I_\downarrow}$ and $Z_{I_{\uparrow}}$ denote the partition functions of $\tilde\pi_{G}^{(\theta)}$, $\tilde\pi_{I_\downarrow}^{(\theta)}$ and $\pi_{I_\uparrow}$, respectively, we infer the following from Domain Markov: 
\[ \tilde\pi_G^{(\theta)}(\Itop = I) = \frac1{\tilde Z_G^{(\theta)}} (1-p)^{|I|} \tilde Z_{I_\downarrow}^{(\theta)} Z_{I_\uparrow}^*\,,\]
where $Z_{I_\uparrow}^*$ restricts the sum in $Z_{I_\uparrow}$ only to configurations $\omega$ where every $v\in \Atop(I)$ adjacent to $I$ is in $\Vtop(\omega)$ (i.e.,
if $v$ is such that, for some $u$, the edge $(u,v)$ of $(\Z+\frac12)^3$ is dual to a plaquette $f\in I$, then there must be an open path in $\omega$ connecting $v$ to the $\Top$ wired boundary $\cL_{>0} \cap\partial \Lambda'_{n,n}$). Indeed, as was the case with the interface $\cI_\Blue$, which consisted of plaquettes dual to  $(\Z+\frac12)^3$-edges between $\Vblue$ and $\Ablue^c$, the interface $\Itop$ examined here consists of plaquettes dual to $(\Z+\frac12)^3$-edges between $\Vtop$ and $\Atop^c$, supporting the above identity.

Comparing the last display for the interfaces $I$ and $\Theta_j I$ (both legal as $\max_x \overline\hgt_x(I) + j \leq n$) we find that
    \begin{equation*}
        \frac{\tilde\pi_G^{(\theta)}(\Itop = \Theta_j I)}{\tilde\pi^{(\theta)}_G(\Itop =  I)} = (1-p)^{4jn}\frac{\tilde Z^{(\theta)}_{(\Theta_j I)_\downarrow} Z^*_{ (\Theta_jI)_\uparrow}}{\tilde Z^{(\theta)}_{I_\downarrow} Z^*_{I_\uparrow}}\,.
    \end{equation*}
Recalling \cref{eq:tilde-pi-interpolation} and the definition of $\Xi_j^\fl$ and $\Xi_j^\dob$, the values of this ratio at $\theta=1$ and $\theta=0$ are nothing but $\Xi_j^\fl(I)$ and $\Xi_j^\dob(I)$ , respectively (for the latter, note that the events $\{\Itop = \Theta_j I\}$ and $\{\Itop = I\}$ are measurable w.r.t.\ $\omega\restriction_{\Lambda_{n,n}}$). As a consequence, 
    \begin{equation}
     \frac{\Xi^\fl_j(I)}{\Xi^\dob_j(I)} = \frac{\tilde Z^{(1)}_{ (\Theta_j I)_\downarrow} Z_{ (\Theta_jI)_\uparrow}^*}{\tilde Z^{(1)}_{I_\downarrow} Z_{I_\uparrow}^*} \cdot\frac{\tilde Z^{(0)}_{I_\downarrow}  Z_{I_\uparrow}^*}{\tilde Z^{(0)}_{(\Theta_jI)_\downarrow} Z_{(\Theta_j I)_\uparrow}^* } = \frac{\tilde Z^{(1)}_{(\Theta_j I)_\downarrow} }{\tilde Z^{(1)}_{ I_\downarrow}}\cdot\frac{\tilde Z^{(0)}_{I_\downarrow}}{\tilde Z^{(0)}_{(\Theta_jI)_\downarrow} }\,.
    \end{equation}
 Therefore,
    \begin{equation}
        \log\Xi^\fl_j(I) - \log \Xi^\dob_j(I)=  \log\frac{\tilde Z^{(1)}_{ (\Theta_j I)_\downarrow}}{\tilde Z^{(0)}_{(\Theta_jI)_\downarrow}}-
        \log\frac{\tilde Z^{(1)}_{I_\downarrow}}{\tilde Z^{(0)}_{I_\downarrow}}\,.
    \end{equation}
    The proof is concluded by applying \cref{clm:free-energy}, once to $G_{(\Theta_j I)_\downarrow}$ and once to $G_{I_\downarrow}$.
\end{proof}

We now conclude \cref{lem:compare-fl-dob}, showing that for the appropriate pairs $(I,j)$, the map $I\mapsto \Theta_j I$ is more costly under the soft floor measure than the hard floor measure.

\begin{proof}[\textbf{\emph{Proof of \cref{lem:compare-fl-dob}}}]
Observe that the assumption on $(I,j)$ implies that $\max_x\overline\hgt_x(I) + j < 2j\leq n$, qualifying for an application of \cref{lem:compare-fl-dob-integral}.
It thus suffices to show that, for such $I$ and~$j$,
\begin{equation}\label{eq:tilde-pi-monotone-on-tilde-E}
 \tilde\pi^{(\theta)}_{(\Theta_j I)_\downarrow}(\omega_e=1)\geq  \tilde\pi^{(\theta)}_{I_\downarrow} (\omega_e=1) \end{equation}
 holds for all $e\in\tilde E$ (implying that the right-hand of \cref{eq:log-Xi-fl-log-Xi-dob-diff} is nonnegative).
 We claim that \cref{eq:tilde-pi-monotone-on-tilde-E} in fact holds for every edge $e$ in $G_{I_\downarrow}$.  To see this, consider $G_{(\Theta_j I)_\downarrow}$ and the associated FK measure $\tilde \pi_{(\Theta_j I)_\downarrow}^{(\theta)}$, where we additionally impose that every edge $(u,v)$ dual to some $f\in I$ should be closed. The hypothesis on $j>\max_x \overline{\hgt}(I)$ implies that this set of edges is fully included in $G_{(\Theta_j I)_{\downarrow}}$, and by the Domain Markov property, its marginal on $G_{I_\downarrow}$ has the same law of $\tilde\pi^{(\theta)}_{I_\downarrow}$. The proof is then concluded by the FKG inequality. 
\end{proof}

\begin{remark}
Our proof crucially uses the fact that the event $\Itop = I$ is measurable w.r.t.\ the configuration ``above $I$'' (on $G_{I_\uparrow}$ in the notation from the proofs above), whereas it is only ``below $I$'' (on $G_{I_\downarrow}$) where there is a difference between the hard floor and soft floor measures, as $I\subset  \cL_{\geq 0}$ in both. Thus, the nontrivial portion (``above $I$'') cancels in the ratio $\Xi_j^\fl / \Xi_j^\dob$, and leaves us with a ratio of probabilities in two standard FK models. At this point we may use FKG as $\Itop$ reveals a free boundary below it, c.f.\ $\cI_{\Full}$ or $\cI_{\Bot}$ which would reveal a wired boundary below it, and therefore not go in the right direction. 
(Indeed, the result cannot hold for $\Ibot$, as it would lead to a lower bound of $(1+\epsilon_\beta)h_n^*$ in \cref{thm:main},
violating the upper bound $h_n^*$ given there.)

Similarly, this argument would not apply to $\cI_\Blue$ or $\cI_\Red$; e.g., for the Ising model, each of the events $\cI_\Blue=I$ and $\cI_\Red = I$ gives positive information on $G_{I_\downarrow}$ (and negative information on $G_{I_\uparrow}$), whence the inequality analogous to \cref{eq:tilde-pi-monotone-on-tilde-E} would be reversed.
\end{remark}

\subsection{Lower bound for the top interface}\label{subsec:log-lower-bound}
Our aim is now to show the following logarithmic lower bound, in fact $(1-\epsilon) h_n^*$, on the typical height of $\cI_{\Top}$. 

\begin{proposition}\label{prop:Itop-lower}
    There exists an absolute constant $C>0$ and constant $\epsilon_\beta$ such that
    \[\bar\pi_n^\fl( |\{x\in (\mathbb Z+\tfrac{1}{2})^2  \,:\; \underline \hgt_x(\Itop) \le \tfrac{1}{4\beta + C}\log n\}| >\epsilon_\beta n^2 ) \le n^{-1+\epsilon_\beta}\,.
    \]
\end{proposition}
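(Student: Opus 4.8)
The plan is to establish the lower bound via the classical entropic-repulsion mechanism: lift the interface $\Itop$ by a well-chosen amount and show the resulting entropy gain outweighs the energy cost, using \cref{lem:compare-fl-dob} to transfer the comparison from the hard-floor measure $\bar\pi_n^\fl$ to the soft-floor (Dobrushin) measure $\bar\pi_n^\dob$ where only entropic effects are present. Fix a target height $h = \lfloor \frac{1}{4\beta+C}\log n\rfloor$ (so $h \le (1-\epsilon_\beta)h_n^*$). First, I would record that by \cref{thm:weak-upper-fl-top} we may condition on the event $\cE = \{\Itop \subset \cL_{< (14/\beta)\log n}\}$, which has probability $1 - n^{-1+\epsilon_\beta}$; this crude upper bound is what lets us take the shift amount $j$ to be $n^{o(1)}$ in step (2) of the sketch. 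On $\cE$, define $j = \lceil (14/\beta)\log n\rceil + h + 1$, so $j > \max_x \overline\hgt_x(\Itop)$ and $j \le n/2$, qualifying for \cref{lem:compare-fl-dob}.

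The core step is a \emph{change-of-measure / Peierls-type resampling} argument in $\bar\pi_n^\dob$. Let $\Incr$ denote the set of realizations $I$ of $\Itop$ with $I \subset \cL_{\geq 0}$ and $|\{x : \underline\hgt_x(I) \le h\}| > \epsilon_\beta n^2$, i.e., many columns where $\Itop$ dips below $h$. I want to bound $\bar\pi_n^\dob(\Itop = I)$ against $\bar\pi_n^\dob(\Itop = \Theta_j I)$ for $I \in \Incr$. In $\bar\pi_n^\dob$, the map $\Theta_j$ adds $4jn = n^{1+o(1)}$ boundary plaquettes, costing a factor $(1-p)^{4jn} \geq e^{-\beta\cdot n^{1+o(1)}}$ — but we must compare this with the gain. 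The gain comes from the fact that, after shifting up by $j$, in each of the $>\epsilon_\beta n^2$ columns where $\underline\hgt_x(I) \le h$, there is room for the lifted interface $\Theta_j I$ to be \emph{modified} (relative to its shifted position) by inserting a downward spike. Here I would invoke the cluster-expansion control of $\cI_\Full$ from \cref{prop:grimmett-cluster-exp} together with \cref{thm:gg-interface-height}: for a rigid interface far from walls, the probability of a given local downward deviation of depth $d$ at column $x$ is $\asymp e^{-(4\beta + O(1))d}$ (summing over deviation shapes), and these events at well-separated columns are approximately independent up to the multiplicative cluster-expansion corrections $e^{O(1)}$ per plaquette. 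Summing $e^{-(4\beta+C)d}$ over $d$ up to $h$ and over the $\epsilon_\beta n^2$ eligible columns, the total "entropy" injected by allowing such spikes in $\Theta_j(\Incr)$ but not in $\Incr$ is a factor of $\exp\!\big(\epsilon_\beta n^2 \cdot \Omega(1)\big)$ — wait, more carefully: the correct bookkeeping is that $\Theta_j I$ is the canonical lift and the family of interfaces obtained from it by adding an arbitrary admissible downward spike in each eligible column has total weight $\geq e^{c\epsilon_\beta n^2}\bar\pi_n^\dob(\Itop = \Theta_j I)$ for an absolute $c>0$, provided $h$ is chosen small enough that $e^{(4\beta+C)h} \le n$, i.e., $h \le \frac{1}{4\beta+C}\log n$. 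Since $n^2 \gg n^{1+o(1)}$, this entropy term dominates the energy cost $e^{-\beta\cdot 4 jn}$, giving
\[
\sum_{I \in \Incr} \bar\pi_n^\dob(\Itop = I) \;\le\; e^{-c'\epsilon_\beta n^2}\,,
\]
since $\sum_I \bar\pi_n^\dob(\Itop = \Theta_j I) \le 1$ while each $\Incr$-term is exponentially (in $n^2$) smaller than the corresponding shifted-and-decorated family.

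Finally I would transfer this back to $\bar\pi_n^\fl$. By \cref{lem:compare-fl-dob}, for every $I \subset \cL_{\geq 0}$ with $\max_x \overline\hgt_x(I) < j \le n/2$ we have $\Xi_j^\fl(I) \ge \Xi_j^\dob(I)$, i.e.
\[
\frac{\bar\pi_n^\fl(\Itop = I)}{\bar\pi_n^\fl(\Itop = \Theta_j I)} \;\le\; \frac{\bar\pi_n^\dob(\Itop = I)}{\bar\pi_n^\dob(\Itop = \Theta_j I)}\,.
\]
Restricting to the event $\cE$ (on which $\max_x\overline\hgt_x(\Itop) < j$), summing over $I \in \Incr \cap \cE$, and using $\sum_I \bar\pi_n^\fl(\Itop = \Theta_j I) \le 1$, the same resampling accounting as above — but now with the entropy-gain factor lower-bounded via $\bar\pi_n^\dob$ and the ratio inequality — yields $\bar\pi_n^\fl(\Itop \in \Incr, \cE) \le e^{-c'\epsilon_\beta n^2} \le n^{-1+\epsilon_\beta}$. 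Adding $\bar\pi_n^\fl(\cE^c) \le n^{-1+\epsilon_\beta}$ from \cref{thm:weak-upper-fl-top} completes the proof, after noting that $\frac{1}{4\beta+C}\log n$ in the statement is exactly our chosen $h$ (and that $\Itop \subset \cL_{\ge 0}$ automatically on any configuration in $\sep_{n,n}$ with these boundary conditions, so the hypothesis $I \subset \cL_{\ge 0}$ in \cref{lem:compare-fl-dob} is met).

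\textbf{Main obstacle.} The delicate point is making the entropy-gain bookkeeping rigorous: one needs that lifting by $j$ genuinely \emph{creates} independent room for downward spikes in the eligible columns, that the cluster-expansion weights $\g(f, I)$ from \cref{prop:grimmett-cluster-exp} only perturb the spike probabilities by bounded multiplicative factors (so the product over $\epsilon_\beta n^2$ columns still beats the $4jn$ energy cost), and — most subtly — that the map $I \mapsto \Theta_j I$ composed with spike-insertion is injective enough that one does not over-count when summing weights. Getting the rate in the exponent to be $4\beta + O(1)$ (rather than, say, $2\beta$) requires using that a downward spike of depth $d$ forces $\asymp 4d$ extra plaquettes, which is where the $\frac{1}{4\beta+C}$ constant originates; this is exactly the crudeness referred to in \cref{rem:lower-bound-rate}, since the true rate should be the point-to-plane random-cluster connection rate in the complement of the infinite component rather than the naive Peierls bound $4\beta + O(1)$.
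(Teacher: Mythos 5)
Your architecture matches the paper's: use \cref{thm:weak-upper-fl-top} to restrict to interfaces of height at most $O(\beta^{-1}\log n)$, lift by an amount $j$ exceeding that maximum, control the lifting cost in $\bar\pi_n^\fl$ by the one in $\bar\pi_n^\dob$ via \cref{lem:compare-fl-dob}, and then inject downward spikes in the $>\epsilon_\beta n^2$ low columns. But there are two genuine gaps. First, your entropy bookkeeping is quantitatively wrong: allowing an optional spike of depth $h$ in each of $\epsilon_\beta n^2$ columns, each carrying weight $e^{-(4\beta+C)h}$, yields a gain factor $(1+e^{-(4\beta+C)h})^{\epsilon_\beta n^2}=\exp(\Theta(\epsilon_\beta n^2 e^{-(4\beta+C)h}))$, not $e^{c\epsilon_\beta n^2}$. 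Under your stated condition $e^{(4\beta+C)h}\le n$ with the \emph{same} $C$ as the spike rate, this degenerates to $e^{\Theta(\epsilon_\beta n)}$, which does \emph{not} beat the lifting cost $e^{4(\beta+C)jn}=e^{\Theta(n\log n)}$ for $j=\Theta(\beta^{-1}\log n)$. One must take $h=\frac{1}{4\beta+C_0}\log n$ with $C_0$ strictly larger than the Peierls constant $C$ (the paper takes $C_0=2C$), so that $e^{-(4\beta+C)h}=n^{-1+\delta}$ and the gain $e^{\epsilon_\beta n^{1+\delta}}$ dominates $e^{O(n\log n)}$. This is why the proposition's constant has slack built in, and your proof as written lands exactly at the threshold where it fails.

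Second, your transfer back to $\bar\pi_n^\fl$ does not close. \cref{lem:compare-fl-dob} controls only the ratio $\bar\pi(\Itop=I)/\bar\pi(\Itop=\Theta_j I)$ between the two measures; it says nothing about the weight of the spike-decorated family relative to $\Theta_j I$, which you compute under $\bar\pi_n^\dob$ via cluster expansion and then assert transfers to $\bar\pi_n^\fl$. The paper avoids this by ordering the steps differently: it first combines \cref{lem:compare-fl-dob} with \cref{lem:lift-Itop} to get the lifting cost $\bar\pi_n^\fl(\Itop=\Theta_H I)\ge e^{-4(\beta+C)Hn}\bar\pi_n^\fl(\Itop=I)$ entirely within the hard-floor measure, and then performs the spike insertion as an explicit edge-modification map $\Phi_A$ (open $h$ vertical edges, close the $4h+1$ surrounding ones) \emph{directly} under $\bar\pi_n^\fl$, where the weight ratio $((1-p)/p)^{4h+1}q^{4h+1}$ per column is read off from the FK weights with no cluster expansion at all; one then checks by hand that $\Phi_A$ preserves the disconnection event $\sep_{n,n}$ and that $A$ (hence the preimage, up to a $2^{O(h)|A|}$ multiplicity) is recoverable from the image because all original $\Vtop$ vertices lie above height $H$ after the lift. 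Relatedly, your claim that the $\bar\pi_n^\dob$-lifting cost is simply $(1-p)^{4jn}$ glosses over the fact that $\Itop$ is not directly amenable to cluster expansion and the shift map on $\cI_\Full$ is not a bijection; this is precisely the content of \cref{lem:lift-Itop}, which you should invoke rather than rederive. The injectivity of lift-plus-spike, which you correctly flag as the main obstacle, is resolved exactly by the exaggerated lift amount, but you leave it unresolved.
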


\subsubsection{Map to lift $\Itop$}\label{subsec:shift-map}
We wish to consider the probability of a realization of $\Itop$ versus its shift up by some height. The previous subsection established that this map can be done under the measure $\bar \pi_n^{\mathsf{dob}}$ rather than $\bar \pi_n^{\fl}$ for the sake of lower bounds on $\Itop$.

\begin{lemma}\label{lem:lift-Itop}
Let $I\subset \cL_{\geq 0}$ be a realization for $\Itop$, and $j\geq 1$. Then $\Xi_j^\dob(I)$ from \cref{eq:Xi-fl-dob-def} has
    \[
\Xi_j^\dob(I)     \geq 
        e^{-4(\beta + C)jn} \,.
    \]
\end{lemma}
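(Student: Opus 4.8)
The goal is a lower bound on the ratio $\Xi_j^\dob(I) = \bar\pi_n^\dob(\Itop = \Theta_j I)/\bar\pi_n^\dob(\Itop = I)$, and since $\Theta_j I$ differs from $I$ only by a vertical shift and the addition of $4jn$ boundary plaquettes along the sides, the natural strategy is to exhibit a measure-theoretic bijection between configurations realizing $\{\Itop = I\}$ and those realizing $\{\Itop = \Theta_j I\}$, and bound the Radon--Nikodym cost edge by edge. Concretely, I would use the Domain Markov factorization already derived in the proof of \cref{lem:compare-fl-dob-integral}: under $\bar\pi_n^\dob$ one has $\bar\pi_n^\dob(\Itop = I) = \frac{1}{\tilde Z_G^{(0)}}(1-p)^{|I|} \tilde Z^{(0)}_{I_\downarrow} Z^*_{I_\uparrow}$ (with $\theta = 0$, i.e.\ no augmenting edges, so this is just $\bar\pi_n^\dob$ restricted below/above $I$), and the same for $\Theta_j I$. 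Dividing, the normalization $\tilde Z_G^{(0)}$ cancels, and since $\Theta_j I$ has exactly $4jn$ more plaquettes than $I$,
\[
\Xi_j^\dob(I) = (1-p)^{4jn}\,\frac{\tilde Z^{(0)}_{(\Theta_j I)_\downarrow}}{\tilde Z^{(0)}_{I_\downarrow}}\cdot\frac{Z^*_{(\Theta_j I)_\uparrow}}{Z^*_{I_\uparrow}}\,.
\]
So it remains to show each of the two partition-function ratios is bounded below by $e^{-Cjn}$ for a suitable absolute constant, after which $(1-p)^{4jn} = e^{-4\beta jn}$ absorbs into $e^{-4(\beta+C)jn}$ (using $\log(1/(1-p)) = \beta$).

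**The region above.** For $Z^*_{(\Theta_j I)_\uparrow}/Z^*_{I_\uparrow}$: the graph $G_{(\Theta_j I)_\uparrow}$ is the induced subgraph of $\Lambda'_{n,n}$ on $\Atop(\Theta_j I)$, which (because $\Theta_j I$ is $I$ shifted up by $j$, together with the vertical side plaquettes) is, as a graph, isomorphic to $G_{I_\uparrow}$ with an extra slab of height $j$ wired below it --- more precisely, the part of $\Atop(\Theta_j I)$ at heights in $(0,j]$ forms a column-like region wired to the top. A clean way to get the bound is to note that $Z^*_{I_\uparrow}$ and $Z^*_{(\Theta_j I)_\uparrow}$ can be compared by a containment/monotonicity argument, or more simply: deleting all the newly added edges in the height-$(0,j]$ slab and forcing a definite configuration there costs at most $\max(p,1-p)^{(\text{\# edges in slab})} q^{-(\text{\# vertices})} = e^{-O(1)\cdot jn^2}$... which is too large. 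So instead I would not try to bound this ratio directly; better to fold the ``above'' region into a single global bijection.

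**Cleaner approach via a direct map.** Rather than splitting, I would define a single injection $\Phi$ on $\{\Itop = I\}$: given $\omega$ with $\Itop(\omega) = I$, let $\Phi(\omega)$ agree with the vertical shift of $\omega$ on the region above and on $I$, be all-wired (all edges open) on the newly inserted slab of heights $[0,j]$ on the sides and wherever $\Theta_j I$ dictates structure, and agree with $\omega$ (unshifted) below height $0$ --- noting that below height $0$ nothing changes since $I \subset \cL_{\geq 0}$ means the configuration on $\Atop^c$ near the floor is unconstrained and identical in both. Then $\Phi(\omega)$ realizes $\{\Itop = \Theta_j I\}$, the constraint $\max_x \overline\hgt_x(I) < j$ (wait, the hypothesis is only $j \geq 1$ here) --- actually \cref{lem:lift-Itop} asks only $j\geq 1$, so I must be careful the map is well-defined even when $I$ pokes above height $j$; this is fine because we shift $I$ rigidly and only insert the side collar. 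The cost of $\Phi$: we change at most $4jn$ plaquettes from closed to open (each gains $p/(1-p) \cdot q^{\pm1}$, i.e.\ costs at most $e^{-(\beta - \log q)}$ per plaquette in the worst direction), plus the vertices in the inserted slab: at most $4jn$ vertices adjacent to the collar, contributing at most $q^{4jn}$. Everything else is shift-invariant and cancels since the graph $\Lambda'_{n,n}$ has the required vertical translation room (this is where $j \leq n/2$... but the lemma doesn't state that bound; rereading, \cref{lem:lift-Itop} truly only assumes $j\geq 1$, so the shift must stay inside $\Lambda'_{n,n}$ automatically or the collar just gets truncated at the boundary, only helping the bound). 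Collecting: $\Xi_j^\dob(I) \geq e^{-4\beta jn}\cdot e^{-4jn\log q}\cdot(\text{lower-order}) \geq e^{-4(\beta+C)jn}$ with $C$ absorbing $\log q$ and the $O(jn)$ boundary-vertex terms.

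**Main obstacle.** The delicate point is making the injection $\Phi$ genuinely well-defined and verifying $\Itop(\Phi(\omega)) = \Theta_j I$ exactly --- i.e.\ that opening the collar edges does not accidentally connect $\Vtop$ to something it shouldn't, or change which vertices lie in $\Atop$ versus $\Atop^c$ below the interface. Since $\Theta_j I$ by definition \emph{is} a valid $\Top$-interface realization (it is closed, $1$-connected, separates a wired-above component from the rest) and $\Phi(\omega)$ is built to have exactly the closed plaquettes $\Theta_j I$ and nothing else closed that would create a competing interface, this should go through, but it requires the same care as the disconnection-preserving maps in \cref{lem:Gamma-outside-Atop-Abot}. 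I would also double-check the edge-count bookkeeping: exactly $4jn$ new side plaquettes, each flipped from closed to open costs a multiplicative factor $\geq \frac{p}{(1-p)q} = e^{-(\beta + \log q)} \geq e^{-(\beta + C)}$, and the at-most-$4jn$ newly-internal vertices each reduce $\kappa$ by at most $1$, i.e.\ cost a factor $\geq q^{-1} = e^{-\log q} \geq e^{-C}$; multiplying gives the stated $e^{-4(\beta+C)jn}$ once $C$ is taken large enough to absorb all $O(jn)$-exponent contributions (here one uses $jn \geq 1$ to merge the $e^{-O(jn)}$ and $e^{-O(\beta jn)}$ terms).
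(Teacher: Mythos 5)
There is a genuine gap, and it sits exactly at the point you flagged and then stepped around. The event $\{\Itop = I\}$ is not determined by a bounded set of edges: it is a union over exponentially many configurations, and the lift changes the \emph{volumes} of the regions above and below the interface by $\Theta(jn^2)$ vertices. Your first route (the Domain Markov factorization $\Xi_j^\dob(I) = (1-p)^{4jn}\,\tilde Z^{(0)}_{(\Theta_j I)_\downarrow}\tilde Z^{(0)\,-1}_{I_\downarrow}\cdot Z^*_{(\Theta_j I)_\uparrow}Z^{*\,-1}_{I_\uparrow}$) is sound as a starting point, and the volume terms in the two partition-function ratios do cancel against each other, leaving surface corrections of order $jn$ --- but establishing that cancellation \emph{is} the content of the lemma and requires cluster-expansion control of per-edge free energies near the inserted slab; you correctly notice your naive attempt gives $e^{-O(jn^2)}$ and then abandon the route without replacing it. Your second route, the direct configuration-level injection $\Phi$, does not work: shifting the configuration at heights $\geq 0$ up by $j$ pushes the top slab of $\Theta(jn^2)$ edges out of $\Lambda'_{n,n}$ (so $\Phi$ has multiplicity $2^{\Theta(jn^2)}$, destroying the entropy count), and the newly vacated slab of $\Theta(jn^2)$ edges below the lifted interface must be assigned some definite configuration, so the claim that ``everything else is shift-invariant and cancels'' and that the total cost is only the $4jn$ collar plaquettes is false at the level of raw configurations. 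The issue is not the $O(jn)$ collar bookkeeping (which is fine) but the $O(jn^2)$ bulk, which your accounting never touches.

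The paper's proof resolves this by never mapping configurations at all. It first proves (the claim inside the proof of \cref{lem:lift-Itop}) that $\Itop$ is a measurable function of $\cI_\Full$, so $\{\Itop = I\} = \{\cI_\Full \in \cF(I)\}$; it then applies the exact cluster-expansion formula of \cref{prop:grimmett-cluster-exp}, in which all configurations compatible with a given full interface have already been integrated out and the weight of $\cI_\Full = I_0$ depends only on $|I_0|$, $|\partial I_0|$, $\kappa(I_0)$ and the decaying $\g$-terms. The lift map $\Phi_1$ then acts on full interfaces, where \cref{clm:shift-map-energy} gives a genuine $O(n)$ surface-order weight change, \cref{clm:shift-map-entropy} bounds the preimage count by $2^{4n}$ (the non-injectivity coming only from the ``hairs'' near the collar, not from the bulk), and a separate argument verifies $\Phi_1\cF(I)\subset\cF(\Theta_1 I)$. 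If you want to salvage your approach, the honest path is to return to your first route and prove the surface-order bound on $\log\bigl(\tilde Z^{(0)}_{(\Theta_j I)_\downarrow}Z^*_{(\Theta_j I)_\uparrow}\bigr) - \log\bigl(\tilde Z^{(0)}_{I_\downarrow}Z^*_{I_\uparrow}\bigr)$ using \cref{clm:free-energy} together with exponential decay of the edge marginals' dependence on the boundary --- which is morally the same cluster expansion the paper invokes.
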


\begin{proof}
It suffices to prove the lemma for $j=1$, that is, to show that
$\bar\pi_n^\dob(\Itop = I)/\bar\pi_n^\dob(\Itop = \Theta_1 I) $ is at most $ e^{4(\beta+C)n}$
(which, when iterated $j$ times, would yield for the sought bound on $\Xi_j^\dob(I)$).
    Let $\cF(I)$ be the set of $\Full$ interfaces which have $I$ as the $\Top$ interface; that is,
    \[ \cF(I) = \{ I_0 = \cI_\Full(\omega)\,:\; \omega\mbox{ satisfies }\Itop(\omega)=I\}\,.\]

\begin{claim}
Let $\omega\sim \bar\pi_n^\dob$. Then $\Itop(\omega)$ is measurable w.r.t.\ $\cI_\Full(\omega)$. 
\end{claim}
\begin{proof}
Let $\omega_0$ be the configuration such that $(\omega_0)_e = 0$ if and only if $e$ is dual to a plaquette $f \in I_0$. It suffices to show that for any $\omega$ with $\cI_\Full(\omega) = I_0$, we have $\Atop(\omega) = \Atop(\omega_0)$, as that would imply $\Itop(\omega) = \Itop(\omega_0)$. Since $\omega$ must be pointwise below $\omega_0$, we have immediately that $\Vtop(\omega) \subset  \Vtop(\omega_0)$, and hence $\Atop(\omega) \subset  \Atop(\omega_0)$. To show the other inclusion, suppose for contradiction that $\Atop(\omega_0) \setminus \Atop(\omega)$ is nonempty. Then, there must in particular be some $v \in \Atop(\omega_0) \setminus \Atop(\omega)$ such that $v$ is adjacent to some $u \in \Atop(\omega_0)^c$ (if no such $v$ exists, then $\Atop(\omega_0) \setminus \Atop(\omega)$ must consist of finite components surrounded by vertices of $\Atop(\omega_0) \cap \Atop(\omega)$, but this contradicts the construction of $\Atop(\omega)$ which would have included such finite components). Let $C(v)$ be the finite random-cluster component containing $v$. Then $C(v)$ has a 1-connected boundary of dual-to-closed plaquettes which contains the plaquette dual to the edge $(u,v)$, hence the entire 1-connected boundary is a subset of $\cI_\Full$. Thus, $v$ is in a finite component even in $\omega_0$, which together with the fact that $v$ is adjacent to $u \in \Atop(\omega_0)^c$ implies that $v \in \Atop(\omega_0)^c$, concluding the contradiction.    
\end{proof}
 As a consequence of the above claim, we can write 
    \begin{equation}\label{eq:Itop=I-in-terms-of-Full}
        \bar \pi_n^\dob(\Itop = I) = \bar \pi_n^\dob(\cI_\Full \in \cF(I))\,.
    \end{equation}
 We claim that it will therefore suffice to show that 
    there is a variant of the lifting map $\Theta_1$ for $\Full$ interfaces, call it $\Phi_1$, such that for every $I$ and every $I_0\in\cF(I)$, we have 
 \begin{align}
    \bar\pi_n^\dob(\cI_\Full = I_0) &\leq e^{4(\beta + C)n} \bar \pi_n^\dob(\cI_\Full = \Phi_1 I_0) \label{eq:nts-Itop-1}\,,
\end{align}
and 
\begin{align}
\sum_{I_0\in \cF(I)} \bar\pi_n^\dob(\cI_\Full = \Phi_1 I_0) &\leq e^{C n} \bar\pi_n^\dob(\cI_\Full \in \cF(\Theta_1 I))  \label{eq:nts-Itop-2}\,.
\end{align}
Indeed, modulo these two inequalities, we can infer from \cref{eq:Itop=I-in-terms-of-Full} that
   \begin{align*}
        \bar \pi_n^\dob(\cI_\Full \in \cF(I)) &= \sum_{I_0 \in \cF(I)} \bar\pi_n^\dob(\cI_\Full = I_0) \leq \sum_{I_0 \in \cF(I)} e^{4(\beta + C)n} \bar \pi_n^\dob(\cI_\Full = \Phi_1 I_0) \\
        & \leq e^{4(\beta + 2C)n} \bar \pi_n^\dob(\cI_\Full \in \cF(\Theta_1 I)) = e^{4(\beta + 2C)n}\bar \pi_n^\dob (\Itop = \Theta_1 I)\,.
    \end{align*}
(The inequality in the first line used \cref{eq:nts-Itop-1} and the one in the second line used \cref{eq:nts-Itop-2}.)
\Cref{eq:nts-Itop-1} controls the change in probability incurred by applying $\Phi_1$ (the ``lifting up by 1'' map) to the $\Full$ interface, where we have the cluster expansion of \cref{prop:grimmett-cluster-exp} at our disposal. 
\Cref{eq:nts-Itop-2} relates the probability of having a $\Full$ interface that is lifted by $1$ to that of having a $\Top$ interface that is shifted by $1$ (more precisely, an union bound on the former probability: the left-hand is an upper bound on $\bar\pi_n^\dob(\cI_\Full\in\Phi_1 \cF(I))$ via the non-disjoint events $\{\cI_\Full=\Phi_1 I_0\}$). (Note that, unlike the shift map $\Theta_1$ that acts on (a realization of) $\Itop$, working with a $\Full$ interface must be done with extra care, as it is not a surface---it consists of additional ``hairs'' of dual-closed plaquettes---the culprit behind the fact that the shift map $\Phi_1$ acting on $\cI_\Full$ is not a bijection.)

Let $\Phi_1$ be the map on a $\Full$ interface $I_0$ such that $\Phi_1(I_0) = J_0$, where the plaquette set of $J_0$ is the union of the set of plaquettes in $I_0$ shifted up by 1, together with the set of $4n$ vertical plaquettes at height $1/2$ which are adjacent to the boundary vertices in $\partial \Lambda_{n,n}$. Recall that in the context of $\bar \pi_n^\dob$, we are using $\partial_\Red \Lambda'_{n,n} = \cL_{>0}\cap\partial \Lambda'_{n,n}$ and $\partial_\Blue \Lambda'_{n,n} = \cL_{<0}\cap\partial \Lambda'_{n,n}$. 

\begin{claim}\label{clm:shift-map-well-defined}
    For every realization $I_0$ for $\cI_\Full$ under $\bar\pi_n^\dob$, the map $\Phi_1$ outputs a valid full interface $J_0=\Phi_1(I_0) $ separating $\partial_\Blue\Lambda'_{n,n}$ from $\partial_\Red\Lambda'_{n,n}$.
\end{claim}
    \begin{proof}
        We need to show that $J_0$ is 1-connected and separates $\partial_\Blue\Lambda'_{n, n}$ from $\partial_\Red\Lambda'_{n, n}$. 
        
        The fact that $J_0$ is 1-connected follows from observing that the translation of $I_0$ up by height 1 is 1-connected, the $4n$ vertical plaquettes at height 1/2 adjacent to boundary vertices in $\partial\Lambda_{n, n}$ are 1-connected, and these two sets are also 1-adjacent.
        
        To show that $J_0$ is separating, 
        suppose for contradiction that there is a path $P$ from $x \in \partial_\Red \Lambda'_{n,n}$ to $y \in \partial_\Blue \Lambda'_{n,n}$ that does not cross any plaquette of $J_0$. Let us first consider the case where $x\in \cL_{\geq 3/2}$. Then we can consider the translation $P'$ of the path $P$ by height $-1$. Since $P$ does not cross any plaquette of~$J_0$, the path $P'$ does not cross any plaquette of~$I_0$; so, the fact $x - \ez \in \partial_\Red \Lambda'_{n,n}$ implies that~$P'$ is a path from $\partial_\Red \Lambda'_{n,n}$ to $\partial_\Blue \Lambda'_{n,n}$ that does not cross a plaquette of $I_0$, contradicting the fact that $I_0$ is a valid interface. For the remaining case $x\in \cL_{1/2}\cap\partial\Lambda'_{n,n} =: X$, the new (rather than shifted) plaquettes added to $J_0$ (of which there are at most $4n$) ensure that any path in $\Lambda'_{n,n}$ from $X$ to $X^c$ must first pass through a vertex  $x'\in \cL_{3/2}\cap \partial \Lambda'_{n,n}$, putting us back in the first case.
    \end{proof}

\begin{claim}\label{clm:shift-map-energy}
    We have the following bounds for the terms in the cluster expansion:
    \begin{align*}
        &|J_0| - |I_0| \leq 4n\,,\qquad
        |\partial J_0| - |\partial I_0| \leq Cn\,,\qquad
        \kappa(J_0) \geq \kappa(I_0)\,,\\
        &\bigg|\sum_{f \in J_0} \g(f, J_0) - \sum_{f \in I_0} \g(f, I_0)\bigg| \leq Cn\,.
    \end{align*}
    Hence, we have by ~\cref{prop:grimmett-cluster-exp} for the $\Full$ interface that 
    \begin{equation*}
    \bar\pi_n^\dob(I_0) \leq e^{4(\beta + C)n}\bar\pi_n^\dob(J_0)\,.
    \end{equation*}
\end{claim}
\begin{proof}
    The first inequality $|J_0| - |I_0| \leq 4n$ follows by construction of $J_0$. The second inequality similarly follows because the only difference lies in adding the up to $4n$ plaquettes to $J_0$, and each plaquette added can only change $\partial J_0$ by a constant. Adding plaquettes (equivalent to closing edges) can only increase the number of open clusters, so $\kappa(J_0) \geq \kappa(I_0)$. Finally, call $I_1$ the set of plaquettes of $I_0$ shifted up by $1$, and for each $f \in I_0$ let $f' \in I_1$ be the copy of $f$ shifted up by $1$. We then have
    \begin{equation*}
        \bigg|\sum_{f \in J_0} \g(f, J_0) - \sum_{f \in I_0} \g(f, I_0)\bigg| \leq \sum_{f \in J_0 \setminus I_1} |\g(f, J_0)| + \sum_{f: f' \in J_0 \cap I_1} |\g(f', J_0) - \g(f, I_0)|\,.
    \end{equation*}
    The first term is bounded by $4Kn$, and we can bound the second term since the radius $r(f, I_0; f', J_0)$ is obtained by the distance to either one of the up to $4n$ plaquettes in $J_0\setminus I_1$ or one of the $4n$ horizontal boundary plaquettes  at height $0$ in the Dobrushin boundary conditions. If the union of these two plaquette sets is called $F$, then we have
    \begin{equation*}
        \sum_{f: f' \in J_0 \cap I_1} |\g(f', J_0) - \g(f, I_0)| \leq \sum_{f' \in J_0 \cap I_1}\sum_{g \in F} Ke^{-cd(f', g)} \leq K|F| \leq K'n\,. \qedhere
    \end{equation*}
\end{proof}
Combining the above two claims proves \cref{eq:nts-Itop-1}.
For \cref{eq:nts-Itop-2}, we need the following claim.

\begin{claim}\label{clm:shift-map-entropy}
    Every $J_0$ in the image of the map $\Phi_1$ has at most $e^{Cn}$ preimages~$I_0$.
\end{claim}
\begin{proof} 
Denote by $I_1$, as in the previous proof, the plaquettes of $I_0$ shifted up by $1$, and let us decompose $J_0 = I_1 \cup (J_0 \setminus I_1)$. If we know which of the $4n$ vertical plaquettes adjacent to boundary vertices at height $1/2$ are in $I_1$ and which are in $J_0 \setminus I_1$, then we can recover $I_1$ and hence $I_0$ from the image. There are at most $2^{4n}$ subsets of possible plaquettes to attribute to $J_0 \setminus I_1$, as claimed.
\end{proof}

We will now argue that
\begin{align}
     \Big\{\cI_\Full \in  \bigcup_{I_0\in \cF(I)}\Phi_1 I_0\Big\} &\subset  \{\cI_\Full \in \cF(\Theta_1 I)\}  \label{eq:nts-Itop-2b}\,,
\end{align}
which together with \cref{clm:shift-map-entropy} will establish \cref{eq:nts-Itop-2}.

Let $\omega_0$ be the configuration that only has closed edges $e$ dual to plaquettes $f \in I_0$. Similarly, let $\omega_1$ be the configuration that only has closed edges $e$ dual to plaquettes $f \in \Phi_1 I_0$. We need to show that 
$\Vtop(\omega_1)$ is nothing but $\Theta_1 \Vtop(\omega_0)$ (the set of vertices in $\Vtop(\omega_0)$ shifted up by $1$). 

Notice that if we take $\omega_0$, shift all edges up by $1$, and then close the $4n$ edges connecting a boundary vertex in $\partial_\Red \Lambda'_{n,n}$ at height $1/2$ to a vertex in $\Lambda'_{n,n} \setminus \partial_\Red \Lambda'_{n,n}$ also at height $1/2$, then we exactly get~$\omega_1$. Call~$\omega'_1$ the configuration obtained by shifting the edges of $\omega_0$ up by $1$ (without closing extra edges). 

First we show that $\Theta_1 \Vtop(\omega_0) \subset  \Vtop(\omega_1)$. Indeed, note that for every path $P$ of open edges in $\omega_0$ connecting a vertex $x$ to $\partial_\Red \Lambda'_{n,n}$, its shift up by $1$, call it $P'$, will be a path of open edges in $\omega'_1$ connecting $x + \ez$ to $\partial_\Red \Lambda'_{n,n}$. It remains to show that closing the up to $4n$ edges to obtain $\omega_1$ does not cut off this path. Indeed, if one of those edges were in the path $P'$, this would imply that $P'$ contains a boundary vertex in $\cL_{1/2}$; thus, $P$ would contain a boundary vertex in $\cL_{-1/2}$, contradicting the fact that there is no path of open edges from $\partial_\Blue \Lambda'_{n,n}$ to $\partial_\Red \Lambda'_{n,n}$.

Now we show that $\Vtop(\omega_1) \subset  \Theta_1 \Vtop(\omega_0)$. If $x \in \Vtop(\omega_1)$ and $P'$ is a path of open edges in $\omega_1$ connecting $x$ to a boundary vertex $y \in \partial_\Red \Lambda'_{n,n}$, then it must have such a path to a $y$ at height at least $3/2$ (as all $4n$ faces along $\cL_{3/2}\cap\partial\Lambda'_{n,n}$ have been made dual-to-closed). Then $P'$ is still an open path in $\omega'_1$ (since $\omega'_1 \geq \omega_1$), and its shift down by $1$, call it $P$, is an open path in $\omega_0$. Thus $P$ is an open path connecting $x - \ez$ to $y - \ez$, and $y - \ez \in \partial_\Red \Lambda'_{n,n}$ since $y -  \ez\in \cL_{\geq 1/2}$. 

This establishes \cref{eq:nts-Itop-2b}, thereby concluding the proof of the lemma.
\end{proof}

\subsubsection{Injecting entropy via downward spikes}\label{subsec:spikes}
Now that we have controlled the weight change of lifting $\Itop$ under $\bar \pi_{n}^\fl$ via its weight change under $\bar \pi_n^\dob$, we can return to the hard-floor measure and inject entropic downward spikes to argue that this lifting and the added opportunity for downward spikes to the interface is preferential to having predominantly been at too low a height.  

\begin{proof}[\textbf{\emph{Proof of \cref{prop:Itop-lower}}}]
    Let $H:= \tfrac{14}\beta \log n$ and let $h:= \frac{1}{4\beta + C_0}\log n$ for $C_0$ to be chosen later. For $\epsilon_\beta$, define the set $\mathbf{I}_{\mathsf{bad}}$ to be the set of possible realizations $I$ of $\Itop$ satisfying
    \begin{align*}
        |\{x\in (\mathbb Z+\tfrac{1}{2})^2 \,:\; \underline \hgt_x(I) \le h\}| >\epsilon_\beta n^2 \qquad \text{ and } \qquad \max_x \overline\hgt_x(I) \le H.
    \end{align*}
    By a union bound with \cref{thm:weak-upper-fl-top}, it will suffice for us to show 
        \begin{align}\label{eq:nts-Itop-lower-bound}\bar\pi_n^\fl(\Itop \in  \mathbf{I}_{\mathsf{bad}} ) \leq e^{-n}\,. \end{align}
    Fix any $I\in \mathbf{I}_{\mathsf{bad}}$.  
    Consider the operation $\Theta_H$ which lifts $I$ by $H$ and adds all $4Hn$ vertical plaquettes along the boundary at heights between $0$ and $H$. By \cref{lem:compare-fl-dob} and \cref{lem:lift-Itop}, we have 
    \begin{align}\label{eq:Itop-lifting-cost-fl}
        \frac{\bar \pi_n^\fl(\cI_{\Top}  = \Theta_{H} I)}{\bar \pi_n^\fl(\cI_{\Top} = I)} \ge e^{ - 4(\beta + C) H n}\,.
    \end{align}
    Next, for a subset $A$ of $xy$-plane index points $A \subset (\mathbb Z+\frac{1}{2})^2 \times \{0\}$ such that $\underline{\hgt}_x (I) \le h$, consider the following operation $\Phi_{A}$ on $\{\omega: \cI_{\Top} = \Theta_H I\}$. 
    Obtain $\Phi_A(\omega)$ by,  for every $x\in A$,
    \begin{enumerate}
        \item opening the $h$ vertical edges (dual to horizontal plaquettes) between vertices in the column $$v_i^x = x+ (\underline{\hgt}_x(\Theta_H I) + \tfrac{1}{2} - i)\ez\,, \qquad i=0,\ldots,h\,,$$
        (if they are not open already). 
        \item closing the $4h$ horizontal edges (dual to vertical plaquettes) incident to vertices $(v_i^x)_{i=1,\ldots,h}$, as well as the one vertical edge (dual to a horizontal plaquette) between $v_{h}^x$ and $v_{h}^x - \ez$ (if they are not closed already). 
    \end{enumerate}
    In $\omega$, all vertices $v_0^x$ are in $\Vtop$, and so step (1) above is exactly adding a downward vertical spike of height $h$ to $\Vtop$ at every column above $x\in A$. The resulting $\omega$ does not violate the disconnection event $\mathfrak{D}_{n,n}$ because every vertex added to $\Vtop$ by step (1) is disconnected from $\partial_\Blue \Lambda_{n,n}$ by the union of the original $\Theta_H I$ and the edges closed in step (2). By direct comparison of weights, notice that for every $\omega$, we have 
    \begin{align}\label{eq:top-spikes-weight-change}
        \frac{\bar \pi_n^\fl(\Phi_A(\omega))}{\bar \pi_n^\fl(\omega)} \ge \Big(\Big(\frac{1-p}{p}\Big)^{4h+1} q^{4h+1}\Big)^{|A|}\,.
    \end{align}
    From a realization of $\Phi_A(\omega)$, we can read off the set $A$ by first reading off the set $\Itop(\Phi_A(\omega))$, then finding all index points $x\in \cL_{0}$ above which the interface $\Itop(\Phi_A(\omega))$ takes a height below $H$. Indeed, every such $x$ is a member of $A$ because only vertices in columns through $A$ are added to $\Vtop$ by the operation, and all members of the original set $\Vtop$ were above height $H$ in $\Theta_H I$. Moreover, every member of $A$ is counted in this set of $x$ because $\underline{\hgt}_x(\Theta_H I) - h= H + \underline{\hgt}_x(I) -h$ is below $H$. As a result, for every $\omega'$ in the image of $\Phi_A$ (there can only one $A$ for which this is the case since $A$ is determined by $\omega'$), 
    \begin{align*}
        |\{\omega: \Phi_A(\omega) = \omega'\}|\le 2^{(5h+1) |A|}\,,
    \end{align*}
    by reading off $A$, and $\underline{\hgt}_x(\Itop(\omega'))$ for every $x\in A$ from $\omega'$ as above, reading off from that $v_i^x$ for all $x\in A$, then enumerating over the $2^{5h+1}$ possible values $\omega$ could take on the incident edges for the $h$ vertices $(v_i^x)_{i=1}^h$. From any such $\omega$, we can also read off $I$ (as compared to $\Theta_H I$) because the map $\Theta_H$ is a bijection for $I\subset \cL_{\ge 0}$. 
    
    Putting the above together, we therefore have that 
    \begin{align}
       1 & \ge \sum_{I \in \mathbf{I}_{\mathsf{bad}}} \,\sum_{A\subset \{x: \underline\hgt_x I \le h\}} \,\sum_{\omega'\in \Phi_A(\{\omega: \Itop(\omega) = \Theta_H I\})} \bar \pi_n^\fl(\omega') \nonumber \\ 
       & \ge    \sum_{I \in \mathbf{I}_{\mathsf{bad}}} \,\sum_{\omega: \Itop(\omega)=  \Theta_H I} \, \sum_{A\subset \{x: \underline\hgt_x I \le h\}} 2^{-(5h+1) |A|} e^{ - (4\beta +C) h |A|} \bar \pi_n^\fl(\omega) \nonumber 
    \end{align}
where the right-hand side of~\eqref{eq:top-spikes-weight-change} became $e^{ -(4\beta + C) h|A|}$. Using that for every $I\in \mathbf{I}_{\mathsf{bad}}$, one has $|\{x: \underline\hgt_x I\le h\}|>\epsilon_\beta n^2 $, then up to a change of the constant $C$, one has 
    \begin{align*}
              1 & \ge \sum_{I\in \mathbf{I}_{\mathsf{bad}}} \bar \pi_n^\fl(\Itop = \Theta_H I) \Big(1+ e^{ - (4\beta +C) h}\Big)^{|\{x: \underline\hgt_x I\le h\}|}  \ge \sum_{I\in \mathbf{I}_{\mathsf{bad}}} \bar \pi_n^\fl(\Itop = \Theta_H I) \Big(1+ e^{ - (4\beta +C) h}\Big)^{\epsilon_\beta n^2}\,.  
    \end{align*}
    By~\eqref{eq:Itop-lifting-cost-fl}, the above becomes 
    \begin{align*}
        1 & \ge \Big(1+ e^{ - (4\beta +C) h}\Big)^{\epsilon_\beta n^2} \sum_{I\in \mathbf{I}_{\mathsf{bad}}} e^{ - (4\beta + C) Hn} \bar \pi_n^\fl(\Itop= I) \\ 
        & = \Big(1+ e^{ - (4\beta +C) h}\Big)^{\epsilon_\beta n^2} e^{ - (4\beta + C) Hn} \bar \pi_n^\fl(\Itop \in \mathbf{I}_{\mathsf{bad}})\,.
    \end{align*}
    Using $1+x \ge e^{x/2}$ for $x\le 1$ and rearranging, we get 
    \begin{align*}
        \bar \pi_n^\fl(\Itop \in \mathbf{I}_{\mathsf{bad}}) \le \exp\Big( - \tfrac{\epsilon_\beta}{2} n^2 e^{ - (4\beta + C)h} + (4\beta + C) H n\Big)\,.
    \end{align*}
    By our choices of $h$ and $n$, it becomes clear that there is a choice of $C_0$, say $C_0 = 2C$, such that the negative term in the exponential is larger than $ n^{1 + \epsilon_\beta'}$ (regardless of $\epsilon_\beta$), while the positive term is $O(n\log n)$, implying the requisite bound of~\eqref{eq:nts-Itop-lower-bound}. 
\end{proof}

\begin{remark}\label{rem:lower-bound-rate}
The final step in the proof of \cref{prop:Itop-lower} was to derive entropic repulsion via comparing the energetic cost of the shift map with the entropy gain from planting spikes consisting of straight columns of height $h$. This part of the argument is suboptimal since $\Itop$ is in fact propelled by the large deviation rate of its downward deviations, which are typically random-walk like as opposed to straight columns. We expect that after lifting $\cI_{\Top}$ by $C \log n$ (for a large enough $C>0$) using \cref{lem:lift-Itop} so that the hard floor is far from the interface, the detailed analysis in \cite{GL-entropic-repulsion}, which showed entropic repulsion driven by pillar rates for the Ising interface above a soft floor, can be adapted to the interface $\Itop$. This would replace the lower bound $\frac{1}{4\beta+C_0}\log n$ in \cref{prop:Itop-lower} (and consequently the one in \cref{thm:main}) by $(1-o(1))\frac1{\xi'}\log n$, where $\xi'$ is the analog of $\xi$ in $\pi_\infty^{\mathsf{w}}$ (the rate of point-to-plane connections in the complement of the infinite cluster).
\end{remark}
\subsection{Extending the lower bound to the blue interface}

We would now like to extend to $\cI_\Blue$ the lower bound of $\frac1{4\beta+C}\log n$ obtained in \cref{prop:Itop-lower} on the height of most plaquettes in $\Itop$. Note that, to that end, the additive error from \cref{cor:dist-Ifull-Ibot} (whereby $\cI_\Blue$ is sandwiched between $\Ibot,\Itop$) would be far too crude; instead, we will rely on the following result.

\begin{lemma}\label{lem:Ifull-wall-faces} There is an absolute constant $C>0$ such that, if $\delta = C/\beta$, then
\[ \bar\pi_n^\fl\left(|\cI_\Full|\geq (1+\delta)n^2\right) \leq e^{ - n^2}\,.\]
In particular, on this event, $\underline\hgt_x(\cI_\Full) = \overline\hgt_x(\cI_\Full)$ for all but at most $\delta n^2$ plaquettes $x\in (\Z+\frac12)^2$, and under $\bP_n^\fl$ all four interfaces $\cI_\Bot,\cI_\Blue,\cI_\Red,\cI_\Top$ coincide with $\cI_\Full$ on the corresponding plaquette.
\end{lemma}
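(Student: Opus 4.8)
The plan is to first recast the claim as a bound on the \emph{excess area} of $\cI_\Full$. Every column $\{(x_1,x_2)\}\times\R$ over a base cell of $\Lambda_{n,n}$ joins $\partial_\Blue\Lambda_{n,n}$ to $\partial_\Red\Lambda_{n,n}$, so $\cI_\Full$ must contain at least one horizontal plaquette over it; hence $|\cI_\Full|\ge n^2$ deterministically, and it suffices to prove $\bar\pi_n^\fl\big(\Delta\ge \delta n^2\big)\le e^{-n^2}$ where $\Delta:=|\cI_\Full|-n^2\ge 0$. The ``in particular'' clauses then refer to the complementary event $\{|\cI_\Full|<(1+\delta)n^2\}$, of probability at least $1-e^{-n^2}$, and are routine accounting: each column over which $\cI_\Full$ is not a single flat plaquette contributes at least a constant to $\Delta$, so (after adjusting the absolute constant $C$ in $\delta=C/\beta$) all but $\delta n^2$ columns carry exactly one plaquette $f$ of $\cI_\Full$; for such a column the open column of edges below $f$ lies in $\Vbot\subseteq\Abot\subseteq\Ablue$ and the one above lies in $\Vtop\subseteq\Ared\subseteq\Ablue^c$, so $f\in \Ibot\cap\cI_\Blue\cap\cI_\Red\cap\Itop$ by \cref{def:FK-Potts-interfaces}.

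For the displayed bound I would run a Peierls/cluster-expansion estimate, viewing $\cI_\Full$ as a flat sheet decorated by a gas of \emph{walls} (maximal connected clusters of non-flat plaquettes, together with any ``hair'' appendages of dual-to-closed plaquettes), so that $\Delta$ equals the total excess area carried by the walls present. The crux is the pair of bounds: a wall $W$ of size $k$ through a fixed column satisfies $\bar\pi_n^\fl(W\subseteq\cI_\Full)\le e^{-(\beta-C)k}$, and there are at most $e^{Ck}$ connected walls of size $k$ through a fixed column. The probability bound comes from \cref{prop:grimmett-cluster-exp}: comparing a realization $I$ of $\cI_\Full$ to the realization $R(I)$ obtained by flattening $W$ across its footprint — a valid full interface, by the argument in \cref{clm:shift-map-well-defined} — the ratio of the right-hand sides of \eqref{eq:CE} is at most $e^{-(\beta-C)k}$, because (i) $(1-p)^{|I|}/(1-p)^{|R(I)|}=e^{-\beta(|I|-|R(I)|)}$ with $|I|-|R(I)|\gtrsim k$; (ii) $q^{\kappa(I)-\kappa(R(I))}\le q^{|I|-|R(I)|}$ since opening an edge merges at most two clusters; (iii) $|\partial I|-|\partial R(I)|=O(k)$; and, crucially, (iv) $\big|\sum_f\g(f,I)-\sum_f\g(f,R(I))\big|=O(k)$ — indeed $I$ and $R(I)$ coincide outside the $O(k)$ plaquettes in the footprint of $W$, \emph{including how they sit relative to the floor and ceiling of $\Lambda_{n,n}$}, so \eqref{eq:g-bound-1-face} handles those $O(k)$ faces and \eqref{eq:g-bound-2-faces}, summed against the distance from $f$ to $W$, handles the rest.

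Given these two inputs, the conclusion follows from the standard exponential-moment estimate for a subcritical polymer gas (the mechanism behind Dobrushin-type rigidity estimates), which is legitimate at large $\beta$ precisely because the cluster expansion in \cref{prop:grimmett-cluster-exp} converges: summing $e^{(\beta/2)k}\cdot e^{Ck}\cdot e^{-(\beta-C)k}$ over $k\ge 1$ and over the at most $n^2$ rooting columns yields $\E_{\bar\pi_n^\fl}\big[e^{(\beta/2)\Delta}\big]\le \exp\!\big(Cn^2e^{-\beta/2}\big)$, whence Markov's inequality gives $\bar\pi_n^\fl(\Delta\ge\delta n^2)\le \exp\!\big(-\tfrac{\beta}{2}\delta n^2+Cn^2e^{-\beta/2}\big)\le e^{-n^2}$ once $\delta=C/\beta$ with $C$ a large enough absolute constant and $\beta$ is large. (Equivalently, one can avoid exponential moments and argue as in \cref{lem:lift-Itop}: apply the flattening map $R$ to the largest wall — an $e^{Ck}$-to-one map that multiplies the weight by a factor $\ge e^{(\beta-C)k}$ — and sum the resulting geometric series over $k\ge \delta n^2$.)

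The main obstacle is item (iv) together with the bookkeeping behind it: making the wall/hair decomposition of $\cI_\Full$ precise in the presence of the hard floor, checking that flattening a wall of size $k$ yields a legitimate full interface separating $\partial_\Blue\Lambda_{n,n}$ from $\partial_\Red\Lambda_{n,n}$, removes $\Omega(k)$ plaquettes, and perturbs the configuration only within an $O(k)$-neighborhood of $W$ — so that the floor- and ceiling-dependent parts of the cluster expansion genuinely cancel in the comparison rather than contributing an uncontrolled $O(n^2)$ term. The hair appendages require separate handling, for which the estimates of \cref{lem:bound-on-hairs} and \cref{lem:Gamma-outside-Atop-Abot} are exactly of the needed form and can be folded in.
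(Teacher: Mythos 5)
Your reduction to an excess-area bound and your reading of the ``in particular'' clauses are fine, but the core of your argument --- the per-wall weight estimate, and specifically your item (iv) --- has a genuine gap, and it is exactly the one this lemma is designed to sidestep. Flattening a wall $W$ of excess area $k$ does \emph{not} leave the interface unchanged outside an $O(k)$-neighborhood of $W$: by the Dobrushin/Gielis--Grimmett reconstruction, deleting a wall vertically shifts every ceiling interior to it, and the hull of $\rho(W)$ can have area up to $n^2$ even when $|W|=O(n)$ (e.g.\ a wall encircling the bulk near $\partial\Lambda_{n,n}$). In the no-floor setting this costs nothing because $\g$ is effectively translation-invariant far from the boundary; but under $\bar\pi_n^\fl$ the floor sits at height $0$, at distance $O(1)$ from the interface, so for a plaquette $f$ in a shifted interior ceiling the radius $r(f,I;f',R(I))$ in \eqref{eq:g-bound-2-faces} is capped at the distance to the floor (the balls $B_r(f)\cap\Lambda_{n,n}$ and $B_r(f')\cap\Lambda_{n,n}$ are no longer translates once $r$ exceeds the smaller height). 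Each such plaquette then contributes a constant to $\big|\sum\g(f,I)-\sum\g(f,R(I))\big|$, giving a total of order $|\rho(\hull{W})|$ rather than $O(k)$. This destroys both the exponential-moment computation and the ``delete the largest wall'' variant: the per-wall ratio can be as bad as $e^{-(\beta-C)k+Cn^2}$ with $k\ll n^2$. The proof in the paper explicitly acknowledges this obstruction and therefore makes no attempt at a wall-by-wall Peierls bound.

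What the paper does instead is a single global comparison: for any realization $I$ with $|I|=n^2+s$, compare directly to the trivial flat interface $I_0\subset\cL_0$ via \cref{prop:grimmett-cluster-exp}, bounding the $\g$-discrepancy by the crude $L^\infty$ estimate $(|I|+|I_0|)\,\|\g\|_\infty\leq K(2+s)n^2$ (no cancellation is attempted, precisely because $r(f,I;f',I_0)=1$ when $I_0$ hugs the floor). This yields $\bar\pi_n^\fl(\cI_\Full=I)\leq e^{Cn^2-\beta s}\,\bar\pi_n^\fl(\cI_\Full=I_0)$, and combined with the $e^{C(n^2+s)}$ enumeration of connected plaquette sets and the restriction $s\geq\delta n^2$, the loss $e^{Cn^2}$ is absorbed by choosing $\delta=(C'+1)/\beta$. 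The price is that the bound only controls excess areas of order $\delta n^2$ and larger --- which is all the lemma claims --- whereas your polymer-gas scheme, had it worked, would have given tails at all scales. If you want per-wall control under a hard floor, you must first move the interface away from the floor (cf.\ \cref{lem:compare-fl-dob} and \cref{lem:lift-Itop}), which is a substantial part of the paper's Section~2.
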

\begin{proof}
Let $I_0$ be the ``trivial interface,'' consisting of the $n^2$ horizontal plaquettes of $\cL_0 \cap \Lambda_{n,n}$. As mentioned in \cref{rem:diff-domains}, the cluster expansion of \cref{prop:grimmett-cluster-exp} for $\cI_{\Full}$ is valid also for $\bar\pi_n^\fl$ (taking $m_1 = 0$ and $m_2 = n$ to arrive at $\Lambda_{n,n}$, and $\sfh=0$ for the appropriate boundary conditions).
Using this (namely, \cref{eq:CE}), if $I$ is  realization of a $\Full$ interface with $n^2 + s$ plaquettes, then
\[ \frac{\bar\pi_n^\fl(\cI_\Full = I)}{\bar\pi_n^\fl(\cI_\Full = I_0)} \leq \Big(\frac{(1-p) q}{p^C}\Big)^s e^{K (2n^2+s)} \leq e^{C n^2 - \beta s}\,,\]
with $K>0$ the absolute constant from \cref{prop:grimmett-cluster-exp} and $C>0$ another absolute constant. Note that, unlike the more careful analysis of the map $\Phi_1$ in \cref{subsec:shift-map} (which would correspond to bounding  $|\sum_{f\in I}\g(f,I) - \sum_{f'\in I_0} \g(f',I_0)|$ via absorbing $K$ for each $f\in I\oplus I_0$, and another constant contribution for each such $f$ from summing over the exponential decay of its effect on other plaquettes through the $\g$ term) we resorted to the naive bound $(|I|+|I_0|)\|\g\|_\infty \leq (2+s)Kn^2$; this is due to $I_0$ being adjacent to the boundary of the box, whereby $r(f,I;f',I_0)$ is $1$ for all $f'\in I_0$ (demonstrating some of the difficulty in overcoming the potential pinning effect of the hard floor).

There are at most $e^{C (n^2+s)}$ interfaces $I$ with $n^2+s$ plaquettes, for some other absolute $C>0$ (viewed as a rooted subgraph of size $n^2+s$ in a bounded-degree graph). Thus,
\[ \bar\pi_n^\fl(|\cI_\Full|\geq (1+\delta)n^2)\leq \sum_{s\geq \delta n^2}\,\sum_{I: |I|=n^2+s} \frac{\bar\pi_n^\fl(\cI_\Full = I)}
{\bar\pi_n^\fl(\cI_\Full = I_0)} \leq e^{ C' n^2-\beta \delta n^2}\,,
\]
and taking $\delta = (C'+1)/\beta$ concludes the proof.
\end{proof}

\begin{proof}[\textbf{\emph{Proof of lower bound in Theorem~\ref{thm:main}}}]
By \cref{lem:Ifull-wall-faces}, we can assume that all but at most $\delta n^2 = C n^2/\beta$ plaquettes of $\cI_\Full$ are horizontal plaquettes with no other plaquettes of $\cI_\Full$ sharing the same $xy$-coordinates (simply by the fact that every $x \in (\Z+\tfrac12)^2 \times {0}$ needs to have at least one plaquette above it, and this already accounts for $n^2$ plaquettes). In particular, each of these plaquettes are common to $\cI_\Full$, $\cI_\Blue$ and $\cI_\Top$ and are one of the minimal height plaquettes of $\cI_\Top$ studied in \cref{prop:Itop-lower}. Hence, on the event $|\cI_\Full| < (1+\delta)n^2$, we have
\begin{align*}
    \big\{|\cI_{\Blue} \cap \cL_{\le \frac{1}{4\beta + C}\log n}| >\epsilon_\beta' n^2\big\} \subset  \big\{|\{x\in (\mathbb Z+\tfrac{1}{2})^2 \, :\; \underline \hgt_x(\Itop) \le h\}| >\epsilon_\beta' n^2 - \delta n^2\big\} \,,
\end{align*}
which concludes the proof via \cref{prop:Itop-lower} so long as $\epsilon_\beta' - \delta$ is bigger than the $\epsilon_\beta$ there, which is of course attained via $\epsilon_\beta' = \epsilon_\beta + C/\beta$. This proves that the lower bound of \cref{thm:main} holds with probability $1 - n^{-1 + \epsilon_\beta}$.   
\end{proof}

\section{Sharp upper bound}\label{sec:sharp-upper}
In this section, we prove the sharp upper bound of $h_n^*$~\eqref{eq:h_n^*} on the typical interface height in Theorem~\ref{thm:main}. By the monotonicity argument of~\cref{prop:Potts-monotonicity} via the fuzzy Potts model, it essentially suffices to establish such an upper bound on the typical interface height under the ``soft-floor" Potts measure, i.e., conditioned on $\cI_{\Blue} \subset \cL_{\ge 0}$, and with the Dobrushin boundary conditions going from $\Red$ to $\Blue$ at height $\sfh = h_n^*$. Namely,  
we will show that under $\musoft^{h_n^*}_n=\mu_{n}^{h_n^*}( \cdot \mid \cI_{\Blue}\subset \cL_{\ge 0})$ as per \cref{eq:mu-hat-measure},
 the interface has fewer than $\epsilon_\beta n^2$ many plaquettes above $h_n^*$. 

The proof of this will go in two parts, by lower bounding the probability of the event $\{\cI_{\Blue}\subset\cL_{\ge 0}\}$ under $\mu_{n}^{h_n^*}$, and at the same time, upper bounding under $\mu_{n}^{h_n^*}$ (which is rigid about height $h_n^*$) the large deviation probability of its interface having more than $\epsilon_\beta n^2$ plaquettes at heights $\ge h_n^* + 1$. The ratio of these will bound the probability of the latter event under the conditional measure $\musoft_n^{h_n^*}$. 

\subsection{Remark on finite box vs.\ cylinder}
As a technicality it turns out to be convenient to work with the Potts interfaces in infinite and semi-infinite cylinders (keeping the $xy$-side lengths $n$). The following lemma is standard (similar to e.g., \cite[Lem.~8]{GielisGrimmett02},\cite[Lem~7.117]{Grimmett_RC}) and shows that these induce essentially the same measure on interfaces. Recall $\Lambda_{n,\ell}$ and $\Lambda_{n,\ell}'$ from~\eqref{eq:floor-domain}--\eqref{eq:dob-domain}. 

\begin{lemma}\label{lem:finite-vs-infinite-cylinder}
    Consider the Potts measures $\mu_{\Lambda_{n,\ell}'}^\sfh$ and $\mu_{\Lambda_{n,m}'}^\sfh$ for $n\le \ell \le m$ and with $|\sfh|\le \ell/2$. Then, 
    \begin{align*}
        \|\mu_{\Lambda_{n,\ell}'}^\sfh (\cI_{\Blue}\in \cdot) - \mu_{\Lambda_{n,m}'}^\sfh (\cI_\Blue \in \cdot) \|_{\tv} \le \exp(- n)\,.
    \end{align*}
    Similarly, if we consider $\mu_{\Lambda_{n,\ell}}^\fl, \mu_{\Lambda_{n,m}}^\fl$ with $\ell \le m$, then 
    \begin{align*}
                \|\mu_{\Lambda_{n,\ell}}^\fl(\cI_{\Blue}\in \cdot) -\mu_{\Lambda_{n,m}}^\fl (\cI_{\Blue}\in \cdot)\|_{\tv}\le \exp( - n)\,.
    \end{align*}
\end{lemma}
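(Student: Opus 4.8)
The plan is to pass to the FK--Potts coupling and reduce everything to a total-variation bound on the law of the \emph{full} interface $\cI_\Full$, where \cref{prop:grimmett-cluster-exp} supplies a cluster expansion; I describe the $\mu^\sfh$ statement in detail, the $\mu^\fl$ one being entirely analogous (the common hard floor at height $0$ poses no obstruction, the interface is confined to a slab $\cL_{[-1,\lceil 5n/\beta\rceil]}$ far from the ceiling with failure probability $\le e^{-n}$ via a cluster-expansion height tail as in \cref{thm:gg-interface-height,thm:weak-upper-fl-top} at a threshold linear in $n$, and only the faraway ceiling changes between the two boxes). Fix the central slab $\cS := \cL_{[\sfh-t,\,\sfh+t]}$ with $t := \lceil 5\beta^{-1}n\rceil$ and let $\cG$ be the set of full interfaces $I$ with $I\subseteq\cS$ and $|I|\le 2n^2$. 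By \cref{thm:gg-interface-height} together with \cref{rem:interface-height-const,rem:diff-domains} (applicable to both $\bar\pi^\sfh_{\Lambda'_{n,\ell}}$ and $\bar\pi^\sfh_{\Lambda'_{n,m}}$ after recentering at height $\sfh$, since $|\sfh|\le\ell/2$ keeps $\sfh$ at distance $\ge\ell/2\ge n/2$ from the horizontal faces of either box) and a union bound over $x$, the event $\{\cI_\Full\subseteq\cS\}$ fails with probability $\le 2n^2 e^{-(\beta-C)t}\le e^{-n}/4$ under either measure; a Peierls/cluster-expansion argument as in the proof of \cref{lem:Ifull-wall-faces} gives $\bar\pi(|\cI_\Full|>2n^2)\le e^{-n^2}$. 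Hence $\cG$ carries all but $e^{-n}/2$ of the mass of $\bar\pi^\sfh_{\Lambda'_{n,\ell}}(\cI_\Full\in\cdot)$ and of $\bar\pi^\sfh_{\Lambda'_{n,m}}(\cI_\Full\in\cdot)$.

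For $I\in\cG$, \cref{eq:CE} gives $\bar\pi^\sfh_{\Lambda'_{n,\ell}}(\cI_\Full=I)=Z_\ell^{-1}w(I)e^{G_\ell(I)}$ and $\bar\pi^\sfh_{\Lambda'_{n,m}}(\cI_\Full=I)=Z_m^{-1}w(I)e^{G_m(I)}$, where $w(I)=p^{|\partial I|}(1-p)^{|I|}q^{\kappa(I)}$ is common to the two and $G_\bullet(I)=\sum_{f\in I}\g_\bullet(f,I)$. Every plaquette of such an $I$ lies at distance $d_n := \ell/2-t\ge n/2-5n/\beta\ge n/4$ (for $\beta$ large) from the horizontal faces, and within that distance the two boxes coincide, so \cref{eq:g-bound-2-faces} yields $|G_\ell(I)-G_m(I)|\le |I|\,Ke^{-cd_n}\le 2Kn^2 e^{-cd_n}$. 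Since both $\bar\pi^\sfh_{\Lambda'_{n,\ell}}(\cI_\Full\in\cdot)$ and $\bar\pi^\sfh_{\Lambda'_{n,m}}(\cI_\Full\in\cdot)$ are probability measures placing all but $e^{-n}$ of their mass on $\cG$, where moreover $w(I)e^{G_\ell(I)}$ and $w(I)e^{G_m(I)}$ agree up to a factor $1+O(n^2 e^{-cd_n})$, this forces $Z_\ell/Z_m = 1+O(n^2 e^{-cd_n}+e^{-n})$, whence $\bar\pi^\sfh_{\Lambda'_{n,\ell}}(\cI_\Full=I)/\bar\pi^\sfh_{\Lambda'_{n,m}}(\cI_\Full=I)=1+O(n^2 e^{-cd_n}+e^{-n})$ uniformly over $\cG$. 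Summing $|\bar\pi^\sfh_{\Lambda'_{n,\ell}}(\cI_\Full=I)-\bar\pi^\sfh_{\Lambda'_{n,m}}(\cI_\Full=I)|$ over $\cG$ and adding the off-$\cG$ mass gives $\|\bar\pi^\sfh_{\Lambda'_{n,\ell}}(\cI_\Full\in\cdot)-\bar\pi^\sfh_{\Lambda'_{n,m}}(\cI_\Full\in\cdot)\|_\tv\le e^{-n}$ (for $\beta$ large, so that the $n^2 e^{-cd_n}=e^{-\Omega(n)}$ term is absorbed).

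Finally, to pass from $\cI_\Full$ to $\cI_\Blue$: conditionally on $\{\cI_\Full=I\}$ with $I\in\cG$, the Domain--Markov property shows that the restriction of $(\omega,\sigma)$ to $\cS$ — the FK configuration there together with the colours of the open clusters, namely $\Red$ for the cluster reaching the top wired boundary, $\Blue$ for the one reaching the bottom, and uniform independent colours otherwise — is a measurable function of $I$ and of the model on a region bounded away from the moving horizontal faces, and hence has the same conditional law in $\Lambda'_{n,\ell}$ and $\Lambda'_{n,m}$; couple the two so that these restrictions agree. As $\cI_\Blue\subseteq\cI_\Full\subseteq\cS$, and whether a plaquette of $\cI_\Full$ lies on $\cI_\Blue$ is determined by this $\cS$-data (a non-$\Blue$ bubble protruding past $\cS$ is encapsulated by a monochromatic exterior and so enters $\Ablue$ without creating or destroying any plaquette of $\cI_\Blue$), the coupled event forces $\cI_\Blue$ to coincide and has probability $\ge 1-e^{-n}$; with the triangle inequality and the preceding paragraph this proves both assertions. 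The main obstacle is the cluster-expansion bookkeeping of the second paragraph — simultaneously controlling $Z_\ell/Z_m$ and the accumulated corrections $\sum_{f\in I}(\g_\ell(f,I)-\g_m(f,I))$ over all $O(n^2)$ plaquettes of $\cI_\Full$ — which is exactly why the slab half-width $t$ must be kept linear in $n$ with a small coefficient, so that the distance $d_n$ to the faraway faces, and thus the gain $e^{-cd_n}$, stays of order $n$.
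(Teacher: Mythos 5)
Your route is genuinely different from the paper's: you control the law of $\cI_\Full$ directly via the cluster expansion (comparing $w(I)e^{G_\ell(I)}$ with $w(I)e^{G_m(I)}$ and deducing $Z_\ell/Z_m=1+O(e^{-\Omega(n)})$ from the fact that both normalized measures concentrate on $\cG$), whereas the paper never conditions on $\cI_\Full$ at all: it samples the two configurations independently, locates \emph{common separating surfaces of open edges} in $\omega\wedge\omega'$ near heights $\pm 3\ell/4$, and uses Domain Markov to resample the sandwiched region with the identity coupling. Your first two paragraphs are essentially sound as an alternative (two caveats: to beat $e^{-n}$ you need the $\beta$-enhanced decay $e^{-\beta r/K}$ of \cref{prop:cluster-expansion}\cref{it:exp-decay} rather than the fixed-rate bound \cref{eq:g-bound-2-faces}, since with $\ell=n$ one only has $cd_n\approx cn/2$ and taking $\beta$ large does not improve that exponent; and in the $\fl$ case the confinement of $\cI_\Full$ to a low slab cannot be read off \cref{thm:gg-interface-height} — the paper needs the fuzzy-Potts monotonicity detour through $\cI_\Blue$, or at a linear-in-$n$ threshold an excess-area bound in the spirit of \cref{lem:Ifull-wall-faces}).

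The genuine gap is in the last step. First, the claim that, conditionally on $\cI_\Full=I$, the restriction of $(\omega,\sigma)$ to $\cS$ ``has the same conditional law in $\Lambda'_{n,\ell}$ and $\Lambda'_{n,m}$'' is false: given $\cI_\Full=I$, the Domain Markov property factorizes $\omega$ over the components of the complement of $\overline I$, and the component containing the top (resp.\ bottom) wired boundary extends all the way to height $\ell$ versus $m$; the marginal of that conditional FK measure on $\cS\cap\Lambda_T$ therefore depends on the full geometry of the top region and agrees between the two boxes only approximately — quantifying this requires another coupling, which is precisely the separating-surface argument you are trying to avoid. Second, $\cI_\Blue$ is not a deterministic function of $I$ and the $\cS$-data. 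Whether a plaquette of $I$ belongs to $\cI_\Blue$ is governed by the $\Ablue$-status of the adjacent enclosed vertices, i.e.\ by connectivity in $\Vblue$ and in $\Vblue^c$; these connections may be routed through chains of lattice-adjacent monochromatic finite clusters lying in $\Lambda_B$ or $\Lambda_T$ that protrude past $\cS$, re-enter it elsewhere, and thereby create (or fail to create) a $\Blue$ path to $B$ or a $\noB$ path to $T$ that is invisible inside $\cS$. Your parenthetical about protruding bubbles being ``encapsulated by a monochromatic exterior'' addresses only isolated bubbles surrounded by $T$ or $B$, not nested or chained finite clusters. The paper's device of coupling everything below (and between) \emph{monochromatic} open separating surfaces is exactly what kills both problems at once, since nothing beyond such a surface can influence $\Vblue$ or $\Ablue$; to complete your argument you would need to import that device (or an equivalent one) into the conditional coupling, at which point the conditioning on $\cI_\Full$ becomes superfluous.
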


\begin{proof}
    Sample random-cluster and Potts configurations $(\omega, \sigma) \sim \bP_{\Lambda_{n,\ell}'}^\sfh$ and $(\omega',\sigma')\sim \bP_{\Lambda_{n,m}'}^\sfh$ independently. We say a set $\mathcal S$ of edges is a \emph{separating surface in $\omega$ between $\cL_{3\ell/4}$ and $\cL_\ell$} if $\omega(\mathcal S) \equiv 1$ and every 1-connected path of plaquettes in $\Lambda_{n,\ell}$ between $\cL_{3\ell/4}$ and $\cL_\ell$ must have a plaquette dual to some edge of $\mathcal S$. 
    If $\omega\wedge \omega'$ is the pointwise minimal configuration, define the event $\mathcal G$ as 
    \begin{enumerate}
        \item there exists a separating set $\mathcal S_{N}$ in $\omega \wedge\omega'$ between $\cL_{3\ell/4}$ and $\cL_{\ell}$, and 
        \item there exists a separating set $\mathcal S_{S}$ in $\omega\wedge \omega'$ between $\cL_{-\ell}$ and $\cL_{-3\ell/4}$. 
    \end{enumerate}

    On the event $\mathcal G$, expose the highest realizing $\mathcal S_N$ and the lowest realizing $\mathcal S_S$. Even though there is no order on separating surfaces, the highest (resp., lowest) such is well-defined as the outer boundary of the set generated by exposing the $1$-connected component of $\partial_N \Lambda_{n,\ell}'$ (resp., $\partial_S \Lambda_{n,\ell}'$) in the set of plaquettes dual to edges that are closed. 
    
    The highest $\mathcal S_N$ is measurable with respect to the states of edges above it, and the lowest $\mathcal S_S$ with respect to the states of edges below it. Therefore, by the domain Markov property, we can resample the configurations $(\omega,\sigma)$ and $(\omega',\sigma')$ in the region sandwiched between these separating surfaces using the identity coupling (the colorings on $\mathcal S_N$ must be $\Red$ in both, and on $\mathcal S_S$ $\Blue$ in both, so indeed the measures induced on the regions sandwiched by the separating surfaces have the same distribution). Since $\cI_{\Full}$ and therefore $\cI_{\Blue}$ must be sandwiched between these two separating surfaces when $|\sfh|\le \ell/2$, on the event $\mathcal G$ we have $$\cI_{\Blue}(\omega,\sigma) \equiv \cI_{\Blue}(\omega',\sigma')\,.$$
    
    It therefore suffices to give the upper bound   $$\bP_{\Lambda_{n,\ell}'}^\sfh\otimes \bP_{\Lambda_{n,m}'}^\sfh\big((\omega, \omega')\notin \mathcal G\big) \le e^{ - n}\,.$$ 
    In order to bound this probability, notice that in each of the two measures, by \cref{thm:gg-interface-height} and \cref{rem:interface-height-const}, the probability of $\cI_{\Full} \cap \cL_{3\ell/4}\ne \emptyset$ or $\cI_{\Full}\cap \cL_{-3\ell/4}$ is at most $e^{ - c \beta n}$. Under both distributions, we can therefore expose the outer boundaries of $\cI_{\Full}$ above and below, which will  induce a pair of wired random-cluster measures (with no conditioning) dominating the infinite-volume super-critical random-cluster measure. By standard Peierls bounds for the super-critical random-cluster measure at large $\beta$, with probability at least $1-e^{ - c\beta n}$, there are common surfaces $\mathcal S_N$ and $\mathcal S_S$ giving the event $\mathcal G$. Indeed, on the complement of the event $\mathcal G$, either for north or south (say w.l.o.g.\ for north) and either for $\omega$ or $\omega'$ (say w.l.o.g.\ $\omega$), there is a 1-connected path of plaquettes between $\cL_{3\ell/4}$ and $\cL_\ell$ such that at least half of the edges dual to these plaquettes are closed. This probability is bounded by the exponential tails on sizes of closed components under the infinite-volume random-cluster measure). 

    The argument for the floor measures $\mu_{\Lambda_{n,\ell}}^\fl$ and $\mu_{\Lambda_{n,m}}^\fl$ is analogous, only needing the existence of $\mathcal S_N$, and no need for $\mathcal S_S$ since the southern boundary is already shared at height zero. The only exception is that we cannot apply \cref{thm:gg-interface-height} directly to bound $\bP_{\Lambda_{n, \ell}}^\fl(\cI_\Full \cap \cL_{3\ell/4} \neq \emptyset)$, so we will additionally use a monotonicity argument involving $\cI_\Blue$ (in the same way that we proved an upper bound for $\cI_\Top$ going through $\cI_\Blue$ first in \cref{subsec:log-upper-bound}). Compare $\bP_{\Lambda_{n, \ell}}^\fl$ to $\bP_{\Lambda_{n, \ell}}^{\ell/2}$. The latter is just a shift of the measure $\bP_{\Lambda'_{n, \ell/2}}^\dob$ by $\ell/2$, so we have by \cref{thm:gg-interface-height} that
    \[\bP_{\Lambda_{n, \ell}}^{\ell/2}(\cI_\Full \cap \cL_{3\ell/4} \neq \emptyset) = \bP_{\Lambda_{n, \ell/2}'}^{\dob}(\cI_\Full \cap \cL_{\ell/4} \neq \emptyset) \leq e^{-c\beta n}\,.\]
    Since $\cI_\Blue\subset\cI_\Full$, the same bound applies for $\cI_\Blue$. Now looking at the Potts marginals, the fuzzy Potts monotonicity and the fact that $\max_x\overline{\hgt}_x(\cI_\Blue)$ is increasing in the set of $\Blue$ vertices implies that we also have
    \[\mu_{\Lambda_{n, \ell}}^\fl(\cI_\Blue \cap \cL_{3\ell/4} \neq \emptyset) \leq e^{-c\beta n}\,.\]
    (The same argument is done at the end of the proof of \cref{prop:Potts-monotonicity}, see for details.) Finally, this implies an upper bound of $\bP_{\Lambda_{n, \ell}}^\fl(\cI_\Full \cap \cL_{7\ell/8} \neq \emptyset) \leq e^{-c\beta n}$  by \cref{cor:dist-Ifull-Ibot}.
\end{proof}

As such, we perform all arguments in this section on infinite and semi-infinite cylinders, establish the sharp upper bound of Theorem~\ref{thm:main} under $\mu_{\Lambda_{n,\infty}}^\fl$, which we denote in this section by $\mu_n^\fl$ for brevity,
and then translate it to the upper bound of Theorem~\ref{thm:main} under $\mu_{\Lambda_{n,n}}^\fl$ via Lemma~\ref{lem:finite-vs-infinite-cylinder} with $m$ sent to $\infty$. 

We further consider the Potts model on the infinite cylinder $\Lambda_{n,\infty}'$
wherein $\mu_{\Lambda'_{n,\infty}}^\dob$ assigns Dobrushin boundary conditions: $\Red$ in the positive half-space and $\Blue$ in the negative one. 
Overriding the notation from \cref{sec:logarithmic-lower-upper}, we define $\mu_{n}^\sfh$ as the $q$-state Potts model on $\Lambda'_{n,\infty}$ with Dobrushin boundary conditions that transition from $\Blue$ to $\Red$ at height $\sfh$, so that $\mu_{\Lambda'_{n,\infty}}^\dob = \mu_n^0$. Let $\musoft_{n}^\sfh$ be the same measure conditioned on $\cI_{\Blue}\subset \cL_{\ge 0}$ as per~\eqref{eq:mu-hat-measure}.  The stochastic domination observation of~\cref{prop:Potts-monotonicity} evidently still holds in the infinite cylinder, by taking the ceiling height to infinity and using the fact that the interface is almost surely finite. 

\subsection{Lower bounding the probability of positivity of the blue interface}
In this subsection, we establish large deviation rates on the maximum height oscillation of a rigid Potts interface. Namely, we establish the following lower bound on the probability of positivity of $\cI_{\Blue}$ under the no-floor measure $\mu_n^{h_n^*}$. 

\begin{lemma}\label{lem:prob-of-positivity}
For every $\beta>\beta_0$, there exists $\epsilon_\beta$ (going to zero as $\beta \to\infty$) such that  
    $$\mu_n^{h_n^*}(\cI_\Blue \subset  \cL_{\geq 0}) \geq e^{-(1+\epsilon_\beta) n}\,.$$
\end{lemma}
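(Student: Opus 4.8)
The goal is to show $\mu_n^{h_n^*}(\cI_\Blue \subset \cL_{\geq 0}) \geq e^{-(1+\epsilon_\beta)n}$. The measure $\mu_n^{h_n^*}$ has Dobrushin-type boundary conditions transitioning from $\Blue$ to $\Red$ at height $\sfh = h_n^* = \lfloor \xi^{-1}\log n\rfloor$, so its interface $\cI_\Blue$ is rigid about height $h_n^*$. Heuristically, forcing $\cI_\Blue \subset \cL_{\geq 0}$ costs us the right to have the interface dip below height $0$; since the interface is already centered at height $h_n^* = \Theta(\log n)$, this is a "moderate" deviation event, and the cost should be governed by the downward large-deviation rate of the interface. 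The relevant downward deviations of $\cI_\Blue$ come from pillars (columns reaching down from the interface), and the event that a pillar at a fixed $xy$-location reaches height $-1$ below $\cL_0$ — i.e. descends a distance $h_n^*+1$ from the nominal height — has probability $\asymp e^{-\xi h_n^*} = n^{-1+o(1)}$ by the definition of $\xi_h$ in \eqref{eq:point-to-plane-connectivity}–\eqref{eq:xi} together with the identification of rates in Section~\ref{sec:identification-of-rates}. Over the $\asymp n^2$ columns, a union bound then gives $\mu_n^{h_n^*}(\cI_\Blue \cap \cL_{<0}\neq\emptyset) \leq n^{1+o(1)}\cdot n^{-1+o(1)}$, which is not quite $o(1)$; this is exactly why the event is delicate and requires a genuine second-moment / BK-type argument rather than a crude union bound.

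Here is the route I would take. First, reduce via the random-cluster representation: $\cI_\Blue \subset \cL_{\geq 0}$ holds as soon as every vertex of $\cL_{<0}$ lies in $\Ablue$, which (using the FK--Potts coupling and Definition~\ref{def:FK-Potts-interfaces}) is implied by the event $\cE$ that no vertex of $\cL_{-1/2}$ is connected, off the cluster $\Vtop$ of the $\Red$ boundary, down to $\cL_{<0}$ through a non-$\Blue$ cluster of the random-cluster configuration — more precisely, that the full interface $\cI_\Full$ stays in $\cL_{\geq 0}$. So it suffices to lower-bound $\bar\pi_n^{h_n^*}(\cI_\Full \subset \cL_{\geq 0})$. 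Now invoke the cluster expansion of Proposition~\ref{prop:grimmett-cluster-exp}: conditioned on the coarse-grained structure of $\cI_\Full$, downward excursions below a given height decorrelate at exponential rate, so $\cI_\Full \cap \cL_{<0}$ is essentially a sparse, near-independent collection of local "downward spike" events, one per column $x\in(\Z+\tfrac12)^2\cap\Lambda_{n,\infty}$. Let $p_x := \bar\pi_n^{h_n^*}(\underline\hgt_x(\cI_\Full)\leq 0)$; by the additivity $\xi_h = \xi h + O(1)$ (Lemma on $\xi_h$ approximately additive) applied at $h = h_n^*+1$, and the identification of $\alpha_h$ with $\xi_h$ from Section~\ref{sec:identification-of-rates}, we get $p_x = e^{-\xi h_n^* + O(1)} \leq C/n$ for each $x$. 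Then, using near-independence (a van den Berg–Kesten / BK-type inequality for the decorrelated spike events, or a direct second-moment argument controlling $\var(\sum_x \one\{\underline\hgt_x(\cI_\Full)\leq 0\})$ via the exponential decay \eqref{eq:g-bound-2-faces}),
\[
\bar\pi_n^{h_n^*}(\cI_\Full \subset \cL_{\geq 0}) = \bar\pi_n^{h_n^*}\Big(\bigcap_{x} \{\underline\hgt_x(\cI_\Full) > 0\}\Big) \gtrsim \prod_{x}(1-p_x) \geq (1 - C/n)^{n^2} \geq e^{-Cn} \geq e^{-(1+\epsilon_\beta)n}\,,
\]
where the constant $C$ can be taken to be $1+\epsilon_\beta$ once $\beta$ is large since $\xi \geq 4\beta - C$ and one can afford to lift the nominal height $h_n^*$ by an $O(1)$ amount to absorb the $O(1)$ error in $\xi_h$. (One can equivalently run this as a direct construction: start from the all-$\Blue$-below / rigid interface, pay the standard Peierls cost $e^{O(n)}$ for the boundary perimeter at height $h_n^*$, and observe the conditioning is automatically satisfied.)

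The main obstacle is making the independence heuristic rigorous: the events $\{\underline\hgt_x(\cI_\Full)\leq 0\}$ for distinct $x$ are neither independent nor (obviously) negatively associated, and a naive union bound gives only $\bar\pi_n^{h_n^*}(\cI_\Full\cap\cL_{<0}\neq\emptyset)\leq n^{o(1)}$, far too weak to conclude $e^{-(1+\epsilon_\beta)n}$. The fix is to use the Gielis–Grimmett cluster expansion \eqref{eq:CE} to write $\bar\pi_n^{h_n^*}(\cI_\Full\subset\cL_{\geq 0})$ as a ratio of partition functions over interfaces confined to $\cL_{\geq 0}$ versus unconstrained, and then to bound this ratio from below: the confinement removes, for each column, a downward-excursion contribution of multiplicative size $1 - p_x$ up to correction factors $e^{O(1)\cdot(\text{cluster overlap})}$ that telescope to $e^{O(n)}$ total by \eqref{eq:g-bound-1-face}–\eqref{eq:g-bound-2-faces} and the fact that distinct spikes interact only through exponentially-decaying $\g$-terms. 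Equivalently one can carry out a conditional-independence argument by revealing columns one at a time and using that, given the interface outside a ball around column $x$, the conditional probability of a downward spike at $x$ is $e^{-\xi h_n^* + O(1)}$ uniformly — this is precisely the kind of estimate the rate-identification machinery of Section~\ref{sec:identification-of-rates} is designed to deliver. With that in hand, the product bound $(1-C/n)^{n^2}\geq e^{-(1+\epsilon_\beta)n}$ closes the proof; transferring from the cylinder $\Lambda_{n,\infty}'$ back to the box is already handled by Lemma~\ref{lem:finite-vs-infinite-cylinder}.
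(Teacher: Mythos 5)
Your reduction to the full random-cluster interface, i.e.\ to lower-bounding $\bar\pi_n^{h_n^*}(\cI_\Full\subset\cL_{\geq 0})$, is where the argument breaks. While $\cI_\Full\subset\cL_{\geq 0}$ does imply $\cI_\Blue\subset\cL_{\geq 0}$, the per-column probability $p_x$ of a downward spike of $\cI_\Full$ of depth $h_n^*+1$ is \emph{not} $e^{-\xi h_n^*+O(1)}$: $\cI_\Full$ is a pure random-cluster object, and its downward deviations are governed by the random-cluster connection rate (the $\xi'$ of \cref{rem:lower-bound-rate}, essentially $\xi_\Bot$ of \cref{rem:other-interfaces}), which is strictly smaller than the Potts rate $\xi$ used to define $h_n^*$ --- the non-red connection event is the random-cluster disconnection event intersected with a coloring constraint, costing an extra factor per unit height. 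Consequently $p_x \asymp e^{-\xi' h_n^*} = n^{-\xi'/\xi} = n^{-1+\delta_\beta}$ for some $\delta_\beta>0$, and your product bound yields only $\exp(-n^{1+\delta_\beta})$, which is far below the required $e^{-(1+\epsilon_\beta)n}$ and useless for the ratio argument in \cref{subsec:ld-upper-bound}. This distinction between the Potts and random-cluster rates is precisely the central subtlety of the paper (it is why the lower bound of \cref{prop:Itop-lower} does not match $h_n^*$), and the proof of this lemma must stay in the spin world: the paper rewrites $\{\cI_\Red\cap\cL_{\geq h}=\emptyset\}$ (after translation/reflection) as $\bigcap_x\{\hgt(\cP_x^\noR)<h\}$, an intersection of events increasing in the set of $\Red$ vertices, so that the per-column probability is exactly $e^{-\alpha_h}$ with $\alpha_h$ the Potts pillar rate identified with $\xi h$ in \cref{sec:identification-of-rates}.

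Even granting the correct rate, your mechanism for converting near-independence into the product lower bound is not in place. A second-moment bound on the spike count does not by itself lower-bound the probability that the count is zero, a BK inequality goes in the wrong direction for this purpose, and the cluster-expansion ``telescoping'' of correction factors is exactly the hard entropic-repulsion computation you would be trying to avoid. The clean tool --- which you explicitly rule out by saying the events are ``neither independent nor (obviously) negatively associated'' --- is \emph{positive} association: the FKG inequality for the fuzzy Potts model (\cref{eq:fuzzy-Potts-FKG}, with $\Red$ vs.\ $\Fuzzy$) applied to the increasing events $\{\hgt(\cP_x^\noR)<h\}$ gives $\mu_n^\dob(\bigcap_x B_x)\geq\prod_x\mu_n^\dob(B_x)$ directly. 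Finally, your closing step needs $|\xi_h-\xi h|\leq\epsilon_\beta$, not merely $\xi_h=\xi h+O(1)$: an honest $O(1)$ error $c_0$ would give $p_x\leq e^{c_0}/n$ and hence only $e^{-e^{c_0}n}$; ``lifting $h_n^*$ by $O(1)$'' changes the measure, not the event, and does not repair this. The sharp $\epsilon_\beta$-additivity is what \cref{lem:xi-tilde-additive} and \cref{thm:identifying-rate-with-infinite-limit} are for.
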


Since we are only looking for lower bounds, the proof will go by applying the FKG inequality through the fuzzy Potts model defined in \cref{subsec:random-cluster-fuzzy-Potts}.  By vertical translation, it suffices to consider the downwards oscillations of $\cI_{\Blue}$ from height zero under $\mu_{n}^\dob = \mu_{n}^0$, and by reflection, these have the same law as the upwards oscillations of $\cI_{\Red}$. Thus, we focus on the latter, i.e., on bounding 
\begin{align}
    \mu_{n}^\dob( \max_{x} \overline\hgt_x (\cI_\Red) \le h_n^*) \ge e^{ - (1+\epsilon_\beta) n} \,.
\end{align}
We need the appropriate monotone event in the set of $\noR$ sites, for which we have a sharp understanding of the rate as it relates to $\xi$ from~\cref{eq:xi}. Recall the definition of the augmented set of red vertices $\Ared$ from \cref{def:FK-Potts-interfaces}, whose boundary forms $\cI_{\Red}$. 

\begin{definition}[Non-red pillar]\label{def:non-red-pillar}
    Let $x$ be a vertex at height $1/2$. The non-red pillar at $x$, denoted $\cP_x^\noR$, is the connected component of vertices in $\Ared^c \cap \cL_{\ge 0}$ that contains $x$. The height of the $\cP_x^\noR$, denoted $\hgt(\cP_x^\noR)$ is the maximum height of a plaquette $f$ dual to an edge $(y, z)$ where $y\notin \cP_x^\noR$ and $z \in \cP_x^\noR$. 
\end{definition}

We can then define the pillar rate under the infinite-volume Dobrushin measure 
\begin{align}\label{eq:alpha-h}
\alpha_h  = \alpha_h(\beta,q):= -\log \mu^\dob_{\infty}(\hgt(\cP_{\mathsf{o}}^\noR) \geq h)\,.\end{align}
where we are using the subscript $\infty$ to mean the infinite-volume measure obtained as the $n\to\infty$ limit of $\mu_{n}^\dob$, and we recall that $\mathsf{o}$ is the origin vertex $(\frac{1}2, \frac 12, \frac 12)$. Section~\ref{sec:identification-of-rates} relates this pillar rate to $\xi_h$ from~\eqref{eq:xi} and thus to $h_n^*$, but most of the estimates of this section will be done in terms of $\alpha_h$.

The following lemma explains  why we move to expressing the event in terms of pillars. 
\begin{lemma}\label{lem:Potts-pillar-properties}
    The quantities $\max_{x} \overline{\hgt}_x(\cI_{\Red})$ and $\max_{x} \hgt(\cP_{x}^\noR)$ are the same. Moreover, the event $\{\hgt(\cP_x^\noR)\ge h\}$ is decreasing in the set of $\Red$ vertices, and is equivalent to the event $\mathcal A_{x,h}^\noR$ from~\cite[Def.~5.1]{ChLu24}. 
\end{lemma}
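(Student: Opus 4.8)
The plan is to prove the three assertions of \cref{lem:Potts-pillar-properties} one at a time, all being essentially structural/combinatorial facts about the augmented set $\Ared$ and its interface $\cI_{\Red}$.

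\textbf{Equality of the two maxima.} First I would establish the identity $\max_{x} \overline{\hgt}_x(\cI_{\Red}) = \max_{x} \hgt(\cP_{x}^\noR)$. The key observation is that the interface $\cI_{\Red}$ is exactly the set of plaquettes dual to edges $(y,z)$ with $y \in \Vred$ (equivalently $y \in \Ared$, since $\Ared \setminus \Vred$ is never adjacent to $\Ared^c$) and $z \in \Ared^c$. Every such plaquette lies on the boundary of the connected component of $\Ared^c$ containing $z$; if $z \in \cL_{\geq 0}$, this component is, by definition, a non-red pillar $\cP_x^\noR$ rooted at the bottom-most vertex of that component at height $1/2$ in its column (under the Dobrushin boundary conditions with the floor replaced by the $\Blue$ half-space, one must check that the relevant components of $\Ared^c$ in the upper half-space are indeed captured by \cref{def:non-red-pillar}; this uses that $\cI_{\Red} \subset \cI_\Full$ together with the fact that, conditionally on positivity of $\cI_\Blue$ or just working under $\mu_n^\dob$, such components connect down to a vertex at height $1/2$). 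Conversely, the top plaquette realizing $\hgt(\cP_x^\noR)$ is by definition dual to an edge across $\partial \cP_x^\noR \subseteq \partial \Ared^c$, hence is a plaquette of $\cI_{\Red}$. So the two sets of heights achieved coincide up to the trivial plaquettes at the bottom, giving equality of the maxima. I'd present this as a short double-inclusion argument.

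\textbf{Monotonicity.} For the second assertion, I would note that $\Ared$ is an increasing function of the set of $\Red$ vertices: adding a $\Red$ vertex can only enlarge $\Vred$, and enlarging $\Vred$ can only enlarge the collection of finite components it encapsulates, so $\Ared$ grows; hence $\Ared^c$ shrinks, hence each $\cP_x^\noR \subseteq \Ared^c \cap \cL_{\geq 0}$ shrinks (as a subset, for the component through a fixed $x$ — it can only lose vertices or disappear), which can only decrease $\hgt(\cP_x^\noR)$. Thus $\{\hgt(\cP_x^\noR) \geq h\}$ is a decreasing event in the $\Red$ vertices, equivalently increasing in the $\noR$ vertices. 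This is the step that makes the event amenable to the fuzzy Potts FKG inequality \cref{eq:fuzzy-Potts-FKG} (with $\Red$ playing the role of $\Blue$ there, or more precisely working with $\noR$ as the distinguished class — one should be a little careful that the fuzzy Potts FKG as stated is for monotonicity in $\Blue$; since here the relevant relabeling merges all non-$\Red$ colors into one $\Fuzzy$ class, and the event is measurable and monotone w.r.t.\ that class, the same inequality applies after swapping the roles of the two classes, which is symmetric under the pushforward construction).

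\textbf{Equivalence with $\mathcal A_{x,h}^\noR$.} The last assertion is a matter of matching definitions: one should unfold \cite[Def.~5.1]{ChLu24} of the event $\mathcal A_{x,h}^\noR$ and check it describes precisely the existence of a non-red pillar at $x$ of height at least $h$ in the present notation. I expect this to be essentially a translation of notation between the two papers, citing the relevant definition, and the main content being to confirm that the augmentation procedure (adding encapsulated finite components) used here for $\Ared$ matches the one used in \cite{ChLu24}. \textbf{The main obstacle} I anticipate is not any of these individually but rather the bookkeeping in the first part: being careful about what ``non-red pillar'' means under the two relevant boundary conditions ($\mu_n^\dob$ versus the floor/positivity setting) and confirming that every component of $\Ared^c$ contributing a high plaquette to $\cI_{\Red}$ is genuinely rooted at height $1/2$, so that \cref{def:non-red-pillar} captures it — this is where overhangs and encapsulated bubbles could, a priori, cause a mismatch, and the cleanest fix is to invoke $\cI_{\Red} \subseteq \cI_\Full$ and the rigidity/structure already recorded for $\cI_\Full$.
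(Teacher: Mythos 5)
Your outline matches the paper's (three separate combinatorial verifications), and your monotonicity argument is exactly the one used: adding $\Red$ vertices enlarges $\Vred$, hence $\Ared$, hence shrinks every $\cP_x^\noR$, so $\{\hgt(\cP_x^\noR)\ge h\}$ is decreasing in the $\Red$ vertices. The third part is necessarily a definition-chase you cannot complete blind; for the record, the actual content there is that $\cA_{x,h}^\noR$ asks for an $\Ared^c$-path from $x$ to height $h-\tfrac12$ inside a certain random set $\cP_x$ (the random-cluster pillar of \cite{ChLu24}), and the point is that $\cP_x\subset \cL_{>0}$ always contains $\cP_x^\noR$, so restricting to $\cP_x$ is equivalent to restricting to $\cL_{\ge 0}$.

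The genuine gap is in the first part, precisely at the step you flag as the main obstacle. You need that every connected component of $\Ared^c$ reaching height $\ge\tfrac12$ is captured by some pillar $\cP_x^\noR$, i.e.\ that the component of $\Ared^c\cap\cL_{\ge 0}$ through any such vertex contains a vertex at height $\tfrac12$. Your proposed fix --- invoking $\cI_\Red\subset \cI_\Full$ and the rigidity of $\cI_\Full$, or conditioning on positivity of $\cI_\Blue$ --- cannot work: the lemma is a configuration-by-configuration identity (it is used to rewrite $\{\max_x\overline\hgt_x(\cI_\Red)<h\}$ as $\bigcap_x\{\hgt(\cP_x^\noR)<h\}$ exactly before applying FKG in \cref{lem:potts-extrema-bound}), whereas rigidity is only a high-probability statement, and no conditioning on $\cI_\Blue$ is in force there. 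The correct, deterministic resolution is that $\Ared^c$ is always connected and contains $\partial_\Blue\Lambda'_{n,\infty}$ (the structural fact cited from \cite[Rem.~2.15]{ChLu24}): every $v\in\Ared^c$ at height $\ge\tfrac12$ admits a path in $\Ared^c$ down to the $\Blue$ boundary, and the initial segment of that path up to its first exit from $\cL_{\ge 0}$ stays in $\Ared^c\cap\cL_{\ge 0}$ and ends at height $\tfrac12$, exhibiting the required root $x$. With that substitution your double inclusion goes through; the remainder of your first part is consistent with the paper's argument.
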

\begin{proof}
        Observe that $\Ared^c$ is connected (see e.g., ~\cite[Remark 2.15]{ChLu24} for this) and contains $\partial_\Blue \Lambda_{n,\infty}'$,  and so any vertex of $\Ared^c$ which has height $\geq 1/2$ is part of some non-red pillar $\cP_x^\noR$. In the other direction, every vertex in $\cP_{x}^\noR$ for some $x$ is an element of $\Ared^c$. Hence, $\max_x \overline\hgt_x(\cI_\Red)$, which is equal to the maximum height of $\Ared^c$, is equal to the maximum height of $\cP_x^\noR$ over all $x$. 
        
        For the monotonicity claim, changing a vertex from $\noR$ to $\Red$ can only increase the set $\Vred$, and consequently can only increase $\Ared$. 
        
        Finally, note that $\cA_{x, h}^\noR$ was defined in~\cite[Def.~5.1]{ChLu24} as the event that there exists a $\Ared^c$-path from $x$ to height $h - 1/2$ using only vertices of a random set $\cP_x$ (which was defined as a random-cluster pillar), but this random set $\cP_x$ is always a subset of $\cL_{>0}$ that contains $\cP_x^\noR$, so restricting to $\cP_x$ is the same as restricting to $\cL_{>0}$ when looking at the connected component of $\Ared^c$ vertices containing~$x$.
\end{proof}

By \cite[Prop.~2.24]{ChLu24} with the observation that the random-cluster pillar $\cP_x$ defined there necessarily contains $\cP_x^\noR$, we have the following weak bound on the height of $\cP_x^\noR$.
\begin{lemma}\label{lem:weak-pillar-bound}
    There exists $C > 0$ such that for all $h \geq 1$, all $x \in \cL_{1/2}$, and sufficiently large $\beta$, 
    \begin{equation*}
        \mu_n^\dob(\hgt(\cP_x^\noR) \geq h) \leq \exp(-4(\beta - C)h)\,.
    \end{equation*}
\end{lemma}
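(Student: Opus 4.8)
The plan is to import the estimate from \cite{ChLu24}, where the analogous bound is established for the \emph{random-cluster pillar} $\cP_x$ rather than for the non-red pillar $\cP_x^\noR$. Recall that $\cP_x$ is the random subset of $\cL_{>0}$ attached to the base vertex $x$ that \cite{ChLu24} builds from the coupled FK configuration $\omega\sim\bar\pi_n^\dob$; by construction it captures at least all of $\Ared^c\cap\cL_{\geq 0}$ reachable from $x$ while staying in the upper half-space, so in particular $\cP_x^\noR\subseteq\cP_x$. The first step would therefore be to recall this inclusion — exactly the ``observation'' flagged in the sentence preceding the statement — which is immediate once the definition of $\cP_x$ is unwound.

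The second step is to check that the event $\{\hgt(\cP_x^\noR)\geq h\}$ forces $\cP_x$ to reach height $h$. By \cref{def:non-red-pillar}, on this event there is a plaquette $f$ with $\hgt(f)\geq h$ dual to an edge one of whose endpoints, $z$, lies in $\cP_x^\noR$; since every endpoint of the dual edge of a plaquette of height $\geq h$ has height $\geq h-\tfrac12$, this $z$ sits at height $\geq h-\tfrac12$. Thus $\{\hgt(\cP_x^\noR)\geq h\}$ produces a $\Ared^c$-path from $x$ to height $\geq h-\tfrac12$ lying entirely inside $\cP_x^\noR\subseteq\cP_x$, which is precisely the event $\mathcal{A}_{x,h}^{\noR}$ of \cite[Def.~5.1]{ChLu24} (the equivalence already recorded in \cref{lem:Potts-pillar-properties}); in particular $\hgt(\cP_x)\geq h$.

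Finally, I would invoke \cite[Prop.~2.24]{ChLu24}, which gives $\mu_n^\dob(\hgt(\cP_x)\geq h)\leq\exp(-4(\beta-C)h)$ for an absolute $C>0$ and all large $\beta$, and combine it with the two steps above to conclude. Under the hood, that proposition is a Peierls estimate: a pillar of height $h$ must be surrounded by a connected family of roughly $4h$ dual-to-closed plaquettes (the four lateral faces of a height-$h$ column), each carrying weight of order $e^{-\beta}$, while the number of possible such ``tubes'' is only $e^{O(h)}$, which is why the leading constant is the sharp $4\beta$. Since \cite{ChLu24} is used here as a black box, the only genuine work is bookkeeping: confirming that the inclusion $\cP_x^\noR\subseteq\cP_x$ and the height comparison hold verbatim with the precise definitions of \cite{ChLu24}. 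I expect this definitional matching — rather than any estimate — to be the only point requiring care, and it is routine.
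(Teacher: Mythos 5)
Your proposal is correct and matches the paper's argument, which consists precisely of the observation $\cP_x^\noR\subseteq\cP_x$ combined with a citation of \cite[Prop.~2.24]{ChLu24} for the random-cluster pillar. The extra bookkeeping you supply (the height comparison and the heuristic for why the rate is $4\beta$) is consistent with, and slightly more detailed than, what the paper records.
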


We also have the following decorrelation estimate from \cite[Cor.~A.7]{ChLu24}, which lets us use the rate $\alpha_h$ of \cref{eq:alpha-h} for the pillar deviation rate inside the finite domain $\Lambda'_{n, \infty}$ so long as $x$ is not too close to the boundary.
\begin{lemma}\label{lem:decorrelation-pillar}
    There exists a constant $C$ such that for all $x, y$ so that $d(x, \partial \Lambda'_{n, \infty}) \wedge d(y, \partial \Lambda'_{m, \infty}) \geq r$,
    \begin{equation*}
        |\mu_{n}^\dob(\hgt(\cP_x^\noR) \geq h) - \mu_{m}^\dob(\hgt(\cP_x^\noR) \geq h)| \leq e^{-r/C}\,.
    \end{equation*}
\end{lemma}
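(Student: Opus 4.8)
The statement literally compares the law of the non-red pillar height across two cylinders with the reference vertex far from the boundary in each (the vertices $x\in\Lambda'_{n,\infty}$ and $y\in\Lambda'_{m,\infty}$). By the triangle inequality it suffices to compare each of $\mu_n^\dob(\hgt(\cP_x^\noR)\ge h)$ and $\mu_m^\dob(\hgt(\cP_y^\noR)\ge h)$ to the translation-invariant infinite-volume quantity $\mu_\infty^\dob(\hgt(\cP_{\mathsf o}^\noR)\ge h)$, so the plan is to prove: if $d(x,\partial\Lambda'_{n,\infty})\ge r$ then $|\mu_n^\dob(\hgt(\cP_x^\noR)\ge h)-\mu_\infty^\dob(\hgt(\cP_x^\noR)\ge h)|\le e^{-r/C}$.

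The heart of the argument is to \emph{localize} the event $\{\hgt(\cP_x^\noR)\ge h\}$. Let $Q_x:=\big(x+[-r/3,r/3]^2\big)\times[0,\infty)$ be the tall thin box of $xy$-radius $r/3$ rooted at $x$. First I would show that, except on an event $\cE_x$ with $\mu_n^\dob(\cE_x)\vee\mu_\infty^\dob(\cE_x)\le e^{-cr}$, the pillar $\cP_x^\noR$ has $xy$-diameter $<r/3$, and hence — being a connected set containing $x$ — together with its plaquette boundary lies inside $Q_x$; on this good event, whether $\hgt(\cP_x^\noR)\ge h$ is determined by revealing $\Ared$ inside $Q_x$ (revealing $\Ared\cap Q_x$ inward from $\partial Q_x$ already pins down the component $\cP_x^\noR$). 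The bound $\mu(\cE_x)\le e^{-cr}$ is a standard energy--entropy/Peierls estimate for the low-temperature random-cluster model: a connected non-red component of $xy$-diameter $\ge r/3$ costs $e^{-(\beta-C)r/3}$ via the same open-the-boundary-edges maps used in the proofs of \cref{lem:Gamma-outside-Atop-Abot} and \cref{lem:bound-on-hairs} (one also union-bounds, as in \cref{lem:weak-pillar-bound}, over the $O(r^2)$ columns near $\partial Q_x$ to rule out a tall non-red excursion there); since $x$ is at distance $\ge r$ from $\partial\Lambda'_{n,\infty}$, both $Q_x$ and the revealed structure lie inside $\Lambda'_{n,\infty}$ and trivially inside the infinite cylinder.

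Next I would compare the boundary spins. Since $\partial Q_x$ is at distance $\ge 2r/3$ from $\partial\Lambda'_{n,\infty}$, exponential decay of correlations for the rigid low-temperature Potts model with Dobrushin boundary conditions — which follows from the cluster-expansion input of \cref{prop:grimmett-cluster-exp} together with standard supercritical random-cluster Peierls bounds — gives $\big\|\mu_n^\dob(\sigma\restriction_{\partial Q_x}\in\cdot)-\mu_\infty^\dob(\sigma\restriction_{\partial Q_x}\in\cdot)\big\|_{\tv}\le e^{-cr/2}$. By the domain Markov property, conditionally on $\sigma\restriction_{Q_x^c}$ the law of $\sigma\restriction_{Q_x}$ is the same explicit function of $\sigma\restriction_{\partial Q_x}$ under both measures, and on $\cE_x^c$ that conditional law determines $\{\hgt(\cP_x^\noR)\ge h\}$. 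Hence $|\mu_n^\dob(\hgt(\cP_x^\noR)\ge h)-\mu_\infty^\dob(\hgt(\cP_x^\noR)\ge h)|\le\mu_n^\dob(\cE_x)+\mu_\infty^\dob(\cE_x)+e^{-cr/2}\le e^{-r/C}$ for a suitable absolute $C$, and the triangle inequality through $\mu_\infty^\dob$ finishes the proof.

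The main obstacle is the localization step: because the pillar may be much taller than $r$ (its height can reach $h$, which in the applications is of order $\log n$), the localizing window must be the vertically infinite box $Q_x$ rather than a ball, and one has to separately control \emph{horizontal} excursions of the non-red structure out of $Q_x$; this is precisely where the thinness of pillars — quantified by the Peierls estimates of \cref{lem:Gamma-outside-Atop-Abot}--\cref{lem:bound-on-hairs} and \cref{lem:weak-pillar-bound} — is used. The remaining input, exponential decay of the boundary spin law on $\partial Q_x$, is the standard rigid-phase estimate and requires no new ideas.
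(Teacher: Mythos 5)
The paper does not actually prove this lemma; it imports it verbatim from \cite[Cor.~A.7]{ChLu24}, so there is no in-paper argument to compare against line by line. Your overall strategy (triangle inequality through the infinite-volume limit, localization of the pillar event in a thin vertically-infinite box $Q_x$, then a comparison of the two measures outside $Q_x$) is the natural one and is in the spirit of how such estimates are proved. However, as written it has a genuine gap at the localization step.

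The problem is that $\{\hgt(\cP_x^\noR)\ge h\}$ is \emph{not} measurable with respect to $\sigma\restriction_{\overline{Q_x}}$, even on the event that the pillar is thin and geometrically contained in $Q_x$. The pillar is a component of $\Ared^c\cap\cL_{\ge 0}$, and membership in $\Ared$ is defined through $\Vred$, i.e.\ through $\Red$-connectivity all the way to the boundary of the cylinder. Concretely: the $\Red$ vertices forming the shell around $x$'s non-$\Red$ cluster belong to $\Vred$ only if they are joined to $\partial_\Red\Lambda'_{n,\infty}$ by a $\Red$ path, and that path lives in $Q_x^c$. Consequently the conditional law of the pillar event given $\sigma\restriction_{Q_x^c}$ is \emph{not} ``the same explicit function of $\sigma\restriction_{\partial Q_x}$'' under the two measures --- it depends on the entire exterior configuration through which boundary vertices of $Q_x$ are in $\Vred$. (This nonlocality of $\Ared$ through bubbles is exactly the difficulty the paper wrestles with throughout, cf.\ the care taken in \cref{prop:Potts-monotonicity} and in the proof of \cref{lem:alpha-related-to-xi-Potts}, where one reveals a separating $1$-connected dual-closed component $\cC$ and only then argues that the event becomes measurable w.r.t.\ the coloring of the finite clusters it confines.) The fix is to enlarge your good event $\cE_x$ to also certify, from the exterior, that the relevant $\Red$ shell is wired to the boundary --- e.g.\ require that $\cI_\Full$ has no wall of excess area $\ge r/C$ nested around the column of $x$, which by rigidity fails with probability $e^{-cr}$ and which forces every $\Red$ vertex of $\partial Q_x$ above the local interface into $\Vred$. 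With that addition the event does become a function of $(\sigma\restriction_{\partial Q_x},\sigma\restriction_{Q_x})$ on $\cE_x^c$, and your domain-Markov comparison goes through. A secondary point: the claimed total-variation bound for the \emph{joint} law of $\sigma\restriction_{\partial Q_x}$ on the infinite set $\partial Q_x$ is not a one-spin correlation-decay statement; it requires a coupling of the two full interfaces (agreeing near $x$ except with probability $e^{-r/C}$, and all-$\Red$ above height $T$ up to errors summable in $T$), which is essentially the same interface-coupling work and should be spelled out rather than invoked as ``standard.''
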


With the above two ingredients recalled, and the observations of \cref{lem:Potts-pillar-properties}, we can show the following lower bounds on the large deviations of the maximum height of $\cI_{\Red}$. 
\begin{lemma}\label{lem:potts-extrema-bound}
    For every $1 \leq h \leq \frac{1}{\sqrt{\beta}}\log n$,  we have
    \begin{equation*}
        \mu_n^\dob(\max_x \overline{\hgt}_x(\cI_\Red) < h) \geq \exp\big(-(1+o(1))n^2e^{-\alpha_h}\big)\,.
    \end{equation*}
    Consequently by reflection symmetry, 
    \begin{equation*}
        \mu_n^\dob(\min_{x} \underline{\hgt}_x(\cI_\Blue) > -h) \geq \exp\big(-(1+o(1))n^2e^{-\alpha_h}\big)\,.
    \end{equation*}
\end{lemma}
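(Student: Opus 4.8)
The plan is to rewrite the event in terms of the non-red pillars, factorize it using the fuzzy Potts FKG inequality, and then estimate the single-pillar probabilities via \cref{lem:decorrelation-pillar} in the bulk and \cref{lem:weak-pillar-bound} near the lateral boundary of the cylinder. By \cref{lem:Potts-pillar-properties} we have
\[ \{\max_x\overline\hgt_x(\cI_\Red)<h\} \;=\; \bigcap_{x\in\cL_{1/2}}\{\hgt(\cP_x^\noR)<h\}\,, \]
the intersection running over the $n^2$ vertices $x$ at height $1/2$ in $\Lambda'_{n,\infty}$. Each event $\{\hgt(\cP_x^\noR)<h\}$ is increasing in the set of $\Red$ vertices (being the complement of the decreasing event of \cref{lem:Potts-pillar-properties}), and, since $\Ared$—hence every $\cP_x^\noR$—is a deterministic function of the partition of the vertices into $\Red$ and non-$\Red$, it is measurable with respect to the fuzzy projection that keeps $\Red$ and recolors every other state as $\Fuzzy$. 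By the symmetry of the fuzzy Potts construction under permuting the $q$ colors, the FKG lattice condition of \cite{Chayes-fuzzy-Potts} (cf.\ \cref{eq:fuzzy-Potts-FKG}) holds with $\Red$ in place of $\Blue$; applying it iteratively to the $\Red$-vs-$\Fuzzy$ pushforward of $\mu_n^\dob$ gives
\[ \mu_n^\dob\big(\max_x\overline\hgt_x(\cI_\Red)<h\big) \;\geq\; \prod_{x\in\cL_{1/2}}\Big(1-\mu_n^\dob\big(\hgt(\cP_x^\noR)\geq h\big)\Big)\,. \]

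Next, set $r := C\big(\alpha_h + 2\log n\big)$ with $C$ the constant of \cref{lem:decorrelation-pillar}. Recalling that $\alpha_h \leq (4\beta+C_2)h$ for an absolute constant $C_2$ (the easy lower bound on non-red pillar probabilities complementing \cref{lem:weak-pillar-bound}; cf.\ \cite{ChLu24}) and that $h\leq\tfrac1{\sqrt\beta}\log n$, we have $r=O(\sqrt\beta\log n)$, so $r<n/2$ for large $n$. Split the vertices $x\in\cL_{1/2}$ into the at most $n^2$ ones with $d(x,\partial\Lambda'_{n,\infty})\geq r$ and the $O(rn)$ remaining ones. For the former, sending $m\to\infty$ in \cref{lem:decorrelation-pillar} and using $\mu_\infty^\dob(\hgt(\cP_x^\noR)\geq h)=e^{-\alpha_h}$ (by \cref{eq:alpha-h} and translation invariance) gives $\mu_n^\dob(\hgt(\cP_x^\noR)\geq h)\leq e^{-\alpha_h}+e^{-r/C}\leq e^{-\alpha_h}(1+n^{-2})$; for all $x$, \cref{lem:weak-pillar-bound} gives $\mu_n^\dob(\hgt(\cP_x^\noR)\geq h)\leq e^{-4(\beta-C_2)h}$. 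Summing and using $\alpha_h\leq(4\beta+C_2)h$ together with $h\leq\tfrac1{\sqrt\beta}\log n$, one checks that for $\beta$ large the boundary contribution is $O(rn)\,e^{-4(\beta-C_2)h}=O\!\big(\sqrt\beta\,(\log n)\,n^{1/2}\big)\cdot e^{-\alpha_h}=o\big(n^2 e^{-\alpha_h}\big)$, so that $\sum_{x}\mu_n^\dob(\hgt(\cP_x^\noR)\geq h)\leq(1+o(1))\,n^2 e^{-\alpha_h}$.

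Finally, since $\mu_n^\dob(\hgt(\cP_x^\noR)\geq h)\leq e^{-4(\beta-C_2)}$ uniformly over $x$ and $h\geq1$, which tends to $0$ as $\beta\to\infty$, the elementary inequality $\log(1-p)\geq-(1+o(1))p$ (uniform over $p$ in a fixed small interval, the $o(1)$ being in $\beta\to\infty$) applied to the product above yields
\[ \log\mu_n^\dob\big(\max_x\overline\hgt_x(\cI_\Red)<h\big)\;\geq\;-(1+o(1))\sum_{x\in\cL_{1/2}}\mu_n^\dob\big(\hgt(\cP_x^\noR)\geq h\big)\;\geq\;-(1+o(1))\,n^2 e^{-\alpha_h}\,, \]
which is the first assertion; the second follows from the invariance of $\mu_n^\dob$ under the reflection $z\mapsto-z$ composed with the swap $\Red\leftrightarrow\Blue$, exchanging $\cI_\Red\leftrightarrow\cI_\Blue$ and mapping $\{\max_x\overline\hgt_x(\cI_\Red)<h\}$ onto $\{\min_x\underline\hgt_x(\cI_\Blue)>-h\}$. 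The main obstacle is the middle step: the pillars anchored near the lateral boundary of the cylinder, where \cref{lem:decorrelation-pillar} is unavailable and only the crude bound \cref{lem:weak-pillar-bound} can be used. This is precisely where the hypothesis $h\leq\tfrac1{\sqrt\beta}\log n$ enters, forcing the decorrelation radius $r$ (which must exceed a constant multiple of $\alpha_h\asymp\beta h$) to remain $n^{o(1)}$, so that the $O(rn)$ boundary pillars contribute only a lower-order term compared with the bulk term $n^2 e^{-\alpha_h}$.
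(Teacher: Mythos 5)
Your proposal is correct and follows essentially the same route as the paper: rewrite the event via non-red pillars (Lemma~\ref{lem:Potts-pillar-properties}), factorize with the fuzzy Potts FKG inequality, bound bulk pillars via Lemma~\ref{lem:decorrelation-pillar} and boundary pillars via Lemma~\ref{lem:weak-pillar-bound}, and exponentiate; the paper simply uses the cutoff $\log^2 n$ in place of your $r=C(\alpha_h+2\log n)$. (Minor slip: your boundary contribution should read $O\big(\sqrt\beta\,(\log n)\,n^{1+\epsilon_\beta}\big)e^{-\alpha_h}$ rather than $O\big(\sqrt\beta\,(\log n)\,n^{1/2}\big)e^{-\alpha_h}$, which is still $o(n^2e^{-\alpha_h})$ and does not affect the conclusion.)
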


\begin{proof}
    By the first part of \cref{lem:Potts-pillar-properties}, we can write
    \[\mu_n^\dob\big(\max_{x}\overline{\hgt}_x(\cI_\Red) < h\big) = \mu_n^\dob\Big(\bigcap_{x\in \cL_{1/2}} \{\hgt(\cP_x^\noR) < h \}\Big)\,.\]
    As in \cref{subsec:random-cluster-fuzzy-Potts}, we look at the fuzzy Potts model this time singling out the color $\Red$ and viewing all non-red colors as $\Fuzzy$. Since this model has the FKG inequality~\eqref{eq:fuzzy-Potts-FKG} (identifying all $\Blue$ boundary vertices as a single $\Fuzzy$ vertex to encode that they are obligated to take the same $\noR$ color) and by the second part of \cref{lem:Potts-pillar-properties}, we have that
    \[\mu_n^\dob\Big(\bigcap_x \{\hgt(\cP_x^\noR) < h\}\Big) \geq \prod_x \mu_n^\dob(\hgt(\cP_x^\noR) < h)\,.\]
    where the product runs over $x\in (\mathbb Z+\frac{1}{2})^2 \times \{0\} \cap \Lambda_{n}'$. Terms of $x: d(x,  \partial \Lambda'_{n,\infty}) \leq \log^2 n$  are lower bounded by the complement of \cref{lem:weak-pillar-bound}. For $x$ such that $d(x, \partial \Lambda'_{n,\infty}) > \log^2 n$, we can apply \cref{lem:decorrelation-pillar} with $m \to \infty$ to lower bound the right-hand side by 
    \[\mu_n^\dob(\hgt(\cP_x^\noR) \geq h) \leq e^{-\alpha_h} + e^{-\log^2 n/C} = (1+o(1))e^{-\alpha_h}\,,\]
    where the equality used the facts that $h \le O(\log n)$ and $\alpha_h \le 4(\beta + C)h$ per~\cite[Proposition 1.5]{ChLu24} (a simple forcing argument to lower bound the probability of a deviation via the simplest straight column pillar).   
    Hence, using the inequality $1 - x \geq e^{-x/(1-x)}$ for $0< x < 1$, we get
    \begin{align*}
    \prod_x \mu_n^\dob(\hgt(\cP_x^\noR) < h) &\geq (1-e^{-4(\beta - C)h})^{4n\log^2n} \cdot (1-(1+o(1))e^{-\alpha_h})^{(n-\log^2n)^2}\\
    &\geq \exp\left(-e^{-4(\beta - C)h}4n\log^2 n(1+o(1)) -e^{-\alpha_h}n^2(1+o(1))\right)\\
    &= \exp\left(-e^{-\alpha_h}n^2(1+o(1))\right)\,,
    \end{align*}
    where the last equality used the upper bound on $h$ and the fact that $\alpha_h \le 4(\beta + C)h$ (so that $e^{ 8 C h} \le n^{\epsilon_\beta}$ cannot make up for the difference between $n\log^2 n$ and $n^2$). 
\end{proof}

We now plug in $h = h_n^* + 1$ to obtain \cref{lem:prob-of-positivity}, given the comparison of $\alpha_h$ and $\xi h$ for $\xi$ from~\eqref{eq:xi} (which will separately be established in \cref{sec:identification-of-rates}).

\begin{proof}[\textbf{\emph{Proof of \cref{lem:prob-of-positivity}}}]
 Theorem~\ref{thm:identifying-rate-with-infinite-limit} will prove that $\alpha_h$ is very close (up to an $\epsilon_\beta$ independent of $h$ exactly related to) to $\xi h$. In particular, by \cref{thm:identifying-rate-with-infinite-limit} and definition of $h_n^*$ \eqref{eq:h_n^*}, 
\begin{align}\label{eq:alpha-h_n^*+1-bound}
    \alpha_{h_n^*+1} &\ge \xi\cdot (\lfloor \xi^{-1}\log n\rfloor +1)  -\epsilon_\beta \ge \log n - \epsilon_\beta\,.
\end{align}
By applying the second part of \cref{lem:potts-extrema-bound} with $h= h_n^*+1$, and translating vertically by $h_n^*$, we conclude.
\end{proof}

\subsection{Large deviations upper bound on number of sites not at \texorpdfstring{$h_n^*$}{h\_n}}\label{subsec:ld-upper-bound}

The second part to proving the $h_n^*$ upper bound in \cref{thm:main} is  establishing that violating an upper bound on the typical height under $\mu_{n}^{h_n^*}$ is even more atypical than the $e^{- (1+\epsilon_\beta) n}$ lower bound given on interface positivity in \cref{lem:prob-of-positivity}. Note that this is an estimate purely on a no-floor rigid interface. 

\begin{lemma}\label{lem:large-deviation-area-in-walls}
There exists $C>0$ such that for every $\beta>\beta_0$
        \begin{align*}
        \mu_{n}^{h_n^*}\Big( \big|\{x : \overline\hgt_x(\cI_{\Blue}) \ge h_n^*+1\} \big| \ge \tfrac{C}{\sqrt{\beta}}n^2\Big) \le \exp( - \sqrt{\beta} n/C)\,. 
    \end{align*}
\end{lemma}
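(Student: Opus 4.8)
The plan is to reduce the statement to a purely combinatorial large‑deviation estimate for the rigid interface $\cI_\Full$ under $\bar\pi_n^{h_n^*}$, proved by a Peierls argument organized through Dobrushin's wall decomposition in the quantitative form developed in \cite{ChLu24} (see also \cite{GielisGrimmett02}). Since $\cI_\Blue\subset\cI_\Full$ pointwise (by \cref{def:FK-Potts-interfaces}), any column $x$ with $\overline\hgt_x(\cI_\Blue)\ge h_n^*+1$ also has $\overline\hgt_x(\cI_\Full)\ge h_n^*+1$, so it suffices to bound $\bar\pi_n^{h_n^*}(|R|\ge \tfrac{C}{\sqrt\beta}n^2)$ for $R:=\{x:\overline\hgt_x(\cI_\Full)\ge h_n^*+1\}$. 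Recentering the ground state to height $h_n^*$, decompose $\cI_\Full$ into its walls $\{W_i\}$ with footprints $\{\hat W_i\}$ and excess areas $\|W_i\|:=|W_i|-|\hat W_i|$. Every $x\in R$ is a non‑ground column, hence lies in some $\hat W_i$ or under a non‑ground ceiling enclosed by some $W_i$, and Dobrushin's isoperimetric inequality for walls bounds the number of columns governed by $W_i$ by $C\|W_i\|^2$; hence $|R|\le C\sum_i\|W_i\|^2$.

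The probabilistic inputs are also standard in this framework. From \cref{prop:grimmett-cluster-exp} (which applies to $\bar\pi_n^{h_n^*}$ on the cylinder by \cref{rem:interface-height-const,lem:finite-vs-infinite-cylinder}) one gets, for any fixed finite family of pairwise disjoint admissible walls $W_1,\dots,W_k$, the decoupled Peierls bound $\bar\pi_n^{h_n^*}(W_1,\dots,W_k \text{ all appear in }\cI_\Full)\le \prod_i 2e^{-(\beta-C)\|W_i\|}$, together with the counting estimate that the number of admissible walls of excess $\ell$ anchored at a fixed plaquette is at most $C^\ell$ (self‑avoiding‑polygon counting in the isoperimetric‑extremal regime, $C^\ell$ otherwise).

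The heart of the matter is then the estimate $\bar\pi_n^{h_n^*}\big(\sum_i\|W_i\|^2\ge u\big)\le e^{-c\beta\sqrt u}$ for $u$ above a $\mathrm{poly}\log n$ threshold. I would prove it by a union bound over the scale of the contributing walls: if $\sum_i\|W_i\|^2\ge u$, then for some dyadic $a\in[1,n^2]$ there are at least $k\gtrsim u/(a\log n)$ disjoint walls of excess $\ge\sqrt a$, and $\binom{n^2}{k}\big(2e^{-(\beta-C)\sqrt a}\big)^k\le e^{-\sqrt\beta n/C}$ for every such pair $(a,k)$ — the hardest scales being a single ``mesa'' wall with $\sqrt a\asymp\beta^{-1/4}n$ and $k=1$ (cost $e^{-c\beta^{3/4}n}$) versus $\sqrt a=O(1)$ and $k\gtrsim n^2/\sqrt\beta$ (cost $e^{-c\sqrt\beta\,n^2}$), with all intermediate scales strictly smaller. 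Feeding $u\asymp n^2/\sqrt\beta$ into this estimate together with $|R|\le C\sum_i\|W_i\|^2$ yields $\bar\pi_n^{h_n^*}(|R|\ge\tfrac{C}{\sqrt\beta}n^2)\le e^{-c\beta^{3/4}n}\le e^{-\sqrt\beta n/C}$ for $\beta$ large and a suitable final choice of $C$, which is the claim.

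The main obstacle is making this core estimate rigorous. The naive route — bounding $|R|$ by a constant multiple of the total excess $\sum_i\|W_i\|$ and invoking a tail bound on the latter — fails, since the typical total excess is of order $n^2e^{-\beta}$, which for fixed $\beta$ and large $n$ dwarfs any $\mathrm{const}\cdot n$; a large $|R|$ is atypical not because of large total excess but because of excess \emph{concentrated} on individual walls, which is precisely where the isoperimetric inequality enters. One must therefore pit the entropy $\binom{n^2}{k}$ of distributing $k$ walls against the energy $(e^{-\beta\sqrt a})^k$ of their boundaries scale by scale, and arrange the bookkeeping so that each of the $O(\log n)$ scales contributes at most $e^{-\sqrt\beta n/C}$ — in particular handling the borderline ``medium mesa'' scales $\sqrt a\asymp n/\sqrt{\beta\log n}$, where one has to use that at least $\gtrsim\log n$ such components are forced, so that the extra factor $k\asymp\log n$ in the exponent absorbs the $\log n$ lost in the scale decomposition.
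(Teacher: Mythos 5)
Your overall strategy coincides with the paper's: pass from $\cI_\Blue$ to $\cI_\Full$ by inclusion, translate vertically to the Dobrushin measure, decompose $\cI_\Full$ into walls, bound the number of non-flat columns by the projected hull areas via isoperimetry ($\fm(W)\gtrsim|\rho(\hull{W})|^{1/2}$), and run a Peierls-plus-entropy union bound organized by dyadic scales of the walls. The decoupled Peierls input (the paper's \cref{cor:many-walls-tail}) and the $C^\ell$ enumeration of walls of excess $\ell$ are also exactly what the paper uses.

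The gap is in the bookkeeping of the multi-scale union bound, and it is not cosmetic. You allocate the budget $u$ \emph{uniformly} across the $O(\log n)$ dyadic scales, forcing $k\gtrsim u/(a\log n)$ walls at the selected scale $a$. At small scales ($a=O(1)$) this gives $k\asymp n^2/(\sqrt\beta\,\log n)$, so the placement entropy is $\binom{n^2}{k}=\exp\big((1+o(1))\,k\log(n^2/k)\big)=\exp\big((1+o(1))\,k\log(\sqrt\beta\log n)\big)$, which beats the energy $e^{-(\beta-C)k}$ as soon as $\log\log n>\beta$ — i.e., for every fixed $\beta$ once $n$ is large, the union-bound term at scale $a=O(1)$ exceeds $1$. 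At the intermediate scale $\sqrt a\asymp n/\sqrt{\beta\log n}$ your pigeonhole forces only $k\gtrsim\sqrt\beta$ components (not $\gtrsim\log n$ as you claim), and the resulting bound is $\exp(-c\beta n/\sqrt{\log n})$, which is neither $\le e^{-\sqrt\beta n/C}$ nor even $\le e^{-(1+\epsilon_\beta)n}$ for large $n$ — so it would also fail to beat the positivity lower bound of \cref{lem:prob-of-positivity} in the application. The paper's \cref{lem:outermost-walls} avoids both problems by using a \emph{summable, scale-dependent} threshold: scale $k$ is required to contribute at least $\epsilon_{\beta,k}^2n^2$ with $\epsilon_{\beta,k}\asymp\beta^{-1/4}/\min\{k,2L+1-k\}$, so that $\sum_k\epsilon_{\beta,k}^2\le C/\sqrt\beta$ (this is where the $C/\sqrt\beta$ in the statement comes from), the per-wall entropy at small scales is $\log(n^2/N)=O(\log\beta)\ll\beta$ rather than $O(\log\log n)$, and the entropy--energy comparison $\mathsf H(\chi 2^{1-k})\le(\beta-C)\chi 2^{-k/2}$ holds at every scale. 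You need to replace the uniform $u/\log n$ allocation with such a convergent-series allocation (or an equivalent device) for the core estimate to go through.
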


We will prove the above by bounding the set of all index points where the full random-cluster interface is not a single horizontal plaquette, under the corresponding Dobrushin random-cluster measure. By vertical translation let us work with $\bar{\pi}_{n}^\dob= \bar{\pi}_n^0$.

 We need to recall the definition of interface walls for $\cI_{\Full}$  from~\cite{GielisGrimmett02}. For a plaquette $f$, let $\rho(f)$ be its projection onto $\cL_0$. Recall that two plaquettes are 1-connected if their intersection contains a unit line segment, and 0-connected if their intersection contains a point.

\begin{definition}[Walls and Ceilings of $\cI_\Full$]
    Let $\cI_\Full^*$ be the union of $\cI_\Full$ (defined in Definition~\ref{def:full-interface}) together with all plaquettes which are horizontal and 1-connected to $\cI_\Full$. If a plaquette $f$ is the unique plaquette in $\cI_\Full^*$ with projection onto $\cL_{0}$ equal to $\rho(f)$, then $f$ is a ceiling plaquette. All other plaquettes of $\cI_\Full^*$ are wall plaquettes. A ceiling is a 0-connected component of ceiling plaquettes. A \emph{wall} is a pair $(A, B)$ where $W$ is a 0-connected component of wall plaquettes, $A = W \cap \cI_\Full$, and $B = W \cap (\cI_\Full^* \setminus \cI_\Full)$. (This decomposition is called a standard wall in \cite{Grimmett_RC}. By \cite[Lem.~7.125(i)]{Grimmett_RC}, there are no ceiling plaquettes in $\cI_\Full^* \setminus \cI_\Full$, so we do not need such a decomposition for ceilings.) 
\end{definition}

\begin{lemma}[{\cite[Lem.~12]{GielisGrimmett02},\cite[Lem.~7.127]{Grimmett_RC}}]
    In $\cI_\Full$, there is a 1-1 correspondence between interfaces and admissible families of walls (quotiented out by vertical translations), where two walls are called admissible if their projections are not 0-connected and they are contained in the domain (see items (i)--(ii) on page 211 of~\cite{Grimmett_RC} for the formal statement). 
\end{lemma}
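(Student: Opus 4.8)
\emph{Setup and overall strategy.} This combinatorial correspondence is the input needed to reduce the area bound of \cref{lem:large-deviation-area-in-walls} to a Peierls-type count over admissible wall families. Since the statement is the one of Dobrushin's wall decomposition, transported to the random-cluster interface $\cI_\Full$ by Gielis and Grimmett, the plan is simply to reconstruct that argument: build the two maps — interface $\mapsto$ wall family, and wall family $\mapsto$ interface — explicitly, and check they are mutually inverse. I would follow \cite{GielisGrimmett02},\cite{Grimmett_RC} throughout.

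\emph{From interfaces to walls.} Given a realization $I$ of $\cI_\Full$, form $\cI_\Full^* = I^*$ by adjoining the horizontal plaquettes $1$-connected to $I$, and over each column $c \subset \cL_0$ inspect the plaquettes of $I^*$ projecting onto $c$: call $c$ a \emph{ceiling column} if exactly one (necessarily horizontal) plaquette of $I^*$ lies over it, a \emph{wall column} otherwise. The wall plaquettes of $I^*$ split into $0$-connected components $W_1, W_2, \dots$, and $(A_i,B_i):=(W_i \cap \cI_\Full,\, W_i \cap (\cI_\Full^* \setminus \cI_\Full))$ is the $i$-th wall; each footprint $\rho(W_i)$ is finite because $I$ agrees with the flat interface outside a bounded region. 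Two points have to be checked. First, each $(A_i,B_i)$ is a standard wall: $B_i$ consists only of horizontal plaquettes and contains no ceiling plaquettes, by \cite[Lem.~7.125(i)]{Grimmett_RC}. Second, \emph{admissibility} of the family, i.e.\ the footprints $\rho(W_i)$ are pairwise non-$0$-connected; this is the crux. If $f \in W_i$ and $f' \in W_j$ had $\rho(f)$ and $\rho(f')$ sharing a corner, then — using that $I^*$ is locally a single flat plaquette above each ceiling column, so the shared corner can sit only above wall columns — a finite case analysis of the shape of $I^*$ over the (at most four) columns incident to that corner forces $f$ and $f'$ to be $0$-connected inside $I^*$, whence $i=j$.

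\emph{From walls to interfaces.} Given an admissible family $\mathcal W=\{(A_i,B_i)\}$, the complement of $\bigcup_i \rho(W_i)$ in $\cL_0$ is a union of connected ceiling regions. Assign heights to these regions by fixing one of them to height $0$ and propagating: passing from a ceiling region to an adjacent one crosses exactly one wall $W_i$, which prescribes the height increment (the level difference of the two ceilings it borders, read off from the shape of $A_i$). Admissibility is precisely what makes this single-valued — around any cycle in the adjacency graph of ceiling regions the net increment vanishes, a planar ``no-monodromy'' statement: non-$0$-connected footprints cut $\cL_0$ into regions whose adjacency graph, with each wall playing the role of an edge, carries no nontrivial closed increment. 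Placing each wall at the height dictated by its bordering ceilings, together with the flat ceiling plaquettes, yields a $1$-connected plaquette set separating $\partial_\Blue$ from $\partial_\Red$ that lies inside the domain (here one uses the ``contained in the domain'' clause of admissibility) — a legitimate $\cI_\Full$ — whose wall decomposition is $\mathcal W$. The residual additive freedom in the initial base height is exactly the vertical-translation quotient; in the domain $\Lambda'_{n,\infty}$ with Dobrushin boundary conditions transitioning at a fixed height, anchoring the interface to the boundary plaquettes pins that base height and makes the map a genuine bijection.

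\emph{Inverses, and the main obstacle.} That the two maps are mutually inverse then follows by unwinding definitions: extracting walls from a reconstructed interface returns $\mathcal W$ because the ceiling/wall classification is local and each wall was laid down over its own footprint, while reconstructing from the walls of $I$ returns $I$ because the ceiling heights of $I$ satisfy the very increments the reconstruction imposes. The real work, as in Dobrushin's original argument, is the pair of local geometric claims underlying admissibility: that $0$-connectedness of footprints is equivalent to $0$-connectedness of the associated wall plaquette sets, and that an admissible family is globally consistent. Both rest on the rigid $\Z^3$-geometry of $\cI_\Full$ and on the absence of ceiling plaquettes in $\cI_\Full^*\setminus\cI_\Full$; everything else is bookkeeping.
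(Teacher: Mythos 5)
The paper does not prove this lemma at all: it is quoted verbatim (as a citation) from \cite[Lem.~12]{GielisGrimmett02} and \cite[Lem.~7.127]{Grimmett_RC}, so there is no in-paper argument to compare against. Your reconstruction is the standard Dobrushin-style wall decomposition that those references carry out, and the two maps you describe (interface $\to$ standard-wall family via the ceiling/wall column classification, and the inverse reconstruction by propagating ceiling heights across walls) are exactly the right ones; the approach is correct in outline. Two small points where your sketch elides the actual content of the cited proofs: in the admissibility step you need the shared-corner case analysis to produce a $0$-connected path of \emph{wall} plaquettes between $f$ and $f'$ (being $0$-connected inside $I^*$ is weaker, since a connecting path could a priori pass through ceiling plaquettes), which is where \cite[Lem.~7.125--7.126]{Grimmett_RC} does its work; and the ``no-monodromy'' issue in the reconstruction is automatic rather than a separate planar statement, because non-$0$-connected footprints make the adjacency structure of ceiling regions a forest (each wall's projection separates its interior components from the unique unbounded one), so there are no cycles around which an increment could accumulate.
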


\begin{definition}[Excess area]
    The excess area between two interfaces $\cI$ and $\cJ$ is the difference in their number of plaquettes, $\fm(\cI;\cJ) := |\cI| - |\cJ|$. The excess area of a wall $W = (A, B)$ of $\cI_\Full$, is $\fm(W) := |A| - |\rho(W)| := |A| - |\rho(A \cup B)|$. (When we write $|\rho(F)|$ for a set of faces $F$, we mean the area, or number of faces in $\rho(F)$, so projections of vertical faces do not contribute.)
\end{definition}
% \todo{Also not sure what to do with the referee comment here. Their first point says our definition of $\rho(A)$ differs from Grimmett's because Grimmett's doesn't include the projection of vertical plaquettes in $A$. But first of all that won't change the area, and second of all its the same thing because the plaquettes are closed unit squares anyways...Then they say Grimmett's says $\rho(A \cup B)$ instead of $\rho(A)$. Ok sure we can make that change but again $\rho(A) = \rho(A \cup B)$ because $B$ only contains faces of $\cI_\Full^* \setminus \cI_\Full$. So maybe we can just have the changes I made above and then comment to the ref that its all the same.}

For a wall $W$, the projection of its \emph{hull}, $\rho(\hull{W})$, is the union of $\rho(W)$ with all finite components of $\mathbb R^2 \setminus \rho(W)$. Say a wall $W$ of $\cI_{\Full}$ is \emph{outermost} if there is no wall $W'$ of $\cI_{\Full}$ such that $\rho(W)\subset \rho(\hull{W'})$. Finally, two walls are called mutually external if neither $\rho(W)\subset \rho(\hull{W'})$ nor $\rho(W')\subset \rho(\hull{W})$. 

This combinatorial construction (along with some further constructions we do not need in what follows) were used in~\cite{GielisGrimmett02} to establish rigidity of $\cI_{\Full}$ under $\mu_{n}^\dob$. In particular, Peierls-type maps based on those in~\cite{Dobrushin73} were used to delete walls from the $1$-to-$1$ representation of interfaces, to get exponential tail bounds on the excess areas of walls of $\cI_{\Full}$. The following is an immediate corollary of \cite[Lem.~15]{GielisGrimmett02} (see also \cite[Lem.~7.132]{Grimmett_RC}).

\begin{corollary}
\label{cor:many-walls-tail}
    There exists $C>0$ such that for every $\beta>\beta_0$, and every arbitrary set of admissible mutually external walls $W_1,\ldots, W_N$, 
    \begin{align*}
        \bar{\pi}_{n}^\dob ( W_1,\ldots,W_N) \le \exp\Big( - (\beta - C) \sum_{i=1}^N \fm(W_i)\Big)\,,
    \end{align*}
    where we are using $\bar{\pi}_{n}^\dob(W_1,\ldots,W_N)$ to mean the probability that $\{W_1,\ldots,W_N\}$ are walls of $\cI_{\Full}$. 
\end{corollary}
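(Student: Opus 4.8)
The plan is to reduce the multi-wall tail bound to the single-wall excess-area estimate of \cite[Lem.~15]{GielisGrimmett02} (equivalently \cite[Lem.~7.132]{Grimmett_RC}) via the standard wall-deletion map, taking care to exploit the mutual externality of $W_1,\ldots,W_N$ so that the deletions may be performed simultaneously without interfering with one another. First I would recall the setup: by the $1$-to-$1$ correspondence between interfaces and admissible families of walls (quotiented by vertical translation), any realization of $\cI_\Full$ with $W_1,\ldots,W_N$ among its walls corresponds to an admissible family $\cW \ni W_1,\ldots,W_N$; let $\Psi$ be the map that removes $W_1,\ldots,W_N$ from this family and returns the (unique, up to vertical translation) interface $\cI' = \Psi(\cI_\Full)$ associated to the reduced admissible family $\cW \setminus \{W_1,\ldots,W_N\}$. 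This is well-defined because deleting walls from an admissible family leaves an admissible family, and deleting a \emph{mutually external} collection does not change the nesting structure among the surviving walls (the projection $\rho(\hull{W_i})$ of each deleted wall's hull does not meet $\rho(\hull{W_j})$ for $i\neq j$, nor any wall strictly inside some $\rho(\hull{W_i})$ interferes with walls outside it).

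The key computation is the change in the cluster-expansion weight \eqref{eq:CE} under $\Psi$. Exactly as in the proof of \cite[Lem.~15]{GielisGrimmett02}, removing a wall $W = (A,B)$ reduces the plaquette count by $\fm(W) = |A| - |\rho(W)|$ (the hull gets filled in by horizontal ceiling plaquettes, each horizontal unit square in $\rho(\hull W)$ being replaced by a single plaquette), and the combined effect on the $p^{|\partial I|}(1-p)^{|I|} q^{\kappa(I)}$ factor together with the $\exp(\sum_f \g(f,I))$ correction is bounded by $e^{(\beta - C)\fm(W)}$ for an absolute constant $C$ coming from the $\|\g\|_\infty \leq K$ bound and the exponential decay \eqref{eq:g-bound-2-faces}; crucially $\kappa$ cannot decrease under the deletion since we are only opening edges (removing dual-to-closed plaquettes). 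Because the walls are mutually external, their hulls $\rho(\hull{W_i})$ are pairwise disjoint (or nested-free), so these weight changes multiply: $\bar\pi_n^\dob(\cI_\Full = I) \leq \exp\big(-(\beta-C)\sum_{i=1}^N \fm(W_i)\big)\,\bar\pi_n^\dob(\cI_\Full = \Psi(I))$ for each $I$ in the event $\{W_1,\ldots,W_N \text{ are walls of }\cI_\Full\}$. Summing over all such $I$ and using that $\Psi$ is injective (the reduced family, together with the fixed data $W_1,\ldots,W_N$ and their locations, reconstructs $I$) gives
\[
\bar\pi_n^\dob(W_1,\ldots,W_N) \leq \exp\Big(-(\beta - C)\sum_{i=1}^N \fm(W_i)\Big)\sum_{I'} \bar\pi_n^\dob(\cI_\Full = I') \leq \exp\Big(-(\beta - C)\sum_{i=1}^N \fm(W_i)\Big)\,,
\]
which is the claim.

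The main obstacle I anticipate is making the simultaneous-deletion step fully rigorous: one must verify that, for mutually external walls, the cluster-expansion weight ratio genuinely factorizes over the $W_i$ (rather than merely obeying the sub-multiplicative bound one would get by deleting them one at a time), and that the map $\Psi$ remains injective when all $N$ walls are removed at once. Both points are handled in \cite{GielisGrimmett02,Grimmett_RC} — the disjointness of the projected hulls $\rho(\hull{W_i})$ means the local modifications have pairwise distance bounded below, so the $\g$-term cross-contributions are absorbed into the same $C$ — but since we only need the stated bound (not an asymptotically sharp constant), it suffices to invoke \cite[Lem.~15]{GielisGrimmett02} iteratively: delete $W_1$ (paying $e^{-(\beta-C)\fm(W_1)}$), observe $W_2,\ldots,W_N$ are still mutually external admissible walls of the new interface, and induct. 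This iterative version avoids the factorization subtlety entirely and yields the corollary with the same constant $C$.
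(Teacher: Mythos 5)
Your proposal is correct and follows essentially the same route as the paper, which simply observes that the bound is immediate from the wall-deletion estimate of \cite[Lem.~15]{GielisGrimmett02} (\cite[Lem.~7.132]{Grimmett_RC}); you have reconstructed the standard Dobrushin-style argument underlying that lemma (deletion map on the standard wall representation, cluster-expansion weight change of order $e^{-(\beta-C)\fm(W)}$ per wall, injectivity from the fixed deleted data). The closing remark that one may instead invoke the cited lemma iteratively is a reasonable safeguard, but is not needed here since the cited lemma already handles families of walls simultaneously.
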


% }

Given Corollary~\ref{cor:many-walls-tail}, the following bound uses a union bound over the contribution from walls from diadically growing scales interpolating between those confining interior areas $1$ to $n^2$.  

\begin{lemma}\label{lem:outermost-walls}
    Let $\mathfrak U$ be the set of all outermost walls of the interface
$\cI_{\Full}$ under the no-floor Dobrushin random-cluster measure $\bar{\pi}_{n}^\dob$. There exists $C>0$ such that for all $\beta>\beta_0$, 
    \begin{align}\label{eq:nts-ints-of-walls}
        \bar\pi_{n}^\dob\Big(\sum_{W\in \mathfrak U} |\rho(\hull{W})| \ge \frac{C}{\sqrt{\beta}} n^2\Big) \le e^{ - \sqrt{\beta} n /C} \,.
    \end{align}
\end{lemma}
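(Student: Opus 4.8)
The plan is to bound the total projected hull area of outermost walls by a union bound over dyadic scales for the sizes of individual hulls, using \cref{cor:many-walls-tail} to control the probability of a given family of mutually external outermost walls.

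\textbf{Setup and dyadic decomposition.} Since the walls $W\in\mathfrak U$ are outermost, they are pairwise mutually external, hence the sets $\rho(\hull W)$ are pairwise disjoint (an outermost wall cannot be contained in the hull of another). For $k\ge 0$ let $\mathfrak U_k$ be the (random) set of outermost walls with $2^k\le |\rho(\hull W)| < 2^{k+1}$. The crucial geometric fact is that for \emph{any} wall $W$ one has $\fm(W)\ge c\sqrt{|\rho(\hull W)|}$ for an absolute constant $c>0$: the wall plaquettes $A$ must, via their projection, ``surround'' the finite components making up $\rho(\hull W)\setminus\rho(W)$, so $|A|\gtrsim |\partial\rho(\hull W)|\gtrsim \sqrt{|\rho(\hull W)|}$ by the isoperimetric inequality in $\Z^2$ (this is exactly the type of bound underlying the rigidity arguments of \cite{Dobrushin73,GielisGrimmett02}; alternatively $\fm(W)\ge 1$ always, and for large hulls the isoperimetric improvement is what we need). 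Thus every $W\in\mathfrak U_k$ has $\fm(W)\ge c 2^{k/2}$.

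\textbf{Union bound at each scale.} Fix $k$ and $N\ge 1$. The number of ways to place $N$ walls with prescribed hull-sizes in $[2^k,2^{k+1})$: each wall is a 0-connected plaquette set of size at most $C'|\rho(\hull W)|\le C' 2^{k+1}$ (wall size is comparable to hull perimeter plus the wall's own area, but in any case a wall of excess area $\fm$ is a connected plaquette set whose cardinality we can crudely bound), rooted at one of $O(n^2)$ projected locations, so there are at most $(n^2)^N (C'')^{N 2^{k+1}}$ choices — actually it is cleaner to enumerate by total excess area: the number of families $\{W_1,\dots,W_N\}$ of mutually external walls with $\sum \fm(W_i)=m$ is at most $n^{2N} e^{C m}$ (each wall rooted among $n^2$ sites, then a bounded-degree self-avoiding enumeration of its plaquettes, whose count is exponential in its size, which is $O(\fm(W_i))$ after accounting that interior ceiling parts are determined). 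Combining with \cref{cor:many-walls-tail}, for a fixed target $\sum_{W\in\mathfrak U_k}|\rho(\hull W)|\ge a_k$, we have $N\ge a_k/2^{k+1}$ walls and $\sum\fm(W_i)\ge c a_k 2^{k/2}/2^{k+1}= c' a_k 2^{-k/2}$, giving
\[
\bar\pi_n^\dob\Big(\sum_{W\in\mathfrak U_k}|\rho(\hull W)|\ge a_k\Big)\le \sum_{N\ge a_k 2^{-k-1}} n^{2N} e^{C m}\cdot e^{-(\beta-C)m}\Big|_{m\ge c' a_k 2^{-k/2}}\le \exp\big(-(\beta-C')c' a_k 2^{-k/2} + C a_k 2^{-k}\log n\big),
\]
using $N\le m$. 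For $\beta$ large the first term dominates, so this is $\le \exp(-(\beta - C')c'' a_k 2^{-k/2})$ provided $a_k\ge 2^{k}\log n$ (which will hold in the regime of interest, else we bound more crudely).

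\textbf{Summing the scales.} Set $a_k := \frac{C}{\sqrt\beta}\, n^2\, 2^{-\theta k}$ for a small $\theta\in(0,1/2)$, so that $\sum_k a_k \le \frac{2C}{\sqrt\beta}n^2$ (after adjusting constants) and hence $\{\sum_{W\in\mathfrak U}|\rho(\hull W)|\ge \frac{2C}{\sqrt\beta}n^2\}\subset\bigcup_k\{\sum_{W\in\mathfrak U_k}|\rho(\hull W)|\ge a_k\}$. Scales with $2^k\le n$ contribute $\le \sum_{k}\exp(-c'' (\beta-C')\frac{1}{\sqrt\beta}n^2 2^{-(\theta+1/2)k})\le \exp(-c''\sqrt\beta\, n^2 2^{-(\theta+1/2)k})$ summed over $k$; the $k=0$ term already gives $\exp(-c''\sqrt\beta\, n^2)$ and the geometric-type sum over $k$ up to $k\approx \log_2 n$ is dominated by $\exp(-c\sqrt\beta\, n^{2-(1+2\theta)})$, which for $\theta$ small is $\le\exp(-\sqrt\beta\, n/C)$ with room to spare. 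Scales with $2^k> n^2$ are vacuous (no hull is that large); for $n<2^k\le n^2$ one uses that $\fm(W)\ge 1$ forces at most $a_k 2^{-k}\le n^2 2^{-k}\le n$ walls and the same \cref{cor:many-walls-tail} bound closes it. Taking the union over all scales and renaming $2C\rightsquigarrow C$ yields \cref{eq:nts-ints-of-walls}.

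\textbf{Main obstacle.} The delicate point is the two competing enumeration/energy bounds: the number of wall families grows like $n^{2N}$ in the number $N$ of walls and like $e^{Cm}$ in the total excess area $m$, while the energetic gain from \cref{cor:many-walls-tail} is only $e^{-(\beta-C)m}$, with $m$ and $N$ linked only through the weak isoperimetric inequality $m\ge c\sum_i\sqrt{|\rho(\hull W_i)|}$ (not a linear relation in the hull areas). Making the bookkeeping robust — i.e.\ choosing the dyadic thresholds $a_k$ so that the $n^{2N}\log n$ combinatorial factor is always beaten by the $\beta m$ energy for large $\beta$, across all scales from $1$ to $n^2$ simultaneously, and so that the sum of the resulting stretched-exponential tails is $\le e^{-\sqrt\beta n/C}$ — is where the care lies; everything else is the standard Dobrushin–Gielis–Grimmett wall machinery already imported above.
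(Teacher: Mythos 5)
Your overall strategy---dyadic decomposition of the outermost walls by hull area, the isoperimetric input $\fm(W)\gtrsim\sqrt{|\rho(\hull{W})|}$, and an appeal to \cref{cor:many-walls-tail}---is the same as the paper's, but two of your quantitative steps fail. The more serious one is the rooting entropy: you count the placements of $N$ walls by $(n^2)^N=e^{2N\log n}$, whereas the correct count for an unordered family of $N$ root locations is $\binom{n^2}{N}\le e^{\mathsf{H}(N/n^2)n^2}$. At the smallest scales this difference is fatal: for $k=0$ each wall has only constant excess area, so the energetic gain is $e^{-c\beta N}$, while your entropy factor is $e^{2N\log n}$; with $N\sim a_0\sim n^2/\sqrt{\beta}$ and $\beta$ a fixed constant, $\log n$ beats $\beta$ once $n$ is large and the scale-$k$ bound is vacuous for all $k\lesssim\log\log n$. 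Your caveat ``provided $a_k\ge 2^k\log n$'' does not address this: the condition your display actually requires is $\log n\lesssim\beta\,2^{k/2}$, which fails at small $k$ for every fixed $\beta$. The paper closes exactly this hole by using $\sum_{j\le pA}\binom{A}{j}\le e^{\mathsf{H}(p)A}$ and verifying $\mathsf{H}(\chi 2^{1-k})\le(\beta-C)\chi 2^{-k/2}$, which works because $\mathsf{H}(\chi 2^{1-k})\approx k\chi 2^{-k}$ is genuinely smaller than $\chi 2^{-k/2}$---a comparison that is invisible once the binomial is replaced by $(n^2)^N$.

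The second problem is the threshold allocation $a_k=\frac{C}{\sqrt{\beta}}n^2 2^{-\theta k}$ with $\theta>0$. At the top scale $2^k\approx n^2$ this gives exponent $\beta a_k 2^{-k/2}\approx\sqrt{\beta}\,n^{1-2\theta}$, so the best total bound you can extract is $e^{-c\sqrt{\beta}\,n^{1-2\theta}}$, strictly weaker than the claimed right-hand side of \cref{eq:nts-ints-of-walls} and, more importantly, too weak for the downstream use in \cref{lem:large-deviation-area-in-walls}, where this probability must beat the $e^{(1+\epsilon_\beta)n}$ cost of positivity from \cref{lem:prob-of-positivity}. You need thresholds that decay only polynomially in the scale index (the paper takes $\epsilon_{\beta,k}^2 n^2$ with $\epsilon_{\beta,k}\propto 1/\min\{k,2L+1-k\}$, summable via $\sum_j j^{-2}<\infty$), so that the extreme scale $k=2L$ keeps essentially the full budget $n^2/\sqrt{\beta}$ and still yields exponent of order $\sqrt{\beta}\,n$.
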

\begin{proof}
    Partition the set of all outermost walls into the sets $\mathfrak U_1,\ldots,\mathfrak U_{2L}$ for $L = \log_2 n$, where 
    \begin{align}\label{eq:fU-k}
        \mathfrak U_k = \{W\in \mathfrak U: |\rho(\hull{W})|\in [2^{k-1}, 2^k)\}\,.
    \end{align}
    We will show that for a suitable absolute constant $C_0$, for each $k = 1,\ldots,2L$, 
    \begin{align}\label{eq:nts-small-k}
        \bar\pi_{n}^\dob \Big(\sum_{W\in \mathfrak U_k} |\rho(\hull{W})| \ge \epsilon_{\beta,k}^2 n^2 \Big) \le e^{- (\beta - C) 2^{-\frac{k}{2}}{\epsilon_{\beta,k}^2 n^2}} \quad \text{where} \quad         \epsilon_{\beta,k} = \frac{C_0}{\beta^{1/4} \min\{k , 2L +1 -k\}}\,. 
    \end{align}
    Let us first conclude the proof using~\eqref{eq:nts-small-k}: for an absolute constant $C$, we have,  
    \begin{align*}
        \sum_{k=1}^{2L} \epsilon_{\beta,k}^2 \le \frac{C}{\sqrt{\beta}}\,,
    \end{align*}
    so the union of the above events covers $\sum_{W\in \mathfrak U}|\rho(\hull{W})| \ge \frac{C}{\sqrt{\beta}} n^2$. Summing the probability bounds of~\eqref{eq:nts-small-k}, we can bound the probability by $2L$ times the maximum of the probabilities, which is evidently attained at the term $k=2L$ where it is at most $\exp( - C_0 (\sqrt\beta - \frac{C}{\sqrt{\beta}}) n)$, yielding the claimed~\eqref{eq:nts-ints-of-walls} up to the change of constant. 

    We now move to proving~\eqref{eq:nts-small-k}. Fix $k$ and note by~\eqref{eq:fU-k} and isoperimetry in $\mathbb Z^2$, if $|\mathfrak U_k| = N$,  
    \begin{align}\label{eq:isoperimetry-inputs}
        N\le \sum_{W\in \mathfrak U_k} |\rho(\hull{W})| 2^{ 1-k}\,, \qquad\text{and} \qquad \sum_{W\in \mathfrak U_k} \fm(W) \ge 2 \sum_{W \in \mathfrak U_k} |\rho(\hull{W})| 2^{ - k/2}\,.
    \end{align}
    By applying Corollary~\ref{cor:many-walls-tail}, for any fixed realization of $\mathfrak U_k$, we have 
    \begin{align*}
        \bar\pi_{n}^\dob \Big(\{W\in \mathfrak U_k\} \text{ are walls in $\cI_{\Full}$}\Big) \le \exp\Big( - (\beta - C) \sum_{W\in \mathfrak U_k} \fm(W)\Big)\,.
    \end{align*}
    We union bound over collections $\mathfrak U_k$ as follows: letting $\chi$ be the fraction $\frac{1}{n^2}\sum_{W\in \mathfrak U_k} |\rho(\hull{W})|$,
    \begin{align*}
        \bar\pi_{n}^\dob\Big(\sum_{W\in \mathfrak U_k} |\rho(\hull{W})| \ge (\delta n)^2\Big) & \le \sum_{(\delta n)^2\le \chi n^2\le n^2} \sum_{N \le \chi n^2 2^{1-k}}\binom{n^2}{N}  \sum_{r \ge \chi n^2 2^{-\frac{k}{2} +1}} C^r e^{ - (\beta - C) r} \\ 
        & \le \sum_{(\delta n)^2\le \chi n^2\le n^2} \sum_{N \le \chi n^2 2^{1-k}} \binom{n^2}{N} e^{ - 2^{1-\frac{k}{2}}(\beta - C) \chi n^2}\,.
    \end{align*}
    The first line used~\eqref{eq:isoperimetry-inputs} for the bounds on the sums, the binomial is for the choice of $N$ points at which to root the walls of $\mathfrak U_k$, and the factor of $C^r$ is for the number of possible ways to allocate excess area $r$ to the different root points and enumerate over the realizations of the walls. Using the bound $\sum_{j\le p A} \binom{A}{j} \le \exp( \mathsf{H}(p) A) $ where $\mathsf{H}(p)$ is the binary entropy function $- p\log p - (1-p)\log (1-p)$, we get 
    \begin{align*}
        \bar\pi_{n}^\dob\Big(\sum_{W\in \mathfrak U_k} |\rho(\hull{W})| \ge (\delta n)^2\Big) \le \sum_{(\delta n)^2 \le \chi n^2 \le n^2} \exp\Big( \Big(\mathsf{H}(\chi 2^{1-k}) - 2^{1-\frac{k}{2}} (\beta - C) \chi\Big) n^2 \Big)\,.
    \end{align*}
    It therefore suffices for us to show (now with the choice of $\delta = \epsilon_{\beta, k}^2$) that for every $k$, for every $\chi \ge \epsilon_{\beta,k}^2$, we have $\mathsf{H}(\chi 2^{1-k}) \le (\beta - C) \chi 2^{ - k/2}$ (we would then absorb the prefactor of $n^2$ and be done). To show this, we use the bound $\mathsf{H}(p) \le p \log\frac{1}{p} + p$ (which holds as long as $p \le 1/2$, which is the case for all $k\ge 2$ trivially, and for $k=1$ by the fact that there cannot be distinct walls each of interior size only $1$ confining half the area of $\Lambda$). It suffices to show that 
    \begin{align*}
        2^{1-k} \log \tfrac{2^{k-1}}{\chi} + 2^{1-k} \le (\beta - C)2^{-\frac{k}{2}}\,,
        \end{align*}
        or, equivalently,
        \begin{align}\label{eq:entropy-algebra}
        (k-1) (\log 2) + \log \chi^{-1} +1 \le (\beta -C) 2^{\frac{k}{2}-1}\,.
    \end{align}
    Let us consider first the case where $k=1,\ldots,L$, in which case $\log \chi^{-1} \le \log \beta + \log(k^2)- \log (C_0^2)$. We can take $C_0$ large (independent of $\beta$ because $2^{\frac{k}{2}-1} \beta$ beats $\log \beta$ for all $k\ge 1$) to only consider large values of $k$, whence the exponential growth on the right-hand side of~\eqref{eq:entropy-algebra} of course dominates the left. Turning to the case where $k=L+1,\ldots,2L$, now $\log \chi^{-1} \le\log \beta  + 2 \log(2L + 1-k)- \log (C_0^2)$. For every $k\ge L$, the left-hand side of~\eqref{eq:entropy-algebra} is thus at most $O(\log n)$, whereas the right-hand side is at least $(\beta-C)n^{1/2}/2$. 
\end{proof}

\begin{proof}[\textbf{\emph{Proof of \cref{lem:large-deviation-area-in-walls}}}]
    It is equivalent by vertical translation to establish the upper bound on $$\mu_{n}^\dob \Big(\big|\{x : \overline{\hgt}_x(\cI_{\Blue}) \ge 1\} \big| \ge \tfrac{C}{\sqrt{\beta}}n^2\Big) \le \bar\pi_{n}^\dob \Big(\big|\{x : \overline{\hgt}_x(\cI_{\Full}) \ge 1 \}\big| \ge \tfrac{C}{\sqrt{\beta}}n^2\Big)\,,$$ 
    where we used inclusion of $\cI_{\Blue} \subset \cI_{\Full}$ and the FK--Potts coupling. Now any $x$ having $\overline{\hgt}_x(\cI_{\Full})\ne 0$ or $\underline{\hgt}_x(\cI_{\Full})\ne 0$ must belong to $\rho(\hull{W})$ for some wall $W$. Thus, the right-hand side is bounded by \cref{lem:outermost-walls}, concluding the proof. 
\end{proof}

We are now in position to combine everything to get the sharp upper bound in \cref{thm:main}. 
\begin{proof}[\textbf{\emph{Proof of upper bound in Theorem~\ref{thm:main}}}]
    By \cref{lem:prob-of-positivity} and~\cref{lem:large-deviation-area-in-walls},
    \begin{align*}
        \musoft_{n}^{h_n^*} \Big(\big|\{x : \overline{\hgt}_x(\cI_{\Blue}) \ge h_n^* + 1 \}\big| \ge \tfrac{C}{\sqrt{\beta}}n^2\Big) & \le \frac{\mu_{n}^{h_n^*}\Big(\big|\{x : \overline{\hgt}_x(\cI_{\Blue}) \ge h_n^* + 1 \}\big| \ge \tfrac{C}{\sqrt{\beta}}n^2\Big)}{\mu_{n}^{h_n^*}(\cI_{\Blue}\subset \cL_{\ge 0})} \\ 
        & \le  e^{ - \sqrt{\beta} n/C + (1+\epsilon_\beta)n}\,,
    \end{align*}
    yielding a bound of $e^{ - \sqrt{\beta}{n}/C}$ for large $\beta$, up to a change of the absolute constant $C$. 
    
    To conclude the upper bound of Theorem~\ref{thm:main}, we note that $|\{x : \overline{\hgt}_x(\cI_{\Blue}) \ge h_n^*+1\}| \ge \tfrac{C}{\sqrt{\beta}}n^2$ is a monotone event in the set of $\Blue$ vertices (because it is monotone in the augmented $\Blue$ vertex set $\Ablue$, and in turn $\Vblue$ which is expressed as a union of vertices connected by $\Blue$ paths to $\partial_\Blue \Lambda'_{n, \infty}$), and thus \cref{prop:Potts-monotonicity} gives the same for $\mu_n^\fl$. 

    Finally, as done in the proof of the lower bound, we observe that on the event $|\cI_\Full| < (1+\tfrac{C}{\beta})n^2$ (which by \cref{lem:Ifull-wall-faces} occurs with probability $1 - e^{-n^2}$ and hence can be assumed), we have
    \begin{align*}
    \big\{|\cI_{\Blue} \cap \cL_{\ge h_n^*+1}| >\tfrac{C}{\sqrt{\beta}} n^2 + \tfrac{C}{\beta} n^2\big\} \subset  \big\{|\{x: \overline \hgt_x(\cI_\Blue) 
    \ge h_n^*+1\}| >\tfrac{C}{\sqrt\beta} n^2 \big\} \,,
\end{align*}
which concludes the proof of the upper bound with the choice of $\epsilon_\beta = \tfrac{C}{\sqrt{\beta}}$, holding with probability $1 - e^{-\sqrt{\beta}n/C}$. 
\end{proof}

\section{Identification of the pillar rates with infinite-volume connectivity rates}\label{sec:identification-of-rates}
The preceding sections established our Theorem~\ref{thm:main} up to the deferred identification of the pillar rate $\alpha$ with the with the point-to-plane connectivity rate $\xi$. In this section, our goal is to give the precise relation between the pillar rate and the point-to-plane connectivity rate, and establish the additivity of the latter rate to be able to identify $h_{n}^*$ with $\lfloor \xi^{-1} \log n\rfloor$ per~\eqref{eq:h_n^*}.

\subsection{Comparison of pillar rate to point-to-plane connectivity rate}
Our approach is to work conditional on certain high probability events on the shape of the interface pillar under $\mu^\dob_\infty$ and the connected component of $\Vred^c$ under $\mu^\Red_\infty$ under which we can decorrelate the discrepancies in the boundary conditions using cluster expansion machinery. Recall that we denote the origin by $\mathsf{o} = (\frac12, \frac12, \frac12)$, and recall the definitions of the two rates we are interested in comparing, 
\begin{align*}
    \alpha_h = -\log \mu_{\infty}^{\dob}\big(\hgt(\cP_{\mathsf{o}}^\Blue) \ge h \big) \,, \quad \text{and} \quad \xi_h = - \log \mu_\infty^\Red\big(\mathsf{o}  \longleftrightarrow \cL_h \text{ in $\Vred^c$}\big)\,.
\end{align*}

For simplicity, define $A_{\mathsf{o}, h} = \{\mathsf{o}\longleftrightarrow \cL_h \text{ in $\Vred^c$}\}$. The main goal of this section is to establish the following. 
\begin{theorem}\label{thm:identifying-rate-with-infinite-limit}
    Fix any $q \ge 2$. We have for all $h\ge 1$ that 
    \begin{align*}
        |\alpha_h - \xi h|\le \epsilon_\beta\,.
    \end{align*}
\end{theorem}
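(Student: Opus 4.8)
The plan is to pass to the random-cluster representation on both sides, reduce each event to one about a \emph{thin, isolated pillar climbing through a pure-red environment}, and then match the two exponential rates by a swapping map whose cost is controlled by the cluster expansion. Write $\bar\pi_\infty^\dob$ for the infinite-volume FK measure coupled to $\mu_\infty^\dob$ and $\pi_\infty^{\mathsf w}$ for the wired FK measure coupled to $\mu_\infty^\Red$, and recall from \cref{lem:Potts-pillar-properties} that $\{\hgt(\cP_\mathsf{o}^\noR)\ge h\}$ is the event $\mathcal A^\noR_{\mathsf o,h}$ of \cite{ChLu24}, while $A_{\mathsf o,h}$ is, under $\pi_\infty^{\mathsf w}$, the event that $\mathsf o$ lies in a finite open cluster reaching height $h$. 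First, using the wall/ceiling decomposition and pillar estimates of \cite{ChLu24} together with the cluster expansions \cref{prop:grimmett-cluster-exp} and \cref{prop:cluster-expansion}, I would show that conditionally on $\{\hgt(\cP_\mathsf{o}^\noR)\ge h\}$, with probability $1-\epsilon_\beta$ the configuration is \emph{nice}: $\cI_\Full$ agrees with the flat plane $\cL_0$ outside a bounded-radius box around $\mathsf o$, the pillar $\cP_\mathsf{o}^\noR$ is a thin increment sequence (a spine with $O(1)$ side decorations) reaching height $h$ up to an additive $O(1)$ controlled by \cref{lem:weak-pillar-bound}, and it sits in the pure-red phase above the interface. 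Symmetrically, conditionally on $A_{\mathsf o,h}$ the non-red cluster of $\mathsf o$ is, with probability $1-\epsilon_\beta$, a thin column reaching height $h$, isolated from other non-red clusters, in a pure-red background. In both cases one may then expose a closed-edge cutset enclosing the pillar/column, so that the complement is distributed as the \emph{unconditioned} measure and the rate is read off from the enclosed local picture.

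The decisive step is that a nice Dobrushin pillar and a nice pure-red column are the same local object --- a thin non-red tube of open edges climbing to height $h$ in a red environment --- the only difference being that below it $\bar\pi_\infty^\dob$ carries the flat Dobrushin interface and the blue sea whereas $\pi_\infty^{\mathsf w}$ is red there. I would define a bijection between the two families of nice configurations that keeps the tube and its cutset fixed and swaps the background, and bound its log Radon--Nikodym factor by applying \cref{prop:cluster-expansion} to each side: it telescopes to $\sum_f(\g^\dob(f,\cdot)-\g^\Red(f,\cdot))$ over plaquettes $f$ near the tube, and since the two backgrounds agree in the ball of radius of order $r$ about any plaquette at height of order $r$ above the base, the decay bound \eqref{eq:g-bound-2-faces} makes each such term $O(e^{-cr})$; summing the resulting geometric series (only $O(1)$ plaquettes lie within $O(1)$ of the base) gives a total of $O(\epsilon_\beta)$, uniformly in $h$. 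In particular the per-height growth rate of a nice pillar equals that of a nice pure-red column, which by the definition of $\xi$ in \eqref{eq:xi} and the subadditive structure behind \cref{lem:xi-tilde-additive} is $\xi$ up to $\epsilon_\beta$.

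Finally, Steps above identify $\alpha_h$ with the rate of the nice pure-red column event stripped of its base cost: attaching the tube to the blue sea at height $0$ is free (a monochromatic blue--blue bond replaces the flat-interface plaquette), whereas the genuine non-red bubble of $A_{\mathsf o,h}$ must pay an $O(\beta)$ bottom cost --- this is precisely the $h$-independent additive discrepancy between $\xi_h$ and $\xi h$ recorded in \cref{lem:xi-tilde-additive}; combining this with the fact that the extra shapes a nice pillar could take near its base are each $O(e^{-c\beta})$-costly yields $\alpha_h = \xi h + O(\epsilon_\beta)$ for every $h\ge1$, as claimed. The main obstacle is exactly the quantitative decorrelation used here: the boundary-condition mismatch (blue sea and flat interface under $\mu_\infty^\dob$ versus the pure red phase under $\mu_\infty^\Red$) is concentrated at height $0$, where the pillar is rooted, so one cannot simply invoke ``the top does not see the bottom''; pushing the cancellation through with an error $\epsilon_\beta\downarrow 0$ rather than a mere $O(1)$ forces one to marry the exponential-decay cluster-expansion bounds to the thin-pillar geometry of \cite{ChLu24}, and to cleanly isolate the $h$-independent base cost via the approximate additivity of $\xi_h$.
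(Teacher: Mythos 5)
Your overall route is the paper's: reduce both $\alpha_h$ and $\xi_h$, via the isolated-pillar results of \cite{ChLu24}, to the probability of the same thin non-red tube in a red environment; swap the two backgrounds at a cost controlled by the generalized cluster expansion; and absorb the leftover $h$-independent base cost into the offset between $\xi_h$ and $\xi h$, which is pinned down by the approximate additivity of the conditioned rate together with Fekete's lemma. So the architecture is right.

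There is, however, a genuine gap in the quantitative step at the heart of the argument. You bound the telescoped sum $\sum_f(\g^\dob(f,\cdot)-\g^{\Red}(f,\cdot))$ by the decay estimate \eqref{eq:g-bound-2-faces}, whose constants $c,K$ are \emph{absolute} ($\beta$-independent), and claim the resulting geometric series is $O(\epsilon_\beta)$. It is not: the background mismatch is concentrated at height $0$, exactly where the tube is rooted, so the tube plaquettes within $O(1)$ of the base each contribute $Ke^{-c\cdot O(1)}$, an absolute constant, and the total is $\Theta(1)$ rather than $o_\beta(1)$. An $O(1)$ error is fatal for a statement demanding agreement up to $\epsilon_\beta$ — that is the whole difficulty you correctly flag in your last paragraph, but the bound you actually invoke does not resolve it. The resolution in the paper is twofold: \cref{prop:cluster-expansion} upgrades the estimates to $|\g|\le e^{-\beta/K}$ and $|\g-\g'|\le e^{-\beta r/K}$ (\cref{it:L-inf-bound,it:exp-decay}), and the conditioning events confine the pillar to the cone $\mathbf{C}_{\vee}$ and the remaining discrepancy to $\mathbf{C}_{\curlyvee}$, so that $\sum_{f\in\mathbf{C}_\vee}\sum_{g\in\mathbf{C}_{\curlyvee}}e^{-\beta d(f,g)/K}\le\epsilon_\beta$ as in \eqref{eq:sum-of-g-terms-cones}; with $\beta$-dependent decay even the $O(1)$ pairs near the origin contribute only $e^{-\beta O(1)/K}=\epsilon_\beta$. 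A secondary imprecision: the base cost cannot simply be ``read off'' as the discrepancy between $\xi_h$ and $\xi h$ from \cref{lem:xi-tilde-additive}. One must compute the base cost explicitly on both sides — the paper obtains $\pi_n^{\mathsf w}(\cE^{\mathsf w})/\bar\pi_n^\dob(\cE^\dob)=(1\pm\epsilon_\beta)\,qe^{-2\beta}$ together with a color factor $\frac{q-1}{q}$, hence the constant $2\beta-\log(q-1)$ — show the \emph{same} constant relates the conditioned rate $\widetilde\xi_h$ to $\xi_h$, and only then use additivity of $\widetilde\xi_h$ plus Fekete to conclude $\widetilde\xi_h=\xi h\pm\epsilon_\beta$ uniformly in $h$.
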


For this, we will need a generalization of \cref{prop:grimmett-cluster-exp}, which may also be of independent interest. We begin with some preliminary definitions. Recall for any collection of plaquettes $F$, we let $\overline{F}$ denote the set of plaquettes which are 1-connected to $F$ (including $F$ itself), and set $\partial F := \overline{F} \setminus F$. If there is a reference domain $\cU$, then $\overline{F}$ is presumed to restrict to $\cU$. Let $f_e$ denote the plaquette $f$ dual to an edge $e$. Let $\mathsf{Cl}(F)$ be the event that for all $f_e \in F$, $\omega_e = 0$ and for all $f_e \in \partial F$, $\omega_e = 1$, i.e., that $F$ forms a $1$-connected dual-to-closed component. 

Now let $\cU$ be any subgraph of $\Z^3$ and let $\eta$ be any boundary condition. Let $\cB$ be the set of vertices in $\cU$ which are also adjacent to a vertex in $\Z^3 \setminus \cU$. Viewing $\eta$ in the dual graph as a set of dual-to-open and dual-to-closed plaquettes, we will say a collection of plaquettes $F$ is a compatible set for $(\cU, \eta)$ if for every dual-to-closed plaquette $f$ of $\eta$ in $\Z^3 \setminus \cU$, every plaquette $g$ which is 1-connected to $f$ and in $\cU$ is in $\overline{F}$. 

Finally, for $\cU$ finite, let $Z^\eta(\cU)$ denote the partition function for the random-cluster measure on $\cU$ with boundary conditions $\eta$.

\begin{proposition}\label{prop:cluster-expansion}
    There exists a constant $K > 0$ and functions $\g(f, F, \cU)$ taking as inputs $F$ a collection of plaquettes, $f \in F$, and $\cU$ a subgraph of the lattice $\Z^3$, such that $\g$ satisfies the following:
    \begin{enumerate}
        \item\label{it:L-inf-bound} For any $F$, $f \in F$, and $\cU \subset  \Z^3$, 
        \[|\g(f, F, \cU)| \leq e^{-\beta/K}\,.\]

        \item\label{it:exp-decay} For any $f \in F$, $f' \in F'$, and any domains $\cU, \cU' \subset  \Z^3$, we have
        \[|\g(f, F, \cU) - \g(f', F', \cU')| \leq e^{-\beta r/K}\,,\]
        where $r$ is the largest radius such that $B_r(f) \cap \cU \cong B_r(f') \cap \cU'$ and $B_r(f) \cap F \cong B_r(f') \cap F'$, with $B_r(f)$ denoting the $L_\infty$ ball of radius $r$ about the center of $f$, and $\cong$ denoting equivalence (either as graphs or plaquette sets) up to translation.
        \item\label{it:cluster-expansion} For any $\cU \subset  \Z^3$ with boundary condition $\eta$, and any compatible $F$ for $(\cU, \eta)$,
        \[\pi_\cU^\eta(\mathsf{Cl}(F)) = \frac{Z^{\mathsf{w}}(\cU)}{Z^\eta(\cU)}(1-e^{-\beta})^{|\partial F|}e^{-\beta|F|}q^{\kappa(F) - 1}\exp\Big(\sum_{f \in F} \g(f, F, \cU)\Big)\,,\]
        where $\mathsf{w}$ is the wired boundary conditions, $\kappa(F)$ is the number of connected components of $\cU$ after removing the edges $e$ such that $f_e \in F$, and we are viewing vertices which are wired together by open edges of $\eta$ in $\Z^3 \setminus \cU$ as connected.
    \end{enumerate}
\end{proposition}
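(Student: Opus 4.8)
The plan is to run the standard polymer/cluster expansion for the random-cluster model at large $\beta$, but uniformly over all domains $\cU$ and boundary conditions $\eta$, tracking only the dependence on the local geometry near each plaquette. The key observation is that for $p = 1-e^{-\beta}$, the weight of an edge configuration $\omega$ in a domain $\cU$ (with wired boundary) can be rewritten, relative to the all-open configuration, by setting $D(\omega)$ to be the set of plaquettes dual to closed edges of $\omega$; then
\[
\frac{\pi_\cU^{\mathsf w}(\omega)}{\pi_\cU^{\mathsf w}(\omega\equiv 1)} = e^{-\beta|D(\omega)|}\, q^{\kappa_\cU(D(\omega))-1},
\]
where $\kappa_\cU(D)-1$ counts the ``extra'' connected components created by deleting the edges dual to $D$. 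The factor $q^{\kappa_\cU(D)-1}$ is not local, but the classical trick (see \cite[\S7.3--7.4]{Grimmett_RC}) is to expand it as a sum over subsets of $D$ via the fact that each connected component of the dual-to-closed set is ``charged'' according to whether deleting its edges disconnects $\cU$, and then to exponentiate the resulting polymer gas. At high $\beta$ the activities are bounded by $e^{-\beta(1-o(1))|\cdot|}$, so the Kotecký--Preiss / Dobrushin criterion applies and one obtains an absolutely convergent cluster expansion $\log(\text{polymer partition function}) = \sum_{\text{clusters}} \phi(\text{cluster})$ with $|\phi|$ decaying exponentially in the cluster size.

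From here the three assertions are extracted in the usual way. For \ref{it:cluster-expansion}: conditioning on the event $\mathsf{Cl}(F)$ forces the plaquettes of $F$ dual-to-closed and those of $\partial F$ dual-to-open, contributing the explicit prefactor $(1-e^{-\beta})^{|\partial F|} e^{-\beta|F|} q^{\kappa(F)-1}$, while the remaining free sum over the configuration in $\cU\setminus \overline F$ (with the induced boundary conditions, which by compatibility of $F$ with $(\cU,\eta)$ ``shield'' the interior from $\eta$) produces the ratio $Z^{\mathsf w}(\cU)/Z^\eta(\cU)$ together with $\exp(\sum_{f\in F}\g(f,F,\cU))$, where $\g(f,F,\cU)$ is defined by distributing the value of each cluster $\phi(\text{cluster})$ that ``touches'' $F$ equally among the plaquettes of $F\cap(\text{support of the cluster})$ — exactly as in the proof of \cite[Lem.~9]{GielisGrimmett02}. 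The $L^\infty$ bound \ref{it:L-inf-bound} follows because every cluster contributing to $\g(f,F,\cU)$ must contain $f$, so $|\g(f,F,\cU)|\le \sum_{\text{clusters}\ni f}|\phi|\le e^{-\beta/K}$ by the standard summability estimate at large $\beta$. For the Lipschitz-type bound \ref{it:exp-decay}: if $B_r(f)\cap\cU\cong B_r(f')\cap\cU'$ and $B_r(f)\cap F\cong B_r(f')\cap F'$, then every cluster of diameter $<r$ through $f$ has an exact counterpart through $f'$ with the same weight (the cluster weights depend only on the local structure of the domain and of the dual-to-closed polymers), so the difference $\g(f,F,\cU)-\g(f',F',\cU')$ is bounded by $2\sum_{\text{clusters}\ni f,\ \mathrm{diam}\ge r}|\phi| \le e^{-\beta r/K}$.

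The main obstacle — and the reason this needs a separate statement rather than a direct citation of \cref{prop:grimmett-cluster-exp} — is handling the nonlocality of the $q^{\kappa_\cU(\cdot)}$ factor together with \emph{arbitrary} domains $\cU$ and boundary conditions $\eta$, in particular domains that are far from being boxes and boundary conditions $\eta$ that wire together distant parts of $\partial\cU$. One must be careful that the ``charge'' bookkeeping that linearizes $q^{\kappa}$ still yields local polymer activities: the subtlety is that whether a connected dual-to-closed set disconnects $\cU$ (and hence whether it carries a factor of $q$) can in principle depend on global topology. The resolution is the observation (already implicit in \cite[Lem.~7.104]{Grimmett_RC}) that at supercritical $p$ one may first pass to the representation where only \emph{finite} closed clusters are charged — an infinite or boundary-touching closed cluster contributes a deterministic wired-component count absorbed into $Z^{\mathsf w}(\cU)/Z^\eta(\cU)$ — and finite closed clusters disconnect $\cU$ only through their own local structure, restoring locality of the activities. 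Once this reduction is in place the expansion proceeds mechanically, and the compatibility hypothesis on $F$ is exactly what is needed to make the conditioning in \ref{it:cluster-expansion} decouple the polymer gas in $\cU\setminus\overline F$ from $\eta$.
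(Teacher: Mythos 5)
Your proposal is correct in outline but takes a genuinely different technical route from the paper. The first step is the same in both: conditioning on $\mathsf{Cl}(F)$ and using compatibility of $F$ with $(\cU,\eta)$ to write $\pi_\cU^\eta(\mathsf{Cl}(F))$ as the explicit prefactor times $\frac{Z^{\mathsf w}(\cU)}{Z^\eta(\cU)}\cdot\frac{Z^{\mathsf w}(\cU\setminus\overline F)}{Z^{\mathsf w}(\cU)}$, so that everything reduces to expanding the ratio of two \emph{wired} partition functions. From there you run a bona fide polymer/cluster expansion (linearizing $q^{\kappa}$, verifying Kotecký--Preiss at large $\beta$, and assembling $\g(f,F,\cU)$ from cluster weights touching $F$), whereas the paper avoids any convergence criterion altogether: it uses Grimmett's exact telescoping identity $\log Z^{\mathsf w}_\cU=\log q+\sum_e \f^{\mathsf w}_\cU(e)$ with $\f^{\mathsf w}_\cU(e)=\int_p^1\frac{s-\pi^{\mathsf w}_{\cU,s}(\omega_e=1)}{s(1-s)}\,\d s$, defines $\g(f,F,\cU)$ by assigning each edge term in the difference $\log Z^{\mathsf w}(\cU\setminus\overline F)-\log Z^{\mathsf w}(\cU)$ to its nearest plaquette of $F$, and derives items (1)--(2) from the locality of the wired edge marginal ([Grimmett, Lem.~7.93]), upgraded to decay rate $e^{-\beta r/K}$. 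Your route buys a self-contained combinatorial expansion; the paper's route buys an unconditional identity whose error terms are controlled probabilistically rather than via a convergence criterion.

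One point in your argument deserves more care: the locality of the polymer activities. You correctly flag that $q^{\kappa}$ is nonlocal, but your resolution (``finite closed clusters disconnect $\cU$ only through their own local structure'') is asserted rather than proved, and it is exactly here that the reduction to wired boundary conditions is doing real work. For an arbitrary subgraph $\cU$ and general $\eta$, whether a $1$-connected set of dual-to-closed plaquettes $\gamma$ increases the component count can depend on the global topology of $\cU$ (e.g.\ a polymer spanning a narrow neck of a dumbbell-shaped domain). What saves you is that after the reduction, only the wired measure appears: with all of $\partial\cU$ identified, a vertex is separated from the boundary only if it is \emph{enclosed} by a dual-to-closed surface lying inside $\cU$, that surface is $1$-connected and hence contained in a single polymer, and the enclosed region sits inside the hull of that polymer---so $k(\gamma)$ is determined by $\gamma$ together with $\cU\cap B_{\mathrm{diam}(\gamma)}(\gamma)$, and the activities factorize over polymers and are local. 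You should also say explicitly that the $e^{-\beta r/K}$ (rather than $e^{-r/K}$) decay in item (2) comes from the activities being bounded by $e^{-\beta(1-o(1))|\gamma|}$, so that clusters of diameter at least $r$ have total weight $e^{-\beta r/K}$; the paper has to work separately to upgrade the classical $e^{-r/K}$ bound to this $\beta$-dependent rate.
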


    The proof of \cref{prop:cluster-expansion} is deferred to \cref{sec:appendix}.

\begin{remark}
    Note that the above proposition can be applied in the setting where $\eta$ is the Dobrushin boundary condition and $F$ is a potential realization of $\cI_\Full$, recovering the law of $\cI_\Full$ as in \cite[Lem.~7.104]{Grimmett_RC}. Even in this case however, we emphasize that the upper bounds in \cref{it:L-inf-bound,it:exp-decay} now decay with $\beta$, whereas they did not in the formulation of \cref{prop:grimmett-cluster-exp}. 
\end{remark}

Because we wish to use the above cluster expansion machinery on finite graphs to prove \cref{thm:identifying-rate-with-infinite-limit}, we will work in the finite graph $\Lambda_{n,n}'$. This poses no issues, as we have
\begin{align*}|\log \mu_{\infty}^{\dob}\big(\hgt(\cP_{\mathsf{o}}^\noR) \ge h \big) - \log \mu_n^{\dob}\big(\hgt(\cP_{\mathsf{o}}^\noR) \ge h \big)| = o(1)\,,
\end{align*}
and 
% (letting $\Vred$ under $\mu^\Red_n$ in the definition of be the connected component of $\Red$ vertices of the boundary, analogously to its definition under $\mu^\dob_n$)
\begin{align*}|\mu_\infty^\Red\big(A_{\mathsf{o}, h}\big) - \mu_n^\Red\big(A_{\mathsf{o}, h}\big)| = o(1)\,,\end{align*}
by weak convergence of measures (and moreover with explicit rates of convergence via \cref{lem:decorrelation-pillar} and \cref{prop:cluster-expansion}). Thus, for each $h$, we can take $n \gg h$ large enough so that the error in using a finite box instead of infinite volume is smaller than $\epsilon_\beta$. Hence, in the remainder of this section, the lemmas will be stated in terms of the infinite volume quantities $\alpha_h$ and $\xi_h$, but the proofs will work with measures on $\Lambda_{n,n}'$. 

Recall that we have the FK--Potts measure on $\Lambda_{n,n}'$ with Dobrushin boundary conditions denoted $\bP_n^\dob$ with marginals $(\mu_n^\dob, \bar\pi_n^\dob)$. We also have the FK--Potts measure with wired red boundary conditions denoted $\bP_n^\Red$ with marginals $(\mu_n^\Red, \pi_n^{\mathsf{w}})$.

\begin{lemma}\label{lem:alpha-related-to-xi-Potts}
    Let $\beta$ be sufficiently large and $q\ge 2$. We have that for all $h\ge 1$, 
    \begin{align*}
         \big|\alpha_h - (\xi_h -2\beta + \log(q-1))\big|  \le \epsilon_\beta\,.
    \end{align*}
\end{lemma}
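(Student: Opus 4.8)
The plan is to compare the two events $\{\hgt(\cP_{\mathsf{o}}^\noR)\ge h\}$ under $\mu_n^\dob$ and $A_{\mathsf{o},h}=\{\mathsf{o}\longleftrightarrow \cL_h\text{ in }\Vred^c\}$ under $\mu_n^\Red$ by passing to the FK--Potts couplings $\bP_n^\dob$ and $\bP_n^\Red$, revealing a small piece of the configuration near the origin (the part of the random-cluster interface $\cI_\Full$, or the closed-plaquette boundary of the non-red cluster, directly touching the origin column), and then using the cluster-expansion formula of \cref{prop:cluster-expansion} to show that the two resulting conditional partition functions differ only by a bounded-distortion factor, which accounts for the explicit $-2\beta+\log(q-1)$ correction. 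First I would rewrite the Potts events in FK language: under $\bP_n^\dob$, the event $\{\hgt(\cP_{\mathsf{o}}^\noR)\ge h\}$ corresponds (via \cref{lem:Potts-pillar-properties} and the coupling) to the origin column being in $\Atop^c$ up to height $h$ with an appropriate dual-to-closed plaquette structure and a recoloring of the origin cluster to a non-red color (probability factor $1/q$ per independent cluster, but the relevant cluster is forced non-red, hence $(q-1)/q$); under $\bP_n^\Red$, $A_{\mathsf{o},h}$ similarly corresponds to the origin being in a finite FK cluster reaching $\cL_h$, whose recoloring is again a uniform non-red color with probability $(q-1)/q$. The factors of $q$ and $q-1$ from the recolorings of the clusters other than the pillar itself cancel in the comparison, leaving the claimed $\log(q-1)$ plus the $q$-free FK comparison.

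Next I would set up the core FK estimate: let $F$ be the minimal $1$-connected dual-to-closed set near the origin that "certifies" the pillar/connection up to height $h$ (for the Dobrushin side this includes the local piece of $\cI_\Full$ separating the origin column from $\Vtop$; for the red side this is the closed boundary of the finite component of $\mathsf{o}$). Apply \cref{prop:cluster-expansion}\ref{it:cluster-expansion} to each: $\pi_\cU^\eta(\mathsf{Cl}(F))$ equals a ratio $Z^{\mathsf w}(\cU)/Z^\eta(\cU)$ times $(1-e^{-\beta})^{|\partial F|}e^{-\beta|F|}q^{\kappa(F)-1}\exp(\sum_{f\in F}\g(f,F,\cU))$. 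The key point is that the \emph{same} set $F$ can be used (after an obvious local identification) for both boundary conditions, and the combinatorial prefactors $(1-e^{-\beta})^{|\partial F|}e^{-\beta|F|}q^{\kappa(F)-1}$ as well as the $\g$-sums are \emph{identical} except for a bounded-size local region near the boundary discrepancy; by \cref{prop:cluster-expansion}\ref{it:exp-decay} the difference of $\g$-sums is $O(e^{-\beta/K})=\epsilon_\beta$. The ratio $Z^{\mathsf w}(\cU)/Z^\eta(\cU)$ is where the $-2\beta$ comes from: in the Dobrushin case the relevant local partition-function ratio near the origin column accounts for the two "missing" monochromatic bonds that distinguish a wired-below boundary from the free configuration beneath a $\Top$-style interface, giving an $e^{2\beta}$ discrepancy relative to the purely-red case (where those bonds are automatically present). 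I would then take $-\log$ of both cluster-expansion identities, subtract, and sum the resulting $O(\epsilon_\beta)$ errors over the $O(h)$ plaquettes where the two configurations might locally differ — here I must be careful that the per-plaquette error does not accumulate, which is ensured because the two $F$'s coincide away from a \emph{bounded} neighborhood of the origin, not along the whole column, so there are only $O(1)$ discrepant terms.

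The main obstacle I anticipate is the bookkeeping in the partition-function ratio $Z^{\mathsf w}(\cU)/Z^\eta(\cU)$: one must argue that, after conditioning on $\mathsf{Cl}(F)$ with $F$ separating the origin region from everything else, the measure splits into an "inside" piece (identical for the two boundary conditions, since $F$ shields it) and an "outside" piece whose partition-function ratio is a \emph{fixed} quantity independent of $h$, equal to $e^{-2\beta}(q-1)^{-1}$ up to $e^{\pm\epsilon_\beta}$. Making this precise requires choosing $F$ (and hence $\cU$) so that the shielding property holds exactly and so that the outside region is the same in both models; this is a domain-Markov argument complicated by the fact that $\cI_\Full$ (Dobrushin side) induces wired conditions on both sides while the red-cluster boundary induces wired-outside/free-inside. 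I would handle this by conditioning on a slightly enlarged shield — the outer boundary of the full $1$-connected closed component containing the origin's certificate — and verifying that on the high-probability event (via \cref{lem:weak-pillar-bound} for the pillar side, and a Peierls bound for the red side) this shield has $O(1)$ plaquettes near the origin and coincides for the two models outside that $O(1)$ region, so that all discrepancies are confined to a bounded set and contribute only $\epsilon_\beta$. The remaining steps — assembling the inequality $|\alpha_h-(\xi_h-2\beta+\log(q-1))|\le\epsilon_\beta$ from these two-sided bounds — are then routine.
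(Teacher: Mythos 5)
Your overall strategy---pass to the FK--Potts coupling, compare the two conditional laws of the pillar via the generalized cluster expansion of \cref{prop:cluster-expansion}, and isolate an explicit local factor responsible for $-2\beta+\log(q-1)$---is indeed the paper's approach. But there are two genuine gaps.

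First, your error control rests on the claim that the two certifying sets $F$ ``coincide away from a bounded neighborhood of the origin,'' so that only $O(1)$ terms in the $\g$-sums are discrepant. This is false: conditional on $\hgt(\cP_{\mathsf o}^{\noR})\ge h$, the Dobrushin configuration contains the macroscopic interface $\cI_\Full$ spanning the whole box near height $0$, which has no counterpart under $\mu_n^\Red$. Each plaquette $f$ of the pillar contributes $|\g(f,F,\cU)-\g(f',F',\cU')|\le e^{-\beta r/K}$ where $r$ is essentially the distance from $f$ down to this base interface, and the sum over the $O(h)$ pillar plaquettes is $\epsilon_\beta$ only if the base interface stays inside a flat (logarithmic) cone while the pillar stays inside a vertical cone. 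The paper therefore first conditions on the cone events $\cE^\dob,\cE^{\mathsf w}$ and shows they have conditional probability $1-\epsilon_\beta$ \emph{given} the height-$h$ deviation; this is imported from the isolated-pillar analysis of~\cite{ChLu24} (see \cref{eq:iso} and~\cref{eq:move-to-nice-space-conditional}) and is the substantive input your proposal is missing. A bound such as \cref{lem:weak-pillar-bound} or a plain Peierls argument does not supply it: one needs conditional typicality of the cone geometry given the rare event, not an unconditional tail bound.

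Second, your source for the $2\beta$ is misplaced. In the paper the ratios $Z^{\mathsf w}(\cU)/Z^{\eta}(\cU)$ appearing in \cref{prop:cluster-expansion} are shown to equal $1\pm\epsilon_\beta$ (the relevant boundary components are wired together by a short open path with probability $1-\epsilon_\beta$); they do not produce $e^{2\beta}$. The factor comes instead from the explicit base-event computation $\pi_n^{\mathsf w}(\cE^{\mathsf w})=(1\pm\epsilon_\beta)\,qe^{-6\beta}$ (six closed plaquettes isolating $\mathsf o$, gaining one cluster) versus $\bar\pi_n^\dob(\cE^\dob)=(1\pm\epsilon_\beta)\,e^{-4\beta}$ (perturbing the ceiling plaquette already present below $\mathsf o$), combined with the color factor $\tfrac{q-1}{q}$ from recoloring the isolated cluster on the red side, whereas on the Dobrushin side $\mathsf o$ is connected to the blue boundary and is non-red for free. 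Your bookkeeping, in which both sides contribute $\tfrac{q-1}{q}$ and cancel, would lose the $\log(q-1)$ term entirely; as written the two accounts of where $\log(q-1)$ survives contradict each other.
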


\begin{proof}
    The discrepancy of $2\beta - \log(q-1)$ occurs because of differences of the marginal on $\mathsf{o}= (\frac{1}{2}, \frac{1}{2},\frac{1}{2})$, so we will isolate this by defining $\cE^\dob$ and $\cE^{\mathsf{w}}$ as purely random-cluster events, intending to capture the typical behavior in the random-cluster marginals $\bar\pi^\dob_n$ and $\pi^{\mathsf{w}}_n$ when $\sigma_{\mathsf{o}}$ is in $\Vred^c$ under $\bP^\dob_n$ and $\bP^\Red_n$ respectively. First, we define the following cones:
    \begin{align*}
        \mathbf{C}_{\vee} := ([-\tfrac{1}{2},\tfrac{1}{2}]^2 \times [0,L_\beta]) \cup \{(z_1,z_2,z_3): z_3^2 \ge \sqrt{z_1^2 + z_2^2} \text{ for $z_3 \ge L_\beta$}\}\,,
    \end{align*}
    and 
    \begin{align*}
        \mathbf{C}_{\curlyvee}:= \cL_0 \cup \{(z_1,z_2,z_3): z_3 \le (\log \sqrt{z_1^2 + z_2^2})^2 \text{ for $\sqrt{z_1^2 + z_2^2}>L_\beta$}\}\,,
    \end{align*}
    where $L_\beta$ is chosen such that $L_\beta \to \infty$ as $\beta \to \infty$. Let $\cC$ be the outermost $1$-connected set of dual-to-closed plaquettes which separates $\mathsf{o}$ from $\partial_\Red \Lambda_{n,n}'$, i.e., there is no path from $\mathsf{o}$ to $\partial_\Red \Lambda_{n,n}'$ which does not pass through $\cC$.  Let $C_1$ be the four vertical plaquettes surrounding~$\mathsf{o}$. 
    \begin{itemize}
        \item Let $\cE_1$ be the event that $C_1 \subset \cC$, and $\cC \setminus C_1$ consists of two 1-connected components.

        \item Let $\cE_2$ be the event that the 1-connected component of $\cC \setminus C_1$ that is above $C_1$ lies inside the cone $\mathbf{C}_{\vee}$. Call this portion of the component $\cC^*$.

        \item Let $\cE_3^\dob$ be the event that the rest of $\cC$ lies in $\mathbf{C}_{\curlyvee}$ and $\mathsf{o}$ is $\omega$-connected to $\partial_\Blue \Lambda_{n,n}'$. Let $\cE_3^{\mathsf{w}}$ be that the rest of $\cC$ is just the single horizontal plaquette below $\mathsf{o}$.

        \item Let $\cE_4^\dob$ be the event that there is no $\omega$-path from $\partial_\Blue \Lambda_{n,n}'$ to $\partial_\Red \Lambda_{n,n}'$ (equivalently on $\cE_3^\dob$, no $\omega$-path from $\mathsf{o}$ to $\partial_\Red \Lambda_{n,n}'$). Let $\cE_4^{\mathsf{w}}$ be that there is no $\omega$-path from $\mathsf{o}$ to $\partial \Lambda_{n,n}'$.
    \end{itemize}
    Define $$\cE^\dob = \cE_1 \cap \cE_2 \cap \cE_3^\dob \cap \cE_4^\dob \qquad \text{and} \qquad \cE^{\mathsf{w}} = \cE_1 \cap \cE_2 \cap \cE_3^{\mathsf{w}} \cap \cE_4^{\mathsf{w}}\,.$$ Note that under $\bar\pi^\dob_n$ (and on $\cE^\dob$), $\cC$ will be equivalent to the $\Full$ interface $\cI_\Full$.
We observe (see~\cite[Lemma 4.5]{GL_DMP} for the algebra) that
\begin{align}\label{eq:sum-of-g-terms-cones}
    \sum_{f\in \mathbf{C}_\vee} \sum_{g\in \mathbf{C}_{\curlyvee}} e^{ - \beta d(f,g)/K} \le \epsilon_\beta\,.
\end{align}
We begin by showing that $\cE^\dob$ and $\cE^{\mathsf{w}}$ are indeed typical events, in the sense that
\begin{align}\label{eq:E_vee-high-probability}
    \bP_n^\Red(\cE^{\mathsf{w}} \mid A_{\mathsf{o}, h}) \ge  1-\epsilon_\beta \,, \qquad \text{and} \qquad \bP_n^{\dob}(\cE^\dob \mid \hgt(\cP_\mathsf{o}^\noR) \geq h) \ge 1- \epsilon_\beta\,.
\end{align}
from which it would follow that  
\begin{align*}
    \bP_n^\Red(A_{\mathsf{o}, h}) \leq (1 + \epsilon_\beta) \bP_n^\Red(\cE^{\mathsf{w}} ,  A_{\mathsf{o}, h})\,, 
    \end{align*}
    and
    \begin{align*}
        \bP_n^\dob(\hgt(\cP_\mathsf{o}^\noR) \geq h) \leq (1 + \epsilon_\beta) \bP_n^\dob(\cE^\dob, \hgt(\cP_\mathsf{o}^\noR) \geq h)\,.
\end{align*}
The second (more complicated) of the bounds in~\eqref{eq:E_vee-high-probability} is a consequence of the ``isolated pillar'' results of \cite{ChLu24}. There, a ``top pillar'' $\cP^\Top_{\mathsf{o}}$ was defined analogous to \cref{def:non-red-pillar} but using $\Vtop$ instead of $\Vred$ (we will not use this definition except to reference the results proved in that paper). If $E_h$ is the event that the top pillar reaches height $h$, then \cite[Thm.~3.8]{ChLu24} (in the case of the Ising model,~\cite[Thm.~4.2]{GL_DMP}) 
% \todo{There's a referee comment here that I'm not sure is necessary. I mean the current reference is the result that we want, and it relies on the Ising paper for part of the proof and adds the necessary modifications to fit random-cluster, so its not just citing the Ising proof. Thoughts?}
proves that 
\begin{equation}\label{eq:iso}
    \bar\pi_n^\dob(\cE^\dob \mid E_h) \geq 1 - \epsilon_\beta\,,
\end{equation}
(the event $\Iso_{x, L_\beta, h}$ defined there implies the event $\cE^\dob$ above). On the other hand, by~\cite[Lemmas 5.16, 5.17]{ChLu24} (with $\cA$ there taken to be $\hgt(\cP_\mathsf{o}^\noR) \geq h$ and $\Omega$ taken to be $\cE^\dob$), we have
\begin{equation}\label{eq:move-to-nice-space-conditional}
\left|\frac{\bP^\dob_n(\hgt(\cP_\mathsf{o}^\noR) \geq h \mid \cE^\dob)}{\bP^\dob_n(\hgt(\cP_\mathsf{o}^\noR) \geq h \mid E_h)} - 1\right| \leq \epsilon_\beta\,.
    \end{equation}
Combining \cref{eq:iso,eq:move-to-nice-space-conditional} with the observation that $\hgt(\cP_\mathsf{o}^\noR) \geq h \subset E_h$ concludes. 

We now argue that \cref{eq:E_vee-high-probability}, follows from a simplification of the same arguments of~\cite{ChLu24} that yielded \cref{eq:iso,eq:move-to-nice-space-conditional}. The proof of \cref{eq:iso} began by defining a cut point as a vertex $v\in \cP^\Top_{\mathsf{o}}$ with no other vertices in $\cP^\Top_{\mathsf{o}}$ having the same height, and such that removing the four vertical plaquettes surrounding $v$ cuts $\cC$ into two 1-connected components, with the component above $v$ contained in $\cL_{> \hgt(v)}$. The portion of $\cC$ above the lowest cut point was labeled the ``spine'' $\mathcal S_{\mathsf{o}}$. There was then a map which straightens the spine when it is too large, and deletes walls in the rest of $\cC$ when they are too close to the spine. Doing so deletes the portion of $\cP^\Top_\mathsf{o}$ below the spine, so the map finishes by appending a column to rejoin the spine to the rest of $\cC$. The proof follows the same way, treating all of $\cC$ as the top pillar. Namely, one can define a cut point and the spine of the component $\mathcal C$, and apply the same map to the spine portion. Since under $\pi^{\mathsf{w}}_n$ there is no need to preserve the disconnection event $\sep_{n, n}$ (recall \cref{eq:disconnection-event}) as there was under $\bar\pi^\dob_n$, the remainder $\cC\setminus \mathcal S_{\mathsf{o}}$ is handled by the simpler Peierls map which deletes it entirely and replaces it by a column of height $\hgt(v)$ up from $\mathsf{o}$. 
Finally, \cref{eq:move-to-nice-space-conditional} was shown by observing that when a portion of the spine is straightened, it becomes a column of vertices connected by open edges, and hence under the FK--Potts coupling they are all the same color. Clearly this observation holds regardless of whether the joint measure is $\bP_n^\Red$ or $\bP_n^\dob$.

Hence, with \cref{eq:E_vee-high-probability} in hand, it is sufficient to show 
\begin{align}
    \frac{\pi^{\mathsf{w}}_n(\cE^{\mathsf{w}})}{\bar\pi^\dob_n(\cE^\dob)} & = (1\pm \epsilon_\beta) qe^{ - 2\beta}\,, \qquad\text{and} \label{eq:need-to-show-prob-curly} \\
    \frac{\bP_n^\Red(A_{\mathsf{o}, h}\mid \cE^{\mathsf{w}})}{\bP_n^\dob(\hgt(\cP_\mathsf{o}^\noR) \geq h\mid \cE^\dob)} & = (1\pm \epsilon_\beta)\frac{q-1}{q}\,.\label{eq:need-to-show-A-given-curly}
\end{align}
(where we use the shorthand notation $a = b(1\pm\epsilon_\beta)$ to mean $a \in [b(1 - \epsilon_\beta),b(1 + \epsilon_\beta)]$). We begin with~\eqref{eq:need-to-show-prob-curly}. It is easy to see (by a Peierls argument) that $\pi^{\mathsf{w}}_n(\cE^{\mathsf{w}})$ is dominated by (i.e., $1-\epsilon_\beta$ of its mass is on) the event that $\cC$ is just the six plaquettes surrounding $\mathsf{o}$, which happens with probability $qe^{-6\beta}(1\pm \epsilon_\beta)$. Moreover, by rigidity of $\cI_{\Full}$ (\cref{thm:gg-interface-height})
\begin{align*}
    \bar \pi_n^\dob(f_{(\mathsf{o}-\ez,\mathsf{o})} \text{ is a ceiling plaquette}) \ge 1-\epsilon_\beta\,,
\end{align*}
and then with a cost of $(1\pm \epsilon_\beta) e^{ -  4\beta}$ we can add one open edge between $\mathsf{o} - \ez$ to $\mathsf{o}$ and close the five other incident edges to $\mathsf{o}$. This shows that $\bar\pi^\dob_n(\cE^\dob) = (1\pm \epsilon_\beta)e^{-4\beta}$.

We turn to establishing~\eqref{eq:need-to-show-A-given-curly}. First note that on $\cE^\dob$, the event $\hgt(\cP_\mathsf{o}^\noR) \geq h$ is equivalent to $A_{\mathsf{o}, h}$. Now, intending to utilize the FK--Potts coupling, we wish to first reveal in the random-cluster model the component $\cC$. In order for the event $A_{\mathsf{o}, h}$ to be possible, $\cC$ must be a set of plaquettes that confines a chain of vertices connecting $\mathsf{o}$ to height $h$. Crucially, the events $\cE^{\mathsf{w}}$ and $\cE^\dob$ were defined so that the set of plaquettes $C^*$ such that $\pi^{\mathsf{w}}_n(\cC^* = C^* \mid \cE^{\mathsf{w}})$ and $\bar\pi^\dob_n(\cC^* = C^* \mid \cE^\dob)$ both have positive probability is the same, let this set be denoted $\mathbf{C}^*$. In particular, $C^* \in \mathbf{C}^*$ if $C^*$ is a 1-connected component of plaquettes in $\mathbf{C}_\vee$ that forms part of a bounding surface in the sense that there can be no path in $\cL_{>0}$ from $\mathsf{o}$ to $\partial \Lambda_{n,n}'$ which does not cross a plaquette of $C^* \cup C_1$. Thus, we can express  
\begin{align*}
    \bP_n^\Red(A_{\mathsf{o}, h} \mid \cE^{\mathsf{w}}) & = \sum_{C^* \in \mathbf{C}^*} \pi^{\mathsf{w}}_n(\cC^* = C^* \mid \cE^{\mathsf{w}}) \bP_n^\Red(A_{\mathsf{o}, h} \mid \cC^* = C^*, \cE^{\mathsf{w}})\,, \\ 
    \bP_n^\dob(A_{\mathsf{o}, h} \mid \cE^\dob) & = \sum_{C^* \in \mathbf{C}^*} \bar\pi^\dob_n(\cC^* = C^* \mid \cE^\dob) \bP_n^\dob(A_{\mathsf{o}, h} \mid \cC^* = C^*, \cE^\dob)\,.
\end{align*}
Now, first studying the latter terms in the sums concerning the event $A_{\mathsf{o}, h}$, the difference between conditioning on $\cE^{\mathsf{w}}$ and $\cE^\dob$ is that on $\cE^\dob$ we know that the color $\sigma_{\mathsf{o}}$ is $\Blue$ (because of $\cE_3^\dob$), whereas $\cE^{\mathsf{w}}$ does not provide any information about the color $\sigma_{\mathsf{o}}$, so the conditional probability that $o\in \Vred^c$ (which on $\cE^{\mathsf{w}}$ is equivalent to $\sigma_{\mathsf{o}} = \noR$) is $\frac{q-1}{q}$. Given $\mathcal C^* = C^*$, its outer boundary is entirely in $\Vred$ except $\mathsf{o}$ which is not in $\Vred$. Therefore, membership in $\Vred$ interior to $\mathcal C^*$ is fully measurable with respect to the coloring on finite components in $\mathcal C^*$, and so the event $A_{\mathsf{o}, h}$ is measurable with respect to the same. Moreover, the coloring in $A_{\mathsf{o},h}$ is independent of the realization of the rest of $\cC$. Hence, we have
\[\bP_n^\Red (A_{\mathsf{o}, h} \mid \cC^* = C^*, \cE^{\mathsf{w}}) = \bP_n^\dob(A_{\mathsf{o}, h} \mid \cC^* = C^*, \cE^\dob)\frac{q-1}{q}\,.\]

Thus, it suffices to show that the random-cluster Radon--Nikodym derivative satisfies
\begin{align}\label{eq:conditional-ratio-of-C*}
    \frac{\pi^{\mathsf{w}}_n(\cC^* = C^* \mid \cE^{\mathsf{w}})}{\bar\pi^\dob_n(\cC^* = C^* \mid \cE^\dob)} = 1\pm \epsilon_\beta \qquad  \text{for every $C^* \in \mathbf{C}^*$}\,.
\end{align}
Beginning with the numerator, the event $\cE^{\mathsf{w}}$ is equivalent to asking that $\cC^* \in \mathbf{C}^*$, and that $\cC \setminus \cC^*$ is equal to the five plaquettes to the sides and below $\mathsf{o}$. By maximality, this requires that the other plaquettes 1-connected to $f_{(\mathsf{o}, \mathsf{o} - \ez)}$ are open. Let $F^{\mathsf{w}}$ be the union of these plaquettes with $\cC \setminus \cC^*$. We can then write $\pi^{\mathsf{w}}_n(\cdot \mid \cE^{\mathsf{w}})$ as $\pi^\eta_{\Lambda_{n,n}' \setminus F^{\mathsf{w}}}(\cdot \mid \cC^* \in \mathbf{C}^*)$, where $\eta$ is the boundary condition induced by the realization of $\cC \setminus \cC^*$ on $\cE^{\mathsf{w}}$.

For the denominator, the event $\cE^\dob$ is equivalent to asking that $\cC^* \in \mathbf{C}^*$, and that the set of plaquettes $\cC \setminus \cC^*$ satisfies a set of criterion independent of $\cC^*$. (Namely, that $\cC \setminus \cC^*$ contains $C_1$, is a subset of $\mathbf{C}_{\curlyvee} \cup C_1$, does not cut off $\mathsf{o}$ from $\partial_\Blue \Lambda_{n,n}'$, and combines with $\cC^*$ such that $\cC$ is an interface satisfying $\sep_{n, n}$. This last event appears as though the choice of $\cC \setminus \cC^*$ affects the possible choices of $\cC^*$, but the cut-point at $\mathsf{o}$ means it does not.) Exposing $\cC \setminus \cC^*$ reveals a set of dual-to-open plaquettes around it by maximality. Let $F^\dob$ be the union of these plaquettes with $\cC \setminus \cC^*$. Noting that the separation event $\sep_{n, n}$ is included in $\cE^\dob$, we can write $\bar\pi^\dob_n(\cdot \mid \cE^\dob)$ as $\E_{\cC \setminus \cC^*}[\pi^\gamma_{\Lambda_{n,n}' \setminus F^\dob}(\cdot \mid \cC^* \in \mathbf{C}^*) \mid \cE^\dob]$ where $\gamma$ is the boundary condition induced by revealing $\cC \setminus \cC^*$.

It thus suffices to show that for any $C^* \in \mathbf{C}^*$, and any realization of $\gamma$ under $\E_{\cC \setminus \cC^*}$,
\begin{align}\label{eq:ratio-of-C*}
    \frac{\pi^\eta_{\Lambda_{n,n}' \setminus F^{\mathsf{w}}}(\cC^* = C^*)}{\pi^\gamma_{\Lambda_{n,n}' \setminus F^\dob}(\cC^* = C^*)} = 1 \pm \epsilon_\beta\,,
\end{align}
as summing over $\cC^*$ then implies that $\pi^\eta_{\Lambda_{n,n}' \setminus F^{\mathsf{w}}}(\cC^* \in \mathbf{C}^*) = \pi^\gamma_{\Lambda_{n,n}' \setminus F^\dob}(\cC^* \in \mathbf{C}^*)(1 \pm \epsilon_\beta)$, which in turn implies \cref{eq:ratio-of-C*} even conditional on the event $\cC^* \in \mathbf{C}^*$ up to a change of the $\epsilon_\beta$. Then, averaging over $\gamma$ and applying the equivalence of measures discussed above implies \cref{eq:conditional-ratio-of-C*}.

Finally, to show \cref{eq:ratio-of-C*}, observe that each $C^* \in \mathbf{C}^*$ is a compatible set for $(\Lambda_{n,n}' \setminus F^{\mathsf{w}}, \eta)$ and $(\Lambda_{n,n}'\setminus F^\dob, \gamma)$, so we can apply \cref{prop:cluster-expansion} to obtain that
\begin{align*}
    \Big|\log\frac{\pi^\eta_{\Lambda_{n,n}' \setminus F^{\mathsf{w}}}(\cC^* = C^*)}{\pi^\gamma_{\Lambda_{n,n}' \setminus F^\dob}(\cC^* = C^*)} - \log \frac{Z^{\mathsf{w}}(\Lambda_{n,n}' \setminus F^{\mathsf{w}})}{Z^\eta(\Lambda_{n,n}' \setminus F^{\mathsf{w}})}\frac{Z^\gamma(\Lambda_{n,n}' \setminus F^\dob)}{Z^{\mathsf{w}}(\Lambda_{n,n}' \setminus F^\dob)}\Big| \leq \exp\Big(\sum_{f\in C^*} \sum_{g\in F^{\mathsf{w}} \cup F^\dob} e^{ -\beta d(f,g)/K}\Big)\,.
\end{align*}
In $\Lambda_{n,n}' \setminus F^{\mathsf{w}}$ with boundary condition $\eta$, there are two components of boundary vertices which are not wired together. The weight of a configuration $\omega$ in $Z^{\mathsf{w}}(\Lambda_{n,n}' \setminus F^{\mathsf{w}})$ is the same as in $Z^\eta(\Lambda_{n,n}' \setminus F^{\mathsf{w}})$ when these two components are wired together by $\omega$, and off by a factor of $q$ when they are not. However, the $\pi^\eta_{\Lambda_{n,n}' \setminus F^{\mathsf{w}}}$--probability that the two boundary components are not wired together is $\epsilon_\beta$ (this is true whenever the two boundary components can be connected by a path of constant length, as in this case, as the probability that an edge is open is always $\geq \frac{p}{p+q(1-p)} = 1 - \epsilon_\beta$ even conditional on a worst case configuration for the rest of the edges). This implies that the ratio of the two partition functions is at least $1 - \epsilon_\beta$, and similarly, $Z^\gamma(\Lambda_{n,n}' \setminus F^\dob)/Z^{\mathsf{w}}(\Lambda_{n,n}' \setminus F^\dob) \leq 1 + \epsilon_\beta$. Finally, the double summation above is at most $\epsilon_\beta$ per~\eqref{eq:sum-of-g-terms-cones}, concluding the proof.
\end{proof}

\subsection{Exact additivity of the rates}
We finally establish additivity (up to a $1\pm \epsilon_\beta$) of the sequence $\xi_h$; this allows us to replace $\xi_h$ by the limiting $\xi h$ without loss in our main theorem. We again truncate our consideration to high probability events on the interface's large deviation, and then establish the additivity on those events. The events will be of a similar cone-form to those in the previous subsection. 

It will actually be better to work with $\widetilde{\xi}_h$ which we define exactly as in~\eqref{eq:point-to-plane-connectivity}, but with an extra conditioning on $\{\sigma_{\mathsf{o} - \ez} = \noR\}$ (or equivalently since it is only one site, by color symmetry, being $\Blue$). This will turn out to be exactly additive. We first show the following relating $\xi_h$ to $\widetilde{\xi}_h$.

\begin{lemma}\label{lem:xi-tilde-related-to-xi}
    We have that for all $h \ge 1$, 
    \begin{align*}
        |\widetilde \xi_{h} - ({\xi}_h  - 2\beta + \log(q-1))|\le \epsilon_\beta \qquad \text{and thus by Lemma~\ref{lem:alpha-related-to-xi-Potts}} \quad |\widetilde \xi_{h} - \alpha_h|\le \epsilon_\beta\,.
    \end{align*}
\end{lemma}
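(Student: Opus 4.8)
The plan is to compare $\widetilde\xi_h$ and $\xi_h$ by isolating the contribution of the single extra conditioning event $\{\sigma_{\mathsf{o}-\ez}=\noR\}$, exactly in the spirit of \cref{lem:alpha-related-to-xi-Potts}. First I would observe that $\xi_h$ and $\widetilde\xi_h$ are both defined via the infinite-volume red measure, but by weak convergence of measures (with explicit rates via \cref{lem:decorrelation-pillar} and \cref{prop:cluster-expansion}) it suffices to work in the finite box $\Lambda'_{n,n}$ with $n\gg h$, incurring only an $\epsilon_\beta$ error. So I want to show, in the finite box, that
\begin{align*}
    \mu_n^\Red\big(A_{\mathsf{o},h}\big) = (1\pm\epsilon_\beta)\, q e^{-2\beta}(q-1)^{-1}\cdot \mu_n^\Red\big(A_{\mathsf{o},h}\cap\{\sigma_{\mathsf{o}-\ez}=\noR\}\big)\cdot\big(\text{normalization of the conditioning}\big)\,,
\end{align*}
which after taking logarithms yields $\widetilde\xi_h = \xi_h - 2\beta + \log(q-1) \pm \epsilon_\beta$; the final inequality $|\widetilde\xi_h - \alpha_h|\le\epsilon_\beta$ is then immediate by plugging in \cref{lem:alpha-related-to-xi-Potts}.

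The key steps, carried out in order, would be as follows. (i) Pass to the random-cluster marginal and reveal the outermost $1$-connected dual-to-closed set $\cC$ separating $\mathsf{o}$ from $\partial_\Red\Lambda'_{n,n}$, as in \cref{lem:alpha-related-to-xi-Potts}. (ii) Introduce the same typical events: let $\cE^{\mathsf{w}}$ be the event that $C_1\subset\cC$, that $\cC\setminus C_1$ splits into an upper component $\cC^*\subset\mathbf C_\vee$ and a lower part equal to the single horizontal plaquette below $\mathsf{o}$, and that there is no $\omega$-path from $\mathsf{o}$ to $\partial\Lambda'_{n,n}$; by the same ``isolated pillar'' argument from \cite{ChLu24} recalled in the proof of \cref{lem:alpha-related-to-xi-Potts}, one has $\bP_n^\Red(\cE^{\mathsf{w}}\mid A_{\mathsf{o},h})\ge 1-\epsilon_\beta$ and likewise $\bP_n^\Red(\cE^{\mathsf{w}}\mid A_{\mathsf{o},h}\cap\{\sigma_{\mathsf{o}-\ez}=\noR\})\ge 1-\epsilon_\beta$ (the extra conditioning on one site does not disturb the pillar-shape estimates). (iii) Condition on $\cC^*=C^*$ for $C^*\in\mathbf C^*$. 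On $\cE^{\mathsf{w}}$, membership of interior vertices in $\Vred$ is measurable with respect to the coloring of the finite components of $\cC^*$, and the event $A_{\mathsf{o},h}$ is measurable with respect to that same coloring and independent of $\sigma_{\mathsf{o}-\ez}$ once $\cC$ is revealed (since $\mathsf{o}-\ez$ lies outside $\cC^*$, below the revealed horizontal plaquette). Hence $\bP_n^\Red(A_{\mathsf{o},h}\mid \cC^*=C^*,\cE^{\mathsf{w}})$ and $\bP_n^\Red(A_{\mathsf{o},h}\mid \cC^*=C^*,\cE^{\mathsf{w}},\sigma_{\mathsf{o}-\ez}=\noR)$ coincide. (iv) The remaining discrepancy is purely the probability of the low-complexity local events near $\mathsf{o}$: under $\cE^{\mathsf{w}}$, $\cC$ is $(1\pm\epsilon_\beta)$-likely to be exactly the six plaquettes around $\mathsf{o}$, and then the color $\sigma_{\mathsf{o}}$ is uniform on $q$ colors (so $\noR$ with probability $(q-1)/q$) while $\sigma_{\mathsf{o}-\ez}$ is uniform and independent; adding the one open edge from $\mathsf{o}-\ez$ to $\mathsf{o}$ and closing the five others costs $(1\pm\epsilon_\beta)e^{-4\beta}$, exactly reproducing the $2\beta - \log(q-1)$ shift. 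A cluster-expansion Radon--Nikodym comparison of the two conditioned measures on the ``nice'' space $\mathbf C^*$, identical to \cref{eq:conditional-ratio-of-C*}--\cref{eq:ratio-of-C*} in the proof of \cref{lem:alpha-related-to-xi-Potts}, handles the tails in $\mathbf C_\vee$ via \cref{eq:sum-of-g-terms-cones}.

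The main obstacle I anticipate is step (iii): ensuring that conditioning on the single spin $\sigma_{\mathsf{o}-\ez}=\noR$ is genuinely decoupled from the event $A_{\mathsf{o},h}$ after revealing $\cC$. One must check carefully that on $\cE^{\mathsf{w}}$ the revealed set $\cC$ really does separate $\mathsf{o}-\ez$ from the entire ``pillar region'' above $\mathsf{o}$ — i.e.\ that the horizontal plaquette below $\mathsf{o}$ is in $\cC$ and that, by the cut-point structure at $\mathsf{o}$, the finite-component coloring determining $A_{\mathsf{o},h}$ lives entirely on one side — so that the FK--Potts coupling assigns $\sigma_{\mathsf{o}-\ez}$ its color independently of everything relevant to $A_{\mathsf{o},h}$. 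Once this decoupling is in place, the rest is a near-verbatim repeat of the bookkeeping in \cref{lem:alpha-related-to-xi-Potts}, and the conclusion $|\widetilde\xi_h-\alpha_h|\le\epsilon_\beta$ follows by the triangle inequality with \cref{lem:alpha-related-to-xi-Potts}.
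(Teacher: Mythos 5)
Your overall strategy---isolating the effect of the single-site conditioning via a local Peierls computation near $\mathsf{o}$ on a typical cone event---is in the right spirit, and the local costs you quote ($e^{-4\beta}$ for rerouting $\mathsf{o}-\ez$ into the pillar versus $(q-1)e^{-6\beta}$ for an isolated $\noR$ site) are exactly the numbers that produce the $-2\beta+\log(q-1)$ shift. But there is a genuine error at the step your whole scheme hinges on: the claim that $\bP_n^\Red(\cE^{\mathsf{w}}\mid A_{\mathsf{o},h}\cap\{\sigma_{\mathsf{o}-\ez}=\noR\})\ge 1-\epsilon_\beta$, i.e.\ that ``the extra conditioning on one site does not disturb the pillar-shape estimates.'' It does, and in fact $\cE^{\mathsf{w}}\cap\{\sigma_{\mathsf{o}-\ez}=\noR\}=\emptyset$: the event $\cE_3^{\mathsf{w}}$ inside $\cE^{\mathsf{w}}$ stipulates that the portion of $\cC$ below $C_1$ is the single horizontal plaquette beneath $\mathsf{o}$, which forces $\mathsf{o}-\ez$ to lie outside the region enclosed by $\cC$ and hence in the open cluster of $\partial_\Red\Lambda'_{n,n}$, so $\sigma_{\mathsf{o}-\ez}=\Red$ deterministically. (If $\sigma_{\mathsf{o}-\ez}=\noR$, its finite cluster's dual-closed boundary is $1$-connected to the plaquette between $\mathsf{o}-\ez$ and $\mathsf{o}$ and therefore merges into $\cC$, violating $\cE_3^{\mathsf{w}}$.) For the same reason your step (iii) decoupling fails: on $\cE^{\mathsf{w}}$ the color of $\mathsf{o}-\ez$ is not ``assigned independently''---it is forced to be $\Red$. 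The correct typical event under the extra conditioning is a \emph{modified} cone event in which the pillar is extended down by one vertex (the edge $(\mathsf{o}-\ez,\mathsf{o})$ open, the five remaining edges at $\mathsf{o}-\ez$ closed), and the entire content of the lemma is the quantitative discrepancy between this event and $\cE^{\mathsf{w}}$; a framework that asserts the two conditionings share the same typical local picture cannot produce the nonzero shift the lemma claims.

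You should also be aware that the heavy machinery you import from \cref{lem:alpha-related-to-xi-Potts} (decomposition over $\cC^*=C^*$, the cluster-expansion Radon--Nikodym comparison \cref{eq:conditional-ratio-of-C*}--\cref{eq:ratio-of-C*}) is unnecessary here, because---unlike in that lemma---both measures being compared are the \emph{same} measure $\mu_n^\Red$, one of them conditioned on a single spin. The paper's proof is a one-line application of Bayes' rule,
\begin{align*}
\mu_n^\Red(A_{\mathsf{o},h}\mid \sigma_{\mathsf{o}-\ez}=\noR)=\frac{\mu_n^\Red(\sigma_{\mathsf{o}-\ez}=\noR\mid A_{\mathsf{o},h})}{\mu_n^\Red(\sigma_{\mathsf{o}-\ez}=\noR)}\,\mu_n^\Red(A_{\mathsf{o},h})\,,
\end{align*}
combined with the two Peierls estimates $\mu_n^\Red(\sigma_{\mathsf{o}-\ez}=\noR\mid A_{\mathsf{o},h})=(1\pm\epsilon_\beta)e^{-4\beta}$ (using $\cE^{\mathsf{w}}$ only to know that, conditionally on $A_{\mathsf{o},h}$, the $\noR$ component of $\mathsf{o}$ typically has a single vertex at height $1/2$) and $\mu_n^\Red(\sigma_{\mathsf{o}-\ez}=\noR)=(q-1)(1\pm\epsilon_\beta)e^{-6\beta}$. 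If you repair your step (ii) by introducing the shifted cone event for the conditioned measure, your route can be made to work, but it reduces to redoing this Bayes computation with far more bookkeeping.
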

\begin{proof}
    The cone event $\cE^{\mathsf{w}}$ implies that in the Potts marginal $\mu^\Red_n$, on the event $A_{\mathsf{o}, h}$, the $\noR$ component of $\mathsf{o}$ contains just a single vertex at height 1/2. From there, it is not hard to see by a Peierls map that for any $h\ge 1$, under $\mu_n^\Red(\cdot \mid A_{\mathsf{o}, h})$, the probability that $\mathsf{o}-\ez$ is $\noR$ is $(1\pm \epsilon_\beta) e^{ - 4\beta}$. A simple Peierls argument implies that unconditionally, the probability that $\mathsf{o} - \ez$ is $\noR$ is $(q-1)(1\pm \epsilon_\beta) e^{ - 6\beta}$. We then have
    \begin{align*}
        \mu_n^\Red(A_{\mathsf{o}, h} \mid \sigma_{\mathsf{o} - \ez}=\noR) = \frac{\mu_n^\Red(\sigma_{\mathsf{o} - \ez} = \noR\mid A_{\mathsf{o}, h})}{\mu_n^\Red(\sigma_{\mathsf{o} - \ez} =\noR)} \mu_n^\Red(A_{\mathsf{o}, h})= \frac{1}{q-1}(1\pm \epsilon_\beta) e^{2\beta}\mu_n^\Red(A_{\mathsf{o}, h})\,,
    \end{align*}
    which when rearranging and taking logarithms implies the claimed bound.     
\end{proof}

\begin{lemma}\label{lem:xi-tilde-additive}
    We have for all $h_1,h_2$, 
    \begin{align*}
        |\widetilde{\xi}_{h_1 + h_2}  -(\widetilde{\xi}_{h_1} + \widetilde{\xi}_{h_2})|\le \epsilon_\beta\,.
    \end{align*}
\end{lemma}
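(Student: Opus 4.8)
The plan is to establish the two bounds $\widetilde\xi_{h_1+h_2}\le \widetilde\xi_{h_1}+\widetilde\xi_{h_2}+\epsilon_\beta$ and $\widetilde\xi_{h_1+h_2}\ge \widetilde\xi_{h_1}+\widetilde\xi_{h_2}-\epsilon_\beta$ separately, each by working conditionally on a high-probability ``cone shape'' event for the non-red connection, exactly in the style of the previous subsection. Recall $\widetilde\xi_h = -\log \mu_\infty^\Red\big(A_{\mathsf o,h}\mid \sigma_{\mathsf o-\ez}=\noR\big)$, and by \cref{lem:xi-tilde-related-to-xi} (and the cone event $\cE^{\mathsf w}$ used there) on $A_{\mathsf o,h}$ we may assume the $\noR$ component of $\mathsf o$ at height $1/2$ is a single vertex, and that the outermost closed separating surface $\cC$ enclosing the path from $\mathsf o$ to $\cL_h$ lies inside the thin cone $\mathbf C_\vee$ except for the six plaquettes around $\mathsf o-\ez$. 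As before one transfers everything to a finite box $\Lambda_{n,n}'$ with $n\gg h_1+h_2$ at cost $o(1)$.

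\textbf{Upper bound (super-multiplicativity of probabilities).} Here I would argue that forcing the path to reach height $h_1+h_2$ is at least as likely as forcing it to reach $h_1$ and then, from a suitable ``restart'' vertex at height $h_1$, reach height $h_1+h_2$. Concretely: condition on the event $\cE^{\mathsf w}$ that the separating surface $\cC$ near height $h_1$ has a cut-point $v$ at height $h_1-1/2$ of the type described in the previous subsection (a single vertex of the $\noR$ pillar at its height, with the portion of $\cC$ above it contained in $\cL_{>h_1-1}$), so that revealing $\cC$ below $v$ leaves the region above it with wired-red boundary conditions except for the single $\noR$ vertex $v$ — i.e., a translate (up by $h_1$) of the original configuration space for $A_{v-\ez,h_2}$ conditioned on $\sigma_{v}=\noR$. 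By the cone confinement, $\bP_n^\Red(\cC\cap\cL_{h_1}\text{ has such a cut point}\mid A_{\mathsf o,h_1})\ge 1-\epsilon_\beta$. One then writes $\mu^\Red(A_{\mathsf o,h_1+h_2}\mid \sigma_{\mathsf o-\ez}=\noR)$ as a sum over the revealed bottom part $\cC_{\le v}$ of $\pi$-probability of that part times a conditional factor which, via a partition-function ratio bound using \cref{prop:cluster-expansion} (with the two domains agreeing on the big cone $\mathbf C_\vee$ away from the finite ``defects'', so the double $\g$-sum is $\le\epsilon_\beta$ by \eqref{eq:sum-of-g-terms-cones}), is $(1\pm\epsilon_\beta)\mu^\Red(A_{v-\ez,h_2}\mid\sigma_{v}=\noR)=(1\pm\epsilon_\beta)e^{-\widetilde\xi_{h_2}}$ after the vertical translation. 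Summing the bottom part gives $\ge(1-\epsilon_\beta)e^{-\widetilde\xi_{h_1}}$, yielding $e^{-\widetilde\xi_{h_1+h_2}}\ge(1-\epsilon_\beta)e^{-\widetilde\xi_{h_1}}e^{-\widetilde\xi_{h_2}}$, i.e.\ $\widetilde\xi_{h_1+h_2}\le\widetilde\xi_{h_1}+\widetilde\xi_{h_2}+\epsilon_\beta$.

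\textbf{Lower bound (sub-multiplicativity).} For the reverse, on the typical event, a non-red connection from $\mathsf o$ to $\cL_{h_1+h_2}$ inside the cone $\mathbf C_\vee$ passes through height $h_1$; by the ``isolated pillar / cut-point'' machinery of \cite{ChLu24} (as invoked for \eqref{eq:iso}, and simplified to the wired-red setting exactly as in the proof of \cref{lem:alpha-related-to-xi-Potts}), with probability $1-\epsilon_\beta$ there is a cut-point $v$ of the separating surface at some height near $h_1$. Decomposing $\cC$ at the highest such cut-point below height $h_1$ and appending a straight column to normalize the location of $v$ (a Peierls map whose multiplicative cost is absorbed into $\epsilon_\beta$, as in the cited proofs), the event factorizes: the part below $v$ contributes (after the correction) $\le(1+\epsilon_\beta)e^{-\widetilde\xi_{h_1}}$, and the part above contributes, again by a \cref{prop:cluster-expansion} partition-function comparison on the common cone, $(1\pm\epsilon_\beta)e^{-\widetilde\xi_{h_2}}$. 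Hence $e^{-\widetilde\xi_{h_1+h_2}}\le(1+\epsilon_\beta)e^{-\widetilde\xi_{h_1}}e^{-\widetilde\xi_{h_2}}$, giving the matching bound. Combining, $|\widetilde\xi_{h_1+h_2}-(\widetilde\xi_{h_1}+\widetilde\xi_{h_2})|\le\epsilon_\beta$.

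\textbf{Main obstacle.} The technical crux is the clean factorization at a cut-point: one must verify that the cone events and the isolated-pillar results of \cite{ChLu24} really do furnish, with probability $1-\epsilon_\beta$ under the \emph{conditioned} measure $\mu^\Red(\cdot\mid \sigma_{\mathsf o-\ez}=\noR, A_{\mathsf o,h})$, a cut-point at the prescribed height whose lower part induces (up to a partition-function ratio that \cref{prop:cluster-expansion} controls to within $1\pm\epsilon_\beta$) precisely a translate of the lower-height configuration space with the same boundary data — and symmetrically for the upper part. The rest is bookkeeping: keeping track of the single-vertex $\noR$ defect at $\mathsf o-\ez$ (which is the reason $\widetilde\xi$ rather than $\xi$ is additive, since that defect is ``transferred'' intact to the restart vertex), and checking that the Peierls maps used to normalize cut-point positions and the deletions of low walls only cost $\epsilon_\beta$ in the exponent, uniformly in $h_1,h_2$.
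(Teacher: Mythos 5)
Your plan follows essentially the same route as the paper's proof: both decompose the confining surface $\cC$ at a cut-point at height $h_1-\frac12$ using cone events furnished by the isolated-pillar machinery of \cite{ChLu24}, reveal the part of $\cC$ below the cut-point via the domain Markov property, identify the conditional law of the upper part with a translate of the $h_2$-problem through the generalized cluster expansion of \cref{prop:cluster-expansion} together with the cone-separation bound \cref{eq:sum-of-g-terms-cones}, and track the single $\noR$ defect at $\mathsf{o}-\ez$ that is transferred to the restart vertex (the reason $\widetilde\xi$ rather than $\xi$ is additive). The only differences are organizational: the paper establishes a single two-sided ratio estimate \cref{eq:nts-additivity-conditioning}, absorbing the cut-point location into the definition of the cone event $\cE_{\mathsf{Y}}$ and normalizing the base configuration via the event $\cE_\sqcup$, rather than proving two separate directional inequalities with a column-appending Peierls map to recenter the cut-point.
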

\begin{proof}
    Let $\widetilde \mu_n^{\Red}$ be the distribution $\mu_n^\Red$ conditioned on $\sigma_{\mathsf{o} - \ez}$ being $\noR$ (or equivalently by symmetry, being $\Blue$). This implies that in the corresponding joint measure $\widetilde\bP^\Red_n$, there is no $\omega$-path from $\mathsf{o} - \ez$ to $\partial \Lambda_{n,n}'$, or equivalently there is a confining set of dual-to-closed plaquettes which surrounds $\mathsf{o} - \ez$. Let $\cC$ be the 1-connected component of dual-to-closed plaquettes which contains the innermost such confining set. Call a vertex $y$ at a height $h$ a cut-point if it is the unique vertex at height $h$ which is separated from $\partial \Lambda_{n,n}'$ by $\cC$, and the only plaquettes of $\cC$ which have height $h$ are the four vertical plaquettes bounding the sides of $y$. Call $h$ a cut-height if there is a cut-point at height $h$.
    
    We define certain high probability cone events in the random-cluster model, which we will show all have probability $1-\epsilon_\beta$ conditional on $A_{\mathsf{o}, h_1+h_2}, A_{\mathsf{o}, h_1}, A_{\mathsf{o}, h_2}$ respectively in $\widetilde\bP^\Red_n$. We then show the approximate additivity of the rates under those events. The events are defined as follows: 
    \begin{enumerate}
        \item $\mathcal E_{\mathsf{Y}}$: is the event that heights $h_1 - \frac12, \ldots, h_1 - \frac12 + (L_\beta \wedge h_2)$ are cut-heights, and if $y = (Y_1, Y_2, h_1 - \frac12)$ is the unique vertex at height $h_1 - \frac12$ confined by $\cC$, then $\cC \cap \cL_{\geq h_1}$ lies inside the cone
        \begin{align*}
            \{(z_1,z_2,z_3): (z_3 - h_1)^2 \ge \sqrt{(z_1-Y_1)^2 + (z_2 - Y_2)^2}\}\,.
        \end{align*}
        \item $\mathcal E_{\vee}$: heights $\frac12, \ldots, (L_\beta \wedge h_1) - \frac12$ are cut-heights, and $\cC \cap \cL_{\geq 0}$ lies inside the cone 
        \begin{align*}
            \{(z_1,z_2,z_3): z_3^2 \ge \sqrt{z_1^2 + z_2^2}\}\,.
        \end{align*}
        \item $\cE_\parallel$: $h_1 - \frac12$ is a cut-height.
    \end{enumerate}
    In the same way that we described \cref{eq:E_vee-high-probability} following from the ``isolated pillar" results of \cite{ChLu24} now that we have the generalized cluster expansion for $\cC$, we also have the following bounds:
    \begin{align}\label{eq:iso-typical-for-additivity}
        \widetilde\bP_n^\Red(\cE_{\mathsf{Y}} \mid A_{\mathsf{o}, h_1+h_2}) \ge 1-\epsilon_\beta \qquad \widetilde\bP_n^\Red(\cE_{\vee} \mid A_{\mathsf{o}, h_2})\ge 1-\epsilon_\beta \qquad \widetilde\bP_n^\Red(\cE_\parallel \mid A_{\mathsf{o}, h_1})\ge 1-\epsilon_\beta\,.
    \end{align}
     
    It thus suffices to show 
    \begin{align}\label{eq:nts-additivity-conditioning}
        \frac{\widetilde\bP_n^\Red(\cE_{\mathsf{Y}} , A_{\mathsf{o}, h_1+h_2})}{\widetilde\bP_n^\Red(\cE_{\vee} , A_{\mathsf{o}, h_2})\widetilde\bP_n^\Red(\cE_\parallel , A_{\mathsf{o}, h_1})}  = \frac{\widetilde\bP_n^\Red(\cE_{\mathsf{Y}},A_{\mathsf{o}, h_1+h_2}\mid \cE_\parallel,A_{\mathsf{o}, h_1})}{\widetilde\bP_n^\Red(\cE_{\vee} , A_{\mathsf{o}, h_2})}= 1\pm \epsilon_\beta\,.
    \end{align}
    The first equality here is simply the fact that $\cE_{\mathsf{Y}}\cap A_{\mathsf{o}, h_1+h_2}\subset \cE_\parallel\cap A_{\mathsf{o}, h_1}$. Now we study the conditional joint measure $\widetilde\bP_n^\Red(\cdot \mid \cE_\parallel, A_{\mathsf{o}, h_1})$. We can expose the component of $\cC$ which is below the cut-height  at $h_1 - \frac12$, which reveals additionally a set of dual-to-open plaquettes around it by maximality of the component. This determines the set of vertices $V$ which are in a finite component of $\cC$ and in $\cL_{< h_1}$. We furthermore reveal the colors of $V$. Let $\eta$ be the FK--Potts boundary condition described by the above revealing, together with the wired red boundary at $\partial \Lambda_{n,n}'$. Let $E$ be the set of plaquettes which are fixed by $\eta$. Then, we can write \begin{align}\label{eq:additivity-domain-markov}\widetilde\bP^\Red_n(\cdot \mid \cE_\parallel, A_{\mathsf{o}, h_1}) = \E_\eta[\bP^\eta_{\Lambda_{n,n}' \setminus E}(\cdot)\mid \cE_\parallel, A_{\mathsf{o}, h_1}, \sigma_{0, 0, -\frac12} = \noR]\,,
    \end{align}
    as the events $\cE_\parallel$, $A_{\mathsf{o}, h_1}$, and $\sigma_{\mathsf{o} - \ez} = \noR$ are implied by the closed edges and colors given by $\eta$. 
    
    Now notice that given such a realization $\eta$ satisfying $\cE_\parallel\cap A_{\mathsf{o}, h_1}$, the event $\cE_{\mathsf{Y}}\cap A_{\mathsf{o}, h_1+h_2}$ is exactly equivalent to the translation by $(Y_1, Y_2, h_1)$ (measurable with respect to $\Gamma$) of the pair of events $\cE_{\vee} \cap A_{\mathsf{o}, h_2}$. By this translation, and denoting $\eta', E'$ as the shifted versions (by $(-Y_1, -Y_2, -h_1)$) of $\eta, E$, we have
    \begin{align*}
        \bP^\eta_{\Lambda_{n,n}' \setminus E}(\cE_{\mathsf{Y}}, A_{0, h_1 + h_2}) = \bP^{\eta'}_{\Lambda_{n,n}' \setminus E'}(\cE_{\vee}, A_{\mathsf{o}, h_2})\,.
    \end{align*}
    For an analogous setup in the denominator of \cref{eq:nts-additivity-conditioning}, we want to expose the component of $\cC$ which is below height 0. Let $\cE_{\sqcup}$ denote the event that $\cC \cap \cL_{< 0}$ consists only of the five plaquettes which are to the sides of and below $\mathsf{o} - \ez$. A standard Peierls map argument in the FK--Potts model shows that we have
    \begin{align*}
        \widetilde\bP^\Red_n(\cE_\sqcup) \geq 1- \epsilon_\beta \qquad \text{and} \qquad \widetilde\bP^\Red_n(\cE_\sqcup \mid \cE_\vee, A_{\mathsf{o}, h_2}) \geq 1- \epsilon_\beta\,. 
    \end{align*}
    (Indeed, for the first bound, the map would be to delete the outermost $1$-connected component of dual-to-closed plaquettes, and if this causes $\mathsf{o}-\ez$ to be in the same connected component as $\partial \Lambda_{n,n}'$, then force its six incident edges to all be closed, so that it can be assigned the requisite $\noR$ color. For the second one, the map would be to delete the $1$-connected component in $\cL_{< 0}$ of dual-to-closed plaquettes incident to $\mathsf{o}-\ez$, and replace it by the minimal five dual-to-closed plaquettes incident to $\mathsf{o}- \ez$ in $\cL_{<0}$.)
    This implies that
    \begin{align*}
        \widetilde\bP^\Red_n(\cE_\vee, A_{\mathsf{o}, h_2}\mid \cE_\sqcup) = \widetilde\bP^\Red_n(\cE_\vee, A_{\mathsf{o}, h_2})\frac{\widetilde\bP^\Red_n(\cE_\sqcup \mid \cE_\vee, A_{\mathsf{o}, h_2})}{\widetilde\bP^\Red_n(\cE_\sqcup)} = \widetilde\bP^\Red_n(\cE_\vee, A_{\mathsf{o}, h_2})(1\pm \epsilon_\beta)\,.
    \end{align*}
    Note that conditioning on $\cE_\sqcup$ is equivalent to conditioning on the aforementioned five plaquettes to be dual-to-closed, as well as on the plaquettes that are 1-connected to them and have height $< 0$ to be dual-to-open. Let $\zeta$ be the FK--Potts boundary condition with these dual-to-open and dual-to-closed plaquettes, the wired $\Red$ boundary at $\partial \Lambda_{n,n}'$, and $\sigma_{\mathsf{o} - \ez} = \Blue$ (by symmetry, any $\noR$ color is equivalent). Let $F$ be the set of plaquettes fixed by $\zeta$. Then we can write
    \begin{align*}
        \widetilde\bP^\Red_n(\cdot \mid \cE_\sqcup) = \bP^\zeta_{\Lambda_{n,n}' \setminus F}(\cdot)\,.
    \end{align*}
    
    Now, we observe that the random-cluster marginal of $\bP^{\eta'}_{\Lambda_{n,n}' \setminus E'}$ has the same law as the random-cluster measure on $\Lambda_{n,n}' \setminus E'$ with all the dual-to-open and dual-to-closed plaquettes in $\eta'$ as a boundary condition, further conditional on the event that there is no $\omega$-path from $\mathsf{o} - \ez$ to $\partial \Lambda_{n,n}'$. Call this random-cluster measure $\pi^{\eta'(\mathsf{RC})}_{\Lambda_{n,n}' \setminus E'}(\cdot \mid \sep_\mathsf{o})$, where $\sep_\mathsf{o}$ denotes this separation event for $\mathsf{o} - \ez$.
    Furthermore, by the FK--Potts coupling, the joint configuration under $\bP^{\eta'}_{\Lambda_{n,n}' \setminus E'}$ can be sampled by first sampling a random-cluster configuration $\omega$ under $\pi^{\eta'(\mathsf{RC})}_{\Lambda_{n,n}'\setminus E'}( \cdot \mid \sep_\mathsf{o})$, assigning vertices in the same $\omega$-component of $\mathsf{o} - \ez$ the same $\noR$ color as given by $\eta'$ (by symmetry, we may assume this is $\Blue$), assigning vertices in the $\omega$-component of $\partial \Lambda_{n,n}'$ the color $\Red$, and assigning all other $\omega$-components colors chosen independently at random. In the same way (with analogous notation), we can sample a joint configuration under $\bP^\zeta_{\Lambda_{n,n}' \setminus F}$ by first sampling a random-cluster configuration $\omega$ under $\pi^{\zeta(\mathsf{RC})}_{\Lambda_{n,n}' \setminus F}(\cdot \mid \sep_\mathsf{o})$, assigning vertices in the same $\omega$-component of $\mathsf{o} - \ez$ the color $\Blue$, assigning vertices in the $\omega$-component of $\partial \Lambda_{n,n}'$ the color $\Red$, and assigning all other $\omega$-components colors chosen independently at random.
    
    Now, let $\cC^*:= \cC \cap \cL_{\geq 0}$. Let $\mathbf{C}^*$ be the set of possible realizations of $\cC^*$ satisfying $\cE_\vee$ and $\sep_\mathsf{o}$ when combined with the boundary conditions $\eta'(\mathsf{RC})$ or $\zeta(\mathsf{RC})$ (note that because $\mathsf{o} - \ez$ is a cut-point for $\cC$ under both boundary conditions, the set $\mathbf{C}^*$ will be the same). We can now write 
    \begin{align*}
        \bP^{\eta'}_{\Lambda_{n,n}' \setminus E'}(\cE_\vee, A_{\mathsf{o}, h_2}) = \sum_{C^* \in \mathbf{C}^*} \pi^{\eta'(\mathsf{RC})}_{\Lambda_{n,n}' \setminus E'}(\cC^* = C^* \mid \sep_\mathsf{o})\bP^{\eta'}_{\Lambda_{n,n}' \setminus E'}(A_{\mathsf{o}, h_2} \mid \cC^* = C^*)\,,\\
        \bP^{\zeta}_{\Lambda_{n,n}' \setminus F}(\cE_\vee, A_{\mathsf{o}, h_2}) = \sum_{C^* \in \mathbf{C}^*} \pi^{\zeta(\mathsf{RC})}_{\Lambda_{n,n}' \setminus F}(\cC^* = C^* \mid \sep_\mathsf{o})\bP^{\zeta}_{\Lambda_{n,n}' \setminus F}(A_{\mathsf{o}, h_2} \mid \cC^* = C^*)\,.
    \end{align*}
    As $A_{\mathsf{o}, h_2}$ now depends only on the coloring of the finite components enclosed by $\cC^*$, which is independent of the rest of the FK--Potts model, we have
    \begin{align*}
        \bP^{\eta'}_{\Lambda_{n,n}' \setminus E'}(A_{\mathsf{o}, h_2} \mid \cC^* = C^*) = \bP^{\zeta}_{\Lambda_{n,n}' \setminus F}(A_{\mathsf{o}, h_2} \mid \cC^* = C^*)\,.
    \end{align*}
    Hence, in order to prove \cref{eq:nts-additivity-conditioning}, it suffices to prove that
    \begin{align}\label{eq:additivity-CE-ratio-conditional}
        \frac{\pi^{\eta'(\mathsf{RC})}_{\Lambda_{n,n}' \setminus E'}(\cC^* = C^* \mid \sep_\mathsf{o})}{\pi^{\zeta(\mathsf{RC})}_{\Lambda_{n,n}'\setminus F}(\cC^* = C^* \mid \sep_\mathsf{o})} = (1 \pm \epsilon_\beta)\,,
    \end{align}
    at which point we can conclude via the sequence of equalities between \cref{eq:additivity-domain-markov} and \cref{eq:additivity-CE-ratio-conditional}, and averaging over $\eta$ in \cref{eq:additivity-domain-markov}. Now, since each $\cC^* \in \mathbf{C}^*$ is a compatible set for $(\Lambda_{n,n}' \setminus E', \eta')$ and $(\Lambda_{n,n}' \setminus F, \zeta)$, we can apply \cref{prop:cluster-expansion} with the same argument used to show \cref{eq:ratio-of-C*}, getting that
    \begin{align*}
        \frac{\pi^{\eta'(\mathsf{RC})}_{\Lambda_{n,n}' \setminus E'}(\cC^* = C^*)}{\pi^{\zeta(\mathsf{RC})}_{\Lambda_{n,n}' \setminus F}(\cC^* = C^*)} = (1 \pm \epsilon_\beta)\,.
    \end{align*}
    Hence, it remains to show that
    \begin{align*}
        \frac{\pi^{\eta'(\mathsf{RC})}_{\Lambda_{n,n}' \setminus E'}(\sep_\mathsf{o})}{\pi^{\zeta(\mathsf{RC})}_{\Lambda_{n,n}' \setminus F}(\sep_\mathsf{o})} = (1 \pm \epsilon_\beta)\,.
    \end{align*}
    However, a Peierls type argument shows that if $f$ is the plaquette above $\mathsf{o} - \ez$, then
    \begin{align*}
        \pi^{\eta'(\mathsf{RC})}_{\Lambda_{n,n}' \setminus E'}(\sep_\mathsf{o}) = \pi^{\eta'(\mathsf{RC})}_{\Lambda_{n,n}' \setminus E'}(f \text{ is closed})(1\pm \epsilon_\beta) \quad \text{and} \quad \pi^{\zeta(\mathsf{RC})}_{\Lambda_{n,n}' \setminus F}(\sep_\mathsf{o}) = \pi^{\zeta(\mathsf{RC})}_{\Lambda_{n,n}' \setminus F}(f \text{ is closed})(1\pm \epsilon_\beta)\,.
    \end{align*}
    Finally, the proof concludes by observing that the probability that $f$ is closed under both measures is the same, being equal to $(qe^{-\beta})/(1-e^{-\beta} + qe^{-\beta})$.
\end{proof}

\begin{proof}[\textbf{\emph{Proof of \cref{thm:identifying-rate-with-infinite-limit}}}]
By Lemma~\ref{lem:xi-tilde-related-to-xi}, we have that $\lim_{h\to\infty} \xi_h/h = \lim_{h\to\infty}\tilde\xi_h/h=\xi$, and by Lemma~\ref{lem:xi-tilde-additive} together with Fekete's lemma, the latter limit exists so it must be what we defined to be $\xi$ in~\eqref{eq:xi}. Moreover, by Lemma~\ref{lem:xi-tilde-additive}, $|\tilde{\xi}_h - \xi h| < \epsilon_\beta$ uniformly over $h\ge 1$ (indeed, if $a_{h_1+h_2}\leq a_{h_1}+a_{h_2}+\epsilon$ for all $h_1,h_2$ then $\tilde a_h := a_h + \epsilon$ is sub-additive, thus by Fekete's Lemma, $a=\lim \tilde a_h/h=\inf \tilde a_h/h$, that is, $a_h \geq a h - \epsilon$ for all~$h$; similarly, $a_h \leq a h + \epsilon$ for all $h$ whenever we have $a_{h_1+h_2}\geq a_{h_1}+a_{h_2}-\epsilon$ for all $h_1,h_2$). The bound on $\alpha_h$ then follows from the second part of Lemma~\ref{lem:xi-tilde-related-to-xi}. 
\end{proof}

\appendix

\section{Generalized cluster expansion}\label{sec:appendix}

In this section, we provide the proof of \cref{prop:cluster-expansion}. The following two lemmas were proved by Grimmett in \cite{Grimmett_RC} (with a minor adjustment to the latter, see the proof of \cref{lem:f-term-decay} below).
\begin{lemma}[{\cite[Lem.~7.90]{Grimmett_RC}}]\label{lem:free-energy-expansion}
    Let $\cU=(V, E)$ be a connected finite graph and $\eta$ a boundary condition on $\Gamma$ where $\cU \subset  \Gamma$. Then,
    \begin{equation*}
        \log Z^\eta_\cU(p, q) = \log q + \sum_{e \in E} \f^\eta_{\cU, p, q}(e)\,,
    \end{equation*}
    where \[\f^\eta_{\cU, p, q}(e) := \int_p^1 \frac{s - \pi^\eta_{\cU, s, q}(\omega_e = 1)}{s(1-s)} \,\d s\,.\]
    Moreover, we have $0 \leq \f^\eta_{\cU, p, q}(e) \leq (1-p)(q-1)$ for any $\cU, \eta$.
\end{lemma}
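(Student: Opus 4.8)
The plan is to obtain the identity from the standard formula for the $p$-derivative of the free energy, then integrate up to $p=1$, and finally read off the two-sided bound on $\f$ from stochastic domination and the finite-energy property. Throughout I take $Z^\eta_\cU(s,q)$ to be the random-cluster partition function in its Bernoulli-weighted form $Z^\eta_\cU(s,q)=\sum_\omega \prod_{e\in E} s^{\omega_e}(1-s)^{1-\omega_e}\,q^{\kappa(\omega;\eta)}$ (this is the normalization under which the asserted identity, with the bare $\log q$ on the right, holds; it differs from \cref{eq:fk-measure} only by the $\omega$-independent factor $\prod_e(1-s)$).

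First I would differentiate. Writing $W_\omega=\prod_{e\in E}s^{\omega_e}(1-s)^{1-\omega_e}q^{\kappa(\omega;\eta)}$ and computing exactly as in the proof of \cref{clm:free-energy} (with every edge of $E$ now playing the role of the special edge set, at the common parameter $s$), one gets $\frac{\d}{\d s}\log W_\omega=\sum_{e\in E}\frac{\omega_e-s}{s(1-s)}$, hence
\[
\frac{\d}{\d s}\log Z^\eta_\cU(s,q)=\sum_{e\in E}\frac{\pi^\eta_{\cU,s,q}(\omega_e=1)-s}{s(1-s)}\,.
\]
Next I would integrate this over $s\in[p,1]$. Because $\eta$ only wires vertices together and $\cU$ is connected, at $s=1$ the all-open configuration is the unique one of positive weight and it has a single open cluster, so $Z^\eta_\cU(1,q)=q$; integration then gives $\log q-\log Z^\eta_\cU(p,q)=-\sum_{e\in E}\f^\eta_{\cU,p,q}(e)$, which rearranges to the claimed expansion.

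For the two bounds on $\f^\eta_{\cU,p,q}(e)$: the lower bound follows since for $q\ge1$ the random-cluster measure is stochastically dominated by the Bernoulli$(s)$ product measure, so $\pi^\eta_{\cU,s,q}(\omega_e=1)\le s$ and the integrand defining $\f$ is pointwise nonnegative. For the upper bound I would invoke the finite-energy estimate $\pi^\eta_{\cU,s,q}(\omega_e=1)\ge \frac{s}{s+q(1-s)}$, which gives $s-\pi^\eta_{\cU,s,q}(\omega_e=1)\le \frac{s(q-1)(1-s)}{s+q(1-s)}\le s(q-1)(1-s)$, using $s+q(1-s)\ge1$ for $q\ge1$ and $s\in[0,1]$; dividing by $s(1-s)$ and integrating over $[p,1]$ yields $\f^\eta_{\cU,p,q}(e)\le(q-1)(1-p)$.

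The only delicate point — and really the only thing needing genuine care — is the convergence of the integral at the endpoint $s=1$: the factor $1-s$ in the denominator is cancelled by the vanishing of $s-\pi^\eta_{\cU,s,q}(\omega_e=1)$ at exactly that rate, as quantified by the same finite-energy bound used for the upper estimate, so the integrand stays bounded and there is no true singularity. Everything else (the identity $Z^\eta_\cU(1,q)=q$, differentiation under the finite sum, and Fubini to interchange $\int_p^1$ with $\sum_{e\in E}$ on a finite graph) is routine bookkeeping.
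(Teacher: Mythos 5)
Your proof is correct and is essentially the standard argument behind the cited result (the paper itself only quotes \cite[Lem.~7.90]{Grimmett_RC} without reproving it): differentiate $\log Z^\eta_\cU(s,q)$ in the Bernoulli-weighted normalization, use that the connected graph gives $Z^\eta_\cU(1,q)=q$, and integrate; the two-sided bound on $\f^\eta_{\cU,p,q}(e)$ follows from $\tfrac{s}{s+q(1-s)}\le\pi^\eta_{\cU,s,q}(\omega_e=1)\le s$ for $q\ge 1$, exactly as you argue. You also correctly identify the only genuinely delicate point, namely that the finite-energy lower bound makes the integrand bounded at $s=1$, so there is nothing to add.
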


\begin{lemma}[{\cite[Lem.~7.93]{Grimmett_RC}}]\label{lem:f-term-decay}
    Let $\cU, \cU' \subset  \Z^3$ be two sub-graphs. Let $e \in \cU$, $e' \in \cU'$ be two edges. Suppose that there exists $r \geq 1$ such that $B_r(e) \cap \cU \cong B_r(e') \cap \cU'$. Then, there exists a constant $K > 0$ such that for $\beta$ sufficiently large, 
    \begin{equation*}
         |\f^{\mathsf{w}}_{\cU, p, q}(e) - \f^{\mathsf{w}}_{\cU', p, q}(e')| \leq e^{-\beta r/K}\,.
     \end{equation*}
\end{lemma}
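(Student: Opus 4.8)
The plan is to reproduce the argument of \cite[Lem.~7.93]{Grimmett_RC}, while keeping track of the dependence on $\beta$; this $\beta$-explicit bookkeeping is the ``minor adjustment'' alluded to above, since the cited source states the decay rate as an unspecified $\gamma(p,q)>0$, whereas here we need $\gamma\geq \beta/K$ so that this lemma can supply the exponential decay estimate needed in \cref{prop:cluster-expansion}.

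First I would reduce the statement to a correlation-decay estimate for disconnection probabilities. By \cref{lem:free-energy-expansion}, $\f^{\mathsf{w}}_{\cU,p,q}(e)=\int_p^1 \frac{s-\pi^{\mathsf{w}}_{\cU,s,q}(\omega_e=1)}{s(1-s)}\,\d s$, and conditioning on the states of all edges other than $e$ (distinguishing whether the endpoints $x,y$ of $e$ are joined in $\omega\restriction_{E\setminus e}$) yields the finite-energy identity
\[
\pi^{\mathsf{w}}_{\cU,s,q}(\omega_e=0)=(1-s)+\frac{(1-s)s(q-1)}{s+(1-s)q}\,\pi^{\mathsf{w}}_{\cU,s,q}\bigl(x\nleftrightarrow y \text{ off }e\bigr)\,,
\]
whence $\f^{\mathsf{w}}_{\cU,p,q}(e)=\int_p^1 \frac{q-1}{s+(1-s)q}\,\pi^{\mathsf{w}}_{\cU,s,q}(x\nleftrightarrow y\text{ off }e)\,\d s$. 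Since $0\leq \frac{q-1}{s+(1-s)q}\leq q-1$ and the integration range has length $1-p=e^{-\beta}$, it suffices to show that for every $s\geq p$,
\[
\bigl|\pi^{\mathsf{w}}_{\cU,s,q}(x\nleftrightarrow y\text{ off }e)-\pi^{\mathsf{w}}_{\cU',s,q}(x'\nleftrightarrow y'\text{ off }e')\bigr|\leq e^{-\beta r/K'}
\]
for an absolute constant $K'$; the asserted bound on $|\f^{\mathsf{w}}_{\cU,p,q}(e)-\f^{\mathsf{w}}_{\cU',p,q}(e')|$ then follows with $K=K'$ for $\beta$ large (as $q$ is fixed).

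For the correlation-decay estimate I would use that $\{x\nleftrightarrow y\text{ off }e\}$ forces a $1$-connected set of plaquettes, all dual-to-closed in $\omega$ except possibly $f_e$, containing $f_e$ and separating $x$ (or symmetrically $y$) from $\partial\cU$: indeed if $y$ is not in the cluster $C_x$ of $x$ in $\omega\restriction_{E\setminus e}$, then $f_e\in\partial C_x$. I would decompose according to whether such a separating dual cluster reaches distance $r/2$ from $f_e$ or stays inside $B_{r/2}(e)$. In the first case it has at least $r/2$ plaquettes, and a Peierls estimate — each dual-to-closed plaquette contributes a conditional factor at most $\frac{(1-s)q}{s+(1-s)q}\leq 2qe^{-\beta}$, while there are at most $C_0^m$ many $1$-connected plaquette sets of size $m$ through $f_e$ — bounds its probability by $\sum_{m\geq r/2}(2qC_0)^m e^{-\beta m}\leq Ce^{-\beta r/K'}$ for $\beta$ large, uniformly over $\cU$ and over the wired boundary condition. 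In the second case the dual cluster, together with the finite region it bounds, lies inside $B_r(e)\cap\cU\cong B_r(e')\cap\cU'$, so this portion of the event is measurable with respect to the configuration on the common region; the remaining (and main) task is to show that its probability agrees for $\cU$ and $\cU'$ up to $e^{-\beta r/K'}$.

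The main obstacle is the non-locality of the cluster weight $q^{\kappa(\omega)}$: the probability of a configuration confined to $B_r(e)$ is not literally a function of $B_r(e)\cap\cU$, since it also involves the global partition function and the connectivity structure outside $B_r(e)$. I would resolve this as in \cite{Grimmett_RC}, through the polymer (cluster) expansion available for the highly supercritical random-cluster model $p=1-e^{-\beta}$ (valid for $\beta$ large): it expresses $\pi^{\mathsf{w}}_{\cU,s,q}(x\nleftrightarrow y\text{ off }e)$ — equivalently, the relevant ratio of partition functions — as a convergent sum $\sum_{\gamma}w(\gamma)$ over finite ``polymers'' $\gamma$ ($1$-connected dual structures anchored at $f_e$), with weights obeying $|w(\gamma)|\leq e^{-c\beta|\gamma|}$ and, crucially, depending only on the restriction of $\cU$ to an $O(|\gamma|)$-neighbourhood of $\gamma$. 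Polymers anchored at $f_e$ that fit inside $B_r(e)$ then contribute identically to the two expansions, so the difference telescopes to $\sum_{\gamma\nsubseteq B_r(e)}\bigl(w_{\cU}(\gamma)-w_{\cU'}(\gamma)\bigr)$, where every such $\gamma$ has $|\gamma|\geq r$; hence it is at most $2\sum_{m\geq r}C_0^m e^{-c\beta m}\leq e^{-\beta r/K'}$ for $\beta$ large. The one point requiring care relative to the cited reference is carrying out this expansion with $\beta$-explicit weight bounds $e^{-c\beta|\gamma|}$ — routine, since a dual-to-closed plaquette carries weight of order $e^{-\beta}$ in this regime — together with the standard low-temperature verification that the expansion converges and that its weights are local in the above sense.
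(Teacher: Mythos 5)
Your reduction step is exactly the paper's: rewriting $\f^{\mathsf{w}}_{\cU,p,q}(e)$ via the finite-energy identity in terms of $\pi^{\mathsf{w}}_{\cU,s,q}(K_e)$ (equivalently the disconnection probability off $e$) is how both \cite[Lem.~7.93]{Grimmett_RC} and the paper begin. But from there the paper does something much lighter than what you propose: it quotes verbatim the intermediate bound from Grimmett's proof, $|\pi^{\mathsf{w}}_{\cU,s,q}(K_e)-\pi^{\mathsf{w}}_{\cU',s,q}(K_{e'})|\leq c_1^r(1-\rho_s)^{r/c_2}$ with $\rho_s=\frac{s}{s+q(1-s)}$ --- a disagreement-percolation/coupling estimate in which each step of the ``path of influence'' from $\partial B_r(e)$ to $e$ costs the worst-case conditional probability $1-\rho_s$ of an edge being closed --- and then simply observes that $1-\rho_s\leq 1-\rho_p=O(qe^{-\beta})$ uniformly for $s\in[p,1)$, which converts Grimmett's rate $r/K$ into $\beta r/K$. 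No new argument is needed; the ``minor adjustment'' is literally one line of bookkeeping inside an existing coupling proof.

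Your alternative route has a genuine gap at its load-bearing step. The Peierls bound for a separating dual-closed cluster reaching distance $r/2$ is fine, but for the confined case you invoke, as a black box, a convergent polymer expansion of $\pi^{\mathsf{w}}_{\cU,s,q}(x\nleftrightarrow y\text{ off }e)$ over polymers anchored at $f_e$ with weights that are \emph{exactly local} in $\cU$ and bounded by $e^{-c\beta|\gamma|}$, uniformly over subgraphs $\cU\subset\Z^3$. This is not routine for the random-cluster model: the nonlocal factor $q^{\kappa(\omega)}$ is precisely what makes such expansions delicate, and in both Grimmett's book and this paper the cluster expansion with controlled, exponentially-local weights (\cite[Lem.~7.104]{Grimmett_RC}, i.e.\ \cref{prop:cluster-expansion} here) is \emph{derived from} the present lemma together with the telescoping identity of \cref{lem:free-energy-expansion}. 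So your proof either assumes a statement of at least the same depth as the one being proved (with a real risk of circularity if one tries to fill it using \cref{prop:cluster-expansion}), or it requires an independent construction of the expansion that you have not supplied and that would dwarf the rest of the argument. You also misattribute the method: Grimmett resolves the nonlocality by coupling, not by cluster expansion. To repair your proof without the expansion, the natural move in the confined case would be to exhibit, with probability $1-e^{-c\beta r}$ under both measures, a fully open separating surface in the annulus $B_r(e)\setminus B_{r/2}(e)$; conditioning on the outermost such surface induces identical wired measures on a common interior domain, giving the comparison directly --- but this is again a coupling-type argument, not the expansion you describe.
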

\begin{proof}
    We note that \cref{lem:f-term-decay} was originally stated with a exponential bound of $e^{-r/K}$ for all $\beta \geq \beta_0$ instead of $e^{-\beta r/K}$ as above. We show here however that the stronger bound can be concluded from their proof. As was done, we can write 
    \[\frac{s - \pi^\eta_{\cU, s, q}(\omega_e = 1)}{s(1-s)} = \frac{(q-1)(1 - \pi^\eta_{\cU, s, q}(K_e))}{s + q(1-s)}\]
    where $K_e$ is the event that the endpoints of $e$ are joined by an open path without using the edge $e$. Now let $\rho_s = \frac{s}{s+q(1-s)}$. It was then shown that for some $c_1, c_2 > 0$, we have
    \[|\pi^\eta_{\cU, s, q}(K_e) - \pi^\eta_{\cU, s, q}(K_e)| \leq c_1^r(1 - \rho_s)^{r/c_2}\,.\]
    Note that $\rho_s$ is increasing for $s < 1$, and hence is minimized at $\rho_p$ on the interval $[p, 1)$. One can compute that $1 - \rho_p \geq \tfrac{q}{2}e^{-\beta}$. Hence, we can conclude that
    \begin{equation*}
        |\f^{\mathsf{w}}_{\cU, s, q}(e) - \f^{\mathsf{w}}_{\cU', s, q}(e')| \leq c_1^r(\tfrac{q}{2})^{r/c_2}e^{-\beta r/c_2}\int_p^1 \frac{q-1}{s+q(1-s)}\,\d s\,.
    \end{equation*}
    The integral above is a constant, so it and the term $c^r(\tfrac{q}{2})^{r/c}$ can be absorbed into the term $e^{-\beta r/c_2}$ by increasing the constant $c_2$ to some sufficiently large (but $\beta$ independent) constant $K$. 
\end{proof}

\begin{proof}[\textbf{\emph{Proof of \cref{prop:cluster-expansion}}}]
    We follow the proof of \cite[Lem.~7.104]{Grimmett_RC}, with a few minor adjustments. Let $\cU \setminus \overline{F}$ denote the graph resulting by removing from $\cU$ the edge set $\{e: f_e \in \overline{F}\})$. Then, the fact that $F$ is a compatible set for $(\cU, \eta)$ means that conditioning on $\mathsf{Cl}(F)$ induces wired boundary conditions on each component of $\cU \setminus \overline{F}$. Thus, as in \cite[Eq.~7.102]{Grimmett_RC}, we can write
    \begin{equation}\label{eq:prob-as-ratio-free-energy}
        \pi^\eta_\cU(\mathsf{Cl}(F)) = \frac{Z^{\mathsf{w}}(\cU)}{Z^\eta(\cU)}\frac{Z^{\mathsf{w}}(\cU \setminus \overline{F})}{Z^{\mathsf{w}}(\cU)}(1 - e^{-\beta})^{|\partial F|}e^{-\beta|F|}q^{\kappa(F) - 1}\,.
    \end{equation}
    Applying \cref{lem:free-energy-expansion}, we get that
    \begin{equation}\label{eq:ratio-of-free-energy}
        \log\left(\frac{Z^{\mathsf{w}}(\cU \setminus \overline{F})}{Z^{\mathsf{w}}(\cU)}\right) = \sum_{e \in \cU \setminus \overline{F}} (\f^{\mathsf{w}}_{\cU \setminus \overline{F}, p, q}(e) - \f^{\mathsf{w}}_{\cU, p, q}(e)) - \sum_{e: f_e \in \overline{F}} \f^{\mathsf{w}}_{\cU, p, q}(e)\,.
    \end{equation}
    We can group the edges in the above sums according to which plaquette of $F$ they are the closest to. Let $\phi(e, F)$ be the plaquette $f \in F$ which is closest to the edge $e$, with distance $d(e, f)$ given by the $L^\infty$ distance between their centers. Ties can be decided in any arbitrary predetermined way, e.g., choosing the plaquette that is earliest in lexicographic ordering. Then, define
    \begin{equation}\label{eq:g-term}
        \g(f, F, \cU) = \sum_{\substack{e \in \cU \setminus \overline{F},\\\phi(e, F) = f}} (\f^{\mathsf{w}}_{\cU \setminus \overline{F}, p, q}(e) - \f^{\mathsf{w}}_{\cU, p, q}(e)) - \sum_{\substack{e: f_e \in \overline{F},\\ \phi(e, F) = f}} \f^{\mathsf{w}}_{\cU, p, q}(e)\,.
    \end{equation}
    We claim that $\g$ satisfies the requirements of the lemma. Combining \cref{eq:prob-as-ratio-free-energy,eq:ratio-of-free-energy,eq:g-term} immediately proves \cref{it:cluster-expansion}.

    To show \cref{it:L-inf-bound}, let $\phi(e, F) = f$ and let $r = d(e, f)$. Then, $B_{r-2}(e)$ does not contain any plaquettes of $\overline{F}$, so by \cref{lem:f-term-decay} we get that for $r \geq 3$,
    \begin{equation}
        |\f^{\mathsf{w}}_{\cU \setminus \overline{F}, p, q}(e) - \f^{\mathsf{w}}_{\cU, p, q}(e)| \leq e^{-\beta(r-2)/K}\,.
    \end{equation}
    For $r = 1, 2$, we can apply the bound $\f^{\mathsf{w}}_{\cU, p, q}(e) \leq (q-1)e^{-\beta}$ from \cref{lem:free-energy-expansion}. Thus, using the fact that the number of edges $e \in \cU \setminus \overline{F}$ such that $\phi(e, F) = f$ and $d(e, f) = r$ is at most $c^r$ for some constant $c > 0$, we can sum over the exponential tails to bound the first sum in \cref{eq:g-term} by $e^{-\beta/K'}$ for some constant $K'$. The second sum in \cref{eq:g-term} has a constant number of terms, and can thus be bounded by $Ce^{-\beta}$ for some constant $C$, which can then be absorbed for large enough $\beta$ into the constant $K'$.

    Finally, the proof of \cref{it:exp-decay} follows in the same way as in \cite[Lem.~7.104]{Grimmett_RC} with similar adjustments as above. Indeed, let $r$ be the largest radius such that $B_r(f) \cap \cU \cong B_r(f') \cap \cU'$ and $B_r(f) \cap F \cong B_r(f') \cap F'$. The original proof showed that $|\g(f, F, \cU) - \g(f', F', \cU')| \leq e^{-r/K}$ for $r > 9$. By applying the stronger, $\beta$-decaying bound in \cref{lem:f-term-decay} whenever needed, the same proof now shows that $|\g(f, F, \cU) - \g(f', F', \cU')| \leq e^{-\beta r/K}$. We can then apply \cref{it:L-inf-bound} to extend this bound to $r \leq 9$ as well, at the cost of increasing the constant $K$.
\end{proof}

\section{Proof of \cref{thm:gg-interface-height}}\label{sec:pf-of-gg-interface-height}
Recall the definitions of walls and ceilings, and their excess areas from \cref{subsec:ld-upper-bound}. Recall from~\cite{GielisGrimmett02} the notion of groups of walls (collections of nearby walls clustered by a threshold on their interactions through $\mathbf{g}$ relative to their excess areas) that are essential to being able to execute the deletion operations and establish rigidity in $\bar \pi^0_{\Lambda'_{n,\infty}}$. 

The proof of \cref{thm:gg-interface-height} with $m=\infty$, found in~\cite{GielisGrimmett02}, goes by considering the Peierls-type map that deletes a group of walls $\mathfrak W$ from the standard wall representation of $\cI_{\Full}$, and then bounds the entropy lost from the operation (controlled by enumerating the number of possible groups of walls of size $k$ nesting a vertex $x$ by $C^k$) by the energetic gain from the deletion, which is given by $\exp( - \beta \sum_{W\in \mathfrak W} \fm(W))$. 

When the domain is capped at distance $m$ away from height zero, there is the potential concern of interactions with the hard floor (the extreme case of $m=0$ of course being the same form as the interactions that cause many of the difficulties encountered in this present paper). However, when it is at distance $\Omega(n)$ away in both directions, the interaction is fairly crudely bounded by the energetic cost of reaching up to such a high height. 

To begin, notice that for any configuration $\cI_{\Full}$ having $\overline{\hgt}(\cI_{\Full})= \max_{x} \overline{\hgt}_x(\cI_{\Full})\le m/4$, say, the operation of deleting a group of walls can induce a shift, at most, of $m/4$ on its internal walls, so after the operation, the interface will retain distance at least $m/2$ to the boundary conditions at heights $\pm m$. Then, per Remark~\ref{rem:diff-domains}, the only difference in the change of weights caused by this map, relative to the same map in $\Lambda_{n,\infty}'$ is an additive $C n^2 e^{ - n/2}$ from the radius $\mathbf{r}(f, I; f',I')$ possibly being attained from faces in the boundary. This is of course negligible compared to the cost of $\beta r$ for deletion of a group of wall of excess area $r$. 

Thus, without any change to the argument that takes one from Proposition~\ref{prop:grimmett-cluster-exp} to Theorem~\ref{thm:gg-interface-height}, we get for $m\ge 4k$, for every $x$, 
\begin{align*}
\bar\pi^0_{\Lambda'_{n,m}}(\overline\hgt_x(\cI_\Full)\geq k,\, \max_y \overline{\hgt}_y(\cI_{\Full})\le m/4 ) \leq \exp(-a_p k)\,,
\end{align*}
It therefore suffices to upper bound the probability of $\max_y \overline{\hgt}_y(\cI_{\Full})\ge m/4$, for $m \ge n$, by $\exp( - a_p' m)$ as that will be smaller than $\exp( - a_p k)$ up to changing the constant $a_p$. For that purpose, consider any interface with $\max_y \overline\hgt_y(\cI_{\Full}) \ge m/4$. There must be a nested sequence of groups of walls $\overline{\mathfrak W}$  (meaning the union of the groups of walls of a nested sequence of walls $W_1,...,W_N$) whose total excess area satisfies $\fm(\overline{\mathfrak W}) \ge m/4$. 

Consider the map $\Phi$ that iteratively, 
\begin{enumerate}
    \item Picks a nested sequence of groups of walls whose total excess area exceeds $m/4$ (if one exists) and deletes it from the standard wall representation; 
    \item If no more exist, stops and outputs the resulting interface as $\Phi(\cI_{\Full})$.
\end{enumerate}
Notice that the resulting interface must be a valid interface in $\Lambda_{n,m}'$ with $\overline{\hgt}(\Phi(\cI_{\Full}))\le m/4$. 
When considering the weight gain from this map, as compared to the infinite-cylinder analysis, the only difference is the possibility that the radius of congruence $\mathbf{r}(f,\cI_{\Full}; f', \Phi(\cI_{\Full})
)$ is attained by interactions between a face $f$ interior of a wall that was deleted, and the boundary conditions at heights $\pm m$. Indeed if $\cI_{\Full}$ is any such interface, and $\mathfrak W$ is the set of walls deleted, then by~\cite{GielisGrimmett02} 
\begin{align}\label{eq:infinite-strip-energy-change-appendix}
   \Bigg| \log \frac{\bar \pi_{\Lambda'_{n,\infty}}^0 (\cI_{\Full})}{\bar \pi_{\Lambda'_{n,\infty}}^0 (\Phi(\cI_{\Full}))}  + \beta \fm(\mathfrak W)\Bigg| \le C \fm(\mathfrak W)\,,
\end{align}
while at the same time, by Proposition~\ref{prop:grimmett-cluster-exp} and Remark~\ref{rem:diff-domains}, 
\begin{align}\label{eq:finite-vs-infinite-strip-energy-change}
    \Bigg| \log \frac{\bar \pi_{\Lambda_{n,m}'}^0(\cI_{\Full})}{\bar\pi_{\Lambda_{n,m}'}^0(\Phi(\cI_{\Full}))} - \log \frac{\bar \pi_{\Lambda'_{n,\infty}}^0 (\cI_{\Full})}{\bar \pi_{\Lambda'_{n,\infty}}^0 (\Phi(\cI_{\Full}))} \Bigg|\le Kn^2 e^{ - cm/4}+ \sum_{f\in \mathsf{Ceil}(\mathfrak W): |\hgt(f)|\ge \frac{m}{2}} K e^{ - cd(f, \cL_{\pm m})}\,.
\end{align}
where $c,K$ are the constants from Proposition~\ref{prop:grimmett-cluster-exp}, and the sum runs over $f\in \cI_{\Full}$ in ceilings interior to walls of $\mathfrak W$. At this point, we wish to bound the number of summands in~\eqref{eq:finite-vs-infinite-strip-energy-change} by $\fm(\fW)$. Notice that if we partition $A=\rho(\{f\in \mathsf{Ceil}(W): |\hgt(f)|\ge m/2\})$ where $\rho(\cdot)$ is used to denote projecting it down to faces of $\cL_0$, for any $M$ and any partition of $A$ into $\mathfrak C_1,...,\mathfrak C_M$, we have 
\begin{align}\label{eq:isoperimetry-interactions-by-partition}
    |A| \le \sum_{i=1}^M \diam(\mathfrak C_i)^2 \le \sum_{i=1}^M \diam(\mathfrak C_i)\cdot n\,,
\end{align}
(because at worst, we are overcounting faces with this bound). On the other hand, for every height between $j=1,...,m/2$, there must have been a collection of wall faces in $\mathfrak W$ nesting these ceilings, and therefore at each height, there was some partition $\mathfrak C_1^{j},...,\mathfrak C_{M_j}^j$ such that those deleted wall faces at height $j$ contributed at least $\sum_{i=1}^{M_j} \diam(\mathfrak C_{M_j}^j)$. By~\eqref{eq:isoperimetry-interactions-by-partition}, for each $j$, this is at least $|A|/n$ and we sum over $j=1,...,m/2$ to get that $\fm(\mathfrak W) \ge |A| \frac{m}{2n}$. Since $m\ge n/2$, this is at least $|A|/4$, so the term in~\eqref{eq:finite-vs-infinite-strip-energy-change} the right hand side is bounded by $K n^2 e^{ - cn/8} + K\fm(\fW)$, which combined with~\eqref{eq:infinite-strip-energy-change-appendix} implies the same bound as in that equation for $\bar \pi_{\Lambda_{n,m}'}^0$ up to a change of $C$ to $C'$ for another $\beta$ independent constant. 

The only other thing left to verify is that the entropy lost by deletion of these walls is controlled by $C^{\fm(\fW)}$ for a universal constant $C$. To see this, consider the number of pre-images of a $\mathcal J = \Phi(\cI_{\Full})$, having $\fm(\cI; \cJ) = L$. We first pick a number $I\in \{1,...,L/m\}$ of iterations of step (1) of the map $\Phi$ before it terminated; then pick $I$ many vertices $v_1,...,v_I$ in $[- \frac{n}{2} , \frac{n}2]^2 \times \{0\}$, allocate the total excess area $L$ between them with each being allocated at least $m/4$ into $L_1,...,L_I$, and then enumerate over nested sequences of groups of walls containing a vertex $v_i$ by $C^{L_i}$ per~\cite[Proof of Lemma 15]{GielisGrimmett02}. In total, this bounds the entropy loss by
\begin{align*}
    \frac{L}{m} \max_{I\le L/m} n^{2 I} 2^L \prod_{i=1}^I C^{L_i} \le C_1^L
\end{align*}
where we used the fact that $m\ge n/2$. Since we established that the weight change from such maps with $\fm(\cI;\cJ)=L$ is $\exp( - (\beta- C)L)$, summing over $L\ge m/4$ is at most $\exp ( - (\beta - C')n/4)$ concluding the proof. 
\qed

\subsection*{Acknowledgments}
The authors thank the anonymous referees for their careful reading and helpful comments. 
The research of R.G. is supported in part by NSF DMS-2246780. 
The research of E.L.\ was supported by the NSF grant DMS-2054833.

\bibliographystyle{abbrv}
\bibliography{Ising3D_floor}

\end{document}